\newtheorem{theorem}{Theorem}[section]
\newtheorem{lemma}[theorem]{Lemma}
\newtheorem{proposition}[theorem]{Proposition}
\newtheorem{corollary}[theorem]{Corollary}
\newtheorem{claim}[theorem]{Claim}
\newtheorem{conjecture}{Conjecture}
\newtheorem{question}{Question}
\theoremstyle{definition}
\newtheorem{example}[theorem]{Example}
\DeclareMathOperator{\ce}{\text{\normalfont ce}}
\DeclareMathOperator{\RT}{\text{\normalfont RT}}
\DeclareMathOperator{\Fact}{\text{\normalfont Fact}}
\DeclareMathOperator{\Occ}{\text{\normalfont Occ}}
\DeclareMathOperator{\Real}{\text{\normalfont Re}}
\renewcommand{\tt}[1]{\mathtt{#1}}
\newenvironment{subproof}[1][\proofname]{%
  \begin{proof}[#1]%
}{%
  \end{proof}%
}
\begin{document}

\title{The repetition threshold for ternary rich words}

\date{May 2025}

\author[1]{James D.~Currie}
\author[2]{Lucas Mol}
\author[3]{Jarkko Peltom\"aki}
\affil[1]{
Department of Mathematics and Statistics,
The University of Winnipeg,
Winnipeg, Canada,
\href{mailto:j.currie@uwinnipeg.ca}{j.currie@uwinnipeg.ca}.
}
\affil[2]{
Department of Mathematics and Statistics,
Thompson Rivers University,
Kamloops, Canada,
\href{mailto:lmol@tru.ca}{lmol@tru.ca}.
}
\affil[3]{
Information Technology,
\capitalring{A}bo Akademi University, 
Turku, Finland,
\href{mailto:r@turambar.org}{r@turambar.org}.
}

\maketitle

\begin{abstract}
In 2017, Vesti proposed the problem of determining the repetition threshold for infinite rich words, i.e., for infinite words in which all factors of length $n$ contain $n$ distinct nonempty palindromic factors. In 2020, Currie, Mol, and Rampersad proved a conjecture of Baranwal and Shallit that the repetition threshold for binary rich words is $2 + \sqrt{2}/2$. In this paper, we prove a structure theorem for $16/7$-power-free ternary rich words. Using the structure theorem, we deduce that the repetition threshold for ternary rich words is $1 + 1/(3 - \mu) \approx 2.25876324$, where $\mu$ is the unique real root of the polynomial $x^3 - 2x^2 - 1$.
\end{abstract}

\section{Introduction}

The study of repetitions in words goes back to the works of Thue at the beginning of the twentieth century~\cite{Thue1906,Thue1912}, which have been translated to English by Berstel~\cite{Berstel1995}.  Many extensions and variations of Thue's results have been proven since then; see, for example, the surveys of Ochem, Rao, and Rosenfeld~\cite{OchemRaoRosenfeld2018} and Rampersad and Shallit~\cite{RampersadShallit2016}.  We use standard notation and terminology related to repetitions in words in the remainder of this section; the unfamiliar reader should refer to \Cref{Section:Background}.

In this paper, we study repetitions in so-called rich words.  A \emph{palindrome} is a word that reads the same forwards and backwards. Droubay, Justin, and Pirillo~\cite{DroubayJustinPirillo2001} were first to observe that every word of length $n$ contains at most $n+1$ distinct palindromes as factors, including the empty word. A word of length $n$ is called \emph{rich} if it has $n+1$ distinct palindromes as factors; an infinite word is rich if all of its finite factors are rich.
%Such words are ``rich'' in palindromes in the sense that they have the maximum number of distinct palindromes as factors among all words of length $n$. 
Since their (implicit) introduction by Droubay, Justin, and Pirillo, rich words have been well-studied.  It is known that the language of infinite rich words contains several highly structured classes of words, including Sturmian words, episturmian words, and complementary symmetric Rote words (see~\cite{BlondinMasseEtAl2011,DroubayJustinPirillo2001}). Infinite rich words have been characterized in terms of a condition on complete returns to palindromes~\cite{GlenJustinWidmerZamboni2009}, in terms of a relation between factor and palindromic complexity~\cite{BucciDeLucaGlenZamboni2009}, and in terms of a condition on bispecial factors~\cite{BalkovaPelantovaStarosta2010}. For any fixed integer $k\geq 2$, the number of rich words of length $n$ over $k$ letters is known to grow superpolynomially~\cite{GuoShallitShur2016} and subexponentially~\cite{Rukavicka2017} in $n$.

Vesti~\cite{Vesti2017} proposed the problem of determining the repetition threshold for the language of infinite rich words over $k$ letters.  This problem has been resolved in the binary case through the combined effort of several authors.  Baranwal and Shallit~\cite{BaranwalShallit2019} constructed an infinite binary rich word with critical exponent $2+\sqrt{2}/2 \approx 2.707$ and conjectured that $2+\sqrt{2}/2$ is in fact the repetition threshold for infinite binary rich words.  This conjecture was confirmed by Currie, Mol, and Rampersad~\cite{CurrieMolRampersad2020}, who proved a structure theorem for $14/5$-power-free infinite binary rich words.  Roughly speaking, the structure theorem says that every $14/5$-power-free infinite binary rich word contains all of the factors of one of two specific infinite binary words (one being the word of Baranwal and Shallit).  Since both of these words turn out to be rich and have critical exponent $2+\sqrt{2}/2$, Baranwal and Shallit's conjecture follows from the structure theorem.

A general lower bound on the repetition threshold for the language of infinite rich words over $k$ letters follows from a result of Pelantov\'a and Starosta~\cite{PelantovaStarosta2013}, which says that every infinite rich word, over \emph{any} finite alphabet, contains infinitely many factors of exponent $2$.  It follows that the repetition threshold (and also the \emph{asymptotic} repetition threshold) for the language of infinite rich words over $k$ letters is at least $2$ for every $k\geq 2$.  In fact, Dvo\v{r}\'akov\'a, Klouda, and Pelantov\'a~\cite{DvorakovaKloudaPelantova2024} have recently shown that the asymptotic repetition threshold for the language of infinite rich  words over $k$ letters is equal to $2$ for all $k\geq 2$.  But to date, Vesti's problem of determining the (ordinary) repetition threshold for the language of infinite rich words over $k$ letters has only been resolved in the binary case.

In this paper, we resolve Vesti's problem in the ternary case by proving the following theorem.

\begin{theorem}\label{Theorem:RepetitionThreshold}
    The repetition threshold for the language of infinite ternary rich words is 
    \begin{equation*}
        1+\frac{1}{3 - \mu_1}\approx 2.25876324,
    \end{equation*}
    where $\mu_1$ is the unique real root of the polynomial $x^3-2x^2-1$.
\end{theorem}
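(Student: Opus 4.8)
The plan is to prove the theorem in two halves: a lower bound and a matching upper bound (construction). Let me denote the target value $\RT := 1 + 1/(3-\mu_1)$, where $\mu_1$ is the real root of $x^3 - 2x^2 - 1$.

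For the lower bound, I need to show every infinite ternary rich word has critical exponent at least $\RT$. The paper has announced a structure theorem for $16/7$-power-free ternary rich words. Note $16/7 \approx 2.2857 < \RT$. So the strategy is: first check that $\RT > 16/7$, meaning any word achieving critical exponent below $\RT$ is in particular $16/7$-power-free (actually I'd want: any infinite ternary rich word with critical exponent strictly less than $\RT$ is $e$-power-free for some $e \le 16/7$, hence falls under the structure theorem). Then invoke the structure theorem to pin down the possible structures, and compute that each such structured word in fact has critical exponent at least $\RT$. This forces a contradiction, establishing the lower bound. The main obstacle here is extracting the exact value $\RT$ from the structure theorem — the appearance of the cubic $x^3 - 2x^2 - 1$ and the root $\mu_1$ strongly suggests that the extremal word has a self-similar (morphic or $S$-adic) structure whose critical exponent is governed by the continued-fraction-like growth of return words, so that the critical exponent is computed as a limit involving powers of $\mu_1$. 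I would carefully compute the critical exponent of the extremal structured word(s), likely by analyzing the lengths of the longest repetitions around bispecial factors and taking a supremum.

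The plan for the upper bound part.

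For the upper bound I must exhibit an infinite ternary rich word with critical exponent exactly equal to $\RT$. Given the binary precedent (Currie, Mol, Rampersad), I expect this extremal word to be one specific word emerging from the structure theorem, presumably the fixed point of a morphism or the image of a standard word under a coding. First I would define the candidate word $\mathbf{w}$ explicitly, verify it is rich (using one of the characterizations of richness cited in the introduction, e.g.\ the complete-return-to-palindrome condition of Glen, Justin, Widmer, and Zamboni, or the bispecial-factor condition of Balkov\'a, Pelantov\'a, and Starosta), and then compute its critical exponent, showing it equals $\RT$ and in particular that no factor of exponent $\ge \RT$ occurs. Establishing that the supremum of exponents equals exactly $1 + 1/(3-\mu_1)$ is where the cubic reappears: I anticipate that the longest repetitions are indexed by a sequence of return words whose lengths satisfy a recurrence with characteristic root $\mu_1$, so the exponents form a sequence converging to $\RT$ from below, giving a critical exponent attained as a supremum rather than a maximum.

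Putting the two bounds together yields the theorem: the lower bound shows no infinite ternary rich word beats $\RT$, and the construction shows $\RT$ is achieved (as a critical exponent), so the repetition threshold is exactly $\RT$. I expect the genuinely hard step to be the lower bound, since it requires the full force of the $16/7$-power-free structure theorem together with a tight analysis showing that every admissible structure forces a repetition of exponent at least $\RT$; the richness verification and the critical-exponent computation for the single extremal word, while technical, should follow a routine (if laborious) analysis of return words and palindromic closures.
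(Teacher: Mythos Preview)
Your overall plan matches the paper's approach: invoke the structure theorem for $16/7$-power-free ternary rich words, show that the extremal word $\tau(g(f^\omega(\tt{0})))$ is rich, and compute its critical exponent to be exactly $1+1/(3-\mu_1)$ via a linear recurrence whose characteristic polynomial is $x^3-2x^2-1$ (your anticipation of the role of $\mu_1$ and of the supremum-not-maximum nature of the critical exponent is correct).

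However, you have the key inequality backwards. You write that $16/7 \approx 2.2857$ is less than the target value, but in fact $1+1/(3-\mu_1) \approx 2.2588 < 16/7$. This direction matters: the argument you need is that any infinite ternary rich word with critical exponent strictly below the target is in particular $16/7$-power-free (because the target is \emph{smaller} than $16/7$), hence falls under the structure theorem. If the target exceeded $16/7$, as you claim, a word with critical exponent between $16/7$ and the target would escape the structure theorem and your lower bound would fail. Fortunately the inequality actually goes the right way, so this is a correctable slip rather than a structural flaw.

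Two smaller points of comparison. First, the paper's structure theorem is phrased so that every $16/7$-power-free ternary rich word contains \emph{all factors} of $\tau(g(f^\omega(\tt{0})))$ up to a permutation of letters; this makes the lower bound immediate once the critical exponent of that single word is known, so you do not need to separately analyze ``each admissible structure''. Second, the paper verifies richness of $\tau(g(f^\omega(\tt{0})))$ via the factor/palindromic complexity identity of Bucci, De~Luca, Glen, and Zamboni rather than the complete-return-to-palindrome condition you mention.
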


In order to prove \Cref{Theorem:RepetitionThreshold}, we first prove a structure theorem for infinite $16/7$-power-free ternary rich words, similar to the one established by Currie, Mol, and Rampersad~\cite{CurrieMolRampersad2020} in the binary case.  Roughly speaking, we show that every infinite $16/7$-power-free ternary rich word ``looks like'' one specific word. 
Structure theorems like this have existed in the combinatorics on words literature from the very beginning. Thue~\cite{Thue1912} established structure theorems for infinite square-free ternary words avoiding various sets of factors.  Another well-known (and incredibly useful) structure theorem is the one for $7/3$-power-free binary words due to Karhum\"aki and Shallit~\cite{KarhumakiShallit2004}, which says that every infinite $7/3$-power-free binary word contains all factors of the Thue-Morse word.  In recent years, a surprising number of new structure theorems have been established; see~\cite{BadkobehOchem2023,BaranwalEtAl2023,Currie2023,CurrieOchemRampersadShallit2023,CurrieRampersad2023,DvorakovaOchemOpocenska2024}, for example.

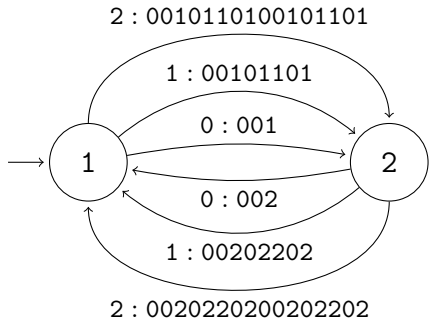
\begin{figure}
    \centering
    \begin{tikzpicture}[shorten >=2pt,initial text={}]
        \node[state,initial] (even) at (0,0) {$\tt{1}$};
        \node[state] (odd) at (4,0) {$\tt{2}$};
        \path[style=->]
        (even) edge[bend left=10] node[above]{\footnotesize $\tt{0}:\tt{001}$} (odd)
        (even) edge[bend left=40] node[above]{\footnotesize $\tt{1}:\tt{00101101}$} (odd)
        (even) edge[bend left=90] node[above]{\footnotesize $\tt{2}:\tt{0010110100101101}$} (odd)
        (odd) edge[bend left=10] node[below]{\footnotesize $\tt{0}:\tt{002}$} (even)
        (odd) edge[bend left=40] node[below]{\footnotesize $\tt{1}:\tt{00202202}$} (even)
        (odd) edge[bend left=90] node[below]{\footnotesize $\tt{2}:\tt{0020220200202202}$} (even)
        ;
    \end{tikzpicture}   
    \caption{The transducer $\tau$.  We will also refer to the related transducer $\overline{\tau}$, which has the same states and transitions, but starts in state $\tt{2}$.  }
    \label{Fig:tau}
\end{figure}

For our structure theorem, we let $f, g\colon \Sigma_3^*\rightarrow \Sigma_3^*$ be the morphisms defined by
\begin{align*}
    f(\tt{0})&=\tt{01} & g(\tt{0})&=\tt{20}\\
    f(\tt{1})&=\tt{022} & g(\tt{1})&=\tt{21}\\
    f(\tt{2})&=\tt{02} & g(\tt{2})&=\tt{2},
\end{align*}
and we let $\tau$ be the transducer drawn in \Cref{Fig:tau}.

\begin{theorem}[Structure Theorem]\label{Theorem:Structure}
    Suppose that $\mathbf{z}\in\Sigma_3^\omega$ is a $16/7$-power-free rich word.  Then for all $n\geq 0$, a suffix of $\mathbf{z}$ can be obtained from $\tau(g(f^n(\mathbf{x}_n)))$ by permuting the letters, where $\mathbf{x}_n\in\Sigma_3^\omega$.  In particular, $\mathbf{z}$ contains all of the factors obtained from $\tau(g(f^\omega(\tt{0})))$ by permuting the letters.
\end{theorem}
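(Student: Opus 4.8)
The plan is to prove the structure theorem by induction on $n$, maintaining the claim that a suffix of $\mathbf{z}$ has the desired preimage form $\tau(g(f^n(\mathbf{x}_n)))$ up to letter permutation. The base case $n=0$ requires showing that a suffix of every $16/7$-power-free rich word $\mathbf{z}$ is (a permutation of) $\tau(g(\mathbf{x}_0))$ for some $\mathbf{x}_0\in\Sigma_3^\omega$. This amounts to proving that $\mathbf{z}$, after passing to a suffix and permuting letters, lies in the image of the map $\mathbf{x}\mapsto\tau(g(\mathbf{x}))$. The inductive step then needs to show that if a suffix of $\mathbf{z}$ is a permutation of $\tau(g(f^n(\mathbf{x}_n)))$, then $\mathbf{x}_n$ itself (or a suffix of it) can be desubstituted one more level through $f$, i.e.\ $\mathbf{x}_n = f(\mathbf{x}_{n+1})$ up to taking a suffix, so that a suffix of $\mathbf{z}$ is a permutation of $\tau(g(f^{n+1}(\mathbf{x}_{n+1})))$. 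First I would set up the combinatorial machinery: understand exactly which factors the composite map $\tau\circ g$ can produce and establish \emph{synchronization} (unique desubstitution) properties, so that a long enough factor of $\mathbf{z}$ determines its preimage under $\tau\circ g$ uniquely up to the permutation ambiguity.

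The heart of the argument is the desubstitution under $f$ in the inductive step. Here I would exploit the $16/7$-power-freeness hypothesis heavily: the threshold $16/7$ is presumably chosen precisely so that it forbids the short repetitions that would obstruct clean factorization into $f$-blocks. Concretely, I would prove a lemma asserting that any $16/7$-power-free rich word lying in the range of $\tau\circ g$ forces its $g$-preimage (the word $\mathbf{x}_n$) to again be $16/7$-power-free and rich, so that the same dichotomy applies recursively; and that the permutation ambiguity is controlled, meaning it does not proliferate uncontrollably as $n$ grows. The key technical tool is a bounded-synchronization-delay result: there is a uniform constant so that any factor of $\mathbf{z}$ longer than that constant has a unique decomposition into the images of letters under the relevant morphism, which lets me lift factor-level statements about $\mathbf{z}$ to factor-level statements about $\mathbf{x}_n$.

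For the final ``In particular'' assertion, I would take the limit $n\to\infty$. Since $f$ is a nonerasing morphism with $|f(a)|\geq 2$ for each letter and is prolongable on $\tt{0}$ (note $f(\tt{0})=\tt{01}$ begins with $\tt{0}$), the iterates $f^n(\tt{0})$ converge to a well-defined fixed point $f^\omega(\tt{0})$, and the lengths $|f^n(\mathbf{x}_n)|$ of prefixes grow without bound. The structure statement for each $n$ says $\mathbf{z}$ (up to suffix and permutation) agrees with $\tau(g(f^n(\mathbf{x}_n)))$; as $n$ increases, the common initial behavior is governed by longer and longer prefixes of $f^\omega(\tt{0})$, because any admissible $\mathbf{x}_n$ must begin with the appropriate letter to match the forced structure. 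The plan is to show that the set of factors of $\mathbf{z}$ therefore contains, for every length, all factors of $\tau(g(f^\omega(\tt{0})))$ up to permutation, by a compactness/pigeonhole argument: each fixed finite factor of the target word appears inside $\tau(g(f^n(\mathbf{x}_n)))$ for all sufficiently large $n$ regardless of which admissible $\mathbf{x}_n$ is chosen, hence occurs in $\mathbf{z}$.

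The step I expect to be the main obstacle is the inductive desubstitution under $f$ while simultaneously tracking the letter permutation. The difficulty is twofold: first, establishing that the rich, $16/7$-power-free hypotheses are genuinely preserved under taking the $f$-preimage (richness is a delicate palindrome-counting condition that does not transfer through morphisms for free, so I anticipate needing a careful analysis of how palindromic factors of $\mathbf{z}$ correspond to palindromic factors of $\mathbf{x}_n$ under the specific maps $f$, $g$, and $\tau$); and second, ruling out the possibility that desubstitution fails at the boundary, which is exactly where a near-$16/7$ power could force a misalignment. I would attack this by a detailed case analysis of the possible factors around block boundaries, using the explicit definitions of $f$, $g$, and the transducer $\tau$ to show that every locally admissible configuration either extends to a valid $f$-block decomposition or creates a forbidden repetition, and this case analysis — though routine in spirit — is where the real work lies.
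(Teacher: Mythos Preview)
Your overall inductive architecture matches the paper's: prove a base case that a suffix of $\mathbf{z}$ (after permuting letters) is $\tau(g(\mathbf{x}_0))$, then repeatedly desubstitute through $f$. The base case via return words and synchronization is also in the right spirit. However, your plan for the inductive step contains a genuine gap.

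You propose to prove that the preimage $\mathbf{x}_n$ is itself $16/7$-power-free and rich, and then ``apply the same dichotomy recursively.'' Neither property transfers to the preimage, and the paper does not attempt this. For power-freeness: the alternation of the transducer $\tau$ between states means that an $r$-power in $\mathbf{x}_n$ only produces an $r$-power in $\tau(g(f^n(\mathbf{x}_n)))$ when the period has an \emph{even} number of $\tt{2}$'s (so that the transducer returns to the same state after one period); powers with an odd number of $\tt{2}$'s in the period are broken by $\tau$, so $\mathbf{x}_n$ can legitimately contain, e.g., cubes or fourth powers of such words without violating $16/7$-power-freeness of $\mathbf{z}$. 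The correct replacement, which the paper introduces, is that $\mathbf{x}_n$ must be $5$-power-free and \emph{even}-$3$-power-free (cube-free only for periods with evenly many $\tt{2}$'s), together with a finite explicit list $F$ of forbidden factors whose images produce $\geq 16/7$-powers after applying $\tau\circ g\circ f^n$.

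For richness the situation is analogous: richness of $\mathbf{z}$ does not pull back to richness of $\mathbf{x}_n$ through $\tau\circ g\circ f^n$. The paper's key idea here is to invent a bespoke property, calling a word \emph{poor} if every palindromic prefix with an even number of $\tt{2}$'s reoccurs after an even number of $\tt{2}$'s, and an intermediate notion \emph{middle-class} for the $g$-image. These are engineered precisely so that $f$ sends poor to poor, $g$ sends poor to middle-class, and $\tau$ sends middle-class to non-rich; hence a poor factor in $\mathbf{x}_n$ would force a non-rich factor in $\mathbf{z}$. The inductive desubstitution under $f$ then proceeds by a return-word/backtracking analysis using only these three ingredients (no factors from $F$, no poor factors, $5$-power-free and even-$3$-power-free), not by invoking $16/7$-power-freeness or richness of $\mathbf{x}_n$ directly. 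Your case analysis ``around block boundaries'' is the right shape, but without the even-$\tt{2}$ parity bookkeeping and the poor/middle-class machinery you will not be able to close it.
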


We then show that the word $\tau(g(f^\omega(\tt{0})))$ is rich and has critical exponent $1+1/(3-\mu_1)$. \Cref{Theorem:RepetitionThreshold} then follows easily from these two facts and \Cref{Theorem:Structure}.

The remainder of the paper is laid out as follows. In \Cref{Section:Background}, we describe the notation and terminology used in the paper, and provide a more comprehensive summary of results related to \Cref{Theorem:RepetitionThreshold}.  In \Cref{Section:Structure}, we prove \Cref{Theorem:Structure}.  In \Cref{Section:Richness}, we prove that $\tau(g(f^\omega(\tt{0})))$ is rich, and in \Cref{Section:CriticalExponent}, we prove that $\tau(g(f^\omega(\tt{0})))$ has critical exponent $1+1 / (3 - \mu_1)$.  Finally, in \Cref{Section:Conclusion}, we ask some questions related to Vesti's problem over larger alphabets.

% \begin{theorem}\label{Theorem:RichCriticalExponent}
%     Let $\mathbf{z}=\tau(g(f^\omega(\tt{0})))$.  Then $\mathbf{z}$ is rich and has critical exponent 
%     \begin{equation*}
%         1+\frac{\mu_1^2}{\mu_1^2-1}\approx 2.25876324.
%     \end{equation*}
% \end{theorem}

\section{Notation, Terminology, and Background}
\label{Section:Background}

\subsection{Words}

An \emph{alphabet} is a nonempty finite set of symbols, which we refer to as \emph{letters}.  Throughout, we let $\Sigma_k$ denote the alphabet $\{\tt{0},\tt{1},\ldots,\tt{k-1}\}$.  A \emph{word} over an alphabet $A$ is a finite or infinite sequence of letters from $A$.   The \emph{length} of a finite word $w$, denoted by $|w|$, is the number of letters that make up $w$.  We let $\varepsilon$ denote the \emph{empty word}, which is the unique word of length $0$.  For a letter $a$, we let $|w|_a$ denote the number of occurrences of $a$ in $w$.

For a finite word $x$ and a finite or infinite word $y$, the \emph{concatenation} of $x$ and $y$, denoted by $xy$, is the word consisting of all of the letters of $x$ followed by all of the letters of $y$.  Suppose that a finite or infinite word $w$ can be written in the form $w=xyz$, where $x$, $y$, and $z$ are possibly empty words.  Then the word $y$ is called a \emph{factor} of $w$, the word $x$ is called a \emph{prefix} of $w$, and the word $z$ is called a \emph{suffix} of $w$.  If $x$ and $z$ are nonempty, then $y$ is called an \emph{internal factor} of $w$, and we say that $y$ \emph{appears internally} in $w$.  If $yz$ is nonempty, then $x$ is called a \emph{proper prefix} of $w$, and if $xy$ is nonempty, then $z$ is called a \emph{proper suffix} of $w$.  For an infinite word $\mathbf{u}$, the \emph{language} of $\mathbf{u}$, denoted by $\Fact(\mathbf{u})$, is the set of all finite factors of $\mathbf{u}$.

For a set of finite words $A$, we let $A^*$ denote the set of finite words obtained by concatenating elements of $A$, and we let $A^\omega$ denote the set of infinite words obtained by concatenating elements of $A$.  So in particular, $\Sigma_k^*$ is the set of finite words over $\Sigma_k$, and $\Sigma_k^\omega$ is the set of infinite words over $\Sigma_k$.

Let $\mathbf{w}$ be an infinite word, and let $w$ be a finite factor of $\mathbf{w}$.  We say that $w$ is \emph{recurrent} in $\mathbf{w}$ if it occurs infinitely many times in $\mathbf{w}$ and that $w$ is \emph{uniformly recurrent} in $\mathbf{w}$ if there is an integer $k$ such that every factor of $\mathbf{w}$ of length $k$ contains $w$.  We say that the infinite word $\mathbf{w}$ is \emph{(uniformly) recurrent} if every finite factor of $\mathbf{w}$ is (uniformly) recurrent.  A \emph{complete return word} to $w$ in $\mathbf{w}$ is a factor of $\mathbf{w}$ that contains $w$ as a proper prefix and a proper suffix but not as an internal factor.  If $w$ is recurrent in $\mathbf{w}$, then every occurrence of $w$ in $\mathbf{w}$ is followed by a successive occurrence, which gives rise to a complete return word to $w$.  Evidently, if $w$ is uniformly recurrent in $\mathbf{w}$, then there are only finitely many complete return words to $w$ in $\mathbf{w}$. We say that a factor of $w$ is \emph{unioccurrent} if it occurs exactly once in $w$ as a factor.

\subsection{Morphisms and Transducers}

For alphabets $A$ and $B$, a $\emph{morphism}$ from $A^{*}$ to $B^{*}$ is a function $h\colon A^{*} \rightarrow B^{*}$ that satisfies $h(uv) = h(u)h(v)$ for all words $u,v \in A^{*}$.  
% We say that $h$ is \emph{strictly growing} if $|h(a)|\geq 2$ for all $a\in X$.  For an integer $k \geq 1$, we say that $h$ is \emph{k-uniform} if $|h(a)| = k$ for all $a \in X$.  We say that $h$ is \emph{non-uniform} if it is not $k$-uniform for every integer $k$.
Let $h\colon A^{*} \rightarrow A^{*}$ be a morphism. For all words $x\in A^*$, we define $h^{0}(x) = x$, and $h^n(x)=h(h^{n-1}(x))$ for all integers $n\geq 1$.  For a letter $a\in A$, we say that $h$ is \emph{prolongable} on $a$ if $h(a) = ax$ for some non-empty word $x \in X^{*}$ and $h^n(x)\neq \varepsilon$ for all $n\geq 0$. 
% (Note that this second condition is satisfied if $h$ is strictly growing.)  
If $h$ is prolongable on $a$ with $h(a)=ax$, then it is easy to show that for every integer $n\geq 1$, we have
\begin{equation*}h^n(a) = axh(x)h^{2}(x)\cdots h^{n-1}(x).\end{equation*}
Thus, each $h^n(a)$ is a prefix of the infinite word
\begin{equation*}
h^\omega(a)=axh(x)h^2(x)h^3(x)\cdots.
\end{equation*}
Note that $h^\omega(a)$ is a fixed point of $h$, i.e., $h(h^\omega(a))=h^\omega(a)$, where the morphism $h$ is extended to infinite words in the natural way.

For the formal definition of finite-state transducer, see~\cite[Section~4.3]{AlloucheShallit2003}.  We only make use of the transducer $\tau$ drawn in \Cref{Fig:tau} (and the related transducer $\overline{\tau}$), whose behavior is very simple. These transducers simply alternate between the two states on every input letter.  In fact, one could define $\tau$ and $\overline{\tau}$ in terms of the morphisms $t_1,t_2\colon\Sigma_3^*\rightarrow \Sigma_3^*$ defined as follows:
\begin{align*}
    t_1(\tt{0})&=\tt{001} & t_2(\tt{0})&=\tt{002}\\
    t_1(\tt{1})&=\tt{00101101} & t_2(\tt{1})&=\tt{00202202}\\
    t_1(\tt{2})&=\tt{0010110100101101} & t_2(\tt{2})&=\tt{0020220200202202}.
\end{align*}
For a finite or infinite word $w=w_0w_1w_2w_3\cdots$ over $\Sigma_3$, where the $w_i$ are letters, we have
\begin{equation*}
\tau(w)=t_1(w_0)t_2(w_1)t_1(w_2)t_2(w_3)\cdots
\end{equation*}
and 
\begin{equation*}
\overline{\tau}(w)=t_2(w_0)t_1(w_1)t_2(w_2)t_1(w_3)\cdots.
\end{equation*}
Essentially, the output of $\tau$ is obtained from the input word by applying $t_1$ to all letters of even index and $t_2$ to all letters of odd index, and the output of $\overline{\tau}$ is obtained by applying $t_2$ to all letters of even index and $t_1$ to all letters of odd index.  We note that the well-known Arshon words can be defined in terms of similar maps (c.f.~\cite{Krieger2010}).

\subsection{Repetitions in Words}

Let $w=w_0w_1\cdots w_{n-1}$ be a finite word, where the $w_i$ are letters.  For an integer $p$ with $1\leq p\leq n$, we say that $p$ is a \emph{period} of $w$ if $w_{i+p}=w_i$ for all $0\leq i < n-p$.  In this case, we say that $n/p$ is an \emph{exponent} of $w$, and that $w$ is an \emph{$n/p$-power}.  The smallest period of $w$ is called \emph{the} period of $w$, and it corresponds to the largest exponent of $w$, which is called \emph{the} exponent of $w$.  
% For example, the English word $u=\tt{alfalfa}$ has periods $3$, $6$, and $7$, hence the period of $u$ is $3$ and the exponent of $u$ is $7/3$.

Now let $\mathbf{w}$ be a finite or infinite word, and let $\alpha>1$ be a real number.  We say that $\mathbf{w}$ is \emph{$\alpha$-power-free} if it contains no factors of exponent greater than or equal to $\alpha$, and that $\mathbf{w}$ is \emph{$\alpha^+$-power-free} if it contains no factors of exponent strictly greater than $\alpha$.  The \emph{critical exponent} of $\mathbf{w}$, denoted by $\ce(\mathbf{w})$, is defined by
\begin{equation*}
\ce(\mathbf{w})=\sup\{r\in \mathbb{Q} : \text{$\mathbf{w}$ has a factor of exponent $r$}\},
\end{equation*}
or equivalently by
\begin{equation*}
\ce(\mathbf{w})=\inf\{\alpha\in\mathbb{R} : \text{$\mathbf{w}$ is $\alpha$-power-free}\}.
\end{equation*}
Roughly speaking, the critical exponent of $\mathbf{w}$ describes the largest exponents among all factors of $\mathbf{w}$.

% and the \emph{asymptotic critical exponent} of $\mathbf{w}$, denoted by $\ce^*(\mathbf{w})$, is defined by $\ce^*(\mathbf{w})=\ce(\mathbf{w})$ if $\ce(\mathbf{w})=\infty$, and by
% \begin{equation*}
% \ce^*(\mathbf{w})=\limsup_{n\rightarrow\infty}\{r\in \mathbf{Q}\colon\ \text{$\mathbf{w}$ has a factor of exponent $r$ and period $n$}
% \end{equation*}
% otherwise.  
% Roughly speaking, the critical exponent of $\mathbf{w}$ describes the largest number that is the exponent of a factor of $\mathbf{w}$, while the asymptotic critical exponent of $\mathbf{w}$ describes the largest number that is the exponent of factors of arbitrarily large period in $\mathbf{w}$.  In other words, the asymptotic critical exponent ``ignores'' numbers that only appear as exponents of ``short'' words.

Given a language $L$ of infinite words, it is natural to try to find the infimum of the critical exponents among all words in $L$.  The words in $L$ with the smallest critical exponents are in some sense the ``least repetitive'' words in $L$.  The \emph{repetition threshold} of $L$, denoted by $\RT(L)$, is defined by
\begin{equation*}
\RT(L)=\inf\{\ce(\mathbf{w}) : \mathbf{w}\in L\}.
\end{equation*}

One of the most celebrated results in the area of repetitions in words is Dejean's theorem, which describes the repetition threshold of the language of all infinite words over $k$ letters:
\begin{equation*}
\RT(\Sigma_k^\omega)=\begin{cases}
    2, &\text{ if $k=2$;}\\
    7/4, &\text{ if $k=3$;}\\
    7/5, &\text{ if $k=4$;}\\
    k/(k-1), &\text{ if $k\geq 5$.}
\end{cases}
\end{equation*}
The case $k=2$ was proven by Thue~\cite{Thue1912}, and Dejean~\cite{Dejean1972} proved the case $k=3$ and correctly conjectured the remaining cases, which have been proven through the work of many different authors. 
In particular, Carpi~\cite{Carpi2007} proved all but finitely many cases, and the last cases were proven independently by Currie and Rampersad~\cite{CurrieRampersad2011} and Rao~\cite{Rao2011}.

Especially since the completion of the proof of Dejean's theorem, much work has been done on determining the repetition thresholds of various narrower languages.  The repetition threshold for the language of all Sturmian words was determined by Carpi and de Luca~\cite{CarpiDeLuca2000}, who showed that the Fibonacci word has the least critical exponent among all Sturmian words, namely $(5+\sqrt{5})/2$.  Sturmian words are episturmian, balanced, and rich, and research has been done on determining the repetition thresholds of $k$-ary words of these three types in the last decade.  For all $k\geq 2$, we let $E_k$, $B_k$, and $R_k$ denote the languages of $k$-ary episturmian, balanced, and rich words, respectively.  

For episturmian words, Dvo\v{r}\'akov\'a and Pelantov\'a~\cite{DvorakovaPelantova2024} recently established that 
\begin{equation*}
\RT(E_k)=2+\frac{1}{t_k-1}
\end{equation*}
for all $k\geq 2$, where $t_k$ is the unique positive root of the polynomial $x^k-x^{k-1}-\cdots-x-1$.  

For balanced words, the following is known:
\begin{equation*}
\RT(B_k)=\begin{cases}
    2+\frac{1+\sqrt{5}}{2}, &\text{ if $k=2$~\cite{CarpiDeLuca2000};}\\
    2+\frac{\sqrt{2}}{2}, &\text{ if $k=3$~\cite{RampersadShallitVandomme2019};}\\
    1+\frac{1+\sqrt{5}}{4}, &\text{ if $k=4$~\cite{RampersadShallitVandomme2019};}\\
    1+\frac{1}{k-3}, &\text{ if $5\leq k\leq 10$~\cite{BaranwalThesis,DolceDvorakovaPelantova2023};}\\
    1+\frac{1}{k-2}, &\text{ if $k=11$ or both $k\geq 12$ and $k$ is even~\cite{DvorakovaOpocenskaPelantovaShur2022}.}
\end{cases}
\end{equation*}
Dvo\v{r}\'akov\'{a}, Opo\v{c}ensk\'a, Pelantov\'{a}, and Shur~\cite{DvorakovaOpocenskaPelantovaShur2022} conjecture that $\RT(B_k)=1+1/(k-2)$ for all $k\geq 13$ with $k$ odd, but this conjecture remains open at the time of writing.

As described in the introduction, for rich words, it is known that $\RT(R_k)\geq 2$ for all $k\geq 2$, and that $\RT(R_2)=2+\sqrt{2}/2$.

It turns out that there are many infinite words $\mathbf{w}$ in which only ``short'' factors of $\mathbf{w}$ have exponent equal to (or even close to) the critical exponent of $\mathbf{w}$.  If we ignore such short factors, and consider only arbitrarily long factors of $\mathbf{w}$, then we obtain what is called the \emph{asymptotic critical exponent} of $\mathbf{w}$, first defined by Cassaigne~\cite{Cassaigne2008}.  It is denoted by $\ce^*(\mathbf{w})$ and defined by
\begin{equation*}
\ce^*(\mathbf{w})=\limsup_{n\rightarrow\infty}\{r\in \mathbb{Q} : \text{$\mathbf{w}$ has a factor of exponent $r$ and period $n$}\}.
\end{equation*}
The \emph{asymptotic repetition threshold} of $L$, denoted by $\RT^*(L)$, is defined by 
\begin{equation*}
\RT^*(L)=\inf\{\ce^*(\mathbf{w})\colon\ \mathbf{w}\in L\}.
\end{equation*}
For every language of infinite words $L$, we clearly have $\RT^*(L)\leq \RT(L)$.  We have equality for some languages $L$, and strict inequality for others.  For example, Cassaigne~\cite{Cassaigne2008} observed that $\RT^*(S)=\RT(S)$, where $S$ is the language of Sturmian words, but demonstrated that $\RT^*(\Sigma_k^\omega)=1<\RT(\Sigma_k^\omega)$ for all $k\geq 2$.  We refer the reader to the recent work of Dvo\v{r}\'{a}kov\'{a}, Klouda, and Pelantov\'{a}~\cite{DvorakovaKloudaPelantova2024} for a summary of results on the asymptotic repetition thresholds of episturmian, balanced, and rich words.  In particular, as mentioned in the introduction, they show that for rich words, we have $\RT^*(R_k)=2$ for all $k\geq 2$.

\section{The Structure Theorem}
\label{Section:Structure}

In this section, we prove \Cref{Theorem:Structure}.  Suppose that $\mathbf{z}$ is an infinite $16/7$-power-free ternary rich word.  
In \Cref{Subsection:FirstLayer}, we show that up to permutation of the letters, some suffix of $\mathbf{z}$ has the form $\tau(\mathbf{y})$ for some word $\mathbf{y}\in\Sigma_3^\omega$.  (In fact, we show this under the slightly weaker assumption that $\mathbf{z}$ is an infinite $7/3$-power-free ternary rich word.)  In \Cref{Subsection:SecondLayer}, we show that a suffix of $\mathbf{y}$ has the form $g(\mathbf{x})$ for some word $\mathbf{x}\in\Sigma_3^\omega$.  In \Cref{Subsection:ForbiddenFactors}, we describe several families of factors that cannot appear in $\mathbf{x}$ if the word $\tau(g(f^n(\mathbf{x})))$ is $16/7$-power-free and rich, where $n$ is any nonnegative integer.  Finally, in \Cref{Subsection:InnerLayers}, we show that if $\tau(g(f^n(\mathbf{x})))$ is $16/7$-power-free and rich for some $n\geq 0$ and $\mathbf{x}\in\Sigma_3^\omega$, then a suffix of $\mathbf{x}$ has the form $f(\mathbf{x}')$ for some word $\mathbf{x}'\in\Sigma_3^\omega$, which allows us to complete the proof of \Cref{Theorem:Structure} by mathematical induction.

\subsection{The First Layer}\label{Subsection:FirstLayer}

Throughout this subsection, let $\mathbf{z}$ be an infinite $7/3$-power-free ternary rich word. 
We first observe that $\mathbf{z}$ must contain at least one of the factors $\tt{001002}$, $\tt{112110}$, and $\tt{220221}$, as a computer backtracking search shows that a longest $7/3$-power-free ternary rich word containing none of these three factors has length $388$.\footnote{Code to verify all backtracking searches in this paper can be found at \url{https://github.com/japeltom/ternary-rich-words-verification}.}  
Note that $\tt{001002}$, $\tt{112110}$, and $\tt{220221}$ can be obtained from one another by permuting the letters.
Thus, by permuting the letters and removing a prefix of $\mathbf{z}$ if necessary, we assume henceforth that $\mathbf{z}$ has prefix $\tt{001002}$.

\begin{table}
\begin{center}
\begin{tabular}{c | c}
$p$ & \makecell{Length of a longest $7/3$-power-free \\ ternary rich word with prefix $p$}\\\hline
% $\tt{012}$ & 152\\ --- Changed by James, Sept. 12, 2023
$\tt{102}$ & 152\\
% $\tt{0102}$ & 152\\
$\tt{0011}$ & 498\\
$\tt{00100200}$ & 502
\end{tabular}
\end{center}
\caption{Some words that do not appear in $\mathbf{z}$.\label{Table:7over3}}
\end{table}

By performing further backtracking searches, we confirm that several short words, and all words obtained from them by permuting the letters, do not appear in $\mathbf{z}$.  The results are summarized in \Cref{Table:7over3}.  Our next lemma describes two more short words that do not appear in $\mathbf{z}$.  To prove it, we make use of the following well-known characterization of finite rich words.

\begin{lemma}[\cite{DroubayJustinPirillo2001}, c.f.\ \cite{GlenJustinWidmerZamboni2009}]\label{Lemma:UnioccurrentSuffix}
A finite word $w$ is rich if and only if every prefix (resp.~suffix) of $w$ has a unioccurrent palindromic suffix (resp.~prefix).
\end{lemma}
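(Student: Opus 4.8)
The plan is to prove the prefix version of the equivalence directly and then obtain the suffix version by a reversal argument. The engine behind the whole proof is the classical observation that \emph{appending a single letter creates at most one new palindromic factor}. Concretely, I would show that if $u$ is a finite word and $a$ is a letter, then every palindromic factor of $ua$ that is not already a factor of $u$ must be a suffix of $ua$, and among those the longest one, say $P = \mathrm{lps}(ua)$, is the unique candidate. Indeed, if $q$ is any palindromic suffix of $ua$ with $|q| < |P|$, then $q$ is a suffix of the palindrome $P$ and hence also a prefix of $P$; since $P$ is a suffix of $ua$ with $|P| < |ua|$, that prefix $q$ of $P$ already occurs strictly inside $u$, so $q$ is a factor of $u$ and is not new. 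Thus the only possible new palindrome introduced by the letter $a$ is the longest palindromic suffix $\mathrm{lps}(ua)$, and it is genuinely new precisely when it is unioccurrent in $ua$.

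The second ingredient is the equivalence, for a single word $u$, between ``$u$ has \emph{some} unioccurrent palindromic suffix'' and ``the longest palindromic suffix $P = \mathrm{lps}(u)$ is itself unioccurrent.'' One direction is immediate. For the other, I argue the contrapositive: suppose $P$ occurs at least twice in $u$. Any palindromic suffix $q \neq P$ of $u$ is shorter than $P$, hence, as above, is both a suffix and a prefix of $P$; an earlier occurrence of $P$ in $u$ then exhibits an occurrence of $q$ that is not the final one, so $q$ also occurs at least twice. Therefore, if $P$ is not unioccurrent, then no palindromic suffix of $u$ is unioccurrent, which is exactly what is needed.

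With these two facts in hand, I would finish by counting. Writing $w = w_0 w_1 \cdots w_{n-1}$ and letting $P_i$ denote the number of distinct nonempty palindromic factors of the prefix $w_0 \cdots w_{i-1}$, the first ingredient gives $P_{i+1} \leq P_i + 1$, with equality precisely when $\mathrm{lps}(w_0 \cdots w_i)$ is new, i.e.\ unioccurrent in $w_0 \cdots w_i$, i.e.\ (by the second ingredient) precisely when the prefix $w_0 \cdots w_i$ has a unioccurrent palindromic suffix. Since $P_0 = 0$ and, by the Droubay--Justin--Pirillo bound, $w$ is rich exactly when it attains the maximum $P_n = n$, richness is equivalent to having equality at every step, which is exactly the assertion that every prefix of $w$ admits a unioccurrent palindromic suffix. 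Finally, the suffix version follows by applying the prefix version to the reversal $w^{R}$: reversal is a bijection on factors that fixes palindromes and swaps prefixes with suffixes, so $w$ is rich if and only if $w^{R}$ is, and ``every suffix of $w$ has a unioccurrent palindromic prefix'' is precisely the image under reversal of the prefix condition for $w^{R}$. The main thing to get right is the first ingredient; every subsequent step is a short deduction from it, and the only recurring subtlety is the repeated use of the fact that a palindromic suffix shorter than a longer palindromic suffix is also a prefix of the latter.
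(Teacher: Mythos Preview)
Your argument is correct and follows the classical route of Droubay--Justin--Pirillo: the ``one new palindrome per letter'' principle, identification of the new palindrome with the longest palindromic suffix, and the counting/telescoping to characterize richness. The paper itself does not prove this lemma; it is stated there with a citation to \cite{DroubayJustinPirillo2001} and \cite{GlenJustinWidmerZamboni2009} and used as a black box, so there is no in-paper proof to compare against. What you have written is essentially the standard proof from those references.

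One small slip worth tightening: in your first ingredient you assert that $P=\mathrm{lps}(ua)$ satisfies $|P|<|ua|$, but this fails when $ua$ is itself a palindrome. The conclusion you want still holds without that hypothesis: since $q$ is a \emph{proper} prefix of $P$ and $P$ is a suffix of $ua$, the prefix occurrence of $q$ inside $P$ ends at position $|ua|-|P|+|q|-1\le |ua|-2$, hence lies inside $u$. So just drop the clause ``with $|P|<|ua|$'' and phrase the sentence in terms of $|q|<|P|$ instead.
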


\begin{lemma}\label{Lemma:Neither12Nor21}
The word $\mathbf{z}$ contains neither $\tt{12}$ nor $\tt{21}$.
\end{lemma}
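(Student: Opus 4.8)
The plan is to show that $\mathbf{z}$ (which has prefix $\tt{001002}$ and is $7/3$-power-free and rich) cannot contain either of the length-two factors $\tt{12}$ or $\tt{21}$. The natural strategy is a case analysis on the possible occurrences of these factors, using the already-established forbidden prefixes from \Cref{Table:7over3} together with the richness characterization in \Cref{Lemma:UnioccurrentSuffix}. Since the table tells us that no permuted image of $\tt{102}$ appears in $\mathbf{z}$, and no permuted image of $\tt{0011}$ or $\tt{00100200}$ appears, I would first translate these into a list of concrete forbidden factors (up to the letter permutations that fix the relevant context) and use them to prune the immediate extensions of any hypothetical occurrence of $\tt{12}$ or $\tt{21}$.

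First I would suppose, for contradiction, that $\tt{12}$ occurs in $\mathbf{z}$, say $\mathbf{z}$ has a factor $a\,\tt{12}\,b$ for letters $a,b$. I would examine the letters that can precede and follow the occurrence. The letter following $\tt{12}$ cannot be $\tt{1}$ (that would give $\tt{121}$, a permuted copy of $\tt{102}$ which is forbidden by the table — here one must check that the relevant permutation carries $\tt{102}$ to $\tt{121}$, or otherwise rule $\tt{121}$ out directly as a short forbidden word), and similar constraints apply to the letter preceding it. The aim is to force the local neighborhood of $\tt{12}$ into a short window all of whose completions either create a $7/3$-power, create a forbidden factor from the table, or violate the richness condition of \Cref{Lemma:UnioccurrentSuffix} because some prefix ending at the problematic position lacks a unioccurrent palindromic suffix. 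By symmetry (the permutation of letters swapping the roles that turns $\tt{12}$ into $\tt{21}$), handling $\tt{12}$ should immediately dispatch $\tt{21}$, so I would argue one case carefully and invoke symmetry for the other.

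The richness obstruction is the most delicate ingredient, so I would lean on it as follows: whenever a local completion is not killed by a repetition or a tabled forbidden factor, I would look at the shortest prefix of that completion ending at the newly placed letter and check whether it admits a unioccurrent palindromic suffix; if the only palindromic suffixes already occurred earlier, \Cref{Lemma:UnioccurrentSuffix} forbids that completion. This is where the bookkeeping is heaviest, because the palindromic-suffix condition must be verified for each surviving branch, and one has to be careful to track which palindromes have already appeared in the fixed prefix $\tt{001002}\cdots$ leading up to the branch point.

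The main obstacle I expect is controlling the combinatorial branching: naively, each of $\tt{12}$ and $\tt{21}$ can appear in many contexts, and ruling them all out requires simultaneously juggling three independent constraints (the $7/3$-power-free condition, the finite list of tabled forbidden factors, and the richness criterion). The cleanest route is probably to exploit the letter-permutation symmetry aggressively to collapse cases, and to organize the argument so that most branches die quickly to a $7/3$-power or a tabled factor, leaving only a handful of branches that genuinely require the palindromic-suffix test. If the hand analysis threatens to explode, the fallback — consistent with the paper's methodology — is to note that a short backtracking search over $7/3$-power-free rich extensions confirms no occurrence of $\tt{12}$ or $\tt{21}$ survives, reducing the lemma to a finite check.
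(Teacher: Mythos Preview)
Your plan contains a concrete error: $\tt{121}$ is \emph{not} a permuted image of $\tt{102}$. A letter permutation is a bijection on $\Sigma_3$, so the image of $\tt{102}$ must have three distinct letters; the permuted images are exactly $\tt{102},\tt{120},\tt{012},\tt{021},\tt{201},\tt{210}$. So the table does not forbid $\tt{121}$, and your opening pruning step fails. You hedge this (``or otherwise rule $\tt{121}$ out directly''), but you give no mechanism, and in fact the rest of your outline never pins down which branches survive or how to kill them without appealing to a computer search. As written, this is a plan rather than a proof.

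More importantly, you are missing the one idea that makes the argument a few lines instead of a case explosion. The paper does not analyze local contexts $a\tt{12}b$ at all: it takes the \emph{shortest} prefix $p$ of $\mathbf{z}$ that ends in $\tt{12}$ or $\tt{21}$. By minimality, the reversal of that length-$2$ suffix has not yet appeared in $p$, so no palindromic suffix of $p$ can have length $\geq 2$ (any such palindrome would begin with the reversal). Hence the only nonempty palindromic suffix of $p$ is a single letter, and since $p$ begins with $\tt{001002}$ every letter already occurs twice in $p$. Thus $p$ has no unioccurrent palindromic suffix, contradicting \Cref{Lemma:UnioccurrentSuffix}. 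This ``first occurrence'' trick is the whole proof; the forbidden factors in \Cref{Table:7over3} and the $7/3$-power-freeness play no role here beyond the standing assumption that $\mathbf{z}$ has prefix $\tt{001002}$.
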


\begin{proof}
Suppose that $\mathbf{z}$ contains one of the factors $\tt{12}$ or $\tt{21}$. Let $p$ be the shortest prefix of $\mathbf{z}$ that ends in $\tt{12}$ or $\tt{21}$, and let $s$ be the suffix of $p$ of length $2$.  By the minimality of $p$, the reversal of $s$ does not appear in $p$, and it follows that the only nonempty palindromic suffix of $p$ has length $1$.  Since $p$ has prefix $\tt{001002}$, every word of length $1$ occurs more than once in $p$.  Hence $p$ does not have a unioccurrent palindromic suffix, and we conclude by \Cref{Lemma:UnioccurrentSuffix} that $p$ is not rich, which is a contradiction.
\end{proof}

Let 
\begin{align*}
A_\tt{1}&=\tt{001},\\
B_\tt{1}&=\tt{00101101},\\
A_\tt{2}&=\tt{002}, \text{ and}\\
B_\tt{2}&=\tt{00202202}.
\end{align*}
Note that $A_\tt{1}$ can be obtained from $A_\tt{2}$ (and vice versa) by swapping $\tt{1}$ and $\tt{2}$.  The same can be said for $B_\tt{1}$ and $B_\tt{2}$.  We will frequently need to deal with such pairs of words.  For a word $x\in\Sigma_3^*$, the \emph{sister} of $x$ is the word obtained from $x$ by swapping the letters $\tt{1}$ and $\tt{2}$.  With this terminology, the words $A_\tt{1}$ and $A_\tt{2}$ are sisters, as are the words $B_\tt{1}$ and $B_\tt{2}$.  Note also that for every word $w\in\Sigma_3^*$, the words $\tau(w)$ and $\overline{\tau}(w)$ are sisters.

Note that $A_\tt{1}$, $B_\tt{1}$, $A_\tt{2}$, and $B_\tt{2}$ have common prefix $\tt{00}$, and that $\tt{00}$ occurs only as a prefix of $A_\tt{1}$, $B_\tt{1}$, $A_\tt{2}$, or $B_\tt{2}$ in a word belonging to $\{A_\tt{1},B_\tt{1},A_\tt{2},B_\tt{2}\}^*$.

\begin{lemma}\label{Lemma:FirstLayer1}
$\mathbf{z}\in \{A_\tt{1},B_\tt{1},A_\tt{2},B_\tt{2}\}^\omega$.
\end{lemma}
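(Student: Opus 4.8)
The plan is to show that $\mathbf{z}$ admits a left-to-right factorization over $\{A_\tt{1},B_\tt{1},A_\tt{2},B_\tt{2}\}$ by peeling off one block at a time. Since each of these blocks begins with $\tt{00}$, and since every suffix of $\mathbf{z}$ is again an infinite $7/3$-power-free rich word (richness and $7/3$-power-freeness are inherited by factors, and the factors excluded from $\mathbf{z}$ in \Cref{Table:7over3} and in \Cref{Lemma:Neither12Nor21} are likewise absent from every suffix), it suffices to prove the following local statement: if $\mathbf{w}$ is an infinite $7/3$-power-free rich word with prefix $\tt{00}$ that avoids $\tt{12}$, $\tt{21}$, $\tt{102}$, $\tt{201}$, $\tt{0011}$, and $\tt{0022}$, then $\mathbf{w}$ has a prefix $C\in\{A_\tt{1},B_\tt{1},A_\tt{2},B_\tt{2}\}$ such that $C\tt{00}$ is also a prefix of $\mathbf{w}$. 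Granting this, I start from the prefix $\tt{001002}$ of $\mathbf{z}$, peel off the block $C$, and repeat on the suffix of $\mathbf{z}$ beginning at the $\tt{00}$ guaranteed by the statement; iterating produces the desired factorization $\mathbf{z}\in\{A_\tt{1},B_\tt{1},A_\tt{2},B_\tt{2}\}^\omega$.

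To prove the local statement I would run a short deterministic case analysis on the letters following the initial $\tt{00}$, using only a handful of forbidden patterns. By symmetry (swapping $\tt{1}$ and $\tt{2}$, i.e.\ passing to the sister word, which preserves all hypotheses) I may assume the third letter is $\tt{1}$, so $\mathbf{w}$ begins with $\tt{001}$. The next letter cannot be $\tt{1}$ (that would give the forbidden $\tt{0011}$) nor $\tt{2}$ (forbidden $\tt{12}$), forcing $\tt{0010}$; then the fifth letter cannot be $\tt{2}$ (forbidden $\tt{102}$), so either we see $\tt{00100}$ or $\tt{00101}$. In the first case the block is $A_\tt{1}=\tt{001}$ and it is already followed by $\tt{00}$, as required. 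In the second case the forcing continues: $\tt{00101}$ must be followed by $\tt{1}$ (a following $\tt{0}$ produces $\tt{01010}=(\tt{01})^{5/2}$, and a following $\tt{2}$ produces $\tt{12}$), then by $\tt{0}$ (avoiding $\tt{111}$ and $\tt{12}$), yielding $\tt{0010110}$; its eighth letter cannot be $\tt{2}$ (forbidden $\tt{102}$), so we see either $\tt{00101101}=B_\tt{1}$ or $\tt{00101100}$. When the block is $B_\tt{1}$, a further short forcing argument shows the two letters after it must be $\tt{00}$: a following $\tt{1}$ creates the $7/3$-power $\tt{1011011}$, so the next letter is $\tt{0}$, after which a $\tt{1}$ creates $\tt{10101}=(\tt{10})^{5/2}$ and a $\tt{2}$ creates $\tt{102}$, forcing a second $\tt{0}$.

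The crux of the argument is ruling out the remaining branch $\tt{00101100}$, and here I would invoke richness through \Cref{Lemma:UnioccurrentSuffix} rather than power-freeness. Consider the shortest prefix $w$ of $\mathbf{w}$ ending in $\tt{00101100}$. Its palindromic suffixes of length at most $8$ lie entirely inside $\tt{00101100}$, and a direct check shows the longest of these is $\tt{00}$; any palindromic suffix of length at least $9$ would have to begin with the reversal $\tt{00110100}$ of $\tt{00101100}$, which contains the forbidden factor $\tt{0011}$. Hence the longest palindromic suffix of $w$ is $\tt{00}$, which is not unioccurrent in $w$ (both $\tt{0}$ and $\tt{00}$ already recur in the prefix $\tt{001002}$), contradicting \Cref{Lemma:UnioccurrentSuffix}. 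This eliminates $\tt{00101100}$, so the block is $A_\tt{1}$ or $B_\tt{1}$, each immediately followed by $\tt{00}$, completing the local statement and hence the lemma.

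I expect the main obstacle to be exactly this disambiguation between $A_\tt{1}$ and $B_\tt{1}$: the word $A_\tt{1}=\tt{001}$ is a prefix of $B_\tt{1}=\tt{00101101}$, so the factorization is not forced by prefix-freeness, and one must look several letters ahead and combine the power-free constraints with the richness obstruction above both to kill the spurious continuation $\tt{00101100}$ and to confirm that a completed block is always followed by a fresh $\tt{00}$, which is what allows the peeling iteration to close up.
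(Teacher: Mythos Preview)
Your proof is correct and follows essentially the same case analysis as the paper, which frames the same tree search as determining the complete return words to $\tt{00}$ in $\mathbf{z}$. The one small difference is how you kill the branch $\tt{00101100}$: the paper cites the computer-verified entry for $\tt{0011}$ in \Cref{Table:7over3} (noting $\tt{1100}$ is a letter-permutation of it), whereas you observe directly that $\tt{00101100}$ is not rich, since its only nonempty palindromic suffixes $\tt{0}$ and $\tt{00}$ each occur twice---a slightly cleaner argument, though in your local setup $\mathbf{w}$ begins with $\tt{00101100}$, so the remarks about palindromic suffixes of length $\geq 9$ and about the prefix $\tt{001002}$ are superfluous.
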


\begin{proof}
By assumption, $\mathbf{z}$ begins with $\tt{00}$.  In fact, a computer backtracking search shows that the longest $7/3$-power-free ternary rich word with no $\tt{00}$ has length $57$, so that the factor $\tt{00}$ is recurrent in $\mathbf{z}$.  Thus, it suffices to show that the only complete returns to $\tt{00}$ in $\mathbf{z}$ are $A_\tt{1}\tt{00}$, $B_\tt{1}\tt{00}$, $A_\tt{2}\tt{00}$, or $B_\tt{2}\tt{00}$.  Since the cube $\tt{000}$ does not appear in $\mathbf{z}$, every occurrence of $\tt{00}$ in $\mathbf{z}$ must be followed by $\tt{1}$ or $\tt{2}$.  We show that the only complete returns to $\tt{00}$ starting with $\tt{001}$ in $\mathbf{z}$ are $A_\tt{1}\tt{00}$ and $B_\tt{1}\tt{00}$.  The argument that the only complete returns to $\tt{00}$ starting with $\tt{002}$ in $\mathbf{z}$ are $A_\tt{2}\tt{00}$ and $B_\tt{2}\tt{00}$ is symmetric.

Consider the tree drawn in \Cref{Fig:Tree1}, which shows all possible complete returns to $\tt{00}$ in $\mathbf{z}$ starting with $\tt{001}$.  From \Cref{Lemma:Neither12Nor21}, we know that $\tt{1}$ is never followed by $\tt{2}$ in $\mathbf{z}$, so these branches are suppressed in the tree.  We explain below why the words corresponding to the red leaves in the tree cannot appear in $\mathbf{z}$.
\begin{itemize}
\item $\tt{001010}$ has the $5/2$-power $\tt{01010}$ as a suffix.
\item $\tt{00101100}$ has a permutation of the word 
%$\tt{1100}$  -change as on next line (James)
$\tt{0011}$
from \Cref{Table:7over3} as a suffix.
\item $\tt{0010110101}$ has the $5/2$-power $\tt{10101}$ as a suffix.
\item $\tt{0010110102}$ has the word 
%$\tt{0102}$ --- Changed by James, Sept. 12, 2023
$\tt{102}$ from \Cref{Table:7over3} as a suffix.
\item $\tt{001011011}$ has the $7/3$-power $\tt{1011011}$ as a suffix.
%\item $\tt{001011012}$ has the word $\tt{012}$ from \Cref{Table:7over3} as a suffix.
\item $\tt{00101102}$ has 
%a permutation of the word $\tt{012}$$ ---Changed by James, Sept. 12, 2023
the word $\tt{102}$ from \Cref{Table:7over3} as a suffix.
\item $\tt{0010111}$ has the cube $\tt{111}$ as a suffix.
\item $\tt{00102}$ has the word 
%$\tt{0102}$ Changed by James, Sept. 12, 2023
$\tt{102}$ from \Cref{Table:7over3} as a suffix.
\item $\tt{0011}$ is in \Cref{Table:7over3}.
\end{itemize}
\begin{figure}
\centering
\begin{forest}
[
$\tt{001}$,for tree={grow=0,l=1.8cm}
%    [
%    	$\tt{2}$
%    ]
	[
		$\tt{1}$,draw,fill=red
	]
	[
		$\tt{0}$
		[
			$\tt{2}$,draw,fill=red
		]
		[
			$\tt{1}$
%			[
%				$\tt{2}$
%			]
			[
				$\tt{1}$
%				[
%					$\tt{2}$
%				]
				[
					$\tt{1}$,draw,fill=red
				]
				[
					$\tt{0}$
					[
						$\tt{2}$,draw,fill=red
					]
					[
						$\tt{1}$
%						[
%							$\tt{2}$
%						]
						[
							$\tt{1}$,draw,fill=red
						]
						[
							$\tt{0}$
							[
								$\tt{2}$,draw,fill=red
							]
							[
								$\tt{1}$,draw,fill=red
							]
							[
								$\tt{0}$,draw,fill=green
							]
						]
					]
					[
						$\tt{0}$,draw,fill=red
					]
				]
			]
			[
				$\tt{0}$,draw,fill=red
			]
		]
		[
       		$\tt{0}$,draw,fill=green
       ]
	]
]
\end{forest}
    \caption{The tree showing all possible complete returns to $\tt{00}$ in $\mathbf{z}$ starting with $\tt{001}$.}
    \label{Fig:Tree1}
\end{figure}
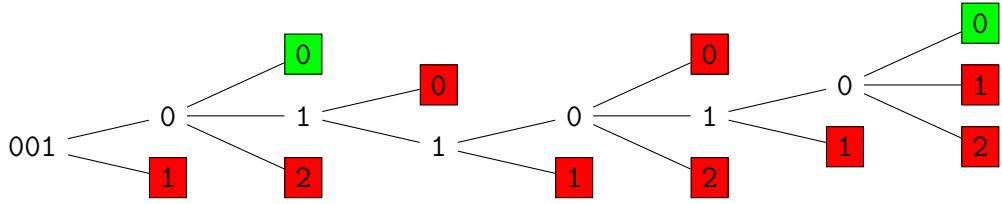
This means that the words corresponding to the green leaves in the tree are the only possible complete returns to $\tt{00}$ starting with $\tt{001}$ in $\mathbf{z}$.  Therefore, we conclude that $\mathbf{z}\in\{A_\tt{1},B_\tt{1},A_\tt{2},B_\tt{2}\}^\omega$, as desired.
\end{proof}

Let $q\colon \{\tt{a_1},\tt{b_1},\tt{a_2},\tt{b_2}\}^*\rightarrow \{\tt{0},\tt{1},\tt{2}\}^*$ be the morphism that sends each lowercase letter to the word denoted by the corresponding uppercase letter.  Then \Cref{Lemma:FirstLayer1} says that $\mathbf{z}=q(\mathbf{v})$ for some word $\mathbf{v}\in\{\tt{a_1},\tt{b_1},\tt{a_2},\tt{b_2}\}^\omega$.

\begin{lemma}\label{Lemma:SecondLayer}
    The word $\mathbf{v}$ corresponds to an infinite walk on the directed graph $G$ drawn in \Cref{Fig:vgraph}, and contains neither $\tt{b_1}\tt{b_1}\tt{b_1}$ nor $\tt{b_2}\tt{b_2}\tt{b_2}$.
\end{lemma}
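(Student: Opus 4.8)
The plan is to read off the allowed transitions of $\mathbf{v}$ directly from the block structure of $\mathbf{z}=q(\mathbf{v})$ and to recognize the non-transitions as the missing edges of $G$. First I would record that the factorization $\mathbf{z}=q(\mathbf{v})$ is unique: since $A_\tt{1}$, $B_\tt{1}$, $A_\tt{2}$, $B_\tt{2}$ all begin with $\tt{00}$ and, by the remark preceding \Cref{Lemma:FirstLayer1}, $\tt{00}$ occurs in a word of $\{A_\tt{1},B_\tt{1},A_\tt{2},B_\tt{2}\}^*$ only as a block prefix, the block boundaries in $\mathbf{z}$ are determined. Consequently, every occurrence of a length-$2$ factor $xy$ of $\mathbf{v}$ produces the factor $q(x)q(y)\tt{00}$ of $\mathbf{z}$, where the trailing $\tt{00}$ is forced because $\mathbf{v}$ is infinite and hence a further block must begin. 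I will exploit this forced context throughout: to forbid a pair $xy$ it suffices to exhibit inside $q(x)q(y)\tt{00}$ either a factor of exponent at least $7/3$ (contradicting $7/3$-power-freeness) or a word that \Cref{Table:7over3} guarantees does not occur in $\mathbf{z}$.

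Next I would carry out the case analysis. Using the sister symmetry (swapping $\tt{1}$ and $\tt{2}$ exchanges $A_\tt{1}\leftrightarrow A_\tt{2}$ and $B_\tt{1}\leftrightarrow B_\tt{2}$, hence $\tt{a_1}\leftrightarrow\tt{a_2}$ and $\tt{b_1}\leftrightarrow\tt{b_2}$, and preserves both richness and power-freeness), it is enough to rule out the four pairs $\tt{a_1}\tt{a_1}$, $\tt{a_1}\tt{b_1}$, $\tt{b_1}\tt{a_1}$, and $\tt{a_1}\tt{a_2}$. For $\tt{a_1}\tt{a_1}$ the forced factor $A_\tt{1}A_\tt{1}\tt{00}=\tt{00100100}$ is an $8/3$-power; for $\tt{a_1}\tt{b_1}$ the word $A_\tt{1}B_\tt{1}$ has prefix $\tt{0010010}$, a $7/3$-power; and for $\tt{b_1}\tt{a_1}$ the forced factor $B_\tt{1}A_\tt{1}\tt{00}$ contains $\tt{0100100}$, again a $7/3$-power. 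The remaining pair $\tt{a_1}\tt{a_2}$ is not killed by any repetition, and here I invoke \Cref{Table:7over3}: the forced factor $A_\tt{1}A_\tt{2}\tt{00}=\tt{00100200}$ simply does not occur in $\mathbf{z}$. Applying the sister map yields eight forbidden ordered pairs, and I would check that the eight surviving pairs are exactly the edges drawn in \Cref{Fig:vgraph}, so that $\mathbf{v}$ is an infinite walk on $G$. The two cube conditions are then immediate, since $q(\tt{b_1}\tt{b_1}\tt{b_1})=B_\tt{1}^3$ and $q(\tt{b_2}\tt{b_2}\tt{b_2})=B_\tt{2}^3$ are cubes, of exponent $3\ge 7/3$, and hence cannot appear in $\mathbf{z}$.

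I expect the main obstacle to lie not in any single repetition computation but in making the forced-context argument airtight. In particular, one must confirm that a single trailing $\tt{00}$ always suffices to realize each claimed power, so that no occurrence of a forbidden pair can escape (this is why infiniteness of $\mathbf{v}$ is used), and that the eight surviving pairs match the edge set of \Cref{Fig:vgraph} exactly rather than approximately. The one genuinely non-elementary input is the exclusion of $\tt{a_1}\tt{a_2}$ and its sister, which rests on the backtracking search behind \Cref{Table:7over3} rather than on an explicit repetition; this is the only step where richness, rather than mere $7/3$-power-freeness, does essential work.
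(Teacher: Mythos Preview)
Your proposal is correct and follows essentially the same approach as the paper: the same four ordered pairs are reduced to by sister symmetry, the same three repetition arguments ($8/3$- and $7/3$-powers inside $q(x)q(y)\tt{00}$) are used, and the same appeal to \Cref{Table:7over3} handles $\tt{a_1}\tt{a_2}$. Your added remarks on uniqueness of the block factorization and on where richness actually enters are accurate and slightly more explicit than the paper's presentation, but the argument is the same.
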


\begin{proof}
    First of all, if $\mathbf{v}$ contains $\tt{b_1}\tt{b_1}\tt{b_1}$ or $\tt{b_2}\tt{b_2}\tt{b_2}$, then $\mathbf{z}$ contains the cube $B_{\tt{1}}B_{\tt{1}}B_{\tt{1}}$ or its sister, contradicting the assumption that $\mathbf{z}$ is $7/3$-power-free.

    We know from \Cref{Lemma:FirstLayer1} that $\mathbf{v}\in\{\tt{a_1},\tt{b_1},\tt{a_2},\tt{b_2}\}^\omega$.  So to show that $\mathbf{v}$ corresponds to an infinite walk on $G$, it suffices to show that the following factors do not appear in $\mathbf{v}$:
    \begin{itemize}
        \item $\tt{a_1}\tt{a_1}$, $\tt{a_1}\tt{b_1}$, $\tt{a_1}\tt{a_2}$, and $\tt{b_1}\tt{a_1}$; and
        \item $\tt{a_2}\tt{a_2}$, $\tt{a_2}\tt{b_2}$, $\tt{a_2}\tt{a_1}$, and $\tt{b_2}\tt{a_2}$.
    \end{itemize}
    By symmetry, we need only show that the first four factors do not appear in $\mathbf{v}$.
    \begin{itemize}
        \item If $\mathbf{v}$ contains $\tt{a_1}\tt{a_1}$, then $\mathbf{z}$ contains the $8/3$-power $A_\tt{1}A_\tt{1}\tt{00}=\tt{00100100}$.
        \item If $\mathbf{v}$ contains $\tt{a_1}\tt{b_1}$, then $\mathbf{z}$ contains $A_\tt{1}B_\tt{1}$, which has the $7/3$-power $\tt{0010010}$ as a prefix.
        \item If $\mathbf{v}$ contains $\tt{b_1}\tt{a_1}$, then $\mathbf{z}$ contains $B_\tt{1}A_\tt{1}\tt{00}$, which has the $7/3$-power $\tt{0100100}$ as a suffix.
        \item If $\mathbf{v}$ contains $\tt{a_1}\tt{a_2}$, then $\mathbf{z}$ contains $A_\tt{1}A_\tt{2}\tt{00}=\tt{00100200}$, which is in Table 1.
    \end{itemize}
    This completes the proof.
\end{proof}

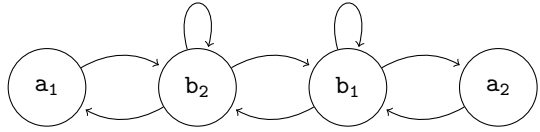
\begin{figure}
    \centering
    \begin{tikzpicture}[shorten >=2pt]
        \node[state] (A1) at (0,0) {\footnotesize $\tt{a_1}$};
        \node[state] (B2) at (2,0) {\footnotesize $\tt{b_2}$};
        \node[state] (B1) at (4,0) {\footnotesize $\tt{b_1}$};
        \node[state] (A2) at (6,0) {\footnotesize $\tt{a_2}$};
        \path[style=->]
        (A1) edge[bend left=30] (B2)
        (B2) edge[bend left=30] (A1)
        (B2) edge[bend left=30] (B1)
        (B1) edge[bend left=30] (B2)
        (B1) edge[bend left=30] (A2)
        (A2) edge[bend left=30] (B1)
        (B2) edge[loop above] (B2)
        (B1) edge[loop above] (B1)
        ;
    \end{tikzpicture}   
    \caption{The directed graph $G$ showing possible transitions between letters in $\mathbf{v}$.}
    \label{Fig:vgraph}
\end{figure}

Let $\mathbf{v}'$ be the word obtained from $\mathbf{v}$ by replacing each occurrence of the factor $\tt{b_1}\tt{b_1}$ with the new letter $\tt{c_1}$, and each occurrence of the factor $\tt{b_2}\tt{b_2}$ with the new letter $\tt{c_2}$.  In other words, we have $\mathbf{v}=p(\mathbf{v}')$, where $p\colon \{\tt{a_1},\tt{b_1},\tt{c_1},\tt{a_2},\tt{b_2},\tt{c_2}\}^*\rightarrow \{\tt{a_1},\tt{b_1},\tt{a_2},\tt{b_2}\}^*$ is the morphism that sends $\tt{c_1}$ to $\tt{b_1}\tt{b_1}$ and $\tt{c_2}$ to $\tt{b_2}\tt{b_2}$, and all other letters to themselves.
From \Cref{Lemma:SecondLayer}, we see that $\mathbf{v}'$ corresponds to an infinite walk on the 
%directed --- Deleted by James, Sept. 12, 2023
graph $G'$ drawn in \Cref{Fig:v'graph}.  Note in particular that $\mathbf{v}'$ alternates between letters of index $\tt{1}$ and index $\tt{2}$.  It follows that we can write $\mathbf{v}'=\sigma(\mathbf{v}'')$, where $\textbf{v}''$ is an infinite word over $\{\tt{a,b,c}\}$ and $\sigma\colon \{\tt{a,b,c}\}\rightarrow \{\tt{a_1},\tt{b_1},\tt{c_1},\tt{a_2},\tt{b_2},\tt{c_2}\}$ is the map that adds index $\tt{1}$ and index $\tt{2}$ alternately to each letter of the input word, starting with $\tt{1}$.

%\begin{figure}
%    \centering
%    \begin{tikzpicture}[shorten >=2pt]
%        \node[state] (A1) at (180:2) {\footnotesize $\tt{a_1}$};
%        \node[state] (B2) at (120:2) {\footnotesize $\tt{b_2}$};
%        \node[state] (B1) at (60:2) {\footnotesize $\tt{b_1}$};
%        \node[state] (A2) at (0:2) {\footnotesize $\tt{a_2}$};
%        \node[state] (C2) at (240:2) {\footnotesize $\tt{c_2}$};
%        \node[state] (C1) at (300:2) {\footnotesize $\tt{c_1}$};
%        \path[style=->]
%        (A1) edge[bend left=10] (B2)
%        (B2) edge[bend left=10] (A1)
%        (B2) edge[bend left=10] (B1)
%        (B1) edge[bend left=10] (B2)
%        (B1) edge[bend left=10] (A2)
%        (A2) edge[bend left=10] (B1)
%        (A1) edge[bend left=10] (C2)
%        (C2) edge[bend left=10] (A1)
%        (C2) edge[bend left=10] (C1)
%        (C1) edge[bend left=10] (C2)
%        (C1) edge[bend left=10] (A2)
%        (A2) edge[bend left=10] (C1)
%        (C2) edge[bend left=10] (B1)
%        (B1) edge[bend left=10] (C2)
%        (C1) edge[bend left=10] (B2)
%        (B2) edge[bend left=10] (C1)
%        ;
%    \end{tikzpicture}   
%    \caption{The directed graph $G'$ showing %possible transitions between letters in %$\mathbf{v}'$.}
%    \label{Fig:v'graph}
%\end{figure}

\begin{figure}
    \centering
    \begin{tikzpicture}
        \node[state] (A1) at (180:2) {\footnotesize $\tt{a_1}$};
        \node[state] (B2) at (120:2) {\footnotesize $\tt{b_2}$};
        \node[state] (B1) at (60:2) {\footnotesize $\tt{b_1}$};
        \node[state] (A2) at (0:2) {\footnotesize $\tt{a_2}$};
        \node[state] (C2) at (240:2) {\footnotesize $\tt{c_2}$};
        \node[state] (C1) at (300:2) {\footnotesize $\tt{c_1}$};
        \path[style=-]
        (A1) edge (B2)
        (B2) edge (B1)
        (B1) edge (A2)
        (A1) edge (C2)
        (C2) edge (C1)
        (C1) edge (A2)
        (C2) edge (B1)
        (C1) edge (B2)
        ;
    \end{tikzpicture}   
    \caption{The graph $G'$ showing possible transitions between letters in $\mathbf{v}'$.}
    \label{Fig:v'graph}
\end{figure}

We have shown that
\begin{equation*}
    \mathbf{z}=q(\mathbf{v})=q(p(\mathbf{v}'))=q(p(\sigma(\mathbf{v}''))),
\end{equation*}
where $\mathbf{v}''$ is an infinite word over $\{\tt{a,b,c}\}$.  In the sequel, we prefer to work with a word over the alphabet $\Sigma_3=\{\tt{0,1,2}\}$ instead of $\{\tt{a,b,c}\}$, so we write $\mathbf{v}''=r(\mathbf{y})$, where $\mathbf{y}\in\{\tt{0,1,2}\}^\omega$ and $r\colon \Sigma_3^*\rightarrow \{\tt{a,b,c}\}$ is defined by $r(\tt{0})=\tt{a}$, $r(\tt{1})=\tt{b}$, and $r(\tt{2})=\tt{c}$.  Thus we have
\begin{equation*}
    \mathbf{z} = q(p(\sigma(r(\mathbf{y})))).
\end{equation*}
We can express the composition of the maps $r$, $\sigma$, $p$, and $q$ as the single finite-state transducer $\tau$, which gives the following result.

\begin{proposition}\label{Prop:FirstLayer}
    We can write $\mathbf{z}=\tau(\mathbf{y})$, where $\mathbf{y}\in\Sigma_3^\omega$, and $\tau$ is the transducer drawn in \Cref{Fig:tau}.
\end{proposition}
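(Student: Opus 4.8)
The plan is to verify directly that the composite map $q\circ p\circ\sigma\circ r$ coincides with the action of the transducer $\tau$; once this is done, the identity $\mathbf{z}=q(p(\sigma(r(\mathbf{y}))))$ established above immediately gives $\mathbf{z}=\tau(\mathbf{y})$. Recall that $\tau$ applies $t_1$ to every letter of even index and $t_2$ to every letter of odd index. Since $r$ is a letter-to-letter coding, $\sigma$ acts position-by-position (attaching index $\tt{1}$ at even positions and index $\tt{2}$ at odd positions), and $p$ and $q$ are morphisms, it suffices to follow a single letter $y_i$ of $\mathbf{y}$ through the four maps, distinguishing the cases of even and odd $i$.

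First I would fix the action of $q\circ p$ on the indexed letters. For index $\tt{1}$ we get $\tt{a_1}\mapsto A_\tt{1}=\tt{001}$, $\tt{b_1}\mapsto B_\tt{1}=\tt{00101101}$, and $\tt{c_1}\mapsto B_\tt{1}B_\tt{1}=\tt{0010110100101101}$, which are precisely $t_1(\tt{0})$, $t_1(\tt{1})$, and $t_1(\tt{2})$; symmetrically, for index $\tt{2}$ we obtain $A_\tt{2}$, $B_\tt{2}$, and $B_\tt{2}B_\tt{2}$, which are $t_2(\tt{0})$, $t_2(\tt{1})$, and $t_2(\tt{2})$. Combining this with the facts that $r$ sends $\tt{0},\tt{1},\tt{2}$ to $\tt{a},\tt{b},\tt{c}$ and that $\sigma$ assigns index $\tt{1}$ exactly at even positions, I would conclude that the image of $y_i$ under the full composition is $t_1(y_i)$ when $i$ is even and $t_2(y_i)$ when $i$ is odd. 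This yields $q(p(\sigma(r(\mathbf{y}))))=t_1(y_0)t_2(y_1)t_1(y_2)t_2(y_3)\cdots=\tau(\mathbf{y})$, exactly the claimed identity.

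The one point requiring genuine care---and the step I would treat as the main obstacle, mild as it is---is the bookkeeping of positions: I must confirm that the parity governing the alternation of $\sigma$ (and hence the alternation between $t_1$ and $t_2$) is the parity of the position in $\mathbf{y}$ itself. Because $r$ is length-preserving and $\sigma$ acts one letter at a time, each letter $y_i$ produces exactly one letter of $\mathbf{v}''$ and one letter of $\mathbf{v}'$, so position $i$ in $\mathbf{y}$ corresponds to position $i$ in $\mathbf{v}'$. This is precisely what guarantees that the alternation of $\sigma$ starting at index $\tt{1}$ matches $\tau$ starting with $t_1$, after which the six equalities verified above finish the proof.
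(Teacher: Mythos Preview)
Your proposal is correct and follows essentially the same approach as the paper: the paper simply remarks that the composition $q\circ p\circ\sigma\circ r$ can be expressed as the transducer $\tau$ and then states the proposition, whereas you carry out this verification explicitly by tracking each letter through the four maps and checking the six output words against $t_1$ and $t_2$. Your careful treatment of the parity bookkeeping (that $r$ and $\sigma$ are length-preserving, so position in $\mathbf{y}$ agrees with position in $\mathbf{v}''$) is exactly the point the paper leaves implicit.
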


\subsection{The Second Layer}\label{Subsection:SecondLayer}

Throughout this subsection, let $\textbf{z}$ be an infinite $16/7$-power-free ternary rich word.  Since $16/7<7/3$, we may use all of the results from \Cref{Subsection:FirstLayer}.  In particular, we may assume without loss of generality that $\mathbf{z}$ starts with $\tt{001002}$, and we know from \Cref{Prop:FirstLayer} that $\mathbf{z}=\tau(\mathbf{y})$ for some word $\mathbf{y}\in\Sigma_3^\omega$, where $\tau$ is the transducer drawn in \Cref{Fig:tau}.  In this subsection, we begin to describe the structure of the word $\textbf{y}$.

First of all, by backtracking, we confirm that several short words do not appear in $\mathbf{y}$.  The results are summarized in \Cref{Table:16over7}.

\begin{table}
\begin{center}
\begin{tabular}{c | c}
$p$ & \makecell{Length of a longest word $y\in \Sigma_3^*$ with prefix $p$ \\
such that $\tau(y)$ is a $16/7$-power-free rich word} \\\hline
$\tt{201}$ & 141\\
$\tt{210}$ & 144\\
$\tt{211}$ & 101
% $\tt{2212}$ & 105
\end{tabular}
\end{center}
\caption{Some words that do not appear in $\mathbf{y}$. \label{Table:16over7}}
\end{table}

\begin{proposition}\label{Prop:SecondLayer}
    A suffix of $\mathbf{y}$ has the form $g(\mathbf{x})$ for some word $\mathbf{x}\in\Sigma_3^\omega$.
\end{proposition}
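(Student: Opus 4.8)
The plan is to mimic the proof of \Cref{Lemma:FirstLayer1}, using the single letter $\tt2$ as an anchor in place of the factor $\tt{00}$. The key observation is that the images $g(\tt0)=\tt{20}$, $g(\tt1)=\tt{21}$, and $g(\tt2)=\tt2$ all begin with $\tt2$ and contain no other occurrence of $\tt2$. Consequently, an infinite word lies in $g(\Sigma_3^\omega)$ precisely when it begins with $\tt2$ and factors over the block set $\{\tt2,\tt{20},\tt{21}\}$; equivalently, when $\tt2$ occurs infinitely often and its only complete return words are $\tt{22}$, $\tt{202}$, and $\tt{212}$ (the corresponding preimage letters being $\tt2$, $\tt0$, and $\tt1$). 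So it suffices to show that $\tt2$ is recurrent in $\mathbf{y}$ and that its only complete return words are those three; the desired suffix is then the one beginning at the first occurrence of $\tt2$, and reading off the block between each consecutive pair of $\tt2$'s produces $\mathbf{x}$.

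First I would record the forbidden factors of $\mathbf{y}$ that drive the argument. Since $\mathbf{y}=r^{-1}(\mathbf{v}'')$ and $\mathbf{v}''$ is the de-indexed walk on the graph $G'$ of \Cref{Fig:v'graph}, in which the vertex $a$ carries no self-edge, the only forbidden two-letter factor coming from \Cref{Lemma:SecondLayer} is $\tt{00}$. From \Cref{Table:16over7}, the factors $\tt{201}$, $\tt{210}$, and $\tt{211}$ also do not appear in $\mathbf{y}$. I would then show that no complete return word $\tt2u\tt2$ to $\tt2$ with $u\in\{\tt0,\tt1\}^*$ can satisfy $|u|\ge 2$: if $u$ begins with $\tt1$, then $\tt2u$ begins $\tt{21}$, and its next letter yields $\tt{210}$ or $\tt{211}$; if $u$ begins with $\tt0$, then $\tt2u$ begins $\tt{20}$, and its next letter yields $\tt{200}$ (forbidden, as it contains $\tt{00}$) or $\tt{201}$. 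In every case a forbidden factor appears, so $|u|\le 1$, and the only complete return words to $\tt2$ are $\tt{22}$, $\tt{202}$, and $\tt{212}$, as needed.

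It then remains to establish the recurrence of $\tt2$ in $\mathbf{y}$, which is the one point genuinely requiring external input. I would first show, by a backtracking search exactly in the style of the recurrence of $\tt{00}$ in \Cref{Lemma:FirstLayer1}, that a longest word $y\in\{\tt0,\tt1\}^*$ for which $\tau(y)$ is a $16/7$-power-free rich word has bounded length; intuitively a long $\tt2$-free run such as $(\tt{01})^k$ forces a high power like $(t_1(\tt0)t_2(\tt1))^k$ in $\mathbf{z}=\tau(\mathbf{y})$, violating $16/7$-power-freeness. This guarantees at least one occurrence of $\tt2$. Full recurrence is then automatic from the return-word bound of the previous paragraph: were $\tt2$ to occur only finitely often, the infinite $\tt2$-free suffix following the last occurrence of $\tt2$ would begin with a factor $\tt2u$ having $|u|\ge 2$, which we have already excluded. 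Hence $\tt2$ occurs infinitely often, the suffix of $\mathbf{y}$ beginning at its first $\tt2$ factors uniquely over $\{\tt2,\tt{20},\tt{21}\}$ into infinitely many blocks, and this suffix equals $g(\mathbf{x})$ for some $\mathbf{x}\in\Sigma_3^\omega$.

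I expect the main obstacle to be precisely this recurrence of $\tt2$ (equivalently, the boundedness of $\tt2$-free runs), since the block and complete-return-word analysis is immediate once the forbidden factors of \Cref{Table:16over7} and \Cref{Fig:v'graph} are available, whereas ruling out an unbounded initial $\{\tt0,\tt1\}$-run is the step that relies on a fresh computation rather than on already-established structure.
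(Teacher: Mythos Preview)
Your proposal is correct and follows essentially the same route as the paper: show that $\tt{2}$ is recurrent in $\mathbf{y}$ (via a backtracking bound on $\{\tt{0},\tt{1}\}$-words whose $\tau$-image is $16/7$-power-free and rich) and that the only complete returns to $\tt{2}$ are $\tt{22}$, $\tt{202}$, $\tt{212}$, by ruling out $\tt{200}$, $\tt{201}$, $\tt{210}$, $\tt{211}$. The only differences are cosmetic: the paper rules out $\tt{00}$ by noting that $\tau(\tt{00})\tt{00}=\tt{00100200}$ is in \Cref{Table:7over3}, whereas you read it off from the absence of an $\tt{a}$--$\tt{a}$ edge in $G'$; and the paper obtains recurrence of $\tt{2}$ directly from the backtracking bound, whereas you first obtain a single $\tt{2}$ and then bootstrap recurrence from the forbidden-factor list (a pleasant but inessential observation, since the same backtracking already bounds every $\{\tt{0},\tt{1}\}$-run).
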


\begin{proof}
    A computer backtracking search shows that the longest word $y\in\{\tt{0,1}\}^*$ such that $\tau(y)$ is $16/7$-power-free and rich has length $18$.  So the letter $\tt{2}$ appears infinitely many times in $\mathbf{y}$.  Thus, it suffices to show that the only complete return words to $\tt{2}$ in $\mathbf{y}$ are $g(\tt{0})\tt{2}=\tt{202}$, $g(\tt{1})\tt{2}=\tt{212}$, and $g(\tt{2})\tt{2}=\tt{22}$.

    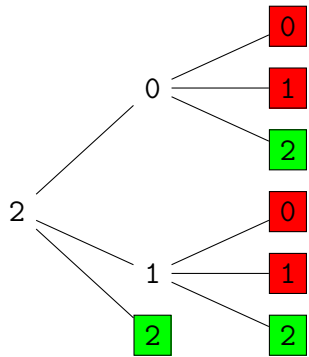
\begin{figure}
        \centering
        \begin{forest}
            [
            $\tt{2}$,for tree={grow=0,l=1.8cm}
                [
               	$\tt{2}$,draw,fill=green
                ]
            	[
            		$\tt{1}$
                    [
                    $\tt{2}$, draw, fill=green
                    ]
                    [
                    $\tt{1}$,draw,fill=red
                    ]
                    [
                    $\tt{0}$,draw,fill=red
                    ]
            	]
            	[
            		$\tt{0}$
                    [
                    $\tt{2}$, draw, fill=green
                    ]
                    [
                    $\tt{1}$,draw,fill=red
                    ]
                    [
                    $\tt{0}$,draw,fill=red
                    ]
            	]
            ]
        \end{forest}
        \caption{The tree showing all possible complete returns to $\tt{2}$ in $\mathbf{y}$.}
        \label{Fig:gTree}
    \end{figure}

    Consider the tree drawn in \Cref{Fig:gTree}, which shows all possible complete returns to $\tt{2}$ in $\mathbf{y}$.  We explain why the words corresponding to the red leaves in the tree cannot appear in $\mathbf{y}$.
    \begin{itemize}
        \item $\tt{200}$ has suffix $\tt{00}$.  If $\mathbf{y}$ contains $\tt{00}$, then $\textbf{z}$ contains $\tau(\tt{00})\tt{00}=\tt{00100200}$ or its sister.  Since $\tt{00100200}$ is in \Cref{Table:7over3}, this is impossible.
        \item $\tt{201}$ is in \Cref{Table:16over7}.
        \item $\tt{210}$ is in \Cref{Table:16over7}.
        \item $\tt{211}$ is in \Cref{Table:16over7}.
    \end{itemize}
This means that the words corresponding to the green leaves in the tree are the only possible complete return words to $\tt{2}$ in $\mathbf{y}$.  Therefore, we conclude that a suffix of $\mathbf{y}$ is in $\{\tt{20},\tt{21},\tt{2}\}^\omega$, as desired.
\end{proof}

\subsection{Forbidden Factors in the Inner Layers}\label{Subsection:ForbiddenFactors}

In this subsection, we describe several families of factors that cannot appear in $\mathbf{x}$ if the word $\tau(g(f^n(\mathbf{x})))$ is $16/7$-power-free and rich, where $n$ is any nonnegative integer. \Cref{Lemma:BadCubes} and \Cref{Lemma:ForbiddenRepetitions} describe factors in $\mathbf{x}$ that lead to repetitions in $\tau(g(f^n(\mathbf{x})))$ with exponent at least $16/7$, while \Cref{Lemma:ForbiddenRichness} describes factors in $\mathbf{x}$ that lead to non-richness in $\tau(g(f^n(\mathbf{x})))$.

Note that when the transducer $\tau$ is applied to an $r$-power $x$, the alternation of $\tau$ between $\tt{1}$'s and $\tt{2}$'s may break the repetition in $x$.  For example, if $x=\tt{000}$, then $x$ is a cube, but $\tau(x)=\tt{001002001}$ is only a $3/2$-power.  Note, however, that if the period of $x$ is even, then the alternation between $\tt{1}$'s and $\tt{2}$'s will line up after each period.  For example, $x=\tt{0101}$ is a square with period $2$, and we see that $\tau(x)=\tt{0010020220200100202202}$ is also a square.

If $x\in\Sigma_3^*$ is an $r$-power of the form $x=p^r$, where $p$ has an even number of $\tt{2}$'s, then we say that $x$ is an \emph{even-$r$-power}.  We say that a word is \emph{even-$r$-power-free} if it contains no even-$r$-powers.

\begin{lemma}\label{Lemma:BadPeriods}
    Let $x\in\Sigma_3^*$ be an even-$r$-power, and write $x=p^r$, where $p$ has an even number of $\tt{2}$'s.  Then for all $n\geq 0$, the word $\tau(g(f^n(x)))$ has period $|\tau(g(f^n(p)))|$.
\end{lemma}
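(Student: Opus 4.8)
Write $w=g(f^n(p))$, so that the target period is $|w|=|\tau(g(f^n(p)))|$ once we apply $\tau$. Since $f$ and $g$ are morphisms and $x=p^r$ is an $r$-power, $x$ is a prefix of $p^\omega$, and hence $g(f^n(x))$ is a prefix of $g(f^n(p^\omega))=(g(f^n(p)))^\omega=w^\omega$. In particular $g(f^n(x))$ already has period $|w|$; the only real content is that applying $\tau$ \emph{preserves} this period, i.e.\ that $\tau(g(f^n(x)))$ has period $|\tau(w)|$. The plan is to reduce this to a single parity fact, namely that $|w|$ is even, and then to observe that once $|w|$ is even, the position-parity alternation driving $\tau$ lines up across successive copies of $w$.

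\textbf{The parity computation (the crux).} The hard part is showing that $|w|=|g(f^n(p))|$ is even, and this is exactly where the hypothesis ``$p$ has an even number of $\tt{2}$'s'' is used. First I would record two congruences valid for every $v\in\Sigma_3^*$. Since $|g(\tt{0})|=|g(\tt{1})|=2$ and $|g(\tt{2})|=1$, counting letters gives
\begin{equation*}
|g(v)|=2|v|_{\tt{0}}+2|v|_{\tt{1}}+|v|_{\tt{2}}\equiv |v|_{\tt{2}}\pmod 2.
\end{equation*}
Since $f(\tt{0})=\tt{01}$, $f(\tt{1})=\tt{022}$, and $f(\tt{2})=\tt{02}$ contain $0$, $2$, and $1$ occurrences of $\tt{2}$ respectively, counting occurrences of $\tt{2}$ gives
\begin{equation*}
|f(v)|_{\tt{2}}=2|v|_{\tt{1}}+|v|_{\tt{2}}\equiv |v|_{\tt{2}}\pmod 2.
\end{equation*}
By induction on $n$ the second congruence yields $|f^n(p)|_{\tt{2}}\equiv |p|_{\tt{2}}\pmod 2$, and then the first gives $|g(f^n(p))|\equiv |f^n(p)|_{\tt{2}}\equiv |p|_{\tt{2}}\pmod 2$. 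As $|p|_{\tt{2}}$ is even by hypothesis, we conclude $|w|=|g(f^n(p))|$ is even.

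\textbf{Transferring the period through $\tau$.} With $|w|$ even in hand, I would argue that $\tau$ commutes with the periodic structure of $w^\omega$. Recall $\tau$ applies $t_1$ at positions of even index and $t_2$ at positions of odd index. In $w^\omega=w\,w\,w\cdots$, the $k$-th copy of $w$ begins at position $k|w|$, which is even because $|w|$ is even; hence within each copy the index parities run $0,1,0,1,\dots$ exactly as in the first copy. Therefore $\tau$ acts identically on every copy, so $\tau(w^\omega)=(\tau(w))^\omega$, which has period $|\tau(w)|$. Finally, $g(f^n(x))$ is a prefix of $w^\omega$, and since $\tau$ assigns its output positionwise, $\tau(g(f^n(x)))$ is a prefix of $\tau(w^\omega)=(\tau(w))^\omega$; as $x$ is a proper $r$-power we have $|g(f^n(x))|>|w|$ and hence $|\tau(g(f^n(x)))|>|\tau(w)|$, so $|\tau(w)|=|\tau(g(f^n(p)))|$ is a genuine period of $\tau(g(f^n(x)))$, as claimed.

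\textbf{Remark on the main obstacle.} The transducer step is essentially bookkeeping once the parity is controlled; the substantive point is the even-length computation, which is precisely why the notion of \emph{even-$r$-power} is defined through the parity of $|p|_{\tt{2}}$ rather than, say, the parity of $|p|$ itself. If $|w|$ were odd, the copies of $w$ would start at alternating parities and $\tau$ would in general break the period (as illustrated by $\tau(\tt{000})=\tt{001002001}$ in the discussion preceding the lemma), so establishing evenness of $|g(f^n(p))|$ uniformly in $n$ is the heart of the argument.
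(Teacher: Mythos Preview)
Your proof is correct and follows essentially the same approach as the paper: both arguments show by induction that $f^n(p)$ has an even number of $\tt{2}$'s (since $f(\tt{0})$ and $f(\tt{1})$ contribute evenly), deduce that $g(f^n(p))$ has even length (since $g(\tt{0})$ and $g(\tt{1})$ have even length), and then invoke the observation that $\tau$ preserves a period of even length. Your version is simply more explicit about the parity congruences and about why the even-length condition makes the alternation of $\tau$ line up across copies of $w$.
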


\begin{proof}
    We first claim that $f^n(p)$ has an even number of $\tt{2}$'s for all $n\geq 0$.  Note that $f(\tt{0})$ and $f(\tt{1})$ both have an even number of $\tt{2}$'s, so the proof of the claim can be completed by a straightforward induction on $n$.

    Now let $n\geq 0$.  Since $f^n(p)$ has an even number of $\tt{2}$'s, and $g(\tt{0})$ and $g(\tt{1})$ both have even length, it follows that $g(f^n(p))$ has even length.  So by the observation preceding the lemma, we conclude that $\tau(g(f^n(x)))$ has period $|\tau(g(f^n(p)))|$.
\end{proof}

\begin{lemma}\label{Lemma:BadCubes}
    Let $\mathbf{x}\in\Sigma_3^\omega$, and suppose for some $n\geq 0$ that $\tau(g(f^n(\mathbf{x})))$ is $16/7$-power-free.  Then $\mathbf{x}$ is $5$-power-free and even-$3$-power-free.
\end{lemma}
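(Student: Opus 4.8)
The plan is to prove the contrapositive: assuming that $\mathbf{x}$ contains a $5$-power or an even-$3$-power, I will exhibit a factor of exponent at least $16/7$ in $\tau(g(f^n(\mathbf{x})))$, contradicting $16/7$-power-freeness. The engine driving everything is \Cref{Lemma:BadPeriods} together with one elementary length observation: since $|t_1(a)|=|t_2(a)|$ for every letter $a\in\Sigma_3$ (the two values being $3$, $8$, and $16$ for $\tt{0}$, $\tt{1}$, $\tt{2}$), the length $|\tau(w)|$ depends only on the number of occurrences of each letter in $w$, and in particular $|\tau(w^k)|=k\,|\tau(w)|$ for every positive integer $k$. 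Combined with the fact that $g$ and $f$ are morphisms, so that $g(f^n(p^k))=(g(f^n(p)))^k$, this lets me compute the length of the image of any power and compare it against the period supplied by \Cref{Lemma:BadPeriods}.

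The even-$3$-power case is then immediate. If $\mathbf{x}$ contains an even-$3$-power $p^3$ with $p$ having an even number of $\tt{2}$'s, set $w=g(f^n(p))$. By \Cref{Lemma:BadPeriods}, the factor $\tau(g(f^n(p^3)))=\tau(w^3)$ has period $|\tau(w)|$, while the length observation gives $|\tau(w^3)|=3\,|\tau(w)|$. Hence this factor is a cube, of exponent $3>16/7$, and we are done.

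For the $5$-power case, suppose $\mathbf{x}$ contains $p^5$ and split on the parity of $|p|_{\tt{2}}$. If $p$ has an even number of $\tt{2}$'s, then $p^5$ is an even-$5$-power, and exactly as above \Cref{Lemma:BadPeriods} together with the length count shows $\tau(g(f^n(p^5)))$ is a $5$-power, of exponent $5\geq 16/7$. If instead $p$ has an odd number of $\tt{2}$'s, then $p^2$ has an even number of $\tt{2}$'s, so I rewrite $p^5=(p^2)^{5/2}$ as an even-$(5/2)$-power with base $p^2$. Now \Cref{Lemma:BadPeriods} gives that $\tau(g(f^n(p^5)))$ has period $|\tau(g(f^n(p^2)))|=2\,|\tau(g(f^n(p)))|$, while its length is $5\,|\tau(g(f^n(p)))|$; the resulting exponent is $5/2\geq 16/7$, again a contradiction.

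The only genuinely delicate point is the odd-parity $5$-power subcase, and it is exactly here that the threshold $5$ (rather than a smaller integer) is forced. When the base has an odd number of $\tt{2}$'s, the alternation of $\tau$ between $t_1$ and $t_2$ fails to realign after a single period, so one cannot preserve the small period $|\tau(g(f^n(p)))|$; doubling to $p^2$ restores the alternation but halves the effective exponent. The inequality that rescues the argument is $5/2\geq 16/7$, equivalently $35\geq 32$, whereas halving a $4$-power would yield only $2<16/7$. For the cube case the same phenomenon explains why we must restrict to \emph{even}-$3$-powers: an arbitrary cube with odd parity would survive only as a $3/2$-power after doubling, which lies below the threshold.
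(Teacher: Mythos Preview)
Your proof is correct and follows the same idea as the paper: invoke \Cref{Lemma:BadPeriods} to produce a long repetition in $\tau(g(f^n(\mathbf{x})))$ from a $5$-power or even-$3$-power in $\mathbf{x}$. The only cosmetic difference is that the paper handles the $5$-power case uniformly by writing $p^5=(p^2)^{5/2}$ without splitting on the parity of $|p|_{\tt{2}}$, since $p^2$ always has an even number of $\tt{2}$'s; your parity split is unnecessary but harmless, and your explicit length observation $|\tau(w^k)|=k\,|\tau(w)|$ is a welcome clarification that the paper leaves implicit.
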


\begin{proof}
    First, suppose towards a contradiction that $\mathbf{x}$ contains a $5$-power, say $x=p^5$.  Then we can write $x=(p^2)^{5/2}$, and $p^2$ has an even number of $\tt{2}$'s.  So by \Cref{Lemma:BadPeriods}, the word $\tau(g(f^n(x)))$ has period $|\tau(g(f^n(p^2)))|$ and exponent $5/2$. This contradicts the assumption that $\tau(g(f^n(\mathbf{x})))$ is $16/7$-power-free, so we see that $\mathbf{x}$ is $5$-power-free.

    Similarly, suppose that $\mathbf{x}$ contains an even-$3$-power, say $x=p^3$, where $p$ has an even number of $\tt{2}$'s.  Then by \Cref{Lemma:BadPeriods}, the word $\tau(g(f^n(x)))$ has period $|\tau(g(f^n(p)))|$ and exponent $3$.  This is a contradiction, and we conclude that $\mathbf{x}$ is even-$3$-power-free.
\end{proof}

For finite words $u$ and $v$ over an alphabet $\Sigma$, we write $u\preceq v$ if $|u|_a\leq |v|_a$ for all $a\in\Sigma$.  Note that non-erasing morphisms preserve the relation $\preceq$, and that $u\preceq v$ implies $|u|\leq |v|$.  Finally, note that if $u,v\in\Sigma_3^*$ and $u\preceq v$, then $|\tau(u)|\leq |\tau(v)|$.

\begin{lemma}\label{Lemma:LengthComp}
    For all $n\geq 0$, we have
    \begin{equation*}
        |f^n(\tt{2})|\leq |f^n(\tt{0})|\leq |f^n(\tt{1})|\leq 2|f^n(\tt{2})| 
    \end{equation*}
    and
    \begin{equation*}
        |\tau(g(f^n(\tt{2})))|\leq |\tau(g(f^n(\tt{0})))|\leq |\tau(g(f^n(\tt{1})))|\leq 2|\tau(g(f^n(\tt{2})))|.
    \end{equation*}
\end{lemma}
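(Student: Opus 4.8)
The plan is to prove both chains of inequalities by a single induction on $n$, exploiting the fact that each quantity appearing in the lemma is an \emph{additive} weight on words satisfying the same recursion, a recursion determined entirely by the definition of $f$. First I would record that recursion. Since $f(\tt{0})=\tt{01}$, $f(\tt{1})=\tt{022}$, and $f(\tt{2})=\tt{02}$, we have $f^{n+1}(\tt{0})=f^n(\tt{0})f^n(\tt{1})$, $f^{n+1}(\tt{1})=f^n(\tt{0})f^n(\tt{2})f^n(\tt{2})$, and $f^{n+1}(\tt{2})=f^n(\tt{0})f^n(\tt{2})$. Consequently, if $m\colon\Sigma_3^*\rightarrow\mathbb{Z}_{\geq 0}$ is any function that is additive over concatenation (that is, $m(uv)=m(u)+m(v)$), then writing $m_i(n)=m(f^n(\tt{i}))$ yields
\begin{align*}
m_0(n+1)&=m_0(n)+m_1(n),\\
m_1(n+1)&=m_0(n)+2m_2(n),\\
m_2(n+1)&=m_0(n)+m_2(n).
\end{align*}

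Both quantities in the lemma are such additive weights. The length $|\cdot|$ is obviously additive. The map $|\tau(g(\cdot))|$ is additive because $g$ is a morphism and, crucially, $|t_1(a)|=|t_2(a)|$ for every letter $a$ (namely $3$, $8$, and $16$ for $\tt{0}$, $\tt{1}$, and $\tt{2}$). Hence $|\tau(w)|=3|w|_{\tt{0}}+8|w|_{\tt{1}}+16|w|_{\tt{2}}$ depends only on the letter counts of $w$, so $|\tau(\cdot)|$, and therefore $|\tau(g(\cdot))|$, is additive despite $\tau$ alternating between $t_1$ and $t_2$ according to the parity of position.

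With the recursion in hand, I would prove the statement $m_2(n)\leq m_0(n)\leq m_1(n)\leq 2m_2(n)$ for every nonnegative additive $m$ by induction on $n$, and then specialize to the two weights. For the inductive step, each of the three desired inequalities at $n+1$ reduces, via the recursion, to something available at $n$: the inequality $m_2(n+1)\leq m_0(n+1)$ reduces to $m_2(n)\leq m_1(n)$ (which follows from $m_2(n)\leq m_0(n)\leq m_1(n)$); the inequality $m_0(n+1)\leq m_1(n+1)$ reduces to $m_1(n)\leq 2m_2(n)$ (the third inductive hypothesis); and $m_1(n+1)\leq 2m_2(n+1)$ reduces to $0\leq m_0(n)$ (trivial by nonnegativity). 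Finally I would check the two base cases: for $m=|\cdot|$ we have $m_0(0)=m_1(0)=m_2(0)=1$, and for $m=|\tau(g(\cdot))|$ we have $(m_0(0),m_1(0),m_2(0))=(19,24,16)$, each of which satisfies the chain.

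I do not expect any serious obstacle; the argument is a routine induction once the right framing is chosen. The only point requiring a moment's care is the additivity of $|\tau(g(\cdot))|$: because $\tau$ is a transducer whose output depends on the parity of position, one must observe that the two branch morphisms $t_1$ and $t_2$ agree in length letter by letter, which is precisely what makes $|\tau(\cdot)|$ behave like the length function of an ordinary morphism. Once this is noted, the two chains are literally the same induction applied to two different nonnegative additive weights.
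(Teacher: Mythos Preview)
Your proof is correct. The key observation that $|t_1(a)|=|t_2(a)|$ for each letter, so that $|\tau(\cdot)|$ is additive despite the alternation, is exactly what is needed, and your three-line reduction of the inductive step is clean.

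The paper takes a different (though equally short) route. Rather than setting up a recursion on additive weights, it uses the Parikh-vector partial order $\preceq$ introduced just before the lemma: one checks by hand that, say, $f^2(\tt{2})=\tt{0102}\preceq \tt{01022}=f^2(\tt{0})$, and then the inequality $f^n(\tt{2})\preceq f^n(\tt{0})$ propagates to all $n\geq 2$ automatically because non-erasing morphisms preserve $\preceq$; the cases $n=0,1$ are checked directly. The length and $|\tau(g(\cdot))|$ inequalities then both follow because $u\preceq v$ implies $|u|\leq |v|$ and $|\tau(u)|\leq |\tau(v)|$. Your approach has the virtue of treating both chains as a single instance of one induction on an arbitrary nonnegative additive weight, and it does not need to verify $n=0$ and $n=1$ separately from the inductive argument. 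The paper's approach, on the other hand, avoids writing out the recursion explicitly and makes the inequalities a one-line consequence of a Parikh comparison that is visible by inspection at $n=2$; it also yields the slightly stronger letterwise statement $f^n(\tt{2})\preceq f^n(\tt{0})$ rather than just the length inequality.
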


\begin{proof}
    We show only that $|f^n(\tt{2})|\leq |f^n(\tt{0})|$ and $|\tau(g(f^n(\tt{2})))|\leq |\tau(g(f^n(\tt{0})))|$ for all $n\geq 0$.  (The other inequalities can be proven in a similar manner.)  Both inequalities are verified by inspection for $n\in\{0,1\}$.  When $n=2$, we have
    \begin{equation*}
        f^2(\tt{2})=\tt{0102}\preceq \tt{01022}=f^2(\tt{0}),
    \end{equation*}
    so that $f^n(\tt{2})\preceq f^n(\tt{0})$ for all $n\geq 2$ by induction, and the desired inequalities follow immediately from the observations preceding the lemma.
\end{proof}

\begin{lemma}\label{Lemma:ForbiddenRepetitions}
    Let $\mathbf{x}\in\Sigma_3^\omega$, and suppose for some $n\geq 0$ that $\tau(g(f^n(\mathbf{x})))$ is $16/7$-power-free and rich.  Then no factor from the set $F=F_1\cup F_2\cup F_3\cup F_4$ appears internally in $\mathbf{x}$, where
    \begin{align*}
        F_1&=\{ \tt{00},\tt{11},\tt{212},\tt{0101},\tt{1010},\tt{2222},\tt{1222}, \tt{2221},\tt{022022},\tt{220220}\},\\
        F_2&=\{\tt{202202},\tt{1022021},\tt{1202201}, \tt{1}(\tt{20102})^2\tt{1}, (\tt{021012})^{13/6},(\tt{012021})^{13/6},\\
        & \hspace{1cm}   (\tt{21012010})^{21/8}, (\tt{21012210120})^{27/11}, (\tt{2101221012010})^{31/13}\},\\
        F_3&=\{\tt{2}(\tt{2101})^{17/4}\tt{2},(\tt{2101210122101})^{31/13}\}, \text{ and}\\
        F_4&=\{(\tt{0222})^{17/4}\tt{1},(\tt{22010})^{12/5},(\tt{022201})^{29/6},(\tt{0222010222})^{12/5},\\
        & \hspace{1cm}  (\tt{0222022201})^{12/5},(\tt{0222010222010222022201})^{5/2}\}.
    \end{align*}
\end{lemma}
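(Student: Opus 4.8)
The plan is to treat the finitely many factors in $F$ one at a time, showing for each that an internal occurrence in $\mathbf{x}$ forces a factor of exponent at least $16/7$ in $\tau(g(f^n(\mathbf{x})))$, contradicting $16/7$-power-freeness. The unifying principle is that each contradiction should be witnessed by a repetition in $\tau(g(f^n(\mathbf{x})))$ whose period and total length are of the form $|\tau(g(f^n(u)))|$ and $|\tau(g(f^n(v)))|$ for fixed words $u$ and $v$; this is what makes the argument uniform in $n$, reducing every case to a single inequality among the quantities $\ell_c := |\tau(g(f^n(c)))|$ for $c\in\Sigma_3$, which we control through \Cref{Lemma:LengthComp}.

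The cleanest cases are the even-powers whose period has an even number of $\tt{2}$'s. For such a factor $x=p^r$, \Cref{Lemma:BadPeriods} tells us directly that $\tau(g(f^n(x)))$ has period $|\tau(g(f^n(p)))|$; writing $x=p^k p'$ with $p'$ a prefix of $p$, the image is $\tau(g(f^n(p)))$ repeated $k$ times followed by the prefix $\tau(g(f^n(p')))$ of $\tau(g(f^n(p)))$, since each copy of $g(f^n(p))$ has even length and so returns the transducer to its start state. Its exponent is therefore $k+|\tau(g(f^n(p')))|/|\tau(g(f^n(p)))|$, and I would verify this is at least $16/7$ by clearing denominators and invoking $\ell_{\tt{2}}\le\ell_{\tt{0}}\le\ell_{\tt{1}}\le 2\ell_{\tt{2}}$. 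For periods containing an odd number of $\tt{2}$'s (such as those built from $\tt{0222}$ or $\tt{22010}$ in $F_4$), the transducer does not realign after a single period, so I would first pass to the square of the period, which has an even number of $\tt{2}$'s, before applying \Cref{Lemma:BadPeriods}; the loss of exponent incurred by this doubling is precisely what the extra context letters in these factors (the trailing $\tt{1}$, etc.) are designed to recover.

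The remaining ingredient is the \emph{internally} hypothesis, which guarantees a letter on each side of the occurrence. For factors whose exponent still falls short of $16/7$ after the above analysis (notably $(\tt{021012})^{13/6}$ and $(\tt{012021})^{13/6}$, since $13/6<16/7$), I would case on the neighboring letters: a neighbor that continues the periodic pattern lengthens the repetition enough to push the image exponent over $16/7$ via the same length inequalities, whereas a neighbor that breaks the pattern creates an occurrence of a shorter factor already shown to be forbidden. This dependency forces an order on the argument—the short factors of $F_1$ first, then $F_2$, $F_3$, and $F_4$—and \Cref{Lemma:BadCubes} (that $\mathbf{x}$ is $5$-power-free and even-$3$-power-free) is used both to bound the finitely many continuations that must be examined and to discard several pattern-breaking branches outright.

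The main obstacle is that $\tau\circ g\circ f^n$ does not preserve exponents: because $\tau(g(f^n(\tt{0})))$, $\tau(g(f^n(\tt{1})))$, and $\tau(g(f^n(\tt{2})))$ have unequal lengths, the exponent of a repetition in the image is a ratio of image-lengths rather than the exponent of its preimage, and several of the target inequalities are genuinely tight. The delicate part of each case is thus to select the correct amount of context, or the correct derived period, so that the resulting length inequality holds for \emph{every} $n$, all while correctly tracking the alternation of the transducer state and confirming that the relevant period really does contain an even number of $\tt{2}$'s so that \Cref{Lemma:BadPeriods} applies.
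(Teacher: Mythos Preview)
Your plan captures the right spirit---everything must be uniform in $n$ and must ultimately reduce to \Cref{Lemma:BadPeriods} and the length inequalities of \Cref{Lemma:LengthComp}---but it is missing the central mechanism the paper uses to achieve this, and the approach you describe does not quite close for the short factors in $F_1$.

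Take $w=\tt{00}$. As an even-square it gives, via \Cref{Lemma:BadPeriods}, a factor of exponent exactly $2$ in the image, which is not enough. Your fallback is to case on the neighbor: a neighbor that continues the period yields $\tt{000}$ (an even-cube, fine), whereas a neighbor that breaks the period should ``create an occurrence of a shorter factor already shown to be forbidden.'' But there is no shorter forbidden factor than $\tt{00}$, so this branch is empty. The same obstruction hits $\tt{11}$, $\tt{212}$, $\tt{1222}$, $\tt{2221}$, and several others: their intrinsic exponent is too small, and their broken-neighbor extensions are not themselves in $F$. One could try to argue that the period in the \emph{image} extends into $\tau(g(f^n(b)))$ via a common prefix with $\tau(g(f^n(\tt{0})))$, but you have not set up any machinery to control such common prefixes uniformly in $n$, and this is a different (and harder) argument than what you sketched.

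The paper's resolution is a trick you do not mention: for each $w\in F$ it finds a small fixed $k$ (typically $k\in\{1,2,3,4\}$) such that \emph{for every relevant context letter} the word $f^k(awb)$ already contains an explicit even-$r$-power $p^r$ with $r$ large enough that \Cref{Lemma:BadPeriods} and \Cref{Lemma:LengthComp} applied at level $n-k$ give exponent at least $7/3$. For instance, $f(\tt{00}b)$ contains $(\tt{01})^{5/2}$ for every $b$, and $f^3(\tt{212}b)$ contains a specific even-$31/13$-power for every $b$. The finitely many cases $n<k$ are then dispatched by direct computation. This two-step reduction (apply $f^k$, then invoke the lemmas at level $n-k$) is exactly what makes the argument go through for the short factors, and it also explains the otherwise mysterious composition of $F$: those words are chosen so that a few applications of $f$ reveal a usable even-power, not so that they themselves are near-$16/7$-powers.
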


\begin{proof}  
    Let $w\in F$, and suppose that $w$ appears internally in $\mathbf{x}$.  Then the word $\tau(g(f^n(\mathbf{x})))$ contains the word $\tau(g(f^n(awb)))$ or its sister for some $a,b\in\{\tt{0,1,2}\}$.  The general idea is to show that for all $a,b\in\{\tt{0,1,2}\}$, the word $\tau(g(f^n(awb)))$ has a factor of exponent at least $16/7$, which is a contradiction.  We do this for each word in $F_1$ below.  The proofs for the words in $F_2$, $F_3$, and $F_4$ are similar, and are omitted.\footnote{Please consult \url{https://github.com/japeltom/ternary-rich-words-verification} for the arguments in the omitted cases.}  Sometimes, we show the stronger statement that for all $b\in\{\tt{0,1,2}\}$, the word $\tau(g(f^n(wb))$ has a factor of exponent at least $16/7$, which implies that $w$ does not appear in $\mathbf{x}$ at all (i.e., neither internally nor as a prefix).
    
    $\tt{00}$: For $n=0$, we check directly that for all $b\in\{\tt{0,1,2}\}$, the word $\tau(g(\tt{00}b))$ contains a factor of exponent at least $16/7$.  So we may assume that $n\geq 1$.  For all $b\in\{\tt{0,1,2}\}$, we check that the word $f(\tt{00}b)$ contains the even-$5/2$-power $\tt{01010}$.  But this means that $\tau(g(f^n(\tt{00}b)))$ contains the word $\tau(g(f^{n-1}(\tt{01010})))$ or its sister.  By \Cref{Lemma:BadPeriods}, the word $\tau(g(f^{n-1}(\tt{01010})))$ has period $|\tau(g(f^{n-1}(\tt{01})))|$.  Further, by \Cref{Lemma:LengthComp}, we have $|\tau(g(f^{n-1}(\tt{01})))|\leq 3|\tau(g(f^{n-1}(\tt{0})))|$, so that $\tau(g(f^{n-1}(\tt{01010})))$ has exponent at least $7/3$.

    $\tt{11}$: For $n=0$, we check directly that for all $b\in\{\tt{0,1,2}\}$, the word $\tau(g(\tt{11}b))$ contains a factor of exponent at least $16/7$.  So we may assume that $n\geq 1$.  For all $b\in\{\tt{0,1,2}\}$, we check that the word $f(\tt{11}b)$ contains the even-$7/3$-power $\tt{0220220}$.  But this means that $\tau(g(f^n(\tt{11}b)))$ contains the word $\tau(g(f^{n-1}(\tt{0220220})))$ or its sister, and by \Cref{Lemma:BadPeriods} and \Cref{Lemma:LengthComp}, the word $\tau(g(f^{n-1}(\tt{0220220})))$ has exponent at least $7/3$.  

    $\tt{212}$:  For $n=0,1,2$, we check directly that for all $b\in\{\tt{0,1,2}\}$, the word $\tau(g(f^n(\tt{212}b)))$ has a factor of exponent at least $16/7$.  So we may assume that $n\geq 3$.  For all $b\in\{\tt{0,1,2}\}$, we check that the word $f^3(\tt{212}b)$ contains the even-$31/13$-power $p^{31/13}$, where $p=\tt{2010201022010}$.  This means that $\tau(g(f^n(\tt{212}b)))$ contains $\tau(g(f^{n-3}(p^{31/13})))$ or its sister.  We write $p^{31/13}=(p_1p_2p_3)^2p_1$,
    where $p_1=\tt{20102}$, $p_2=\tt{0102}$, and $p_3=\tt{2010}$.    Observe that $p_2,p_3\preceq p_1$, so that $|\tau(g(f^{n-3}(p_1p_2p_3)))| \leq 3|\tau(g(f^{n-3}(p_1)))|.$  From \Cref{Lemma:BadPeriods}, it follows that $\tau(g(f^{n-3}(p^{31/13})))$ has exponent at least $7/3$.

    $\tt{2222}$: For $n=0,1$, we check that for all $b\in\{\tt{0,1,2}\}$, the word $\tau(g(f^n(\tt{2222}b)))$ has a factor of exponent at least $16/7$.  So we may assume that $n\geq 2$.  For all $b\in\{\tt{0,1,2}\}$, we check that $f^2(\tt{2222}b)$ contains the $5$-power $(\tt{0102})^5$.  Thus $\tau(g(f^n(\tt{2222}a)))$ contains $\tau(g(f^{n-2}((\tt{0102})^5)))$ or its sister, and by \Cref{Lemma:BadPeriods}, the word $\tau(g(f^{n-2}((\tt{0102})^5)))$ has exponent $5/2$.

    $\tt{1222}$: We have already seen that $\mathbf{x}$ does not contain the factor $\tt{2222}$, so it suffices to show that for all $b\in\{\tt{0,1}\}$, the word $\tau(g(f^n(\tt{1222}b)))$ contains a factor of exponent at least $16/7$.  We check this directly for $n\leq 3$, so we may assume that $n\geq 4$.  For all $b\in\{\tt{0,1}\}$, we check that $f^4(\tt{1222}b)$ contains the even-$19/8$-power $p^{19/8}$, where $p=(\tt{02010220102010220102})^2$.  We write $p^{19/8}=(p_1p_2p_3)^2p_1$,
    where $p_1=\tt{020102201020102}$, $p_2=\tt{201020201022}$, and $p_3=\tt{0102010220102}$.  Observe that $p_2,p_3\preceq p_1$.  So by \Cref{Lemma:BadPeriods}, we see that $\tau(g(f^{n-4}(p^{19/8})))$ has exponent at least $7/3$.

    $\tt{2221}$: We have already seen that $\mathbf{x}$ does not contain the factor $\tt{2222}$, so it suffices to show that for all $a\in\{\tt{0,1}\}$ and $b\in\{\tt{0,1,2}\}$, the word $\tau(g(f^n(a\tt{2221}b)))$ has a factor of exponent at least $16/7$.  We check this directly for $n\leq 3$, so we may assume that $n\geq 4$.  For all $a\in\{\tt{0,1}\}$ and $b\in\{\tt{0,1,2}\}$, we check that $f^4(a\tt{2221}b)$ contains the even-$19/8$-power $p^{19/8}$, where $p=(\tt{20102010220102020102})^2$.  But one can show that $\tau(g(f^{n-4}(p^{19/8})))$ has exponent at least $7/3$ as in the proof for $\tt{1222}$.

    $\tt{1010}$: We have already seen that $\mathbf{x}$ contains no $\tt{00}$, so it suffices to show that for all $b\in\{\tt{1,2}\}$, the word $\tau(g(f^n(\tt{1010}b)))$ has a factor of exponent at least $16/7$.  We check this directly for $n=0$, so we may assume that $n\geq 1$.  For all $b\in\{\tt{1,2}\}$, we check that the word $f(\tt{1010}b)$ contains the even-$12/5$-power $(\tt{02201})^{12/5}$.  But by \Cref{Lemma:BadPeriods} and \Cref{Lemma:LengthComp}, the word $\tau(g(f^{n-1}((\tt{02201})^{12/5})))$ has exponent at least $7/3$. 

    $\tt{0101}$: We have already seen that $\mathbf{x}$ contains no $\tt{00}$, no $\tt{11}$, and no $\tt{1010}$, so it suffices to show that the word $\tau(g(f^n(\tt{201012})))$ has a factor of exponent at least $16/7$.  For $n=0$, we check directly that $\tau(g(\tt{201012}))$ contains a factor of exponent at least $16/7$, so we may assume that $n\geq 1$.  We observe that $f(\tt{201012})$ contains the even-$12/5$-power $(\tt{20102})^{12/5}$.  But by \Cref{Lemma:BadPeriods} and \Cref{Lemma:LengthComp}, the word $\tau(g(f^{n-1}((\tt{20102})^{12/5})))$ has exponent at least $7/3$.

    $\tt{022022}$ and $\tt{220220}$: Let $u\in\{\tt{022022},\tt{220220}\}$.  For $n=0,1$, we check directly that for all $a,b\in\{\tt{0,1,2}\}$, the word $\tau(g(f^n(aub)))$ contains a factor of exponent at least $16/7$.  So we may assume that $n\geq 2$.  For all $a,b\in\{\tt{0,1,2}\}$, we check that $f^2(aub)$ contains one of the following even-$31/13$-powers: $(\tt{2010220102010})^{31/13}$, $(\tt{2010201020102})^{31/13}$.  By an argument similar to the one used for $\tt{212}$ above, we see that $\tau(g(f^n(aub)))$ contains a factor of exponent at least $7/3$.
\end{proof}

In \Cref{Lemma:BadCubes} and \Cref{Lemma:ForbiddenRepetitions}, we demonstrated that certain factors in $\mathbf{x}$ lead to long repetitions in $\tau(g(f^n(\mathbf{x})))$.  We now wish to demonstrate that certain factors in $\mathbf{x}$ lead to non-rich factors in $\tau(g(f^n(\mathbf{x})))$.  The maps $f$ and $g$ belong to the well-studied class $P_{\text{ret}}$ (see~\cite{BalkovaPelantovaStarosta2011,DolcePelantova2022}).  In particular, it is known that morphisms in class $P_{\text{ret}}$ preserve non-richness of infinite words.  However, the alternation of $\tau$ complicates things for us here.

We say that a word $w\in\Sigma_3^*$ is \emph{poor} if every palindromic prefix of $w$ with an even number of $\tt{2}$'s occurs at least once more in $w$ with an even number of $\tt{2}$'s before the occurrence.  In other words, $w$ is poor if for every $u\in\Sigma_3^*$, if $u$ is a palindromic prefix of $w$ with an even number of $\tt{2}$'s, then there exist words $p,s\in\Sigma_3^*$ such that $w=pus$, $p\neq \varepsilon$, and $p$ has an even number of $\tt{2}$'s.  The following are examples of poor words:
\begin{itemize}
    \item $\tt{2012}$, whose only palindromic prefix with an even number of $\tt{2}$'s is the empty word, which occurs again as a suffix with an even number of $\tt{2}$'s before it;
    \item $\tt{01220}$, which has two palindromic prefixes with an even number of $\tt{2}$'s, namely $\varepsilon$ and $\tt{0}$, both of which occur as suffixes with an even number of $\tt{2}$'s before them; and 
    \item $\tt{0220102020220}$, which has three palindromic prefixes with an even number of $\tt{2}$'s, namely $\varepsilon$, $\tt{0}$, and $\tt{0220}$, all of which occur as suffixes with an even number of $\tt{2}$'s before them.
\end{itemize}
On the other hand, the word $\tt{0120}$ is neither rich nor poor---the palindromic prefix $\tt{0}$ occurs just once more as a suffix, and there are an \emph{odd} number of $\tt{2}$'s before this second occurrence.  We will prove the following.

\begin{lemma}\label{Lemma:ForbiddenRichness}
    Let $\mathbf{x}\in\Sigma_3^\omega$, and suppose for some $n\geq 0$ that $\tau(g(f^n(\mathbf{x})))$ is rich.  Then $\mathbf{x}$ contains no poor factor.
\end{lemma}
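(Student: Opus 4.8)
The plan is to prove the contrapositive: assuming that $\mathbf{x}$ contains a poor factor $w$, I will exhibit a prefix of $\mathbf{w} := \tau(g(f^n(\mathbf{x})))$ that has no unioccurrent palindromic suffix, so that $\mathbf{w}$ is not rich by \Cref{Lemma:UnioccurrentSuffix}. The mechanism is a palindrome-transfer argument: a palindromic prefix $u$ of $w$ should lift, under $\tau\circ g\circ f^n$, to a palindrome $U$ appearing in $\mathbf{w}$, and the recurrence of $u$ guaranteed by poorness should force $U$ to recur as well, destroying unioccurrence. The role of the ``even number of $\tt{2}$'s'' condition will be to guarantee that the two occurrences lift to the \emph{same} palindrome rather than to a pair of sisters.

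First I would record the palindrome-transfer properties of the two morphisms. Both $f$ and $g$ lie in class $P_{\text{ret}}$ with explicit markers: $f(a)\tt{0}$ is a palindrome and $g(a)\tt{2}$ is a palindrome for every letter $a$. The standard class-$P$ computation then gives, for every palindrome $v$, that $g(v)\tt{2}$ is a palindrome, and a short induction gives $f^n(v)P_n$ palindromic for all $n$, where $P_0=\varepsilon$ and $P_{n+1}=f(P_n)\tt{0}$. For the transducer I would use that $t_1(a)\tt{00}$ and $t_2(a)\tt{00}$ are palindromes and that $t_1$ and $t_2$ are sisters. Composing these facts, a palindromic prefix $u$ of $w$ yields a word built from $\tau(g(f^n(u)))$ which, together with a suitable marker supplied by the following letters, is palindromic in $\mathbf{w}$ — \emph{provided} the alternation of $\tau$ is correctly phased at the start of the relevant block.

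The heart of the argument is the parity bookkeeping that controls this phase. A direct count shows that $f$ preserves the parity of the number of $\tt{2}$'s, while $|g(v)|_{\tt{2}}=|v|$ and $|g(v)|\equiv |v|_{\tt{2}}\pmod 2$; hence the phase in which $\tau$ processes the block corresponding to a given letter is governed precisely by the parity of the number of $\tt{2}$'s preceding it. Consequently, if $u$ is an even-$\tt{2}$ palindromic prefix of $w$, then its prefix occurrence and any later occurrence separated from it by a factor with an even number of $\tt{2}$'s are processed in the same phase and lift to the identical palindrome $U$; an occurrence across an odd gap would instead produce the sister $\overline{U}$, which is exactly why an odd gap fails and why the definition of poor insists on even $\tt{2}$-counts. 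Poorness supplies precisely such an even-gap recurrence for every even-$\tt{2}$ palindromic prefix of $w$. I would then choose the prefix of $\mathbf{w}$ ending at the (marked) image of the recurrence and argue, using the explicit block structure of the images to bound the lengths of palindromic suffixes, that every palindromic suffix of this prefix has already occurred earlier in it, so that the prefix has no unioccurrent palindromic suffix and $\mathbf{w}$ is not rich.

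The main obstacle is precisely the non-morphic behaviour of $\tau$: because $\tau$ alternates between $t_1$ and $t_2$, palindrome-preservation is phase-dependent, and the standard fact that $P_{\text{ret}}$-morphisms preserve non-richness does not transfer verbatim. Making the lift rigorous requires pinning down the exact marker word produced by the composition $\tau\circ g\circ f^n$, and verifying that the recurrence guaranteed by poorness lands in the correct phase, so that a genuine palindrome — not its sister — repeats within a single prefix of $\mathbf{w}$. Getting this orientation and phasing exactly right, and ruling out the existence of a longer unioccurrent palindromic suffix at the chosen prefix, is the delicate part of the proof.
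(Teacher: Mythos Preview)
Your parity bookkeeping is correct and matches the paper: $f$ preserves the parity of $|v|_{\tt{2}}$, $|g(v)|\equiv |v|_{\tt{2}}\pmod 2$, and the phase of $\tau$ at a block is governed by the parity of the position, so the even-$\tt{2}$ condition in ``poor'' is exactly what keeps two occurrences in the same phase. The forward palindrome-lifting facts you cite ($f(a)\tt{0}$, $g(a)\tt{2}$, $t_i(a)\tt{00}$ palindromic) are also correct. The gap is that you only transfer palindromes \emph{forward}. To conclude non-richness via \Cref{Lemma:UnioccurrentSuffix} you must control the \emph{longest} palindromic suffix of your chosen word, and you have not shown that this longest palindrome arises from a palindromic prefix of $w$ at all. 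Worse, since $w$ is only a factor of $\mathbf{x}$, the prefix of $\mathbf{w}$ you propose to examine has an uncontrolled initial segment preceding the image of $w$; a palindromic suffix could reach into that segment, and ``bound the lengths of palindromic suffixes'' names the difficulty without resolving it.

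The paper closes this in two moves. First, it abandons prefixes of $\mathbf{w}$ and instead works with the \emph{factor} $W_3=\tau(g(W_1)\tt{2})\tt{00}$ of $\mathbf{w}$, where $W_1=f(\cdots f(w)\tt{0}\cdots)\tt{0}$ with $f$ applied $n$ times; showing $W_3$ is not rich suffices, since factors of rich words are rich. Second, it proves the reverse transfer as \Cref{Lemma:PalindromicPreimage}: any palindromic prefix of $f(w)\tt{0}$, of $g(w)\tt{2}$, or of $\tau(w)\tt{00}$ (with $w$ starting in $\tt{2}$) is itself the marked image of a palindromic prefix of $w$, and in the $\tau$ case that preimage has odd length. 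This lets the paper pull the \emph{longest} palindromic prefix of $W_3$ all the way back to an even-$\tt{2}$ palindromic prefix of $w$, to which poorness then applies. The argument is packaged modularly via an auxiliary notion (``middle-class''): one shows poor $\Rightarrow$ poor under $w\mapsto f(w)\tt{0}$, poor $\Rightarrow$ middle-class under $w\mapsto g(w)\tt{2}$, and middle-class $\Rightarrow$ non-rich under $w\mapsto \tau(w)\tt{00}$. Your plan becomes a proof once you add the preimage direction and switch from a prefix of $\mathbf{w}$ to this explicit factor.
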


We first prove several intermediate lemmas, for which we make use of one more term.  We say that a word $w\in\Sigma_3^*$ is \emph{middle-class} if it begins in $\tt{2}$ and every odd-length palindromic prefix of $w$ occurs at least once more in $w$ starting at an even index.  In other words, $w$ is middle-class if it begins in $\tt{2}$ and for every word $u\in\Sigma_3^*$, if $u$ is a palindromic prefix of $w$ of odd length, then there exist words $p,s\in\Sigma^*$ such that $w=pus$, $p\neq \varepsilon$, and $p$ has even length.  The following are examples of middle-class words:
\begin{itemize}
    \item $\tt{2202122}$, whose only palindromic prefix of odd length is $2$, which occurs again starting at even index $6$; and
    \item $\tt{202122202}$, whose only palindromic prefixes of odd length are $\tt{2}$ and $\tt{202}$, which both occur again starting at even index $6$.
\end{itemize}
On the other hand, the word $\tt{2012}$ is not middle-class.  It has palindromic prefix $\tt{2}$ of odd length, which occurs again only at odd index $3$.

Roughly speaking, we will show the following:
\begin{itemize}
    \item $f$ sends poor words to poor words;
    \item $g$ sends poor words to middle-class words; and
    \item $\tau$ sends middle-class words to non-rich words.
\end{itemize}
Throughout, we use the following result.

\begin{lemma}\label{Lemma:PalindromicPreimage}
Let $u,w\in\Sigma_3^*$.
\begin{enumerate}[label=\normalfont (\roman*)]
    \item If $f(u)\tt{0}$ is a palindromic prefix of $f(w)\tt{0}$, then $u$ is a palindromic prefix of $w$.
    \item If $g(u)\tt{2}$ is a palindromic prefix of $g(w)\tt{2}$, then $u$ is a palindromic prefix of $w$.
    \item If $u$ starts with $\tt{2}$ and $\tau(u)\tt{00}$ is a palindromic prefix of $\tau(w)\tt{00}$, then $u$ is a palindromic prefix of $w$ of odd length.
\end{enumerate}
\end{lemma}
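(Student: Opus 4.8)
The three parts share a common shape: in each case we are handed a palindromic prefix $P$ of the image word (namely $f(w)\tt{0}$, $g(w)\tt{2}$, or $\tau(w)\tt{00}$), and we must pull this structure back through the morphism to conclude something about $w$ itself. The plan is to exploit the fact that each of $f$, $g$, and $\tau$ is injective and, more importantly, that each image of a letter begins and ends with distinguished markers that let us parse any image word into blocks uniquely. Concretely, $f(\tt{0})=\tt{01}$, $f(\tt{1})=\tt{022}$, $f(\tt{2})=\tt{02}$ all begin with $\tt{0}$, so the appended trailing $\tt{0}$ in $f(w)\tt{0}$ makes the whole word parse cleanly into blocks each beginning at a $\tt{0}$; similarly $g(\tt{0})=\tt{20}$, $g(\tt{1})=\tt{21}$, $g(\tt{2})=\tt{2}$ all begin with $\tt{2}$, and the trailing $\tt{2}$ plays the analogous role.

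\textbf{Parts (i) and (ii).} For part (i), I would first argue that if $f(u)\tt{0}$ is a prefix of $f(w)\tt{0}$ then $u$ is a prefix of $w$: the trailing $\tt{0}$'s align the block boundaries, and since $f$ is injective on letters and each block is identifiable by its initial $\tt{0}$, the prefix relation on images forces the prefix relation on preimages. Then I would use palindromicity. Write $P=f(u)\tt{0}$ and suppose $P$ reads the same forwards and backwards. Reversing $P$ and re-parsing it should show that $P$ is itself the image (under a reversed reading) of the reversal of $u$; matching this against $f(u)\tt{0}$ and using injectivity of $f$ gives $u=\tilde u$, i.e. $u$ is a palindrome. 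The key technical point is that reversal interacts nicely with $f$ up to the boundary markers: $\widetilde{f(u)\tt{0}} = \tt{0}\,\widetilde{f(u)}$, and one checks that $\widetilde{f(a)}$ for each letter $a$ is, after absorbing the boundary $\tt{0}$, exactly $f(\tilde a)$ read with the markers shifted. Part (ii) is structurally identical with $\tt{2}$ playing the role of $\tt{0}$ and $g$ in place of $f$; the same block-parsing and reversal argument applies verbatim.

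\textbf{Part (iii) — the main obstacle.} This is where the alternation of $\tau$ makes life harder, and I expect it to be the crux. Here $\tau$ applies $t_1$ to even-index letters and $t_2$ to odd-index letters, and $\tau(u)\tt{00}$ is palindromic. Each $t_i$-image begins with $\tt{00}$, so the trailing $\tt{00}$ again aligns block boundaries and lets us parse $\tau(w)\tt{00}$ uniquely into blocks. As in the first two parts, the prefix relation $\tau(u)\tt{00}\preceq\tau(w)\tt{00}$ will force $u$ to be a prefix of $w$. The new difficulty is the parity: because $t_1$ and $t_2$ are sisters (swapping $\tt{1}\leftrightarrow\tt{2}$), reversing the palindrome and re-parsing it will recover the letters of $u$ only up to the sister involution unless the parity of the position is tracked carefully. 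The claim that $u$ must have \emph{odd} length is exactly what makes the two ends of the palindrome use the \emph{same} map ($t_1$ at both ends, or $t_2$ at both ends) rather than sister maps; I would prove the odd-length assertion by a parity count, showing that if $|u|$ were even the central block structure would force a mismatch between a $t_1$-block and a $t_2$-block (sisters that are not equal), contradicting palindromicity. Once odd length is established, the forward and backward readings use matching maps, injectivity of $t_1$ (resp.\ $t_2$) applies, and we conclude $u=\tilde u$ with $u$ a palindromic prefix of $w$ of odd length. The delicate bookkeeping of which map sits at each boundary, and the parity argument forcing odd length, is the part that will require the most care.
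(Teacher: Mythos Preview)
Your plan is correct and matches the paper's approach: the paper itself defers parts (i)--(ii) to the analogous lemma in \cite{CurrieMolRampersad2020} (unique block parsing via the marker letter $\tt{0}$ or $\tt{2}$, followed by a reversal argument), and for (iii) simply remarks that ``the alternation of $\tau$ forces $u$ to have odd length,'' which is precisely your parity observation. One small sharpening worth noting: the parity obstruction is most cleanly seen at the endpoints rather than at the center---since every $t_1$-block contains the letter $\tt{1}$ and every $t_2$-block contains the letter $\tt{2}$, palindromicity of $\tau(u)\tt{00}$ forces the first and last blocks of $\tau(u)$ to lie over the same sub-alphabet, hence to come from the same map $t_i$, hence $|u|$ is odd.
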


\begin{proof}
    Parts (i) and (ii) can be proven in a manner similar to~\cite[Lemma 4]{CurrieMolRampersad2020}.  The argument for part (iii) is similar; the alternation of $\tau$ forces $u$ to have odd length.
\end{proof}

\begin{lemma}\label{Lemma:PoorToPoor}
    Let $w\in\Sigma_3^*$.  If $w$ is poor, then $f(w)\tt{0}$ is poor.
\end{lemma}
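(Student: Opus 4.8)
The plan is to prove that poorness is preserved by the map $w \mapsto f(w)\tt{0}$ by analyzing the palindromic prefixes of $f(w)\tt{0}$ and tracing them back through $f$ using \Cref{Lemma:PalindromicPreimage}(i). Let me set up the structure. Suppose $w$ is poor, and let $v$ be a palindromic prefix of $f(w)\tt{0}$ having an even number of $\tt{2}$'s; I must produce an earlier reoccurrence of $v$ in $f(w)\tt{0}$ preceded by a prefix with an even number of $\tt{2}$'s. The key dichotomy is whether $v$ has the special form $f(u)\tt{0}$ for some palindromic prefix $u$ of $w$, or whether $v$ is some ``partial'' palindrome not aligning with the block structure of $f$.

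First I would handle the aligned case. If $v = f(u)\tt{0}$ with $v$ palindromic, then by \Cref{Lemma:PalindromicPreimage}(i), $u$ is a palindromic prefix of $w$. I then want to check that $u$ has an even number of $\tt{2}$'s: since $f(\tt{0}) = \tt{01}$ and $f(\tt{1}) = \tt{022}$ each contribute an even number of $\tt{2}$'s while $f(\tt{2}) = \tt{02}$ contributes one, the parity of the number of $\tt{2}$'s in $f(u)\tt{0}$ equals the parity of $|u|_{\tt{2}}$, so an even count in $v$ forces $u$ to have an even number of $\tt{2}$'s. Because $w$ is poor, $u$ reoccurs in $w$ with an even number of $\tt{2}$'s before it, say $w = p u s$ with $p \neq \varepsilon$ and $|p|_{\tt{2}}$ even. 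Applying $f$ gives $f(w)\tt{0} = f(p)\,f(u)\,f(s)\tt{0}$, and I need $f(u)\tt{0} = v$ to reappear; since $f(u)$ is immediately followed by $f(s)\tt{0}$ which begins with $\tt{0}$ (as every $f$-image begins with $\tt{0}$, and $f(s)\tt{0}$ is nonempty), the occurrence $f(p)$ then $f(u)\tt{0}$ is a genuine reoccurrence of $v$, with $f(p)$ nonempty and $|f(p)|_{\tt{2}}$ even by the parity observation above.

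The main obstacle will be the misaligned case: a palindromic prefix $v$ of $f(w)\tt{0}$ need not have the clean form $f(u)\tt{0}$. Here I would argue that the terminal $\tt{0}$ and the structure of the $f$-blocks force alignment. A palindromic $v$ ending somewhere inside an $f$-block (or the appended $\tt{0}$) must, by comparing its first and last letters and the internal block boundaries of $f(\tt{0})=\tt{01}$, $f(\tt{1})=\tt{022}$, $f(\tt{2})=\tt{02}$, either end exactly at a block boundary or be ruled out. The crux is that each block begins with $\tt{0}$, so a palindrome must also end with $\tt{0}$; tracking where $\tt{0}$'s can appear as block-ends, and using that no $f$-image ends in $\tt{0}$, should show that the only way $v$ can be a palindrome ending in $\tt{0}$ is for $v$ to terminate at the appended trailing $\tt{0}$ of some image $f(u)$, giving $v = f(u)\tt{0}$. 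This synchronization argument — showing every even-$\tt{2}$ palindromic prefix of $f(w)\tt{0}$ is forced into the form $f(u)\tt{0}$ — is the technical heart, and it is precisely the kind of block-boundary bookkeeping that \Cref{Lemma:PalindromicPreimage}(i) is designed to encapsulate; I expect the bulk of the work to be reducing the misaligned case to the aligned one so that the parity-transfer argument above can finish the proof.
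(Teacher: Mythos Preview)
Your approach is essentially the paper's: pull a palindromic prefix of $f(w)\tt{0}$ back through $f$ via \Cref{Lemma:PalindromicPreimage}(i), transfer the parity of $|\cdot|_\tt{2}$, and push the reoccurrence of $u$ in $w$ forward to a reoccurrence of $f(u)\tt{0}$ in $f(w)\tt{0}$. Two small remarks. First, the ``misaligned case'' you call the technical heart is in fact immediate and needs no block-boundary bookkeeping: each of $f(\tt{0})=\tt{01}$, $f(\tt{1})=\tt{022}$, $f(\tt{2})=\tt{02}$ contains exactly one $\tt{0}$, at its first position, so every $\tt{0}$ in $f(w)\tt{0}$ sits at a block start (or is the appended $\tt{0}$); a nonempty palindromic prefix $v$ begins with $\tt{0}$, hence ends with $\tt{0}$, hence automatically has the form $f(u)\tt{0}$ for a prefix $u$ of $w$. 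Second, you should treat $v=\varepsilon$ separately (it is not of the form $f(u)\tt{0}$): here $\varepsilon$ reoccurs after the prefix $\tt{0}$, which has zero $\tt{2}$'s.
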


\begin{proof}
    Let $p$ be a palindromic prefix of $f(w)\tt{0}$ with an even number of $\tt{2}$'s.  Note that $f(w)\tt{0}$ starts with $\tt{0}$.  If $p=\varepsilon$, then $p$ occurs again in $f(w)\tt{0}$ after the prefix $\tt{0}$, which has an even number of $\tt{2}$'s.  So we may assume that $p\neq \varepsilon$.  Since $p$ is a palindromic prefix of $f(w)\tt{0}$, and $f(w)\tt{0}$ starts with $\tt{0}$, we see that $p$ must begin and end in $\tt{0}$.  Hence, we can write $p=f(u)\tt{0}$ for some word $u\in\Sigma_3^*$, and by \Cref{Lemma:PalindromicPreimage}, we see that $u$ is a palindromic prefix of $w$.  Since $p$ has an even number of $\tt{2}$'s, and since $\tt{2}$ is the only letter whose $f$-image has an odd number of $\tt{2}$'s, we conclude that the word $u$ must also contain an even number of $\tt{2}$'s.  Since $w$ is poor, the word $u$ occurs at least once more in $w$ with an even number of $\tt{2}$'s before the occurrence.  It follows that the word $p=f(u)\tt{0}$ occurs at least once more in $f(w)\tt{0}$ with an even number of $\tt{2}$'s before the occurrence, i.e., that $f(w)\tt{0}$ is poor.
\end{proof}

\begin{lemma}\label{Lemma:PoorToMiddle}
    Let $w\in\Sigma_3^*$.  If $w$ is poor, then $g(w)\tt{2}$ is middle-class.
\end{lemma}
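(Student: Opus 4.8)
The plan is to show directly that $g(w)\tt{2}$ satisfies both conditions in the definition of middle-class. The first condition is immediate: every image $g(a)$ of a letter begins with $\tt{2}$, so $g(w)\tt{2}$ begins with $\tt{2}$ (and if $w=\varepsilon$, then $g(w)\tt{2}=\tt{2}$). The substance lies in the second condition, and the key structural observation driving the argument is that in $g(w)$ every occurrence of $\tt{2}$ is the first letter of some block $g(w_i)$, since each of $g(\tt{0})=\tt{20}$, $g(\tt{1})=\tt{21}$, and $g(\tt{2})=\tt{2}$ begins with $\tt{2}$ and contains exactly one $\tt{2}$. In particular, $|g(v)|_{\tt{2}}=|v|$ for every $v\in\Sigma_3^*$.

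I would then analyze an arbitrary odd-length palindromic prefix $P$ of $g(w)\tt{2}$. Since $g(w)\tt{2}$ begins with $\tt{2}$ and $P$ is a palindrome, $P$ ends with $\tt{2}$. By the structural observation, this terminal $\tt{2}$ is either the appended final $\tt{2}$ or the first letter of some block $g(w_i)$; in either case $P=g(u)\tt{2}$ for some prefix $u$ of $w$ (with $u=w$ in the former case). By \Cref{Lemma:PalindromicPreimage}(ii), $u$ is then a palindromic prefix of $w$. A parity computation links the two notions: since $|g(u)|\equiv |u|_{\tt{2}}\pmod 2$, we have $|P|=|g(u)|+1\equiv |u|_{\tt{2}}+1\pmod 2$, so $P$ having odd length is equivalent to $u$ having an even number of $\tt{2}$'s.

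Next I would invoke poorness of $w$. Applied to the palindromic prefix $u$, which has an even number of $\tt{2}$'s, it yields a factorization $w=pus$ with $p\neq\varepsilon$ and $p$ having an even number of $\tt{2}$'s. Applying $g$ gives $g(w)=g(p)g(u)g(s)$, so $g(u)$ reoccurs in $g(w)$ starting at index $|g(p)|$; since $s\neq\varepsilon$ forces $g(s)$ to begin with $\tt{2}$, while $s=\varepsilon$ leaves the appended $\tt{2}$ directly after $g(u)$, in both cases the whole word $P=g(u)\tt{2}$ reoccurs at index $|g(p)|$. A second parity computation closes the argument: $|g(p)|\equiv |p|_{\tt{2}}\pmod 2$ is even because $|p|_{\tt{2}}$ is even, and $|g(p)|>0$ because $p\neq\varepsilon$ and $g$ is non-erasing. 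Thus $P$ reoccurs at a positive even index, which is exactly what middle-class requires.

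The main obstacle is conceptual rather than computational: one must set up the correspondence so that the two defining conditions---poorness counts occurrences separated by an even number of $\tt{2}$'s, whereas middle-class counts lengths and indices---are matched precisely through $g$. The two short parity congruences ($|g(v)|\equiv |v|_{\tt{2}}\pmod 2$, used once to translate ``odd length of $P$'' into ``even number of $\tt{2}$'s in $u$'', and once to translate ``even number of $\tt{2}$'s in $p$'' into ``even starting index $|g(p)|$'') are what make the translation work, and keeping these parities straight is the only delicate point.
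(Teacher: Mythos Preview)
Your proof is correct and follows essentially the same approach as the paper: both write an odd-length palindromic prefix as $g(u)\tt{2}$, use \Cref{Lemma:PalindromicPreimage}(ii) to deduce that $u$ is a palindromic prefix of $w$, translate the odd-length condition into $u$ having an even number of $\tt{2}$'s via the parity of $|g(\cdot)|$, and then push the reoccurrence of $u$ guaranteed by poorness forward through $g$. Your version is slightly more explicit in justifying that the terminal $\tt{2}$ of $P$ aligns with a block boundary and in handling the case $s=\varepsilon$, but the argument is the same.
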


\begin{proof}
    First note that $g(w)\tt{2}$ begins in $\tt{2}$.  Let $p$ be an odd-length palindromic prefix of $g(w)\tt{2}$.  Since $p$ is palindromic, it must begin and end with $\tt{2}$, hence we can write $p=g(u)\tt{2}$ for some word $u\in\Sigma_3^*$, and by \Cref{Lemma:PalindromicPreimage}, the word $u$ is a palindromic prefix of $w$.  Since $p$ has odd length, we see that $g(u)$ has even length, and since $\tt{2}$ is the only letter whose $g$-image has odd length, it follows that $u$ has an even number of $\tt{2}$'s.  Since $w$ is poor, the word $u$ occurs at least once more in $w$ with an even number of $\tt{2}$'s before the occurrence.  It follows that $p=g(u)\tt{2}$ occurs at least once more in $g(w)\tt{2}$ starting at an even index.  Therefore, we conclude that the word $g(w)\tt{2}$ is middle-class.
\end{proof}

\begin{lemma}\label{Lemma:MiddleToRich}
    Let $w\in\Sigma_3^*$.  If $w$ is middle-class, then $\tau(w)\tt{00}$ is not rich.
\end{lemma}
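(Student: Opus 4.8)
The plan is to apply \Cref{Lemma:UnioccurrentSuffix} in its prefix form: to show $\tau(w)\tt{00}$ is not rich, it suffices to exhibit a single prefix of $\tau(w)\tt{00}$ that has no unioccurrent palindromic suffix. The engine for this is the observation (checked directly, using $t_1(\tt{2})=t_1(\tt{1})^2$ and $t_2(\tt{2})=t_2(\tt{1})^2$) that $t_1(a)\tt{00}$ and $t_2(a)\tt{00}$ are palindromes for every $a\in\Sigma_3$. Writing $x^{R}$ for the reversal of $x$, each palindrome relation says $\tt{00}\,t_i(a)^{R}=t_i(a)\tt{00}$, and telescoping these replacements gives two facts I will lean on. First, if $u$ is a palindromic prefix of $w$ of odd length (so $u$ begins and ends in $\tt{2}$, and mirror positions of $u$ receive the same map, since $|u|-1$ is even), then $\tau(u)\tt{00}$ is a palindromic prefix of $\tau(w)\tt{00}$; this is the converse of part~(iii) of \Cref{Lemma:PalindromicPreimage}, proved by the same computation. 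Second, the identity $\tt{00}\,(\tau(x))^{R}=\tau(x^{R})\tt{00}$ holds whenever $|x|$ is odd, which I will use to convert statements about palindromic suffixes into statements about palindromic prefixes.

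Since $w$ is middle-class it begins in $\tt{2}$, so it has an odd-length palindromic prefix; let $u$ be the \emph{longest} one, and let $|p|$ be the least positive even index at which $u$ recurs in $w$ (this exists by the middle-class hypothesis), so $w=pus$ with $p\neq\varepsilon$ and $|p|$ even. As $|p|$ is even, the alternation of $\tau$ resets at position $|p|$, so $\tau(pu)=\tau(p)\tau(u)$ and hence $P:=\tau(pu)\tt{00}=\tau(p)\,\tau(u)\tt{00}$ is a prefix of $\tau(w)\tt{00}$ in which the palindrome $\tau(u)\tt{00}$ occurs both at position $0$ (it is a prefix of $\tau(w)\tt{00}$ by the lifting above) and at position $|\tau(p)|>0$ (as the suffix of $P$). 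Any palindromic suffix $q$ of $P$ with $|q|\le|\tau(u)\tt{00}|$ is then a suffix of $\tau(u)\tt{00}$, hence, being a palindrome, also a prefix of $\tau(u)\tt{00}$, so $q$ occurs at position $0$ as well and is not unioccurrent in $P$. Thus it remains only to show that $\tau(u)\tt{00}$ is the \emph{longest} palindromic suffix of $P$; granting this, $P$ has no unioccurrent palindromic suffix, and \Cref{Lemma:UnioccurrentSuffix} gives that $\tau(w)\tt{00}$ is not rich.

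To control the longest palindromic suffix of $P$, I pass to reversals. Since $|pu|=|p|+|u|$ is odd, the reversal identity gives $P^{R}=\tt{00}\,(\tau(pu))^{R}=\tau((pu)^{R})\tt{00}=\tau(up^{R})\tt{00}$, so the palindromic suffixes of $P$ correspond exactly to the palindromic prefixes of $\tau(w')\tt{00}$ with $w'=up^{R}=(pu)^{R}$. If $P$ had a palindromic suffix longer than $\tau(u)\tt{00}$, then $\tau(w')\tt{00}$ would have a palindromic prefix $L$ longer than $\tau(u)\tt{00}$, necessarily beginning and ending in $\tt{00}$. The main obstacle is a synchronization step: one must know that such an $L$ has the form $\tau(u')\tt{00}$ for some prefix $u'$ of $w'$ beginning in $\tt{2}$, i.e.\ that the trailing $\tt{00}$ of $L$ falls at a block boundary of $\tau(w')$. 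This is precisely the alignment property underlying \Cref{Lemma:PalindromicPreimage}, and I expect to obtain it the same way, using that every block $t_i(a)$ begins with $\tt{00}$. Granting it, part~(iii) of \Cref{Lemma:PalindromicPreimage} makes $u'$ an odd-length palindromic prefix of $w'=(pu)^{R}$ with $|u'|>|u|$, equivalently an odd-length palindromic suffix of $pu$ longer than $u$. Such a $u'$ must begin with $u$ (as $u$ is the length-$|u|$ palindromic suffix of the palindrome $u'$), so $u$ recurs in $w$ at position $j=|p|+|u|-|u'|$, which is even because $|p|$ is even and $|u|,|u'|$ are odd. If $j>0$ this contradicts the minimality of $|p|$; if $j=0$ then $u'=pu$ is an odd-length palindromic prefix of $w$ longer than $u$, contradicting the maximality of $u$. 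Hence no such $L$ exists, $\tau(u)\tt{00}$ is the longest palindromic suffix of $P$, and the proof is complete.
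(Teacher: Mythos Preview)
Your argument is correct, but it is considerably more elaborate than the paper's. The paper applies \Cref{Lemma:UnioccurrentSuffix} directly to the whole word $\tau(w)\tt{00}$, viewed as a suffix of itself: it takes the longest palindromic prefix $p$ of $\tau(w)\tt{00}$, uses the same synchronization you invoke (the fact that the trailing $\tau(\tt{2})\tt{00}$ must fall at a block boundary) to write $p=\tau(u)\tt{00}$, invokes \Cref{Lemma:PalindromicPreimage}(iii) once to make $u$ an odd-length palindromic prefix of $w$, and then the middle-class hypothesis immediately produces a second occurrence of $p$. Since every shorter palindromic prefix is a prefix of $p$ and so also recurs, $\tau(w)\tt{00}$ has no unioccurrent palindromic prefix and we are done.

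Your route instead fixes an extremal pair (longest $u$, least $|p|$), builds the intermediate prefix $P=\tau(pu)\tt{00}$, and then has to argue that $\tau(u)\tt{00}$ is the \emph{longest} palindromic suffix of $P$ --- which forces the reversal trick, a second synchronization, a second use of \Cref{Lemma:PalindromicPreimage}(iii), and the final two-case contradiction against the extremality of $u$ and $p$. All of this is valid, and the reversal identity $\tt{00}\,\tau(x)^R=\tau(x^R)\tt{00}$ for odd $|x|$ is a nice auxiliary observation, but none of it is needed: the paper's argument gets the non-unioccurrence of the longest palindromic prefix in one step rather than reconstructing it via a suffix of a carefully chosen shorter prefix. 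Where you write ``I expect to obtain it the same way'' for the alignment step, the paper is equally brief (``corresponds in an obvious manner''), so that is not a gap relative to the paper.
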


\begin{proof}
    Let $p$ be the longest palindromic prefix of $\tau(w)\tt{00}$.  Since $w$ is middle-class, it begins in $\tt{2}$, and we see that $p$ must have prefix $\tau(\tt{2})\tt{00}$.  Since $p$ is palindromic, it must also end in $\tau(\tt{2})\tt{00}$, and not its sister.  Note that every occurrence of $\tau(\tt{2})\tt{00}$ in $\tau(w)\tt{00}$ corresponds in an obvious manner to an occurrence of $\tt{2}$ in $w$ at an even index.  
    % (This is not the case for $\tau(\tt{1})\tt{00}$, since $\tau(\tt{1})\tt{00}$ occurs twice in $\tau(\tt{2})\tt{00}$; this is why we require that middle-class words start with $\tt{2}$.)  
    So we can write $p=\tau(u)\tt{00}$ for some word $u\in\Sigma_3^*$.  Further, by \Cref{Lemma:PalindromicPreimage}, the word $u$ is a palindromic prefix of $w$ of odd length. 
    Since $w$ is middle-class, the word $u$ occurs at least once more in $w$ starting at an even index.  It follows that $p=\tau(u)\tt{00}$ occurs at least twice in $\tau(w)\tt{00}$.  Since $p$ is the longest palindromic prefix of $w$, it follows that every palindromic prefix of $w$ occurs at least twice in $w$.  By \Cref{Lemma:UnioccurrentSuffix}, we conclude that $w$ is not rich.
\end{proof}

We are now ready to prove \Cref{Lemma:ForbiddenRichness}.

\begin{proof}[Proof of \Cref{Lemma:ForbiddenRichness}]
    Suppose towards a contradiction that $\mathbf{x}$ contains a poor factor $w$.  By a straightforward induction using \Cref{Lemma:PoorToPoor}, we see that the word
    \begin{equation*}
        W_1=f(f(\cdots(f(w)\tt{0})\cdots)\tt{0})\tt{0},
    \end{equation*}
    where $f$ is applied $n$ times, is poor.  Hence, by \Cref{Lemma:PoorToMiddle}, the word $W_2=g(W_1)\tt{2}$ is middle-class, and by \Cref{Lemma:MiddleToRich}, the word $W_3=\tau(W_2)\tt{00}$ is not rich.  But $W_3$ or its sister is a factor of $\tau(g(f^n(\mathbf{x})))$, which contradicts the assumption that $\tau(g(f^n(\mathbf{x})))$ is rich.
\end{proof}

\subsection{The Inner Layers}\label{Subsection:InnerLayers}

\begin{proposition}\label{Prop:InnerLayers}
    Let $\mathbf{x}\in\Sigma_3^\omega$, and suppose for some $n\geq 0$ that $\tau(g(f^n(\mathbf{x})))$ is $16/7$-power-free and rich.  Then a suffix of $\mathbf{x}$ has the form $f(\mathbf{x}')$ for some word $\mathbf{x}'\in\Sigma_3^\omega$.
\end{proposition}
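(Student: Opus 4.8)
The plan is to show that the letter $\tt{2}$ occurs in $\mathbf{x}$, that it is recurrent, and that the only possible complete return words to $\tt{2}$ in $\mathbf{x}$ are exactly the words $f(a)$ for $a \in \{\tt{0},\tt{1},\tt{2}\}$, namely $\tt{01}$, $\tt{022}$, and $\tt{02}$. Since $f(\tt{0})=\tt{01}$, $f(\tt{1})=\tt{022}$, and $f(\tt{2})=\tt{02}$ all begin with $\tt{0}$ and each contains a single leading block before the next $\tt{2}$, establishing this return-word structure is exactly what is needed: a suffix of $\mathbf{x}$ beginning at an occurrence of $\tt{0}$ immediately following a $\tt{2}$ will then decompose uniquely as a concatenation of these three blocks, giving $\mathbf{x}' \in \Sigma_3^\omega$ with the named suffix equal to $f(\mathbf{x}')$.

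First I would establish recurrence of $\tt{2}$ in $\mathbf{x}$: this should follow from a bounded backtracking search (analogous to the ones in \Cref{Subsection:SecondLayer}) showing that a word $\mathbf{x}$ avoiding $\tt{2}$ beyond some length forces $\tau(g(f^n(\mathbf{x})))$ to violate $16/7$-power-freeness or richness; alternatively one argues that too long a $\tt{2}$-free stretch in $\mathbf{x}$ produces a forbidden factor already ruled out by \Cref{Lemma:ForbiddenRepetitions}. Next, and this is the heart of the argument, I would enumerate the complete return words to $\tt{2}$ via a short decision tree exactly as in the proof of \Cref{Prop:SecondLayer} and \Cref{Lemma:FirstLayer1}. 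Starting from $\tt{2}$, I branch on the following letters, and at each leaf I show that the candidate return word either contains a factor from the forbidden set $F$ of \Cref{Lemma:ForbiddenRepetitions} (appearing internally), or contains a $5$-power or even-$3$-power excluded by \Cref{Lemma:BadCubes}, or contains a poor factor excluded by \Cref{Lemma:ForbiddenRichness}. The three surviving green leaves must be precisely $\tt{201}$, $\tt{0221}$-type (i.e.\ the return $\tt{2022}$), and $\tt{202}$, which correspond to the returns $f(\tt{0})\tt{2}$, $f(\tt{1})\tt{2}$, $f(\tt{2})\tt{2}$.

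The main obstacle I anticipate is pruning the decision tree: I must verify that every branch other than the three intended ones is killed by one of the three lemmas of \Cref{Subsection:ForbiddenFactors}, and the relevant forbidden factors in $F$ (such as $\tt{00}$, $\tt{11}$, $\tt{212}$, $\tt{0101}$, $\tt{1010}$, and the longer even-power words) were evidently engineered precisely so that these branches die. Concretely, $\tt{00}$ and $\tt{11} \in F_1$ forbid two consecutive $\tt{0}$'s or $\tt{1}$'s between occurrences of $\tt{2}$; $\tt{212} \in F_1$ and $\tt{0101},\tt{1010} \in F_1$ forbid the remaining unwanted transitions; and the words beginning in $\tt{2}$ in $F_3$ together with poorness (via \Cref{Lemma:ForbiddenRichness}) eliminate any return of the form $\tt{2}(\tt{01})^k\tt{\cdots}$ or $\tt{22}\cdots$ that does not match $\tt{202}$. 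The delicate point is to handle the boundary carefully: the forbidden factors are only known to be forbidden \emph{internally}, so I must ensure each return word under consideration genuinely occurs with both a preceding and a following letter in $\mathbf{x}$ (which it does, since $\tt{2}$ is recurrent, so every complete return to $\tt{2}$ is an internal factor).

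Finally, with the three complete return words to $\tt{2}$ identified as $\tt{01}\tt{2}$, $\tt{022}\tt{2}$, and $\tt{02}\tt{2}$, I take the suffix $\mathbf{x}''$ of $\mathbf{x}$ beginning at the first occurrence of $\tt{2}$ and shift by one letter to the suffix $\mathbf{x}'''$ beginning with the $\tt{0}$ right after it. This suffix factors uniquely as a concatenation over $\{\tt{01},\tt{022},\tt{02}\}$, since each block is the portion of a complete return word preceding the terminal $\tt{2}$ and these three blocks have distinct prefixes after the initial $\tt{0}$ (equivalently, reading each maximal $\tt{2}$-free block determines the block and hence the preimage letter). Defining $\mathbf{x}' \in \Sigma_3^\omega$ by $f(\mathbf{x}') = \mathbf{x}'''$ completes the proof.
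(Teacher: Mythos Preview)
Your overall strategy—reduce to a finite set of complete return words using the forbidden-factor lemmas of \Cref{Subsection:ForbiddenFactors}—is exactly the paper's strategy, but you have badly underestimated what the return-word tree actually produces, and this is a genuine gap, not a detail.

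First, some confusion to clear up: the paper analyzes complete returns to $\tt{0}$, not to $\tt{2}$; each $f(a)$ begins with $\tt{0}$, so the target returns are $f(a)\tt{0}\in\{\tt{010},\tt{0220},\tt{020}\}$. Your text oscillates between returns to $\tt{2}$ and words of the shape $f(a)\tt{2}$, which do not start with $\tt{2}$ and hence cannot be complete returns to $\tt{2}$. If instead you insist on returns to $\tt{2}$, note that $f(\tt{0})=\tt{01}$ contains no $\tt{2}$, so consecutive $\tt{0}$'s in $\mathbf{x}'$ produce returns to $\tt{2}$ of variable length; the analysis then does not line up with the morphism $f$ at all.

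The real problem, however, is the size of the tree. Using only $F_1$ and poorness (which is all that is available at the level of a single return), the tree of complete returns to $\tt{0}$ has \emph{eight} surviving leaves, not three: besides the desired $\tt{010},\tt{020},\tt{0220}$ one also gets $\tt{0120},\tt{0210},\tt{01210},\tt{012210},\tt{02220}$. None of these five extras contains a factor of $F_1$, a poor factor, a $5$-power, or an even cube; your proposed pruning simply does not kill them. For instance, $\tt{0120}$ avoids $\tt{00},\tt{11},\tt{212},\tt{0101},\tt{1010}$, is not poor (its unique palindromic prefix $\tt{0}$ with an even number of $\tt{2}$'s recurs only after an odd number of $\tt{2}$'s), and contains no power of the relevant kind. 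The paper handles this by encoding all eight returns via an extension $\hat f\colon\Sigma_8^*\to\Sigma_3^*$, building the transition digraph $H$ among these eight blocks, and then spending three separate claims (each with its own auxiliary morphism, its own use of the longer forbidden words in $F_2,F_3,F_4$, and its own backtracking search) to rule out the five extra letters. That entire second stage is missing from your plan, and the sets $F_2,F_3,F_4$ in \Cref{Lemma:ForbiddenRepetitions}—which you allude to only vaguely—are there precisely to make those eliminations go through.
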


\begin{proof}
First observe the following.
\begin{itemize}
    \item By \Cref{Lemma:BadCubes}, the word $\mathbf{x}$ is $5$-power-free and even-$3$-power-free.
    \item By \Cref{Lemma:ForbiddenRepetitions}, no factor from $F=F_1\cup F_2\cup F_3\cup F_4$ appears internally in $\mathbf{x}$.
    \item By \Cref{Lemma:ForbiddenRichness}, the word $\mathbf{x}$ contains no poor factor.
\end{itemize}
So by taking a suffix if necessary, we may assume that $\mathbf{x}$ is $5$-power-free, even-$3$-power-free, and contains neither poor words nor words from $F$ as factors.  We use these properties frequently throughout the remainder of the proof without further reference.

For ease of writing, we consider an extension $\hat{f}$ of $f$ to $\Sigma_8^*$, defined by
\begin{align*}
    \hat{f}(\tt{0})&=\tt{01}, & \hat{f}(\tt{4})&=\tt{0121},\\
    \hat{f}(\tt{1})&=\tt{022}, & \hat{f}(\tt{5})&=\tt{01221},\\
    \hat{f}(\tt{2})&=\tt{02}, & \hat{f}(\tt{6})&=\tt{012},\\
    \hat{f}(\tt{3})&=\tt{0222}, & \hat{f}(\tt{7})&=\tt{021}.
\end{align*}
Observe that for all $a\in\Sigma_3$, we have $\hat{f}(a)=f(a)$, so $\hat{f}$ is indeed an extension of $f$.

\begin{figure}
    \centering
    \begin{tikzpicture}[shorten >=2pt]
        \node[state,align=center] (02) at (0,0) {\footnotesize $\tt{2}$\\ \scriptsize $\tt{02}$};
        \node[state,align=center] (0222) at (0,2) {\footnotesize $\tt{3}$\\ \scriptsize $\tt{0222}$};
        \node[state,align=center] (01) at (2,2) {\footnotesize $\tt{0}$\\ \scriptsize $\tt{01}$};
        \node[state,align=center] (022) at (2,0) {\footnotesize $\tt{1}$\\ \scriptsize $\tt{022}$};
        \node[state,align=center] (012) at (4,2) {\footnotesize $\tt{6}$\\ \scriptsize $\tt{012}$};
        \node[state,align=center] (021) at (4,0) {\footnotesize $\tt{7}$\\ \scriptsize $\tt{021}$};
        \node[state,align=center] (01221) at (6,1) {\footnotesize $\tt{5}$\\ \scriptsize $\tt{01221}$};
        \node[state,align=center] (0121) at (8,1) {\footnotesize $\tt{4}$\\ \scriptsize $\tt{0121}$};
        \path[style=->]
        (02) edge[bend left=10] (01)
        (01) edge[bend left=10] (02)
        (02) edge[bend left=10] (022)
        (022) edge[bend left=10] (02)
        (0222) edge[bend left=10] (01)
        (01) edge[bend left=10] (0222)
        (022) edge[bend left=10] (01)
        (01) edge[bend left=10] (022)
        (012) edge[bend left=10] (021)
        (021) edge[bend left=10] (012)
        (01221) edge[bend left=10] (0121)
        (0121) edge[bend left=10] (01221)
        (012) edge (01)
        (022) edge (021)
        (01) edge (021)
        (012) edge (022)
        (021) edge (01221)
        (01221) edge (012)
        (02) edge[loop left] (02)
        (0222) edge[loop left] (0222)
        (0121) edge[loop right] (0121)
        (01221) edge[loop above] (01221)
        ;
    \end{tikzpicture}   
    \caption{The directed graph $H$ showing possible transitions between letters in $\mathbf{x}'$.}
    \label{Fig:x'graph}
\end{figure}
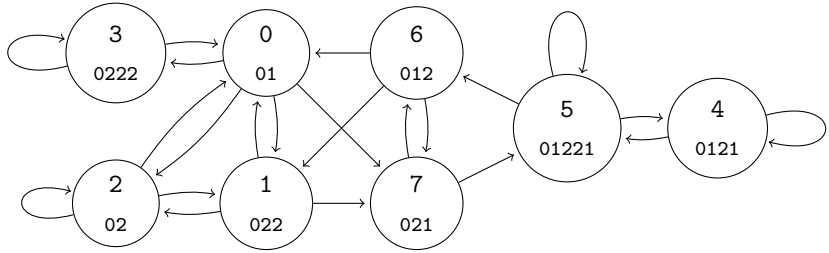

\begin{claim}\label{Claim:fhat}
A suffix of $\mathbf{x}$ has the form $\hat{f}(\mathbf{x}')$ for some word $\mathbf{x}'\in \Sigma_8^\omega$.  Further, the word $\mathbf{x}'$ corresponds to an infinite walk on the graph $H$ drawn in \Cref{Fig:x'graph}.
\end{claim}

\begin{subproof}[Proof of \Cref{Claim:fhat}]
The longest word over $\{\tt{1,2}\}$ that contains no factor from $F_1$ has length $4$, so the letter $\tt{0}$ must occur infinitely many times in $\mathbf{x}$, and by taking a suffix if necessary, we may assume that $\mathbf{x}$ starts with $\tt{0}$.  Thus, it suffices to show that the only possible complete returns to $\tt{0}$ in $\mathbf{x}$ have the form $\hat{f}(a)\tt{0}$ for some $a\in\Sigma_8$.  Consider the tree drawn in \Cref{Fig:fTree}, which shows all possible complete returns to $\tt{0}$ in $\mathbf{x}$.  The words corresponding to red leaves in the tree have a suffix in $F_1$, while the words corresponding to yellow leaves in the tree are poor.  So the words corresponding to green leaves in the tree are the only possible complete returns to $\tt{0}$ in $\mathbf{x}$.  Thus, we can write $\mathbf{x}=\hat{f}(\mathbf{x}')$ for some $\mathbf{x}'\in\Sigma_8^\omega$.

\begin{figure}
        \centering
        \begin{forest}
            [
            $\tt{0}$,for tree={grow=0,l=1.8cm}
                [
               	$\tt{2}$
                    [
                        $\tt{2}$
                        [
                            $\tt{2}$
                            [
                                $\tt{2}$,draw,fill=red
                            ]
                            [
                                $\tt{1}$,draw, fill=red
                            ]
                            [
                                $\tt{0}$,draw,fill=green
                            ]
                        ]
                        [
                            $\tt{1}$
                            [
                                $\tt{2}$,draw,fill=red
                            ]
                            [
                                $\tt{1}$,draw,fill=red
                            ]
                            [
                                $\tt{0}$,draw,fill=yellow
                            ]
                        ]
                        [
                            $\tt{0}$,draw,fill=green
                        ]
                    ]
                    [
                        $\tt{1}$
                        [
                            $\tt{2}$,draw,fill=red
                        ]
                        [
                            $\tt{1}$,draw,fill=red
                        ]
                        [
                            $\tt{0}$,draw,fill=green
                        ]
                    ]
                    [
                        $\tt{0}$,draw,fill=green
                    ]
                ]
                [
            	$\tt{1}$
                    [
                        $\tt{2}$
                        [
                            $\tt{2}$
                            [
                                $\tt{2}$, draw, fill=red
                            ]
                            [
                                $\tt{1}$
                                [
                                    $\tt{2}$,draw,fill=red
                                ]
                                [   
                                    $\tt{1}$,draw,fill=red
                                ]
                                [
                                    $\tt{0}$,draw,fill=green
                                ]
                            ]
                            [
                                $\tt{0}$,draw,fill=yellow
                            ]
                        ]
                        [
                            $\tt{1}$
                            [
                                $\tt{2}$,draw,fill=red
                            ]
                            [
                                $\tt{1}$,draw,fill=red
                            ]
                            [
                                $\tt{0}$,draw,fill=green
                            ]
                        ]
                        [
                            $\tt{0}$,draw,fill=green
                        ]
                    ]
                    [
                        $\tt{1}$,draw,fill=red
                    ]
                    [
                        $\tt{0}$,draw,fill=green
                    ]
                ]
                [
            	$\tt{0}$,draw,fill=red
                ]
            ]
        \end{forest}
        \caption{The tree showing all possible complete returns to $\tt{0}$ in $\mathbf{x}$.}
        \label{Fig:fTree}
    \end{figure}
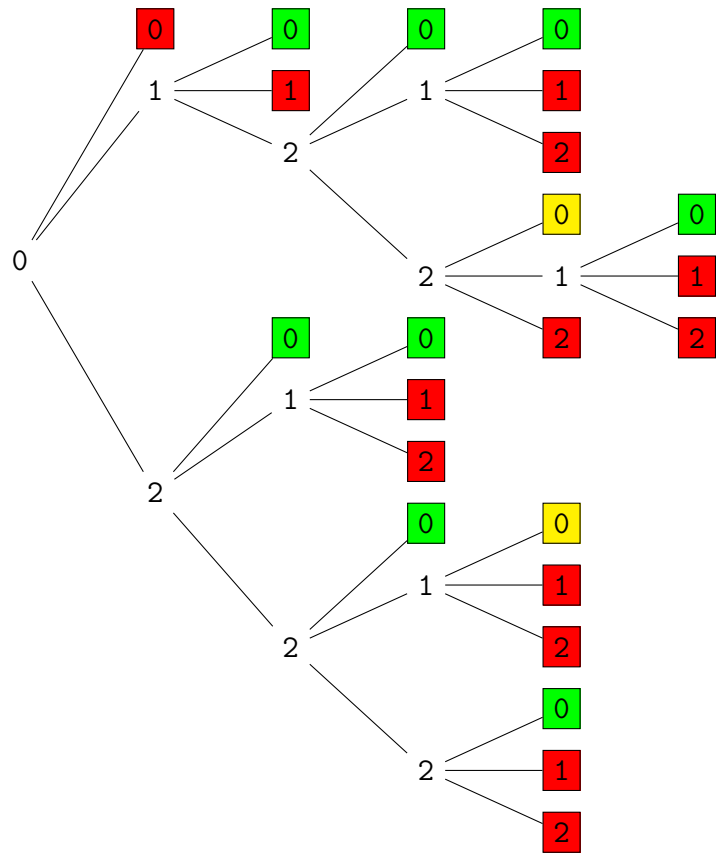

It remains to show that $\mathbf{x}'$ corresponds to an infinite walk on the graph $H$ drawn in \Cref{Fig:x'graph}.  First note that the image of every letter under $\hat{f}$ has prefix $\tt{0}$.  So if $u$ is a factor of $\mathbf{x}'$, then $\hat{f}(u)\tt{0}$ is a factor of $\mathbf{x}$.  We use this fact to show that some letters cannot appear immediately after others in $\mathbf{x}'$.  Consider the letter $\tt{4}$, for example.  It cannot be followed by the letter $\tt{0}$, because $\hat{f}(\tt{40})\tt{0}=\tt{0121010}$ contains the factor $\tt{1010}\in F_1$.  It cannot be followed by a letter from $\{\tt{1,2,3,7}\}$, because for all $a\in\{\tt{1,2,3,7}\}$, the word $\hat{f}(\tt{4}a)$ contains the poor word $\tt{2102}$.  Finally, it cannot be followed by the letter $\tt{6}$, because $\hat{f}(\tt{46})\tt{0}=\tt{01210120}$ is poor.  So the letter $\tt{4}$ can only be followed by $\tt{4}$ or $\tt{5}$ in $\mathbf{x}'$.  By performing a similar analysis on all letters of $\Sigma_8$, ruling out the transition from $a$ to $b$ if the word $\hat{f}(ab)\tt{0}$ contains a poor word or a word in $F_1$, we obtain the directed graph $H$.
\end{subproof}

So $\mathbf{x}$ has the form $\hat{f}(\mathbf{x}')$ for some word $\mathbf{x}'\in\Sigma_8^\omega$.  Our goal now is to show that in fact we have $\mathbf{x}'\in \Sigma_3^\omega$.

\begin{claim}\label{Claim:No6or7}
    The word $\mathbf{x}'$ contains neither $\tt{6}$ nor $\tt{7}$, i.e., we have $\mathbf{x}'\in\Sigma_6^\omega$.
\end{claim}

\begin{subproof}[Proof of \Cref{Claim:No6or7}]
Consider an occurrence of $\tt{6}$ in $\mathbf{x}'$. From the digraph $H$, we see that $\tt{6}$ is followed by a factor of the form $ua$, where $u \in \{\tt{0}, \tt{1}\}^*$ and $a \in \{\tt{2}, \tt{3}, \tt{7}\}$. As $\tt{00}$, $\tt{11}$, $\tt{0101}$, and $\tt{1010}$ are in $F_1$, we see that $u \in \{\varepsilon,\tt{0},\tt{1},\tt{01},\tt{10},\tt{010},\tt{101}\}$. If $a \neq \tt{7}$, then the only palindromic prefixes of $\hat{f}(\tt{6}ua)\tt{0}$ are $\varepsilon$ and $\tt{0}$, because $\hat{f}(\tt{6}ua)\tt{0}$ has prefix $\tt{012}$ and contains no $\tt{21}$. Further, both palindromic prefixes occur as a suffix in $\hat{f}(\tt{6}ua)\tt{0}$ after an even number of $\tt{2}$'s, meaning that $\hat{f}(\tt{6}ua)\tt{0}$ is poor. Thus every time the letter $\tt{6}$ appears in $\mathbf{x}'$, it appears as a prefix of $\tt{6}u\tt{7}$ for some $u \in \{\varepsilon,\tt{0},\tt{1},\tt{01},\tt{10},\tt{010},\tt{101}\}$. 
But for each $u\in\{\tt{1},\tt{01},\tt{10},\tt{010},\tt{101}\}$, we check that $\hat{f}(\tt{6}u\tt{7})$ contains a factor in $F_2$.
% Observe that
% \begin{itemize}
%     \item $\hat{f}(\tt{617})=\tt{012022021}$ contains the factor $\tt{202202}\in F_2$, so $u\neq \tt{1}$;
%     \item $\hat{f}(\tt{6017}) = \tt{01201022021}$ contains the factor $\tt{1022021} \in F_2$, so $u \neq \tt{01};$
%     \item $\hat{f}(\tt{6107}) = \tt{01202201021}$ contains the factor $\tt{1202201} \in F_2$, so $u \neq \tt{10};$
%     \item $\hat{f}(\tt{61017})$ contains the factor $\tt{1022021}\in F_2$, so $u\neq \tt{101}$; and
%     \item $\hat{f}(\tt{60107})$ contains the factor $\tt{1}(\tt{20102})^2\tt{1}\in F_2$, so  $u\neq \tt{010}$.
% \end{itemize}
So every $\tt{6}$ in $\mathbf{x}'$ occurs as a prefix of either $\tt{67}$ or $\tt{607}$.  

Now consider an occurrence of $\tt{7}$ in $\mathbf{x}'$.  From the digraph $H$, we see that $\tt{7}$ is followed by a factor of the form $v\tt{6}$, where $v\in\{\tt{4},\tt{5}\}^*$.
First note that $\tt{55}$ is not a factor of $v$, since in that case $\hat{f}(\tt{7}v\tt{6})$ contains the factor $\tt{21}\hat{f}(\tt{55})\tt{012}=(\tt{21012})^3$, which is an even-$3$-power.  Next, note that $\tt{4}$ is not a factor of $v$, since in that case $v$ must have suffix $\tt{45}$, and in turn $\hat{f}(\tt{7}v\tt{6})\tt{0}$ has suffix $\hat{f}(\tt{456})\tt{0}$, which is poor.
So we must have $v\in\{\varepsilon,\tt{5}\}$, i.e., every $\tt{7}$ in $\mathbf{x}'$ occurs as a prefix of either $\tt{76}$ or $\tt{756}$.

So suppose that the letter $\tt{7}$ appears in $\mathbf{x}'$.  By taking a suffix of $\mathbf{x}$ if necessary, we may assume that $\mathbf{x}'$ starts with $\tt{7}$.  Then we can write $\mathbf{x}'=\phi(\mathbf{x}'')$ for some $\mathbf{x}''\in\Sigma_4^\omega$,
where
\begin{align*}
    \phi(\tt{0})&=\tt{76},\\
    \phi(\tt{1})&=\tt{760},\\
    \phi(\tt{2})&=\tt{756},\\
    \phi(\tt{3})&=\tt{7560}.
\end{align*}
In turn, we have $\mathbf{x}=\hat{f}(\phi(\mathbf{x}''))$.  Observe that for every letter $a\in\Sigma_4$, the word $\hat{f}(\phi(a))$ has an even number of $\tt{2}$'s.  Since $\mathbf{x}$ is even-$3$-power-free, we see that $\mathbf{x}''$ must be $3$-power-free.  We also claim that $\mathbf{x}''$ contains no factor from the set 
\begin{equation*}
F_\phi=\{\tt{00},\tt{11},\tt{22},\tt{33},\tt{01},\tt{20},\tt{31}\}.
\end{equation*}
Let $w\in F_{\phi}$, and suppose that $w$ appears in $\mathbf{x}''$.  Note that for every letter $a\in\Sigma_4$, the word $\hat{f}(\phi(a))$ has prefix $\hat{f}(\phi(\tt{0}))=\tt{021012}$.  It follows that the word $\hat{f}(\phi(w\tt{0}))$ appears in $\mathbf{x}$.  However, we check that $\hat{f}(\phi(w\tt{0}))$ contains a factor from $F_2$, hence we conclude that $\mathbf{x}''$ contains no factor from $F_\phi$.

Now we run a backtracking algorithm that searches through the tree of all words $u\in\Sigma_4^*$ such that 
\begin{enumerate}[label=(\roman*)]
    \item $u$ is $3$-power-free;
    \item $u$ contains no factor from $F_\phi$; and
    \item $\hat{f}(\phi(u))$ contains no poor factor.
\end{enumerate}  
We find that the longest such word has length $8$.  Thus, we conclude that the letter $\tt{7}$ does not appear in $\mathbf{x}'$.  Since every $\tt{6}$ in $\mathbf{x}'$ appears as a prefix of either $\tt{67}$ or $\tt{607}$, it follows that the letter $\tt{6}$ does not appear in $\mathbf{x}'$ either.
\end{subproof}

\begin{claim}\label{Claim:No4or5}
    The word $\mathbf{x}'$ contains neither $\tt{4}$ nor $\tt{5}$, i.e., we have $\mathbf{x}'\in\Sigma_4^\omega$.
\end{claim}

\begin{subproof}[Proof of \Cref{Claim:No4or5}]
Suppose that $\tt{4}$ or $\tt{5}$ appears in $\mathbf{x}'$.  By \Cref{Claim:No6or7}, we have $\mathbf{x}'\in\Sigma_6^\omega$.  So by considering the digraph $H$, we see that $\mathbf{x}'\in\{\tt{4,5}\}^\omega$.  Since $\mathbf{x}$ contains no $5$-power, we see that $\mathbf{x}'$ contains no $5$-power.  Further, we claim that some suffix of $\mathbf{x}'$ contains no factor from the set $F'=\{\tt{55},\tt{444},\tt{5445}\}$.  Observe that for all $a\in\{\tt{4,5}\}$, the word $\hat{f}(a)$ has prefix $\tt{012}$ and suffix $\tt{21}$.  So if some word $u$ appears internally in $\mathbf{x}'$, then $\mathbf{x}$ contains the word $\tt{21}\hat{f}(u)\tt{012}$. We handle each word in $F'$ separately.
\begin{itemize}
    \item $\tt{55}$: Suppose that $\tt{55}$ appears internally in $\mathbf{x}'$.  It follows that $\mathbf{x}$ contains the word $\tt{21}\hat{f}(\tt{55})\tt{012}=(\tt{21012})^3$.  But this contradicts the fact that $\mathbf{x}$ is even-$3$-power-free.
    \item $\tt{444}$: We first claim that $\tt{4444}$ does not appear internally in $\mathbf{x}$. For if $\tt{4444}$ appears internally in $\mathbf{x}'$, then $\mathbf{x}$ contains the $5$-power $\tt{21}\hat{f}(\tt{4444})\tt{01}= (\tt{2101})^5$; a contradiction.  So by taking a suffix if necessary, we may assume that $\mathbf{x}'$ contains no $\tt{4444}$.  Now suppose that $\tt{444}$ appears internally in $\mathbf{x}'$.  Then $\mathbf{x}'$ contains the factor $\tt{54445}$.  But this is impossible, since $\hat{f}(\tt{54445})$ contains the factor $\tt{2}(\tt{2101})^{17/4}\tt{2}\in F_3$.
    \item $\tt{5445}$: Suppose that $\tt{5445}$ appears in $\mathbf{x}'$ with at least two letters before it.  Then since $\tt{55}$ does not appear internally in $\mathbf{x}'$, we see that $\mathbf{x}'$ must contain an internal occurrence of $\tt{454454}$.  But then $\mathbf{x}$ contains the word $\tt{21}\hat{f}(\tt{454454})\tt{012}=(\tt{2101210122101})^{31/13}\in F_3$, a contradiction.
\end{itemize}

Now we run a backtracking algorithm that searches through the tree of all words $u\in\{\tt{4,5}\}^*$ such that
\begin{enumerate}[label=(\roman*)]
    \item $u$ is $5$-power-free;
    \item $u$ contains no factor from $F'$; and
    \item $\hat{f}(u)$ contains no poor factor.
\end{enumerate}
The longest such word has length $11$.  Thus, we conclude that neither $\tt{4}$ nor $\tt{5}$ appears in $\mathbf{x}'$.
\end{subproof}

\begin{claim}\label{Claim:No3}
    The word $\mathbf{x}'$ contains no $\tt{3}$, i.e., we have $\mathbf{x}'\in\Sigma_3^\omega$.
\end{claim}

\begin{subproof}[Proof of \Cref{Claim:No3}]
Suppose that the letter $\tt{3}$ appears in $\mathbf{x}'$.  From \Cref{Claim:No4or5}, we have $\mathbf{x}'\in\Sigma_4^\omega$.  We claim that the letter $\tt{2}$ does not appear in  $\mathbf{x}'$.  For if it did, then $\mathbf{x}'$ would have a factor of the form $\tt{3}u\tt{2}$ or $\tt{2}u\tt{3}$, where $u\in\{\tt{0,1}\}^*$.  But for all $u\in\{\tt{0,1}\}^*$, the words $\hat{f}(\tt{3}u\tt{2})\tt{0}$ and $\hat{f}(\tt{2}u\tt{3})\tt{0}$ are poor; their only palindromic prefixes with an even number of $\tt{2}$'s are $\varepsilon$ and $\tt{0}$, and they occur again (as suffixes) after an even number of $\tt{2}$'s.  

So we have $\mathbf{x}'\in\{\tt{0,1,3}\}^\omega$.  Observe that $\mathbf{x}'$ does not contain the factor $\tt{3333}$.  For if it did, then we see from the digraph $H$ that $\mathbf{x}'$ would contain either $\tt{33333}$ or $\tt{33330}$.  But $\tt{33333}$ is a $5$-power, and $\hat{f}(\tt{33330})=(\tt{0222})^{17/4}\tt{1}$ is in $F_4$.  By taking a suffix if necessary, assume that $\mathbf{x}'$ starts with $\tt{0}$.  Then by considering the digraph $H$, and remembering that $\mathbf{x}'$ does not contain $\tt{3333}$, we see that we must have $\mathbf{x}'=\psi(\mathbf{x}'')$ for some word $\mathbf{x}''\in\{\tt{0,1,2,3}\}^\omega$, where 
\begin{align*}
    \psi(\tt{0})&=\tt{03},\\
    \psi(\tt{1})&=\tt{033},\\
    \psi(\tt{2})&=\tt{0333},\\
    \psi(\tt{3})&=\tt{01}.
\end{align*}
In turn, we have $\mathbf{x}=\hat{f}(\psi(\mathbf{x}''))$.  Since $\mathbf{x}$ contains no $5$-power, we see that $\mathbf{x}''$ contains no $5$-power.
Note that for all $a\in\Sigma_4$, the word $\hat{f}(\psi(a))$ has prefix $\tt{01022}$ and suffix $\tt{22}$.  Further, for all $a\in \Sigma_3$, the word $\hat{f}(\psi(a))$ has suffix $\tt{0222}$, and for all $v\in\Sigma_3^*$ with $|v|\geq 2$, the word $\hat{f}(\psi(v))$ has prefix $\tt{0102220}$.

Now we claim that some suffix of $\mathbf{x}''$ belongs to $\Sigma_2^\omega$.  Suppose first that the letter $\tt{3}$ appears internally in $\mathbf{x}''$.  Then $\mathbf{x}$ contains the word $\tt{22}\hat{f}(\psi(\tt{3}))\tt{01022}=(\tt{22010})^{12/5}\in F_4$, a contradiction.  So by taking a suffix if necessary, we may assume that $\mathbf{x}''\in\Sigma_3^\omega$.  Suppose now that the letter $\tt{2}$ appears in $\mathbf{x}''$.  If $\tt{20}$ appears in $\mathbf{x}''$, then $\mathbf{x}$ contains the poor factor $\hat{f}(\psi(\tt{20}))\tt{010}$, a contradiction.  If $\tt{21}$ appears in $\mathbf{x}''$, then $\mathbf{x}$ contains the word $\hat{f}(\psi(\tt{21}))\tt{010222}$, which has suffix $(\tt{0222022201})^{12/5}\in F_4$,
a contradiction.  So every occurrence of $\tt{2}$ in $\mathbf{x}''$ must be followed by another $\tt{2}$.  But then $\mathbf{x}''$ contains the $5$-power $\tt{22222}$, a contradiction.

So we may assume that $\mathbf{x}''\in\Sigma_2^\omega$.  We now claim that some suffix of $\mathbf{x}''$ contains no factor from the set 
\begin{equation*}
F_{\psi}=\{\tt{11},\tt{000},\tt{1001}\}.
\end{equation*}
We handle each word in $F_\psi$ separately.
\begin{itemize}
    \item $\tt{11}$: Suppose that $\tt{11}$ appears internally in $\mathbf{x}''$. Then $\mathbf{x}$ contains the word 
    \[
    \tt{0222}\hat{f}(\psi(\tt{11}))=(\tt{0222010222})^{12/5}\in F_4,
    \]
    a contradiction.
    \item $\tt{000}$: Suppose that $\tt{000}$ appears internally in $\mathbf{x}''$.  Then $\mathbf{x}$ contains the word 
    \[
    \tt{0222}\hat{f}(\psi(\tt{000}))\tt{0102220}=(\tt{022201})^{29/6}\in F_4,
    \]
    a contradiction.
    \item $\tt{1001}$: Suppose that $\tt{1001}$ appears with at least two letters before it in $\mathbf{x}''$.  Since $\tt{11}$ does not appear internally in $\mathbf{x}''$, we see that the word $\tt{010010}$ appears internally in $\mathbf{x}''$.  But then $\mathbf{x}$ contains the word \[
    \tt{0222}\hat{f}(\psi(\tt{010010}))\tt{0102220}=(\tt{0222010222010222022201})^{5/2}\in F_4,
    \]
    a contradiction.
\end{itemize}

Now we run a backtracking algorithm that searches through the tree of all words $u\in\Sigma_2^*$ such that
\begin{enumerate}[label=(\roman*)]
    \item $u$ is $5$-power-free;
    \item $u$ contains no factor from $F_{\psi}$; and
    \item $\hat{f}(\psi(u))$ contains no poor factor.
\end{enumerate}
The longest such word has length $11$.  Thus, we conclude that $\tt{3}$ does not appear in $\mathbf{x}'$.
\end{subproof}

Since $\mathbf{x}'\in\Sigma_3^\omega$, we have $\mathbf{x}=\hat{f}(\mathbf{x}')=f(\mathbf{x}')$, and this completes the proof of the proposition.
\end{proof}

\begin{proof}[Proof of \Cref{Theorem:Structure}]
    We proceed by induction on $n$.  As argued at the beginning of \Cref{Subsection:FirstLayer}, by permuting the letters and taking a suffix if necessary, we may assume that $\mathbf{z}$ has prefix $\tt{001002}$.  Then by \Cref{Prop:FirstLayer}, we have $\mathbf{z}=\tau(\mathbf{y})$ for some word $\mathbf{y}\in\Sigma_3^\omega$, and by \Cref{Prop:SecondLayer}, some suffix of $\mathbf{y}$ has the form $g(\mathbf{x}_0)$ for some word $\mathbf{x}_0\in\Sigma_3^\omega$.  This completes the base case.
    
    Now suppose for some integer $k\geq 0$ that some suffix of $\mathbf{z}$ has the form $\tau(g(f^k(\mathbf{x}_k)))$ for some word $\mathbf{x}_k\in\Sigma_3^\omega$.  Then by \Cref{Prop:InnerLayers}, some suffix of $\mathbf{x}_k$ has the form $f(\mathbf{x}_{k+1})$ for some word $\mathbf{x}_{k+1}\in \Sigma_3^\omega$.  It follows that some suffix of $\mathbf{z}$ has the form $\tau(g(f^{k+1}(\mathbf{x}_{k+1})))$.  Therefore, we conclude that the theorem statement holds by mathematical induction.
\end{proof}

\section{Richness}\label{Section:Richness}

Throughout this section, let $\mathbf{x}=f^\omega(\tt{0})$, $\mathbf{y}=g(\mathbf{x})$, and $\mathbf{z}=\tau(\mathbf{y})$. 
For an infinite word $\mathbf{u}$, we let $C_\textbf{u}\colon\mathbb{N}\rightarrow\mathbb{N}$ denote the \emph{factor complexity} of $\mathbf{u}$, and we let $P_\textbf{u}\colon \mathbb{N}\rightarrow\mathbb{N}$ denote the \emph{palindromic complexity} of $\textbf{u}$.  That is, $C_\mathbf{u}(n)$ is the number of distinct factors of $\mathbf{u}$ of length $n$ for all $n\geq 0$, and $P_\mathbf{u}(n)$ is the number of distinct palindromic factors of $\mathbf{u}$ of length $n$ for all $n\geq 0$. 
% We omit the subscript when $\mathbf{u}$ is clear from context.

The goal of this section is to prove the following result.

\begin{proposition}\label{Prop:Rich}
    The word $\mathbf{z}$ is rich.
\end{proposition}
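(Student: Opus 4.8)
The plan is to prove that $\mathbf{z}=\tau(g(f^\omega(\tt{0})))$ is rich by peeling off the three layers $f^\omega$, $g$, and $\tau$ one at a time, transferring richness upward and treating the transducer as the genuinely hard case. First I would establish that $\mathbf{x}=f^\omega(\tt{0})$ is rich: the morphism $f$ is prolongable on $\tt{0}$ and lies in class $P_{\mathrm{ret}}$ (each image $f(a)$ is $\tt{0}$ followed by a palindrome), and the fixed points of such morphisms are rich; alternatively one argues directly from \Cref{Lemma:UnioccurrentSuffix}, using the self-similarity $\mathbf{x}=f(\mathbf{x})$ to show by induction on prefix length that every prefix of $\mathbf{x}$ has a unioccurrent palindromic suffix. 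Since $g$ is also a class-$P_{\mathrm{ret}}$ morphism, and these morphisms preserve richness (the companion statement to the non-richness preservation recorded in \Cref{Subsection:ForbiddenFactors}), the word $\mathbf{y}=g(\mathbf{x})$ is rich as well.

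The crux is the outermost layer, showing that $\mathbf{z}=\tau(\mathbf{y})$ is rich, where $\tau$ is not a morphism and its alternation between $t_1$ and $t_2$ breaks the naive correspondence between palindromes of $\mathbf{z}$ and palindromes of $\mathbf{y}$. I would use the complete-return characterization of Glen, Justin, Widmer, and Zamboni~\cite{GlenJustinWidmerZamboni2009}: $\mathbf{z}$ is rich if and only if every complete return word to every palindromic factor of $\mathbf{z}$ is itself a palindrome. The key step is a palindrome-lifting argument, in the spirit of \Cref{Lemma:PalindromicPreimage}(iii): after choosing the correct parity of alignment with the $t_1/t_2$ alternation, a palindromic factor of $\mathbf{z}$ is forced to have the form $\tau(u)$ bordered by the appropriate context, where $u$ is a palindromic factor of $\mathbf{y}$ occurring at an even index. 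One then transfers a complete return to such a palindrome in $\mathbf{z}$ back to a complete return in $\mathbf{y}$, invokes richness of $\mathbf{y}$ to conclude the latter is a palindrome, and pushes it forward through $\tau$ to deduce that the return in $\mathbf{z}$ is palindromic. The parity bookkeeping here—an even number of $\tt{2}$'s, equivalently occurrence at an even index, exactly the conditions isolated in \Cref{Subsection:ForbiddenFactors}—is precisely what reconciles the alternation of $\tau$ with reversal symmetry.

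An alternative route, perhaps more in line with the notation $C_{\mathbf{z}}$ and $P_{\mathbf{z}}$ introduced just before the statement, is to verify the Bucci--De Luca--Glen--Zamboni criterion~\cite{BucciDeLucaGlenZamboni2009}: for a recurrent word whose language is closed under reversal, richness is equivalent to $P_{\mathbf{z}}(n)+P_{\mathbf{z}}(n+1)=C_{\mathbf{z}}(n+1)-C_{\mathbf{z}}(n)+2$ for all $n\geq 0$. Since $\mathbf{z}$ is a morphic image of the fixed point $\mathbf{x}$, its factor complexity is accessible through its finitely many special factors, and one sets up a recurrence for the palindromic complexity driven by $\mathbf{x}=f(\mathbf{x})$, checking the identity directly for small $n$ and by induction for large $n$. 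In either route one must first confirm that $\mathbf{z}$ is uniformly recurrent and that $\Fact(\mathbf{z})$ is closed under reversal, both of which follow from the morphic structure together with the sister symmetry $\overline{\tau}(w)=$ the sister of $\tau(w)$.

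The main obstacle in either route is the same: controlling the palindromes of $\mathbf{z}$ in the presence of the alternation of $\tau$. Because $\tau(w)$ and $\overline{\tau}(w)$ are sisters, a factor of $\mathbf{z}$ can read the same backwards only when it is aligned to the alternation in the correct parity, so the bijection between palindromes of $\mathbf{z}$ and palindromes of $\mathbf{y}$ holds only after the even-index and even-number-of-$\tt{2}$'s restrictions are imposed. Getting this correspondence exactly right, neither losing palindromes nor miscounting their occurrences, is the delicate part; the lifting lemmas of \Cref{Subsection:ForbiddenFactors} supply the machinery needed to carry it out.
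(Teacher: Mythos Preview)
Your alternative route is exactly what the paper does. The paper verifies the identity $P_{\mathbf{z}}(n)+P_{\mathbf{z}}(n+1)=C_{\mathbf{z}}(n+1)-C_{\mathbf{z}}(n)+2$ by computing both complexities explicitly through the three layers. On the factor side, it shows that $\mathbf{x}$ and then $\mathbf{y}$ have exactly two right-special factors of each length (so $C_{\mathbf{x}}=C_{\mathbf{y}}=2n+1$), and then that $\mathbf{z}$ has exactly four right-special factors of each length $n\ge 4$, each with two right extensions. On the palindrome side, the paper does \emph{not} merely use richness of $\mathbf{x}$ and $\mathbf{y}$: it proves the stronger statement that every palindromic factor of $\mathbf{x}$ (and then of $\mathbf{y}$) has a \emph{unique} palindromic extension, by induction through $f$ and $g$ via \Cref{Lemma:PalindromicPreimage}. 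With that in hand, a case analysis on the partial $\tau$-block bordering a palindrome of $\mathbf{z}$ gives $P_{\mathbf{z}}(n)=4$ for even $n\ge 4$ and $P_{\mathbf{z}}(n)=2$ for odd $n\ge 5$, so both sides of the identity equal $6$. Richness of $\mathbf{x}$ and $\mathbf{y}$ is obtained only as a corollary of $C=2n+1$; the paper does not appeal to class-$P_{\mathrm{ret}}$ preservation (and note that the paper records only that such morphisms preserve \emph{non}-richness, not the companion statement you invoke).

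Your primary route via complete returns is a genuinely different strategy. It could in principle succeed, but the step ``transfer a complete return in $\mathbf{z}$ back to a complete return in $\mathbf{y}$'' hides real work: a long palindrome of $\mathbf{z}$ has the shape $u\,\overline{\tau}(y)\,\tt{00}\,\tilde{u}$ with a partial block $u$, and matching two occurrences of it in $\mathbf{z}$ to two occurrences of $y$ in $\mathbf{y}$ of the same parity requires the sister symmetry and the parity bookkeeping you mention. The paper's choice to track \emph{palindromic extensions} rather than returns is what makes the transfer through $\tau$ clean; the unique-extension property of $\mathbf{y}$ is strictly stronger than richness of $\mathbf{y}$ (it fails for some words of complexity $2n+1$) and is exactly the invariant that survives the alternation of $\tau$.
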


Along the way, we will also show that $\mathbf{x}$ and $\mathbf{y}$ are rich.  In \Cref{SubSection:FactorComplexity} and \Cref{SubSection:PalindromicComplexity}, we determine the factor complexity and the palindromic complexity, respectively, of the word $\mathbf{z}$. \Cref{Prop:Rich} follows immediately from these results and the following result that characterizes infinite rich words in terms of their factor complexity and palindromic complexity functions. Notice that the language of $\mathbf{z}$ is closed under reversal as $\mathbf{z}$ is uniformly recurrent and contains arbitrarily long palindromes (see the results of \Cref{SubSection:PalindromicComplexity}).

\begin{proposition}\cite[Thm.~1.1]{BucciDeLucaGlenZamboni2009}
\label{Prop:RichCharacterization}
Let $\mathbf{u}$ be an infinite word whose language is closed under reversal. Then $\mathbf{u}$ is rich if and only if
\begin{equation}\label{Eq:RichCharacterization}
    P_\textbf{u}(n) + P_\textbf{u}(n+1) = C_\textbf{u}(n+1) - C_\textbf{u}(n) + 2
\end{equation}
for all $n \geq 0$.
\end{proposition}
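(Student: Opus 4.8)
The plan is to prove the equivalence separately for each $n$ by realizing \eqref{Eq:RichCharacterization} as the equality case of a general inequality valid for every word with reversal-closed language. The engine throughout is the local lemma of Droubay, Justin, and Pirillo~\cite{DroubayJustinPirillo2001}: extending any factor $w$ by a single letter $a$ creates at most one new palindromic factor, and when it does, that new palindrome is the longest palindromic suffix of $wa$ and is unioccurrent in $wa$. This is precisely the mechanism behind \Cref{Lemma:UnioccurrentSuffix}, which I would use in the form that a finite word is rich if and only if each of its prefixes contributes a new (unioccurrent) palindromic suffix. I would also use the standard hereditary fact~\cite{GlenJustinWidmerZamboni2009} that every factor of a rich word is rich.

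Set $\Delta(n) = C_{\mathbf u}(n+1) - C_{\mathbf u}(n) + 2 - P_{\mathbf u}(n) - P_{\mathbf u}(n+1)$. The heart of the argument is a nonnegative counting identity in the spirit of~\cite{BucciDeLucaGlenZamboni2009}: for $\mathbf u$ recurrent with language closed under reversal,
\[
\Delta(n) = \sum_{p} N(p),
\]
where $p$ ranges over the palindromic factors of $\mathbf u$ of the relevant lengths and $N(p)\ge 0$ counts the complete return words to $p$ that are not themselves palindromes. To set this up I would first record the palindromic analogue of the special-factor calculus, $P_{\mathbf u}(n+2) - P_{\mathbf u}(n) = \sum_{|p|=n}(e(p)-1)$, where $e(p)$ is the number of letters $a$ with $apa$ a factor, together with the standard identity $C_{\mathbf u}(n+1) - C_{\mathbf u}(n) = \sum_{|w|=n}(m^+(w)-1)$ summing right-special excess. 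Using reversal-closure to pair each factor with its mirror image, and to match right-special factors against the palindromic extensions of their central palindromes, one reorganizes $\Delta(n)$ as a sum over palindromic factors, each term measuring exactly the failure of the complete return words to that palindrome to be palindromic. Making this bookkeeping precise---choosing the length normalization so that every factor and every palindrome is charged exactly once, and identifying the surviving discrepancy with non-palindromic return words---is where I expect the real difficulty to lie.

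Granting the identity, one direction is immediate: each $N(p)\ge 0$ gives $P_{\mathbf u}(n)+P_{\mathbf u}(n+1)\le C_{\mathbf u}(n+1)-C_{\mathbf u}(n)+2$, and \eqref{Eq:RichCharacterization} holds for all $n$ if and only if $N(p)=0$ for every palindromic factor $p$, i.e.\ every complete return word to every palindrome of $\mathbf u$ is a palindrome. It then remains to prove that this return-word condition is equivalent to richness. For the forward direction, let $r$ be a complete return word to a palindrome $p$; as a factor of the rich word $\mathbf u$, $r$ is rich, so its longest palindromic suffix $q$ is unioccurrent by \Cref{Lemma:UnioccurrentSuffix}. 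If $|q|=|p|$ then $q=p$, contradicting that $p$ occurs twice in $r$; if $|q|>|p|$, then $p$, being a suffix of the palindrome $q$, is also a prefix of $q$ and hence occurs at index $|r|-|q|$, an internal occurrence unless $q=r$---so $r$ is forced to be a palindrome. For the converse, assuming every such return word is a palindrome, I would verify the unioccurrent-palindromic-suffix condition of \Cref{Lemma:UnioccurrentSuffix} for every prefix of every factor by tracking how the longest palindromic suffix evolves between consecutive occurrences of a palindrome, using that each intervening return word is a palindrome to ensure no palindromic suffix ever recurs prematurely. Combining the two equivalences yields that $\mathbf u$ is rich if and only if \eqref{Eq:RichCharacterization} holds for all $n\ge 0$, as claimed.
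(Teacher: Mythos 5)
You should first note that the paper does not prove \Cref{Prop:RichCharacterization} at all: it is quoted verbatim from \cite[Thm.~1.1]{BucciDeLucaGlenZamboni2009}, so your attempt has to be judged as a reconstruction of that paper's argument. Your architecture does match the real one: richness is equivalent to the condition that every complete return word to every palindromic factor is a palindrome (this is a theorem of \cite{GlenJustinWidmerZamboni2009}), and \eqref{Eq:RichCharacterization} is the equality case of a general inequality for reversal-closed words. Your forward argument that in a rich word a complete return $r$ to a palindrome $p$ must be a palindrome is correct and complete (note $|q|\geq |p|$ is automatic since $p$ is itself a palindromic suffix of $r$, so your two cases exhaust everything), and the auxiliary identities $C_{\mathbf u}(n+1)-C_{\mathbf u}(n)=\sum_{|w|=n}(m^+(w)-1)$ and $P_{\mathbf u}(n+2)-P_{\mathbf u}(n)=\sum_{|p|=n}(e(p)-1)$ are standard and right.

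The genuine gap is the counting identity $\Delta(n)=\sum_p N(p)$, which you postulate and explicitly defer (``where I expect the real difficulty to lie''): that identity essentially \emph{is} the theorem, and as stated it is not even well-formed. Your $N(p)$, the number of non-palindromic complete return words to $p$, does not depend on $n$, while $\Delta(n)$ does, and ``palindromic factors of the relevant lengths'' is not a definition, so the claim cannot be checked, let alone proved. Worse, the hypothesis is only closure under reversal, which implies recurrence but not uniform recurrence; a factor may then have infinitely many distinct complete return words, so the right-hand side can be infinite while $\Delta(n)$ is a finite integer. Any correct bookkeeping must charge, for each $n$, only finite local data (extension counts at length scale $n$), which is precisely the delicate part of the argument in \cite{BucciDeLucaGlenZamboni2009}; even granting some identity, concluding ``equality for all $n$ iff all returns are palindromic'' additionally requires that every non-palindromic return word actually contributes to $\Delta(n)$ for some $n$. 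Finally, the converse implication (palindromic complete returns $\Rightarrow$ rich) is also only gestured at; it is a real theorem of \cite{GlenJustinWidmerZamboni2009} and should either be cited or proved. As written, your text fully establishes one of the three ingredients and sketches the other two, with the central one missing; since the paper itself simply cites the result, the honest options are to do the same or to supply the full argument of \cite{BucciDeLucaGlenZamboni2009}.
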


Before we proceed with determining the factor complexity and palindromic complexity functions of $\mathbf{z}$, we introduce some terminology and prove a preliminary result.  Let $\mathbf{u}$ be an infinite word, and let $w\in\Fact(\mathbf{u})$.  A \emph{left extension} (resp.\ \emph{right extension}) of $w$ in $\mathbf{u}$ is a word of the form $aw\in\Fact(\mathbf{u})$ (resp.\ $wa\in\Fact(\mathbf{u})$), where $a$ is a letter.  A \emph{bi-extension} of $w$ in $\mathbf{u}$ is a word of the form $awb\in Fact(\mathbf{u})$, where $a$ and $b$ are letters.  If $w$ is a palindrome, then a \emph{palindromic extension} of $w$ in $\mathbf{u}$ is a bi-extension of $w$ that is a palindrome.  We say that $w$ is \emph{left-special} (resp.\ \emph{right-special}) in $\mathbf{u}$ if it has at least two distinct left (resp.\ right) extensions, and we say that $w$ is \emph{bispecial} in $\mathbf{u}$ if it is both left-special and right-special.

\begin{lemma}\label{Lemma:SisterOccurs}
  If $z$ is a factor of $\mathbf{z}$, then its sister is a factor of $\mathbf{z}$.
\end{lemma}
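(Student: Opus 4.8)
The plan is to reduce the claim to a parity statement about $\mathbf{y}$ and then prove that statement from the self-similar structure of $\mathbf{y}$. Write $\overline{w}$ for the sister of a word $w$. Recall that $\tau$ applies $t_1$ to the letters of even index of its input and $t_2$ to those of odd index, that $t_1$ and $t_2$ are sisters (so $\overline{t_1}=t_2$), and that $\overline{\tau}(w)$ is the sister of $\tau(w)$ for every $w$. Suppose $z$ is a factor of $\mathbf{z}=\tau(\mathbf{y})$. Then $z$ lies, at some offset, inside the concatenation of the $\tau$-images of a block $v:=y_iy_{i+1}\cdots y_j$ of $\mathbf{y}$; this concatenation is $\tau$ applied to $v$ with the alternation started according to the parity of $i$. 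If $v$ also occurs in $\mathbf{y}$ starting at an index $i'$ of the opposite parity, then at that occurrence each $t_1$ is replaced by $t_2$ and vice versa, so the concatenation of the $\tau$-images of this new occurrence is the sister of the original one; reading off the factor at the same offset produces $\overline{z}$, showing $\overline{z}$ is a factor of $\mathbf{z}$. Hence it suffices to prove that (*) every factor of $\mathbf{y}$ occurs in $\mathbf{y}$ at both an even and an odd position.

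To prove (*), I would first realize $\mathbf{y}$ as a morphic fixed point with convenient length parities. A direct computation shows $g\circ f=h\circ g$, where $h\colon\Sigma_3^*\to\Sigma_3^*$ is given by $h(\tt{0})=\tt{1}$, $h(\tt{1})=\tt{2}$, and $h(\tt{2})=\tt{202}$. Since $\mathbf{x}=f(\mathbf{x})$, this yields $\mathbf{y}=g(\mathbf{x})=g(f(\mathbf{x}))=h(g(\mathbf{x}))=h(\mathbf{y})$, so $\mathbf{y}=h^\omega(\tt{2})$. The key feature is that every image $h(a)$ has odd length, so in the decomposition $\mathbf{y}=h(y_0)h(y_1)\cdots$ the block $h(y_i)$ begins at a position whose parity equals that of $i$. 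Consequently the parity of the start of any occurrence of a factor equals the parity of the index of the block in which the occurrence begins, shifted by a fixed offset.

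I would then prove (*) by strong induction on length, using an elementary recognizability property of $h$: in $\mathbf{y}$ the letter $\tt{0}$ occurs only as the central letter of an image $h(\tt{2})=\tt{202}$, the letter $\tt{1}$ is exactly the image $h(\tt{0})$, and every remaining $\tt{2}$ is an image $h(\tt{1})$; moreover $\tt{0}$ and $\tt{1}$ are isolated (each is flanked by $\tt{2}$'s) and consecutive $\tt{0}$'s are at most four positions apart. Thus every factor of length at least four contains a $\tt{0}$ and hence determines its own $h$-block decomposition, so any factor $u$ with $|u|\geq 6$ desubstitutes to a uniquely determined shorter factor $v$ of $\mathbf{y}$ with a fixed offset into its first block; here $|v|<|u|$ because the isolation of $\tt{0}$ and $\tt{1}$ forces $|h(v)|\geq 2|v|-1$. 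By the length-parity observation, the two parities of an occurrence of $u$ correspond exactly to the two parities of the block-index of $v$, i.e.\ to the two parities of an occurrence of $v$ in $\mathbf{y}$; so (*) for $v$ gives (*) for $u$. The finitely many factors of length at most $5$ are verified to occur at both parities by inspecting a sufficiently long prefix of $\mathbf{y}$.

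The routine part is the reduction in the first paragraph, which follows formally from $\overline{t_1}=t_2$. The substantive work, and the main obstacle, is property (*): one must check that $h$ is recognizable enough that every sufficiently long factor has a well-defined block decomposition with a fixed offset, that desubstitution strictly shortens the factor, and that the base case really holds. The last point is the subtle one, since some short factors (for instance $\tt{21}$) first reappear at a position of the opposite parity only fairly deep inside $\mathbf{y}$, so the prefix inspected for the base case must be taken long enough to witness both parities for every short factor.
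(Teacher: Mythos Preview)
Your reduction to property (*)---that every factor of $\mathbf{y}$ occurs at both an even and an odd position---is exactly what the paper does as well, and your argument that (*) implies the lemma is clean and correct. Where you diverge is in the proof of (*). The paper stays in $\mathbf{x}$: it exhibits, for each factor $x$ of $\mathbf{x}$, two prefixes $ux$ and $u'x$ of $\mathbf{x}$ with $|u|_{\tt{2}}$ and $|u'|_{\tt{2}}$ of opposite parity, using the explicit prefix $f^{n+2}(\tt{0})f^{n+1}(\tt{02})f^n(\tt{0})$ and the observation that $|f^k(\tt{0})|_{\tt{2}}$ is always even while $|f^k(\tt{2})|_{\tt{2}}$ is always odd; since $|g(v)|$ is even iff $|v|_{\tt{2}}$ is even, this transfers directly to (*) for $\mathbf{y}$. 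Your route instead uncovers the intertwining $g\circ f=h\circ g$ (which checks out) to realize $\mathbf{y}$ as the $h$-fixed point $h^\omega(\tt{2})$, then exploits that all $h$-images have odd length so that block index and starting position have the same parity, and runs an induction via desubstitution. This is a genuinely different and rather elegant argument; the odd-length trick is a nice structural explanation of why (*) holds, whereas the paper's argument is shorter but more ad hoc.

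One point to tighten: your claim that a factor $u$ with $|u|\geq 6$ ``desubstitutes to a uniquely determined shorter factor $v$'' is not quite right as stated. Once $u$ contains a $\tt{0}$ the offset into the first $h$-block is indeed pinned down, but the last (and possibly first) letter of $v$ can remain ambiguous when $u$ ends in a $\tt{2}$ that could be either $h(\tt{1})$ or the first letter of $h(\tt{2})$; for instance $u=\tt{202122}$ has two valid preimages. This does not break your induction, since it is enough to choose \emph{any} valid preimage $v$: if $v$ occurs at both parities then so does $h(v)$, hence so does $u$ at the fixed offset. Just phrase the step as ``there exists a preimage $v$ with $|v|<|u|$'' rather than ``a uniquely determined $v$''. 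With that adjustment (and the finite base-case check you already flag), your proof goes through.
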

\begin{proof}
  We first claim that for each
  $x \in \Fact(\mathbf{x})$, there exist $u$ and $u'$ such that both $ux$ and $u'x$ are prefixes of $\mathbf{x}$, and $|u|_\tt{2}$ and $|u'|_\tt{2}$ have different parity.
  Let $x \in \Fact(\mathbf{x})$.  Then $x$ occurs in $f^n(\tt{0})$ for some $n \geq 0$, i.e., there is some $n\geq 0$ such that $f^n(\tt{0})$ has prefix $ux$ for some word $u$.  Set $v_n = f^{n+2}(\tt{0}) f^{n+1}(\tt{02}) f^n(\tt{0})$. It is not difficult to see that $v_n$ is a prefix of $f^{n+3}(\tt{0})$, and hence of $\mathbf{x}$.  Letting $u'=f^{n+2}(\tt{0})f^{n+1}(\tt{02})u$, we see that $v_n$ (and hence $\mathbf{x}$) has both $ux$ and $u'x$ as prefixes. 
  Note that $|f^k(\tt{0})|_\tt{2}$ is even for all $k\geq 0$ and $|f^k(\tt{2})|_\tt{2}$ is odd for all $k \geq 0$.
  So $|f^{n+2}(\tt{0}) f^{n+1}(\tt{02})|_2$ is odd, and we conclude that $|u|_\tt{2}$ and $|u'|_\tt{2}$ have different parity.

  Now let $y\in \Fact(\mathbf{y})$.  Then there exists a factor $x \in \Fact(\mathbf{x})$ such that $g(x) = \alpha y \beta$ for some words $\alpha$ and $\beta$. Let $u$ and
  $u'$ be as above, so that $g(u)\alpha y$ and $g(u')\alpha y$ are prefixes of $\mathbf{y}$. Since $|g(v)|$ is even if and only if $|v|_\tt{2}$ is even, it follows that the lengths of $g(u)\alpha$ and $g(u')\alpha$ have different parities.  So $y$ occurs starting at both an even position and an odd position in $\mathbf{y}$.

  Finally, let $z\in \Fact(\mathbf{z})$.  Then $z$ occurs in $\tau(y)$ for some prefix $y$ of $\mathbf{y}$.  From above, we see that $y$ must also occur starting at an odd position in $\mathbf{y}$, hence $\overline{\tau}(y)$ also occurs in $\mathbf{z}$, and $\overline{\tau}(y)$ contains the sister of $z$.
\end{proof}

\subsection{Factor complexity}\label{SubSection:FactorComplexity}

\begin{proposition}\label{Prop:xFactorComplexity}
    For all $n \geq 1$, the word $\mathbf{x} = f^\omega(\tt{0})$ has exactly two right-special factors $u$ and $v$ of length $n$. The word $u$ ends with $\tt{0}$ and $u\tt{1}, u\tt{2} \in \Fact(\mathbf{x})$, and the word $v$ ends with $\tt{2}$ and $v\tt{0}, v\tt{2} \in \Fact(\mathbf{x})$.
    % It follows that the factor complexity of $\mathbf{x}$ is given by $C(n) = 2n+1$ for all $n \geq 0$.
\end{proposition}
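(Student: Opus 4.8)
The plan is to analyze the right-special factors of $\mathbf{x} = f^\omega(\tt{0})$ directly via the structure of the morphism $f$, using the idea that right-special factors of a fixed point of a primitive (or at least suitably nice) morphism can be traced back through desubstitution. First I would establish the base case: since $f(\tt{0})=\tt{01}$, $f(\tt{1})=\tt{022}$, and $f(\tt{2})=\tt{02}$, I would directly enumerate the factors of $\mathbf{x}$ of length $1$ and their right extensions. The letters occurring in $\mathbf{x}$ are $\tt{0}, \tt{1}, \tt{2}$; examining which letters follow each, I expect to find that $\tt{0}$ is followed by $\tt{1}$ or $\tt{2}$ (so $\tt{0}$ is right-special, ending in $\tt{0}$, matching the role of $u$), and $\tt{2}$ is followed by $\tt{0}$ or $\tt{2}$ (so $\tt{2}$ is right-special, ending in $\tt{2}$, matching the role of $v$), while $\tt{1}$ is always followed by $\tt{0}$ (since $f(\tt{1})=\tt{022}$ and $f$ applied to whatever follows $\tt{1}$ begins with $\tt{0}$). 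This confirms exactly two right-special factors of length $1$ with the stated extension behavior.

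For the inductive step, the key tool is the \emph{synchronization} / \emph{desubstitution} property of $f$. I would argue that every sufficiently long factor $w$ of $\mathbf{x}$ has a unique interpretation as $w = s\, f(w')\, p$, where $s$ is a proper suffix of some $f(a)$, $p$ is a proper prefix of some $f(b)$, and $w'$ is a factor of $\mathbf{x}$; the uniqueness follows from the fact that the images $f(\tt{0}), f(\tt{1}), f(\tt{2})$ all begin with $\tt{0}$ and the only places $\tt{0}$ occurs are at the start of each block, so the factorization into $f$-blocks is recoverable (this is the standard recognizability argument). The crucial observation is that a factor $w$ of $\mathbf{x}$ is right-special if and only if its ``core'' $w'$ under desubstitution is right-special, \emph{provided} the right boundary of $w$ aligns with a block boundary. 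Because $f(\tt{0})$, $f(\tt{1})$, $f(\tt{2})$ have distinct lengths but a common prefix $\tt{0}$, one needs to track how the right end of $w$ sits inside a block: a factor ending strictly inside a block $f(a)$ is right-special only if that position inside $f(a)$ is itself a branching point, but inspection of the three images shows that the interior positions are deterministic (each internal letter is forced), so right-specialness must come from a block boundary. This reduces right-special factors of length $n$ to right-special factors of the shorter length $|w'|$, giving the inductive correspondence.

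The main obstacle I anticipate is handling the right-end alignment carefully: because $f$ is non-uniform (images have lengths $2, 3, 2$), the right extensions of $w$ correspond to right extensions of $w'$ only after accounting for the partial block at the right end, and one must verify that exactly the two right-special factors $u$ (ending in $\tt{0}$, extendable by $\tt{1}$ and $\tt{2}$) and $v$ (ending in $\tt{2}$, extendable by $\tt{0}$ and $\tt{2}$) at each level map under $f$ to the two right-special factors at the next level. Concretely, I would check that $f(u)$ and $f(v)$, suitably extended, produce exactly the factors $u$ and $v$ at the larger length, using that $f(\tt{0})=\tt{01}$ ends in $\tt{1}$, $f(\tt{2})=\tt{02}$ ends in $\tt{2}$, and the bifurcation $u\tt{1}, u\tt{2}$ versus $v\tt{0}, v\tt{2}$ is preserved. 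I expect the cleanest path is to show that the map $w' \mapsto$ (the longest right-special factor of $\mathbf{x}$ whose desubstitution is $w'$) is a bijection between right-special factors at consecutive scales, and then induct; the verification that $u\tt{1},u\tt{2}\in\Fact(\mathbf{x})$ and $v\tt{0},v\tt{2}\in\Fact(\mathbf{x})$ with no other right extensions is the heart of the argument and where the explicit form of $f$ must be used most carefully.
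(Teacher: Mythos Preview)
Your approach shares the paper's core idea: use the recognizability of $f$ to relate right-special factors at one scale to those at a smaller scale. The organizational difference is that the paper separates existence from uniqueness. For existence, it applies $f$ forward: from a right-special $u$ ending in $\tt{0}$ with extensions $\tt{1},\tt{2}$, it shows $f(u)f(\tt{2})$ is right-special ending in $\tt{2}$; from $v$ ending in $\tt{2}$ with extensions $\tt{0},\tt{2}$, it shows $f(v)\tt{0}$ is right-special ending in $\tt{0}$. For uniqueness, it argues by minimal counterexample: if three right-special factors occurred at some least length $n$, two of them would share a length-$(n-1)$ suffix, that suffix would be bispecial, and desubstituting it would produce a shorter factor with four bi-extensions, hence three right-special factors at a smaller length. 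Your bijection-style induction can be made to work but must contend with the non-uniform block lengths $2,3,2$, which makes a clean induction on $n$ awkward; the paper's minimal-counterexample framing sidesteps that entirely.

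One point to tighten: the claim that ``interior positions are deterministic, so right-specialness must come from a block boundary'' is not accurate as stated. A factor ending at the block-initial $\tt{0}$ (one letter into a block) is right-special exactly when the block could be $f(\tt{0})$ (next letter $\tt{1}$) or $f(\tt{1})/f(\tt{2})$ (next letter $\tt{2}$); and a factor ending in $\tt{02}$ is right-special exactly when that $\tt{02}$ could be all of $f(\tt{2})$ (next letter $\tt{0}$) or a proper prefix of $f(\tt{1})$ (next letter $\tt{2}$). These are precisely the two types $u$ and $v$ in the proposition, but neither is ``ending at a block boundary'' in the sense $p=\varepsilon$: at a true block boundary the next letter is always $\tt{0}$, so no factor is right-special there. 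Once you phrase the two branching positions this way, your desubstitution bijection goes through and matches the paper's forward construction.
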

\begin{proof}
    Since $\tt{00},\tt{11},\tt{12},\tt{21}\not\in\Fact(\mathbf{x})$, every non-empty factor of $\mathbf{x}$ has at most two right extensions in $\Fact(\mathbf{x})$. Moreover, there is no right-special factor ending in $\tt{1}$ since neither $\tt{11}$ nor $\tt{12}$ belongs to $\Fact(\mathbf{x})$. Both $\tt{0}$ and $\tt{2}$ are right-special as $\tt{01}$, $\tt{02}$, $\tt{20}$, and $\tt{22}$ all belong to $\Fact(\mathbf{x})$. 
    % Since $\mathbf{x}$ is recurrent, every right-special factor of length $n$ is a suffix of some right-special factor of length $n+1$.

    We claim that $\mathbf{x}$ contains arbitrarily long right-special factors ending with $\tt{0}$ and arbitrarily long right-special factors ending with $\tt{2}$. Let $u$ and $v$ be right-special factors of length $n$ ending respectively with $\tt{0}$ and $\tt{2}$. Such factors exist when $n = 1$. The word $u$ has right extensions $u\tt{1}$ and $u\tt{2}$, hence $f(u)f(\tt{1})$ and $f(u)f(\tt{2})\tt{0}$ are also factors of $\mathbf{x}$. As $f(u)f(\tt{1}) = f(u)f(\tt{2})\tt{2}$, we see that $f(u)f(\tt{2})$ is right-special, ends in $\tt{2}$, and has length greater than $n$. Similarly, $v$ has right extensions $v \tt{0}$ and $v \tt{2}$, and we find that $f(v) \tt{0}$ is right-special, ends in $\tt{0}$, and has length greater than $n$. This proves the claim.
    
    It suffices to show that $\mathbf{x}$ never has three right-special factors of the same length. Suppose, towards a contradiction, that $\mathbf{x}$ has three right-special factors of the same length.  Let $n$ be the least integer such that $\mathbf{x}$ has three right-special factors of length $n$.  By enumerating all right-special factors of $\mathbf{x}$ of length at most $3$, we see that $n \geq 4$.  Now each right-special factor of length $n$ contains a right-special factor of length $n-1$ as a suffix.  By the minimality of $n$, there are only two right-special factors of length $n-1$.  Thus, two distinct right-special factors of length $n$ have a common suffix $u$ of length $n-1$, i.e., they have the form $au$ and $bu$, where $a,b\in\Sigma_3$ and $|u| = n - 1$.  So $u$ is bispecial, meaning that $u$ does not begin (or end) in $\tt{1}$ or $\tt{22}$.  Hence, by deleting at most one letter at the beginning of $u$ (if $\tt{20}$ is a prefix of $u$) and at most two letters at the end of $u$ (if $\tt{02}$ is a suffix of $u$), we obtain a factor $u'=f(v)$ of $u$ for some $v$.  Since $|u|\geq 4$, we have $1\leq |v|<|u|$.  Further, we see that $v$ has four distinct bi-extensions in $\Fact(\mathbf{x})$.  Let $cv$ and $dv$ be the left extensions of $v$ in $\Fact(\mathbf{x})$.  Then $cv$ and $dv$ are right-special and end in the same letter. Thus $\mathbf{x}$ has at least three distinct right-special factors of length $|cv|<|au|=n$, and this contradicts the minimality of $n$.
    %
    %We have thus proved the first claim. The latter claim concerning the factor complexity follows from the first claim by observing that $C(n+1) - C(n)$ equals the number of right-special factors of length $n$ for all $n \geq 1$.
\end{proof}

\begin{proposition}\label{Prop:yFactorComplexity}
    For all $n \geq 2$, the word $\mathbf{y} = g(f^\omega(\tt{0}))$ has exactly two right-special factors $u$ and $v$ of length $n$. The word $u$ ends with $\tt{02}$ and $u\tt{1}, u\tt{2} \in \Fact(\mathbf{y})$, and the word $v$ ends with $\tt{22}$ and $v\tt{0}, v\tt{2} \in \Fact(\mathbf{y})$. 
    % The letter $\tt{2}$ is the unique right-special factor of length $1$ and $\tt{20}, \tt{21}, \tt{22} \in \Fact(\mathbf{y})$.
    % It follows that the factor complexity of $\mathbf{y}$ is given by $C(n) = 2n+1$ for all $n \geq 0$.
\end{proposition}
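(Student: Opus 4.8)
The plan is to transfer \Cref{Prop:xFactorComplexity} from $\mathbf{x}$ to $\mathbf{y}=g(\mathbf{x})$ by exploiting the recognizability of $g$. Since $\mathbf{x}$ avoids $\tt{00}$, $\tt{11}$, $\tt{12}$, $\tt{21}$, and since $\tt{1}$ is always followed by $\tt{0}$ in $\mathbf{x}$, each of the letters $\tt{0}$ and $\tt{1}$ is, in $\mathbf{y}$, immediately preceded and immediately followed by $\tt{2}$. Every occurrence of $\tt{2}$ in $\mathbf{y}$ therefore begins a $g$-block (one of $g(\tt{0})=\tt{20}$, $g(\tt{1})=\tt{21}$, $g(\tt{2})=\tt{2}$), and the block boundaries are exactly the positions immediately before the occurrences of $\tt{2}$. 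Consequently every factor of $\mathbf{y}$ has a unique $g$-decomposition up to (at most) a one-letter partial block $\tt{0}$ or $\tt{1}$ at its left end. I would record these structural facts first, as they drive everything else.

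Next I would pin down the right extensions. Because $\tt{0}$ and $\tt{1}$ have the unique right extension $\tt{2}$, every right-special factor of $\mathbf{y}$ ends in $\tt{2}$; reading this final $\tt{2}$ as the start of the block $g(x_k)$ of the underlying letter $x_k\in\Fact(\mathbf{x})$, the right extension of the factor is $\tt{0}$, $\tt{1}$, or $\tt{2}$ according as $x_k=\tt{0}$, $\tt{1}$, or $\tt{2}$. A factor ending in $\tt{12}$ is then never right-special, since its $\tt{1}$ comes from a block $g(\tt{1})$ and $\tt{1}$ is always followed by $\tt{0}$ in $\mathbf{x}$, forcing the continuation $\tt{0}$. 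Using $\tt{00},\tt{21}\notin\Fact(\mathbf{x})$, a factor ending in $\tt{02}$ (resp.\ $\tt{22}$) can be continued only by $\tt{1}$ or $\tt{2}$ (resp.\ $\tt{0}$ or $\tt{2}$), and it is right-special precisely when its underlying factor of $\mathbf{x}$ is right-special. This already yields the two claimed types together with their right-extension sets; the remaining task is to count exactly one factor of each type per length.

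For the count I would set up a correspondence with the right-special factors of $\mathbf{x}$. By \Cref{Prop:xFactorComplexity}, for each $m\geq 1$ there is a unique right-special factor $u_m$ of $\mathbf{x}$ ending in $\tt{0}$ and a unique one $v_m$ ending in $\tt{2}$; moreover, since the length-$m$ suffix of a right-special factor of length $m+1$ is itself right-special and ends in the same letter, we have $u_{m+1}=c_m u_m$ and $v_{m+1}=c_m' v_m$ for single letters $c_m,c_m'$. To each right-special factor $r$ of $\mathbf{x}$ I associate the aligned factor $A(r)=g(r)\tt{2}$, and, when $r$ begins with $\tt{0}$ or $\tt{1}$, the factor $B(r)$ obtained from $A(r)$ by deleting its leading $\tt{2}$. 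Using the recognizability from the first paragraph, I would check that $A(r)$ and $B(r)$ are right-special of the type determined by the last letter of $r$ (the first letter of $B(r)$ is always preceded by $\tt{2}$, so $B(r)$ and $\tt{2}B(r)=A(r)$ have the same continuations), and that conversely every right-special factor of $\mathbf{y}$ equals exactly one $A(r)$ or $B(r)$, obtained by prepending the forced $\tt{2}$ when the factor starts with $\tt{0}$ or $\tt{1}$ to reach a block boundary.

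It then remains to verify that, within each type, the lengths $|A(u_m)|=|g(u_m)|+1$ and $|B(u_m)|=|g(u_m)|$ (the latter only when $u_m$ begins with $\tt{0}$ or $\tt{1}$) together realize every integer $\geq 2$ exactly once, and likewise for $v_m$. This bookkeeping is the step I expect to be the main obstacle. The crux is that $|g(u_{m+1})|-|g(u_m)|=|g(c_m)|\in\{1,2\}$, equal to $1$ exactly when $c_m=\tt{2}$, that is, exactly when $u_{m+1}$ begins with $\tt{2}$ and hence $B(u_{m+1})$ does not exist. So whenever consecutive aligned lengths differ by $1$ there is no competing $B$-factor, and whenever they differ by $2$ the intermediate length is filled by exactly one $B$-factor; the lengths thus interlock with no gaps and no repetitions. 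Checking the base values $|g(u_1)|=|g(\tt{0})|=2$ and $|g(v_1)|=|g(\tt{2})|=1$ then gives exactly one right-special factor of each type for every length $n\geq 2$ (the length-$1$ factor $\tt{2}$, which has three right extensions, is precisely why the statement begins at $n=2$). Combining the two types yields exactly two right-special factors of each length $n\geq 2$, of the stated forms, completing the proof along lines parallel to \Cref{Prop:xFactorComplexity}.
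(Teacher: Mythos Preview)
Your approach is correct but genuinely different from the paper's. The paper argues existence quickly (images under $g$ of the right-special factors from \Cref{Prop:xFactorComplexity}) and then proves ``at most two'' by a minimal-counterexample argument: if some least $n\geq 3$ had three right-special factors, two of them would be of the form $au$, $bu$ with $u$ bispecial; since $\tt{2}$ is the unique left-special and right-special letter, $u=g(v)\tt{2}$, and pulling the four bi-extensions back through $g$ produces two right-special factors of $\mathbf{x}$ of the same length ending in the same letter, contradicting \Cref{Prop:xFactorComplexity}. This mirrors the proof of \Cref{Prop:xFactorComplexity} itself and is quite short.

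You instead build an explicit bijection: every right-special factor of $\mathbf{y}$ is either $A(r)=g(r)\tt{2}$ for a right-special $r$ in $\mathbf{x}$, or $B(r)$ obtained by stripping the leading $\tt{2}$ when $r$ begins with $\tt{0}$ or $\tt{1}$; you then run a length-interlocking argument using $|g(u_{m+1})|-|g(u_m)|\in\{1,2\}$ to show these hit each length $\geq 2$ exactly once in each type. This is more constructive (it names every right-special factor of $\mathbf{y}$) and makes the structural correspondence with $\mathbf{x}$ very explicit, at the cost of more bookkeeping. The paper's approach is slicker for proving the bare count, while yours would be preferable if one later needed to actually manipulate the right-special factors of $\mathbf{y}$ individually.
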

\begin{proof}
  % By inspection, we see that $\tt{2}$ is the unique right-special factor of length $1$ and that it is extendible by each letter. 
  By considering the images under $g$ of the right extensions of the right-special factors of $\mathbf{x}$ described in \Cref{Prop:xFactorComplexity}, we see that the claimed words $u$ and $v$ exist for all $n\geq 2$. Thus it suffices to show that $\mathbf{y}$ has at most two right-special factors of length $n$ for all $n \geq 2$. The claim is easily checked for $n = 2$.

  Assume for a contradiction that there exists a least integer $n \geq 3$ such that there are three right-special factors of length $n$. As in the proof of \Cref{Prop:xFactorComplexity}, there exists a right-special factor $u$ of length $n - 1$ such that $au$ and $bu$ are right-special for distinct letters $a$ and $b$. 
  % Moreover, the right extensions of $au$ and $bu$ in $\mathbf{y}$ end in the same letters. 
  Since $\tt{2}$ is the unique left-special letter and the unique right-special letter, we see that $u$ begins and ends in $\tt{2}$. Therefore $u = g(v) \tt{2}$ for a factor $v$ of $\mathbf{x}$. Since $au$ and $bu$ both have two right extensions in $\mathbf{y}$, we deduce that $\mathbf{x}$ has two right-special factors of length $|v| + 1$ that end in the same letter. This contradicts \Cref{Prop:xFactorComplexity}.
\end{proof}

\begin{proposition}\label{Prop:zFactorComplexity}
    For all $n \geq 4$, the word $\mathbf{z} = \tau(g(f^\omega(\tt{0})))$ has $4$ right-special factors of length $n$, each with exactly two right extensions. 
    % For $n = 1, 2, 3$, the number of right-special factors of length $n$ are respectively $3$, $5$, $6$.
    % It follows that the factor complexity of $\mathbf{z}$ is given by $C(0)=1$, $C(1)=3$, $C(2)=7$, $C(3)=12$, and $C(n)=4n+2$ for all $n\geq 4$.
\end{proposition}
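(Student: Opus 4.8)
The plan is to transfer the right-special structure of $\mathbf{y}$, determined in \Cref{Prop:yFactorComplexity}, to $\mathbf{z}=\tau(\mathbf{y})$, the guiding intuition being that the alternation of $\tau$ \emph{doubles} the number of right-special factors. Concretely, I expect each of the two right-special factors of $\mathbf{y}$ of a given length to give rise to exactly two right-special factors of $\mathbf{z}$: a $\tau$-version and an $\overline{\tau}$-version, which are sisters of one another. Together with the two right-special factors of $\mathbf{y}$ this accounts for the count $2\times 2=4$, and each inherits exactly two right extensions from its $\mathbf{y}$-ancestor. The technical engine throughout is a recognizability property of $\tau$ that lets me parse factors of $\mathbf{z}$ and descend to $\mathbf{y}$.

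First I would establish that recognizability. Writing $t_1(\tt{0})=\tt{001}$, $t_2(\tt{0})=\tt{002}$, $t_1(\tt{1})=\tt{00101101}$, $t_2(\tt{1})=\tt{00202202}$, and observing that $t_i(\tt{2})=t_i(\tt{1})^2$, I call these four strings \emph{units}. Each unit begins with $\tt{00}$, contains no other occurrence of $\tt{00}$, and ends in $\tt{1}$ or $\tt{2}$; hence no $\tt{00}$ straddles a unit boundary, and the occurrences of $\tt{00}$ in $\mathbf{z}$ mark exactly the unit starts. So any factor of $\mathbf{z}$ containing an interior $\tt{00}$ parses uniquely into units. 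Moreover a $t_1$-unit uses only the letters $\tt{0},\tt{1}$ and a $t_2$-unit only $\tt{0},\tt{2}$, so a unit reveals both its type and its parity; since consecutive units alternate in parity except that a letter $\tt{2}$ of $\mathbf{y}$ produces two units of equal parity (and at most two), the unit sequence decodes uniquely back to a factor of $\mathbf{y}$.

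For the lower bound I would take the right-special factors $u$ (ending in $\tt{02}$, with extensions $\tt{1},\tt{2}$) and $v$ (ending in $\tt{22}$, with extensions $\tt{0},\tt{2}$) of $\mathbf{y}$ from \Cref{Prop:yFactorComplexity} and trace their images through $\tau$. A short computation shows that the two blocks produced by the two extensions of $u$ (respectively $v$) share a common prefix and then diverge at a fixed offset, so $\tau$ carries $u$ and $v$ to right-special factors of $\mathbf{z}$ whose two right extensions are inherited from those of $u$ and $v$; in particular each has exactly two right extensions. By \Cref{Lemma:SisterOccurs} every factor of $\mathbf{y}$ occurs at both an even and an odd starting position, so both the $\tau$- and $\overline{\tau}$-images occur in $\mathbf{z}$; these are sisters and, being long, are distinct. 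As $\mathbf{y}$ has right-special factors of every length $n\geq 2$, this yields four distinct right-special factors of $\mathbf{z}$ of every sufficiently large length, each of degree $2$.

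For the upper bound I would argue by a minimal counterexample, in the spirit of the proofs of \Cref{Prop:xFactorComplexity} and \Cref{Prop:yFactorComplexity}: were some length to admit a right-special factor of $\mathbf{z}$ of degree at least $3$, or a fifth right-special factor, I would take the shortest such and parse it into units. Since units are fixed strings, the divergence of the extensions cannot occur strictly inside a unit, so it decodes to a divergence in the underlying factor of $\mathbf{y}$ at a well-defined parity, producing either a right-special factor of $\mathbf{y}$ of degree at least $3$ or a third one, contradicting \Cref{Prop:yFactorComplexity}. Combining the two bounds gives exactly four right-special factors, each of degree $2$, with the finitely many small lengths checked directly so as to pin the threshold at $n\geq 4$. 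The step I expect to be the main obstacle is making this desubstitution fully rigorous in the presence of the alternation: one must control precisely where the branching in $\mathbf{z}$ falls relative to the branching in $\mathbf{y}$, and rule out spurious right-special factors created by the doubling $t_i(\tt{2})=t_i(\tt{1})^2$ and by the unit boundaries, while keeping the length bookkeeping tight.
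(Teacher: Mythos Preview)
Your approach is essentially the paper's: verify small lengths directly, produce right-special factors by lifting those of $\mathbf{y}$ through $\tau$ and $\overline{\tau}$ (invoking \Cref{Lemma:SisterOccurs}), and bound their number by a minimal-counterexample argument that desubstitutes back to $\mathbf{y}$ and contradicts \Cref{Prop:yFactorComplexity}. The paper carries this out slightly differently in one respect worth noting: rather than reasoning abstractly about ``units'' and their boundaries, it first pins down the possible \emph{suffixes} of a long right-special factor $s$, showing by a short case analysis on $\Fact(\mathbf{z})$ that $s$ must end in one of $\tt{10}$, $\tt{100}$, $\tt{20}$, $\tt{200}$. This immediately yields the degree-$2$ claim and splits the uniqueness argument into two concrete cases (one per suffix type, the sisters following for free), each of which is then pushed down to $\mathbf{y}$.

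Your sentence ``the divergence of the extensions cannot occur strictly inside a unit'' is the one place where the paper's concreteness pays off. In fact one of the two branching types (suffix $\tt{10}$, extensions $\tt{0}$ and $\tt{1}$) \emph{does} occur mid-unit: e.g.\ after $\tt{0010}$ one extension continues the unit $t_1(\tt{1})=\tt{00101101}$ while the other crosses into the next unit. So the branching is not cleanly aligned with unit boundaries, and your unit-parsing framing, while correct in spirit, would need exactly the suffix analysis the paper does in order to be made precise. You flag this yourself as the main obstacle; the paper's resolution is simply to replace ``unit boundary'' bookkeeping by the four-way suffix classification.
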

\begin{proof}
  This proof is essentially the same as for $\mathbf{x}$ and $\mathbf{y}$, but the presence of $\tau$ complicates the analysis slightly. It is straightforward to verify the statement for $n\leq 9$.
  
  Let $s$ be a right-special factor of $\mathbf{z}$ such that $|s| \geq 10$.  First note that $s$ contains at least one of the letters $\tt{1}$ and $\tt{2}$.  We assume that the letter $\tt{1}$ occurs closer to the end of the word $s$ than the letter $\tt{2}$, i.e., that $s$ has suffix $\tt{1}$, $\tt{10}$, or $\tt{100}$.  (A symmetric argument applies if $s$ has suffix $\tt{2}$, $\tt{20}$, or $\tt{200}$.)  We first argue that $s$ has suffix $\tt{10}$ or $\tt{100}$.  Suppose otherwise that $s$ has suffix $\tt{1}$.  Then we must have $s \tt{0}, s \tt{1}\in \Fact(\mathbf{z})$, since $\tt{12}\not\in \Fact(\mathbf{z})$. The suffix $\tt{11}$ of $s \tt{1}$ forces $s$ to have suffix $\tt{00101}$, but then $s \tt{0}$ has suffix $\tt{001010}$.  Since $\tt{001010}\not\in\Fact(\mathbf{z})$, this is a contradiction.  So $s$ has suffix $\tt{10}$ or $\tt{100}$.  If $s$ has suffix $\tt{10}$, then we have $s \tt{0}, s \tt{1} \in \Fact(\mathbf{z})$ and $s\tt{2}\not\in \Fact(\mathbf{z})$, since $\tt{102} \notin \Fact(\mathbf{z})$. If $s$ has suffix $\tt{100}$, then we have $s \tt{1}, s \tt{2} \in \Fact(\mathbf{z})$ and $s\tt{0}\not\in \Fact(\mathbf{z})$, since $\tt{000}\not\in \Fact(\mathbf{z})$.  In particular, we see that every right-special factor of $\mathbf{z}$ of length at least $10$ has exactly two right extensions.

  Now it suffices to show that $\mathbf{z}$ contains a unique right-special factor of length $n$ with suffix $\tt{10}$ and a unique right-special factor of length $n$ with suffix $\tt{100}$ for each $n\geq 10$, as \Cref{Lemma:SisterOccurs} guarantees that their sisters will also be right-special factors of $\mathbf{z}$.  To see that at least one right-special of length $n$ of each type exists, we consider the images under $\tau$ (or $\overline{\tau}$) of the right extensions of the right-special factors of $\mathbf{y}$ described in \Cref{Prop:yFactorComplexity}.

  Suppose for a contradiction that there is a least integer $n \geq 10$ such that there exist two right-special factors of length $n$ with suffix $\tt{10}$. By the minimality of $n$, they have the form $au$ and $bu$ for distinct letters $a$, $b$ and a right-special factor $u$ of length $n - 1$, and their right extensions are $au\tt{0}$, $au\tt{1}$, $bu\tt{0}$, and $bu\tt{1}$. Since $\tt{1100}, \tt{10101} \notin \Fact(\mathbf{z})$, it must be that $u$ has suffix $\tt{0010}$. Further, $u$ cannot have suffix $\tt{10010}$ as this factor is not right-special. Therefore $u$ has suffix $\tt{20010}$. 
  Since $\Fact(\mathbf{z})$ is closed under reversal and $u$ is left-special, we see from our work above that $u$ has prefix $\tt{0}$.
  
  First assume that $a = \tt{0}$.  Then $b\in\{\tt{1},\tt{2}\}$, and we let $c$ be the other letter in $\{\tt{1},\tt{2}\}$. Since $a\tt{00},b \tt{0} c \notin \Fact(\mathbf{z})$, it must be that $u$ has prefix $\tt{0}b$. Further, as $\tt{00} bb, b \tt{0} b \tt{0} b \notin \Fact(\mathbf{z})$, we see that $u$ must begin with $0b00c$. It follows that $u = \tt{0} b \tt{00} c u' \tt{20010}$ for some word $u'$. Now $\tt{00}c u' \tt{2}$ must equal $\tau(v)$ (if $c = \tt{1}$) or $\overline{\tau}(v)$ (if $c = \tt{2}$) for a nonempty factor $v$ of $\mathbf{y}$.  Further, since $au\tt{0}\in \Fact(\mathbf{z})$, we must have $\tt{0}v\tt{0}\in \Fact(\mathbf{y})$.  Since $au\tt{1}$ is also in $\Fact(\mathbf{z})$, we must have $\tt{0}v\tt{1}$ or $\tt{0}v\tt{2}$ in $\Fact(\mathbf{y})$ as well.  So $\tt{0}v$ is right-special in $\mathbf{y}$, and from \Cref{Prop:yFactorComplexity}, it must be $\tt{0}v\tt{2}\in \Fact(\mathbf{y})$.  By considering the right extensions $bu\tt{0}$ and $bu\tt{1}$ of $bu$, and using the fact that $\Fact(\mathbf{z})$ is closed under reversal, we see that $\tt{2}v\tt{0}$ and $\tt{2}v\tt{2}$ must also be factors of $\mathbf{y}$.  So $\tt{0}v$ and $\tt{2}v$ are distinct right-special factors of $\mathbf{y}$, both with right extensions by $\tt{0}$ and $\tt{2}$ belonging to $\Fact(\mathbf{y})$, and this contradicts \Cref{Prop:yFactorComplexity}.

  We may now assume that $a, b \neq \tt{0}$.  In this case, we see that $u = u' \tt{0010}$ with $u' = \tau(\tt{1}v)$ or $u' = \overline{\tau}(\tt{1}v)$ for a factor $\tt{1}v$ of $\mathbf{y}$.  By an argument similar to the one in the previous paragraph, one can show that $\tt{1}v$ and $\tt{2}v$ are right-special in $\mathbf{y}$, and derive a contradiction with \Cref{Prop:yFactorComplexity}.  Thus we have shown that $\mathbf{z}$ contains a unique right-special factor of length $n$ with suffix $\tt{10}$ for all $n \geq 10$.

  The proof that $\mathbf{z}$ contains a unique right-special factor of length $n$ with suffix $\tt{100}$ for all $n\geq 10$ is similar, and is omitted.
\end{proof}

Note that it now follows easily from \Cref{Prop:xFactorComplexity}, \Cref{Prop:yFactorComplexity}, and \Cref{Prop:zFactorComplexity}, along with a determination of initial values by inspection, that $C_\mathbf{x}(n)=C_\mathbf{y}(n)=2n+1$ for all $n\geq 0$, and that $C_\mathbf{z}(n)=4n+2$ for all $n\geq 4$, with $C_\mathbf{z}(0)=1$, $C_\mathbf{z}(1)=3$, $C_\mathbf{z}(2)=7$, and $C_\mathbf{z}(3)=12$.

We remark that \cite[Corollary~1.4]{BalkovaPelantovaStarosta2009} states that an infinite word with factor complexity $2n + 1$ for all $n$ is rich provided that its language is closed under reversal.  Since $\mathbf{x}$ and $\mathbf{y}$ are uniformly recurrent and contain arbitrarily long palindromes (see the results of \Cref{SubSection:PalindromicComplexity}), their languages are closed under reversal, and we immediately obtain the following.

\begin{corollary}
    The words $\mathbf{x}$ and $\mathbf{y}$ are rich.
\end{corollary}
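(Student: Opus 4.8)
The plan is to deduce richness directly from the cited criterion \cite[Corollary~1.4]{BalkovaPelantovaStarosta2009}, which asserts that an infinite word whose language is closed under reversal and whose factor complexity equals $2n+1$ for every $n$ is rich. The complexity requirement is already in hand: \Cref{Prop:xFactorComplexity} and \Cref{Prop:yFactorComplexity}, together with the initial-value check recorded immediately before this corollary, give $C_\mathbf{x}(n)=C_\mathbf{y}(n)=2n+1$ for all $n\geq 0$. Thus the only genuine task is to verify that the languages of $\mathbf{x}$ and $\mathbf{y}$ are closed under reversal, after which the corollary follows by a single appeal to the criterion.

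To obtain closure under reversal, I would invoke the standard fact that a uniformly recurrent word containing arbitrarily long palindromes has a reversal-closed language: given a factor $w$, uniform recurrence forces an occurrence of $w$ inside a sufficiently long palindrome $P$, and since $P=P^R$ the reversal $w^R$ also occurs in $P$, hence in the word. So it suffices to check these two hypotheses for $\mathbf{x}$ and $\mathbf{y}$.

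For uniform recurrence, I would first observe that $f$ is primitive: a direct computation shows that $f^2(a)$ contains all three letters of $\Sigma_3$ for each $a\in\Sigma_3$, and since $f$ is prolongable on $\tt{0}$, its fixed point $\mathbf{x}=f^\omega(\tt{0})$ is therefore uniformly recurrent. Uniform recurrence of $\mathbf{y}=g(\mathbf{x})$ then follows because $g$ is non-erasing and the image of a uniformly recurrent word under a non-erasing morphism is again uniformly recurrent. For the existence of arbitrarily long palindromes in $\mathbf{x}$ and $\mathbf{y}$, I would defer to the palindromic complexity computations of \Cref{SubSection:PalindromicComplexity}, exactly as the preceding remark indicates.

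Since the argument is essentially a packaging step, I do not expect a deep obstacle; the one point requiring care is the (mild) forward dependence on \Cref{SubSection:PalindromicComplexity} for the arbitrarily-long-palindromes hypothesis. If one wished to remove that dependence, the harder part would be to exhibit arbitrarily long palindromic factors of $\mathbf{x}$ and $\mathbf{y}$ directly---for instance by exploiting the membership of $f$ and $g$ in the class $P_{\text{ret}}$---but given the organization of the paper it is cleaner simply to cite the later palindromic complexity results.
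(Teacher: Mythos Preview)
Your proposal is correct and follows essentially the same approach as the paper: both invoke \cite[Corollary~1.4]{BalkovaPelantovaStarosta2009} after noting that $C_\mathbf{x}(n)=C_\mathbf{y}(n)=2n+1$ and that the languages are closed under reversal because $\mathbf{x}$ and $\mathbf{y}$ are uniformly recurrent with arbitrarily long palindromes (the latter deferred to \Cref{SubSection:PalindromicComplexity}). Your treatment is actually slightly more detailed than the paper's, as you justify uniform recurrence via the primitivity of $f$ and the non-erasingness of $g$, which the paper simply asserts.
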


We note, however, that we cannot deduce that $\mathbf{z}$ is rich directly from its factor complexity.  So we proceed with the determination of the palindromic complexity of $\mathbf{z}$ in the next subsection.

\subsection{Palindromic Complexity}\label{SubSection:PalindromicComplexity}

In order to determine the palindromic complexity of $\mathbf{z}$, we first show that all palindromic factors in $\mathbf{x}$ and $\mathbf{y}$ have unique palindromic extensions.  
Note that this property does not hold for all words of factor complexity $2n + 1$ (c.f.~\cite[Section~4]{BalkovaPelantovaStarosta2009}).

\begin{proposition}\label{Prop:xPalindromicComplexity}
Every palindromic factor of $\mathbf{x}$ has a unique palindromic extension in $\Fact(\mathbf{x})$.
\end{proposition}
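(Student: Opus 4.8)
The plan is to induct on $|p|$, showing that every palindromic factor $p$ of $\mathbf{x}$ has exactly one palindromic extension $apa\in\Fact(\mathbf{x})$. First I would record that $\mathbf{x}$ is uniformly recurrent (as $f$ is primitive) and contains arbitrarily long palindromes: setting $P_0=\tt{0}$ and $P_{k+1}=f(P_k)\tt{0}$, each $P_k$ is a palindrome (see the identity below), so $\Fact(\mathbf{x})$ is closed under reversal. In particular, for a palindrome $p$ the set of left extension letters equals the set of right extension letters; write $E(p)$ for this common set. Since $\tt{00},\tt{11},\tt{12},\tt{21}\notin\Fact(\mathbf{x})$, every factor has at most two right extensions, so $|E(p)|\le 2$. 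If $p$ is not right-special then its single extension letter $a$ gives the unique palindromic extension $apa$; and since no right-special factor ends in $\tt{1}$ (\Cref{Prop:xFactorComplexity}), every palindrome ending in $\tt{1}$ is handled this way. Thus it remains to treat the bispecial (equivalently, right-special) palindromes, which start and end in $\tt{0}$ or in $\tt{2}$.

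The engine is the block structure of $\mathbf{x}$: since $f(\tt{0})=\tt{01}$, $f(\tt{1})=\tt{022}$, $f(\tt{2})=\tt{02}$ each begin with $\tt{0}$ and contain no other $\tt{0}$, the occurrences of $\tt{0}$ are exactly the block boundaries, so every factor desubstitutes uniquely. Writing $\widetilde{w}$ for the reversal of $w$, a short induction gives the identity $f(w)\tt{0}=\tt{0}\,\widetilde{f(\widetilde{w})}$; hence $f(w)\tt{0}$ is a palindrome if and only if $w$ is. Using this I would desubstitute the two cases. If $p$ starts and ends in $\tt{0}$, then $p$ begins at a block boundary and its final $\tt{0}$ starts the next block, so $p=f(q)\tt{0}$ with $q\in\Fact(\mathbf{x})$ a palindrome; moreover $p$ being right-special forces $E(q)=\{\tt{0},\tt{2}\}$. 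If $p$ starts and ends in $\tt{2}$, then $p\tt{2}$ cannot contain $\tt{222}$, so $p$ does not end in $\tt{22}$ and thus begins $\tt{20}$; peeling off the outer $\tt{2}$'s shows $p=\tt{2}f(m)\tt{02}$ with $m\in\Fact(\mathbf{x})$ a palindrome, and $p$ being right-special forces $E(m)=\{\tt{1},\tt{2}\}$.

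The induction then closes via two extension dictionaries. In the first case, tracking the forced boundary letters gives $\tt{1}p\tt{1}\in\Fact(\mathbf{x})\iff\tt{0}q\tt{0}\in\Fact(\mathbf{x})$ and $\tt{2}p\tt{2}\in\Fact(\mathbf{x})\iff\tt{2}q\tt{2}\in\Fact(\mathbf{x})$ (e.g.\ $\tt{0}\cdot\tt{1}p\tt{1}=f(\tt{0}q\tt{0})$, while $\tt{2}q\tt{2}\in\Fact(\mathbf{x})$ yields $f(\tt{2}q\tt{2})=\tt{02}f(q)\tt{02}$, which contains $\tt{2}p\tt{2}$). In the second case one gets $\tt{0}p\tt{0}\in\Fact(\mathbf{x})\iff\tt{2}m\tt{2}\in\Fact(\mathbf{x})$ (as $\tt{0}p\tt{0}=f(\tt{2}m\tt{2})\tt{0}$) and $\tt{2}p\tt{2}\in\Fact(\mathbf{x})\iff\tt{1}m\tt{1}\in\Fact(\mathbf{x})$ (as $\tt{0}\,\tt{2}p\tt{2}\,\tt{0}=f(\tt{1}m\tt{1})\tt{0}$). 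Since $E(q)=\{\tt{0},\tt{2}\}$ and $E(m)=\{\tt{1},\tt{2}\}$, the two candidate palindromic extensions of $p$ correspond bijectively to the only two possible palindromic extensions of the strictly shorter palindrome $q$ (resp.\ $m$); by the induction hypothesis exactly one of the latter lies in $\Fact(\mathbf{x})$, hence exactly one of the former does. Every bispecial palindrome has one of these two forms, and the recursion bottoms out at the length-one palindromes $\tt{0},\tt{1},\tt{2}$, whose unique palindromic extensions $\tt{202},\tt{010},\tt{020}$ are checked by hand.

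The main obstacle is establishing the two extension dictionaries rigorously: this is the bookkeeping of how the boundary letters $\tt{1}$ and $\tt{2}$ sit inside the marked blocks. The subtlety is that a trailing $\tt{2}$ can be the lone $\tt{2}$ of $f(\tt{2})=\tt{02}$ or can lie inside $f(\tt{1})=\tt{022}$, so one must use the forbidden factors $\tt{00},\tt{11},\tt{12},\tt{21},\tt{222}$ together with the extension sets $E(q),E(m)$ to pin down the unique desubstitution and to confirm that each prepended or appended letter is forced. Once the dictionaries are in place, the inductive step and the identification of $q$ and $m$ with the two right-special factors of \Cref{Prop:xFactorComplexity} are routine.
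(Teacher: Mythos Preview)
Your approach is correct and is essentially the same induction-by-desubstitution as the paper's: both split on the outer letter of the palindrome, write $p=f(q)\tt{0}$ (or $p=\tt{2}f(m)\tt{02}$), invoke \Cref{Lemma:PalindromicPreimage} (your identity $f(w)\tt{0}=\tt{0}\,\widetilde{f(\widetilde{w})}$) to see that the preimage is a shorter palindrome, and then finish by induction. The organization differs slightly: you first dispose of non-bispecial palindromes in one stroke using closure under reversal and \Cref{Prop:xFactorComplexity}, and then set up explicit if-and-only-if ``dictionaries'' matching the two candidate palindromic extensions of $p$ to those of $q$ (resp.\ $m$); the paper instead runs a direct case analysis on $w=\tt{0}w'\tt{0}$, $\tt{1}w'\tt{1}$, $\tt{2}w'\tt{2}$ and argues uniqueness in the first case by contradiction. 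Your use of \Cref{Prop:xFactorComplexity} to pin down $E(q)=\{\tt{0},\tt{2}\}$ and $E(m)=\{\tt{1},\tt{2}\}$ is exactly what is needed to make the desubstitution of the boundary letters unambiguous, and the dictionaries you state do hold (the forbidden factors $\tt{00},\tt{11},\tt{12},\tt{21},\tt{222}$ force the claimed block identifications). One small addition: include $\varepsilon$ and $\tt{22}$ among the base checks (both are immediate), and note that the degenerate preimages $q=\varepsilon$ and $m=\varepsilon$ correspond to $p=\tt{0}$ and $p=\tt{202}$, the first of which is a base case and the second of which is not right-special, so they cause no trouble.
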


\begin{proof}
    We proceed by induction on the length of the palindromic factor.  First, we check that every palindromic factor of $\mathbf{x}$ of length at most $2$ has a unique palindromic extension.

    Now suppose for some $n\geq 3$ that every palindromic factor of $\mathbf{x}$ of length less than $n$ has a unique palindromic extension, and let $w$ be a palindromic factor of $\mathbf{x}$ of length $n$.  Since $w$ is a palindrome of length at least $3$, we have $w=aw'a$ for some letter $a\in \Sigma_3$ and word $w'\in \Sigma_3^*$ with $|w'|\geq 1$.

    \smallskip

    \noindent
    \textbf{Case 1:} Suppose that $w=\tt{0}w'\tt{0}$.  Then $w=f(v)\tt{0}$ for some word $v\in\Fact(\mathbf{x})$ of length less than $w$.  By \Cref{Lemma:PalindromicPreimage}, the word $v$ is a palindrome, and since $|v|<|w|$, the word $v$ has a unique palindromic extension in $\Fact(\mathbf{x})$, say $ava$, where $a\in\Sigma_3$.  Then $\mathbf{x}$ contains the factor $f(ava)$, which contains a palindromic extension of $w$.  Now suppose (towards a contradiction) that there are two distinct palindromic extensions of $w$ in $\Fact(\mathbf{x})$.  Since $\tt{00}\not\in\Fact(\mathbf{x})$, we must have $\tt{1}w\tt{1}=\tt{10}w'\tt{01}$ and $\tt{2}w\tt{2}=\tt{20}w'\tt{02}$ in $\Fact(\mathbf{x})$.  Note that the only possible preimage of $\tt{1}w\tt{1}$ is $\tt{0}v\tt{0}$, while the possible preimages of $\tt{2}w\tt{2}$ are $\tt{1}v\tt{1}$, $\tt{1}v\tt{2}$, $\tt{2}v\tt{1}$, and $\tt{2}v\tt{2}$.  So $\Fact(\mathbf{x})$ must contain $\tt{0}v\tt{0}$ and at least one word from the set 
    $\{\tt{1}v\tt{1},\tt{1}v\tt{2},\tt{2}v\tt{1},\tt{2}v\tt{2}\}$.  Since $v$ has a unique palindromic extension in $\Fact(\mathbf{x})$ by the inductive hypothesis, we see that $\Fact(\mathbf{x})$ must contain $\tt{0}v\tt{0}$ and either $\tt{1}v\tt{2}$ or $\tt{2}v\tt{1}$.  Since $\tt{0}v\tt{0}\in\Fact(\mathbf{x})$ and $\tt{00}\not\in\Fact(\mathbf{x})$, we see that the first and last letter of $v$ is either a $\tt{1}$ or a $\tt{2}$.  But this is impossible, since $\tt{1}v\tt{2}$ or $\tt{2}v\tt{1}$ is also in $\Fact(\mathbf{x})$, and no word from $\{\tt{11},\tt{12},\tt{21}\}$ belongs to $\Fact(\mathbf{x})$.

    \smallskip

    \noindent
    \textbf{Case 2:} Suppose that $w=\tt{1}w'\tt{1}$.  Since every occurrence of $\tt{1}$ in $\mathbf{x}$ is preceded and followed by $\tt{0}$, we see that $\tt{0}w\tt{0}$ is the unique palindromic extension of $w$ in $\Fact(\mathbf{x})$.

    \smallskip
    
    \noindent
    \textbf{Case 3:} Suppose that $w=\tt{2}w'\tt{2}$.  Since $\tt{21}\not\in\Fact(\mathbf{x})$, we see that $w'$ must begin in $\tt{0}$ or $\tt{2}$.  If $w'$ begins in $\tt{2}$, then we must have $|w'|\geq 2$, since $\tt{222}\not\in\Fact(\mathbf{x})$.  So we can write $w=\tt{22}w''\tt{22}$ for some $w''\in\Sigma_3^*$, and since every occurrence of $\tt{22}$ in $\mathbf{x}$ is preceded and followed by $\tt{0}$, we see that the unique palindromic extension of $w$ in $\Fact(\mathbf{x})$ is $\tt{0}w\tt{0}$.  So we may assume that $w'$ begins in $\tt{0}$.  Hence we can write $w'=f(v)\tt{0}$ for some word $v\in\Sigma_3^*$, and by \Cref{Lemma:PalindromicPreimage}, the word $v$ is a palindrome.  Since $|v|<|w'|<|w|$, the words $v$ and $w'$ have a unique palindromic extension in $\Fact(\mathbf{x})$.  So $w=2w'2$ must be the unique palindromic extension of $w'$ in $\Fact(\mathbf{x})$.  It follows that $\tt{0}v\tt{0}\not\in \Fact(\mathbf{x})$, for $f(\tt{0}v\tt{0})$ contains the palindromic extension $1w'1$ of $w'$. 
    Now, if the unique palindromic extension of $v$ in $\Fact(\mathbf{x})$ is $\tt{1}v\tt{1}$, then $\mathbf{x}$ contains the factor $f(\tt{1}v\tt{1})$, which contains the palindromic extension $\tt{2}w\tt{2}$.  If the unique palindromic extension of $v$ in $\Fact(\mathbf{x})$ is $\tt{2}v\tt{2}$, then $\mathbf{x}$ contains the palindromic extension $f(\tt{2}v\tt{2})\tt{0}=\tt{0}w\tt{0}$ of $w$.  Finally, if both $\tt{0}w\tt{0}$ and $\tt{2}w\tt{2}$ are in $\Fact(\mathbf{x})$, then both $\tt{2}v\tt{2}$ and $\tt{1}v\tt{1}$ would be in $\Fact(\mathbf{x})$, a contradiction.

    \smallskip

    By mathematical induction, we conclude that every palindromic factor of $\mathbf{x}$ has a unique palindromic extension.
\end{proof}

\begin{proposition}\label{Prop:yPalindromicComplexity}
Every palindromic factor of $\mathbf{y}$ has a unique palindromic extension in $\Fact(\mathbf{y})$.
\end{proposition}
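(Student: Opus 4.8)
The plan is to leverage the result already established for $\mathbf{x}$ (\Cref{Prop:xPalindromicComplexity}) rather than to rerun the induction from scratch. The key structural observation is that, since each of $g(\tt{0})=\tt{20}$, $g(\tt{1})=\tt{21}$, and $g(\tt{2})=\tt{2}$ begins with $\tt{2}$ and contains no other occurrence of $\tt{2}$, every occurrence of the letter $\tt{2}$ in $\mathbf{y}=g(\mathbf{x})$ marks the start of a $g$-block. Consequently, every occurrence of $\tt{0}$ (which can only arise as the second letter of $g(\tt{0})$) is immediately preceded by $\tt{2}$ and immediately followed by the $\tt{2}$ that begins the next block, and the same holds for $\tt{1}$. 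I would use these facts to split into cases according to the first (equivalently, last) letter of a palindromic factor $w$ of $\mathbf{y}$.

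If $w$ begins and ends with $\tt{0}$ or with $\tt{1}$, then its only left neighbour and its only right neighbour in $\mathbf{y}$ are both $\tt{2}$, so $\tt{2}w\tt{2}$ is the unique bi-extension of $w$; it is automatically a palindrome, which settles these cases. The empty word is handled separately, its unique palindromic extension being $\tt{22}$, since $\tt{00},\tt{11}\notin\Fact(\mathbf{y})$ while $\tt{22}\in\Fact(\mathbf{y})$.

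The substantive case is when $w$ begins and ends with $\tt{2}$. Since the leading and trailing $\tt{2}$'s each begin a block, deleting the final letter leaves a concatenation of complete blocks, so $w=g(u)\tt{2}$ for some $u\in\Fact(\mathbf{x})$. Realizing $g(u)\tt{2}$ as a palindromic prefix of $g(ue)\tt{2}$, where $e$ is a letter following $u$ in $\mathbf{x}$, part (ii) of \Cref{Lemma:PalindromicPreimage} shows that $u$ is a palindrome. I would then establish the extension correspondence
\[
    c\,w\,c' \in \Fact(\mathbf{y}) \iff c\,u\,c' \in \Fact(\mathbf{x}) \qquad (c,c'\in\Sigma_3)
\]
by reading off blocks: at any occurrence of $w$, the letter immediately to the left of its leading $\tt{2}$ is the last letter of the preceding block $g(d)$, which equals $d$ in each of the three cases $d\in\{\tt{0},\tt{1},\tt{2}\}$; dually, the letter immediately to the right of its trailing $\tt{2}$ equals the letter $e$ following $u$. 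Taking $c=c'$ matches palindromic extensions of $w$ in $\mathbf{y}$ with palindromic extensions of $u$ in $\mathbf{x}$, so both the existence and the uniqueness of a palindromic extension of $w$ follow from \Cref{Prop:xPalindromicComplexity}.

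The main obstacle is establishing the extension correspondence cleanly, precisely because $g(\tt{2})=\tt{2}$ makes the letter $\tt{2}$ play a double role: it occurs both as a standalone block and as the first letter of $g(\tt{0})$ and $g(\tt{1})$. In particular, the extension letter $c'=\tt{2}$ must be traced to the case $e=\tt{2}$, where the trailing $\tt{2}$ of $w$ is the entire block $g(\tt{2})$ and the following block supplies the extending $\tt{2}$; one must then check that an occurrence of $c\,u\,c'$ in $\mathbf{x}$ with $c'=\tt{2}$ is extendable one further step to the right so that this extending $\tt{2}$ is indeed present, which is immediate since $\mathbf{x}$ is infinite. Once these bookkeeping points are verified, no induction is needed, and the statement reduces entirely to the corresponding result for $\mathbf{x}$.
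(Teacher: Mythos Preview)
Your proposal is correct and follows essentially the same approach as the paper: case split on the first/last letter of $w$, dispatch the $\tt{0}$/$\tt{1}$ cases immediately, and in the $\tt{2}$ case write $w=g(u)\tt{2}$, use \Cref{Lemma:PalindromicPreimage}(ii) to see that $u$ is a palindrome, and transport the unique palindromic extension of $u$ (from \Cref{Prop:xPalindromicComplexity}) to one of $w$ via the block structure of $g$. The only cosmetic differences are that the paper treats $w=\tt{2}$ by inspection while your bi-extension correspondence covers it uniformly, and the paper phrases the correspondence as $ava\in\Fact(\mathbf{x})\iff g(ava)\tt{2}\in\Fact(\mathbf{y})$ rather than tracking the left and right neighbours separately.
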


\begin{proof}
    Let $w$ be a palindromic factor of $\mathbf{y}$.  First note that if $w=\varepsilon$, then the unique palindromic extension of $w$ is $\tt{22}$.  Next note that every occurrence of $\tt{0}$ or $\tt{1}$ in $\mathbf{y}$ is preceded and followed by $\tt{2}$.  So if $w$ begins in $\tt{0}$ or $\tt{1}$, then the unique palindromic extension of $w$ in $\Fact(\mathbf{y})$ is $\tt{2}w\tt{2}$.

    So we may assume that $w$ begins in $\tt{2}$.  By inspecting all factors of $\mathbf{y}$ of length $3$, we see that the unique palindromic extension of $\tt{2}$ is $\tt{222}$.  So we may assume that $|w|\geq 2$, which means that we can write $w=\tt{2}w'\tt{2}$ for some word $w'\in \Sigma_3^*$.  But then we have $w=g(v)\tt{2}$ for some nonempty word $v\in \Fact(\mathbf{x})$, and by \Cref{Lemma:PalindromicPreimage}, the word $v$ is a palindrome.  By \Cref{Prop:xPalindromicComplexity}, the word $v$ has a unique palindromic extension in $\Fact(\mathbf{x})$.  Observe that for all $a\in\Sigma_3^*$, we have $ava\in \Fact(\mathbf{x})$ if and only if $g(ava)\tt{2}\in \Fact(\mathbf{y})$, and that $g(ava)\tt{2}$ contains the palindromic extension $awa$ of $w$.  So if $ava$ is the unique palindromic extension of $v$ in $\Fact(\mathbf{x})$, then $awa$ is the unique palindromic extension of $w$ in $\Fact(\mathbf{y})$.  
\end{proof}

\begin{lemma}\label{Lemma:0w0}
    Let $w$ be a palindromic factor of $\mathbf{y}$.  If $\tt{0}w\tt{0}\in \Fact(\mathbf{y})$, then no word from the set $\{\tt{1}w\tt{1},\tt{1}w\tt{2},\tt{2}w\tt{1},\tt{2}w\tt{2}\}$ is in $\Fact(\mathbf{y})$.
\end{lemma}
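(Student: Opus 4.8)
The plan is to push the whole statement down to the word $\mathbf{x}$, where \Cref{Prop:xPalindromicComplexity} already guarantees unique palindromic extensions. The engine for this descent is the recognizability of $g$: since $g(\tt{0})=\tt{20}$, $g(\tt{1})=\tt{21}$, and $g(\tt{2})=\tt{2}$ all begin with $\tt{2}$ and contain $\tt{2}$ \emph{only} in their first position, the occurrences of $\tt{2}$ in $\mathbf{y}=g(\mathbf{x})$ mark exactly the block boundaries of the factorization of $\mathbf{y}$ into $g$-images. In particular, every $\tt{0}$ and every $\tt{1}$ in $\mathbf{y}$ is immediately preceded and followed by $\tt{2}$ (as already noted in the proof of \Cref{Prop:yPalindromicComplexity}), and any factor of $\mathbf{y}$ that starts at an occurrence of $\tt{2}$ and has the form $g(u)$ arises from an occurrence of $u$ in $\mathbf{x}$. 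Since $\tt{0}w\tt{0}\in\Fact(\mathbf{y})$ and the two $\tt{0}$'s force $\tt{2}$ on their inner sides, the word $w$ begins and ends with $\tt{2}$; hence $w=g(v)\tt{2}$ for some $v\in\Fact(\mathbf{x})$, and $v$ is a palindrome by \Cref{Lemma:PalindromicPreimage}(ii).

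First I would establish the correspondence $\tt{a}w\tt{b}\in\Fact(\mathbf{y})\Leftrightarrow \tt{a}v\tt{b}\in\Fact(\mathbf{x})$ for all letters $a,b$, of which only the forward direction is needed here. Given an occurrence of $\tt{a}w\tt{b}$, I prepend the $\tt{2}$ forced before $\tt{a}$ (when $a\in\{\tt{0},\tt{1}\}$) or absorb the leading $\tt{2}=g(\tt{2})$ (when $a=\tt{2}$), and read the trailing $\tt{2}\tt{b}$ as the beginning of the block $g(b)$; in each case the resulting word is exactly $g(\tt{a}v\tt{b})$ aligned to block boundaries, so recognizability yields $\tt{a}v\tt{b}\in\Fact(\mathbf{x})$. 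Applying this to the hypothesis gives $\tt{0}v\tt{0}\in\Fact(\mathbf{x})$. (If $v=\varepsilon$ this reads $\tt{00}\in\Fact(\mathbf{x})$, which is false, so the hypothesis is vacuous and I may assume that $v$ is nonempty.)

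It then remains to show that none of $\tt{1}v\tt{1}$, $\tt{1}v\tt{2}$, $\tt{2}v\tt{1}$, $\tt{2}v\tt{2}$ lies in $\Fact(\mathbf{x})$. The word $\tt{0}v\tt{0}$ is a palindromic extension of the palindrome $v$, so by \Cref{Prop:xPalindromicComplexity} it is the \emph{only} one; this immediately excludes $\tt{1}v\tt{1}$ and $\tt{2}v\tt{2}$. For the two mixed words I would split on the first letter of $v$: since $\tt{0}v\tt{0}\in\Fact(\mathbf{x})$ and $\tt{00}\notin\Fact(\mathbf{x})$, the palindrome $v$ begins and ends with $\tt{1}$ or with $\tt{2}$, and since none of $\tt{11}$, $\tt{12}$, $\tt{21}$ is a factor of $\mathbf{x}$, in either case both $\tt{1}v\tt{2}$ and $\tt{2}v\tt{1}$ already fail at their first or last two letters. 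Thus all four words are absent from $\Fact(\mathbf{x})$, and the correspondence transports this conclusion back to $\mathbf{y}$. The main obstacle is the first paragraph: making the descent through $g$ rigorous, that is, verifying that an arbitrary occurrence of $\tt{a}w\tt{b}$ really does align with the block structure so that its $g$-preimage is $\tt{a}v\tt{b}$. Once this synchronization is in hand, the remainder is a short finite check against the forbidden two-letter factors of $\mathbf{x}$ together with the uniqueness of palindromic extensions.
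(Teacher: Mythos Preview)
Your proposal is correct and follows essentially the same approach as the paper: both arguments descend through $g$ by recognizability to a palindrome in $\mathbf{x}$ (your $v$ is the paper's $v'$), invoke uniqueness of palindromic extensions to rule out $\tt{1}w\tt{1}$ and $\tt{2}w\tt{2}$, and then use $\tt{00},\tt{11},\tt{12},\tt{21}\notin\Fact(\mathbf{x})$ to rule out the mixed extensions. The only cosmetic difference is that the paper appeals to \Cref{Prop:yPalindromicComplexity} (unique palindromic extension in $\mathbf{y}$) for the first two cases, whereas you push everything down to $\mathbf{x}$ and use \Cref{Prop:xPalindromicComplexity} instead; since the former is derived from the latter, this is merely a reorganization.
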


\begin{proof}
    Suppose that $\tt{0}w\tt{0}\in\Fact(\mathbf{y})$.  By \Cref{Prop:yPalindromicComplexity}, the word $w$ has a unique palindromic extension in $\Fact(\mathbf{y})$, so we see that neither $\tt{1}w\tt{1}$ nor $\tt{2}w\tt{2}$ belongs to $\Fact(\mathbf{y})$.  It remains to show that $\tt{1}w\tt{2}$ and $\tt{2}w\tt{1}$ do not belong to $\Fact(\mathbf{y})$. 

    First observe that since $\tt{0}w\tt{0}\in\Fact(y)$, we have $\tt{20}w\tt{02}\in\Fact(\mathbf{y})$, and we can write $\tt{20}w\tt{02}=g(v)\tt{2}$ for some word $v\in\Fact(\mathbf{x})$ of the form $\tt{0}v'\tt{0}$.  (Note in particular that $w=g(v')\tt{2}$.)  By \Cref{Lemma:PalindromicPreimage}, we see that $v$ and $v'$ are palindromes.  Since $\tt{00}\not \in \Fact(\mathbf{x})$, the word $v'$ begins with $\tt{1}$ or $\tt{2}$.  
 
    Now suppose, towards a contradiction, that $\tt{1}w\tt{2}\in\Fact(\mathbf{y})$.  Then we have $\tt{21}w\tt{2}\in\Fact(\mathbf{y})$, which means that $g^{-1}(\tt{21}w)=\tt{1}v'\in \Fact(\mathbf{x})$.  Since neither $\tt{11}$ nor $\tt{12}$ belongs to $\Fact(\mathbf{x})$, we see that $v'$ begins with $\tt{0}$.  But this contradicts the fact that $v'$ begins with $\tt{1}$ or $\tt{2}$.  So we conclude that $\tt{1}w\tt{2}\not\in\Fact(\mathbf{y})$.

    Finally, suppose that $\tt{2}w\tt{1}\in\Fact(\mathbf{y})$.  Then we have $g^{-1}(w\tt{1})=v'\tt{1}\in\Fact(\mathbf{y})$.  Since neither $\tt{11}$ nor $\tt{21}$ belongs to $\Fact(\mathbf{x})$, we see that $v'$ ends with $\tt{0}$.  But this contradicts the fact that $v'$ is a palindrome that begins with $\tt{1}$ or $\tt{2}$.  So we conclude that $\tt{2}w\tt{1}\not\in\Fact(\mathbf{y})$.
\end{proof}

\begin{proposition}\label{Prop:zPalindromicComplexity}
    $P_\mathbf{z}(n)=\begin{cases}
        1, &\text{ if $n=0$;}\\
        3, &\text{ if $n=1$ or $n=2$;}\\
        4, &\text{ if $n=3$, or $n\geq 4$ and $n$ is even;}\\
        2, &\text{ if $n\geq 5$ and $n$ is odd.}
    \end{cases}$
\end{proposition}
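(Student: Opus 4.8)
The plan is to determine each value of $P_\mathbf{z}$ by sandwiching it between a lower bound obtained by explicitly producing palindromes and an upper bound coming from the already-computed factor complexity. Since $\mathbf{z}$ is uniformly recurrent and (by the palindromes produced below) contains arbitrarily long palindromic factors, its language is closed under reversal; moreover, by \Cref{Lemma:SisterOccurs}, every palindromic factor occurs together with its sister. A palindrome equals its own sister only if it is a power of $\tt{0}$, so, as $\tt{000}\notin\Fact(\mathbf{z})$, every palindrome of length at least $3$ belongs to a pair of two \emph{distinct} sister palindromes. Hence it suffices to exhibit, for each even length $m\ge4$, two sister-inequivalent palindromes, and for each odd length $m\ge5$, one palindrome; this forces $P_\mathbf{z}(m)\ge4$ and $P_\mathbf{z}(m)\ge2$, respectively.

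For the lower bound I would first verify that $t_1(a)\tt{00}$ and $t_2(a)\tt{00}$ are palindromes for every $a\in\Sigma_3$, and then use a parity-of-position argument to show that whenever $u$ is an \emph{odd-length} palindromic factor of $\mathbf{y}$, both $\tau(u)\tt{00}$ and its sister $\overline{\tau}(u)\tt{00}$ are palindromic factors of $\mathbf{z}$ (their occurrence at even and odd positions being guaranteed by the argument in the proof of \Cref{Lemma:SisterOccurs}). The length of $\tau(u)\tt{00}$ has the same parity as $|u|_{\tt{0}}$, because among $|t_i(\tt{0})|,|t_i(\tt{1})|,|t_i(\tt{2})|$ only the images of $\tt{0}$ have odd length. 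By \Cref{Prop:yPalindromicComplexity}, the odd palindromes of $\mathbf{y}$ fall into three palindromic-extension chains, seeded by $\tt{0}$, $\tt{1}$, and $\tt{2}$; since a palindromic extension changes $|u|_{\tt{0}}$ by $0$ or $2$, the $\tt{0}$-chain keeps $|u|_{\tt{0}}$ odd while the $\tt{1}$- and $\tt{2}$-chains keep it even. Applying $\tau$ and $\overline{\tau}$ thus produces arbitrarily long palindromes of $\mathbf{z}$: odd ones centered on a single letter (from the $\tt{0}$-chain) and even ones centered on $\tt{00}$ or on $\tt{11}$/$\tt{22}$ (from the $\tt{2}$- and $\tt{1}$-chains). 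As the central factor of a palindrome is a palindrome of the same parity and the same center, each such long palindrome yields a palindrome of every smaller length of its parity; together with sisters and a check that the handful of center types are pairwise sister-inequivalent (already visible at length $4$: $\tt{1001}$ and $\tt{0110}$), this supplies four even and two odd palindromes at every length.

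For the upper bound I would invoke the general inequality $P_\mathbf{z}(n)+P_\mathbf{z}(n+1)\le C_\mathbf{z}(n+1)-C_\mathbf{z}(n)+2$, valid for uniformly recurrent words whose language is closed under reversal (it is the inequality whose saturation is precisely the richness criterion of \Cref{Prop:RichCharacterization}). By \Cref{Prop:zFactorComplexity} and the ensuing formula $C_\mathbf{z}(n)=4n+2$ for $n\ge4$, the right-hand side equals $6$ for all $n\ge4$. Combining this with the lower bounds $P_\mathbf{z}(m)\ge4$ (even $m\ge4$) and $P_\mathbf{z}(m)\ge2$ (odd $m\ge5$) forces equality: for even $m\ge4$ one gets $4\le P_\mathbf{z}(m)\le 6-P_\mathbf{z}(m+1)\le4$, and symmetrically $P_\mathbf{z}(m+1)=2$. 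The values for $n\le3$ are then checked directly by inspecting short factors, which accounts for the exceptional entries $P_\mathbf{z}(0)=1$, $P_\mathbf{z}(1)=P_\mathbf{z}(2)=3$, and $P_\mathbf{z}(3)=4$.

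The crux, and the step I expect to be most delicate, is controlling the alternation of $\tau$ in the lower bound. Because $\tau$ applies $t_1$ and $t_2$ on alternate positions, the central letter of an odd palindrome $u$ of $\mathbf{y}$ is sent to $t_1$ or to $t_2$ according to the parity of its position, so consecutive elements of a single $\mathbf{y}$-chain alternate between a center type in $\mathbf{z}$ and its sister; it is exactly here that \Cref{Lemma:SisterOccurs} and careful parity bookkeeping (the roles of $|u|_{\tt{0}}$ and of the trailing $\tt{00}$) are needed to ensure that each center type genuinely occurs at every length. A fully self-contained alternative to the complexity inequality would be to prove directly, using \Cref{Lemma:PalindromicPreimage}(iii) and \Cref{Lemma:0w0} to transport the unique palindromic extension property of $\mathbf{y}$ across $\tau$, that every palindrome of $\mathbf{z}$ of length at least $4$ has a unique palindromic extension, whence the counts stabilize at their length-$4$ and length-$5$ values; but transferring uniqueness through the alternation of $\tau$ is precisely the technical obstacle that the inequality route sidesteps.
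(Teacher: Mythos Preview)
Your approach is correct and genuinely different from the paper's. The paper verifies the formula for $n\le 20$ by direct enumeration and then proves, by a three–case analysis on the prefix $u$ with $u\tt{00}$ the maximal $\tt{2}$-free prefix of the palindrome, that every palindromic factor of $\mathbf{z}$ of length at least $19$ has a \emph{unique} palindromic extension; the cases $u=\tt{01}$ and $u=\tt{00101101}$ are reduced via \Cref{Lemma:0w0} and \Cref{Prop:yPalindromicComplexity} to the corresponding extension problem in $\mathbf{y}$, and the remaining possibilities for $u$ are handled by inspection. You instead sandwich $P_\mathbf{z}$: the lower bound comes from pushing the three odd-palindrome chains of $\mathbf{y}$ through $\tau$ (using that each $t_i(a)\tt{00}$ is a palindrome and that the parity of $|\tau(u)\tt{00}|$ is governed by $|u|_\tt{0}$), and the upper bound is the Bal\'a\v{z}i--Mas\'akov\'a--Pelantov\'a inequality $P(n)+P(n+1)\le C(n+1)-C(n)+2$ for reversal-closed languages, together with \Cref{Prop:zFactorComplexity}. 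Your route avoids the case split and yields richness of $\mathbf{z}$ for free (the sandwich forces equality in the inequality, which is exactly \Cref{Prop:RichCharacterization}); the paper's route is self-contained within the tools already developed and additionally establishes the unique–extension property for long palindromes as a structural statement in its own right. You correctly flag the one delicate point on your side: the alternation of $\tau$ makes consecutive chain elements centred on sister letters, so one must invoke \Cref{Lemma:SisterOccurs} and track $|u|_\tt{0}$ to ensure each centre type is realised at every length; the paper's direct argument bypasses this by working from the $\mathbf{z}$ side and pulling back through \Cref{Lemma:PalindromicPreimage}(iii).
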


\begin{proof}
    By generating all factors of $\mathbf{z}$ of length at most $20$, we verify the formula given in the theorem statement for all $n\leq 20$.  Now it suffices to show that every palindromic factor of $\mathbf{z}$ of length at least $19$ has a unique palindromic extension in $\Fact(\mathbf{z})$.

    Let $w$ be a palindromic factor of $\mathbf{z}$ of length at least $19$, so that $w$ contains both $\tt{1}$ and $\tt{2}$.  We assume that $\tt{1}$ appears before $\tt{2}$ in $w$; a symmetric argument applies if $\tt{2}$ appears before $\tt{1}$.  Let $u\tt{00}$ be the longest prefix of $w$ that does not contain the letter $\tt{2}$.  Then $u$ must be a nonempty suffix of $\tau(\tt{0})$, $\tau(\tt{1})$, or $\tau(\tt{2})$, and we can write
    \begin{equation*}
        w=u\tt{00}w'\tt{00}\tilde{u},
    \end{equation*}
    where $w'$ begins and ends in $\tt{2}$ and $\tilde{u}$ is the reversal of $u$.  So we see that 
    \begin{equation*}
        w=u\overline{\tau}(y)\tt{00}\tilde{u}
    \end{equation*}
    for some word $y\in\Fact(\mathbf{y})$.  It is not hard to see that $y$ must be a palindrome of odd length.

    \smallskip

    \noindent
    \textbf{Case 1:} Suppose that $u=\tt{01}$.  First note that $\tt{2}w\tt{2}\not\in\Fact(\mathbf{z})$, since $\tt{201}\not\in\Fact(\mathbf{z})$.  Note further that $\tt{0}w\tt{0}\in\Fact(\mathbf{z})$ if and only if $\tt{0}y\tt{0}\in\Fact(\mathbf{y})$, and that $\tt{1}w\tt{1}\in\Fact(\mathbf{z})$ if and only if $ayb\in\Fact(\mathbf{y})$ for some $a,b\in\{\tt{1},\tt{2}\}$.

    Suppose first that $\tt{0}y\tt{0}\in\Fact(\mathbf{y})$.  Then we see from \Cref{Lemma:0w0} that the words $\tt{1}y\tt{1}$, $\tt{1}y\tt{2}$, $\tt{2}y\tt{1}$, and $\tt{2}y\tt{2}$ do not belong to $\Fact(\mathbf{y})$.  Hence, from the biconditional statements above, we conclude that $\tt{0}w\tt{0}\in\Fact(\mathbf{z})$ and $\tt{1}w\tt{1}\not\in\Fact(\mathbf{z})$, i.e., that $\tt{0}w\tt{0}$ is the unique palindromic extension of $w$ in $\Fact(\mathbf{z})$.

    Suppose otherwise that $\tt{0}y\tt{0}\not\in\Fact(\mathbf{y})$.  Then either $\tt{1}y\tt{1}$ or $\tt{2}y\tt{2}$ belongs to $\Fact(\mathbf{y})$, since $y$ has a unique palindromic extension in $\Fact(\mathbf{y})$ by \Cref{Prop:yPalindromicComplexity}.  Hence, from the biconditional statements above, we conclude that $\tt{1}w\tt{1}$ is the unique palindromic extension of $w$ in $\Fact(\mathbf{z})$.

    \smallskip

    \noindent
    \textbf{Case 2:} Suppose that $u=\tt{00101101}$.  First note that $\tt{0}w\tt{0}\not\in \Fact(\mathbf{z})$, since $\tt{000}\not\in\Fact(\mathbf{z})$. Note further that $\tt{1}w\tt{1}\in\Fact(\mathbf{z})$ if and only if $\tt{2}y\tt{2}\in\Fact(\mathbf{y})$, and that $\tt{2}w\tt{2}\in\Fact(\mathbf{z})$ if and only if $\tt{1}y\tt{1}\in\Fact(\mathbf{y})$.

    From the structure of $w$, we see that $\Fact(\mathbf{y})$ contains a factor of the form $ayb$, where $a,b\in\{\tt{1},\tt{2}\}$.  So by \Cref{Lemma:0w0}, we have $\tt{0}y\tt{0}\not\in\Fact(\mathbf{y})$.  But $y$ has a unique palindromic extension in $\Fact(\mathbf{y})$ by \Cref{Prop:yPalindromicComplexity}, so exactly one of the words $\tt{1}y\tt{1}$ and $\tt{2}y\tt{2}$ belongs to $\Fact(\mathbf{y})$.  By the biconditionals above, we conclude that $w$ has a unique palindromic extension in $\Fact(\mathbf{z})$.

    \smallskip
    
    \noindent
    \textbf{Case 3:} Say $u\not\in\{\tt{01},\tt{00101101}\}$.  It is not hard to show by inspection of $\tau(\tt{0})$, $\tau(\tt{1})$, and $\tau(\tt{2})$ that $w$ has a unique palindromic extension.  For example, if $u=\tt{1}$, then the unique palindromic extension of $w$ in $\Fact(\mathbf{z})$ is $\tt{0}w\tt{0}$, since $\tau(\tt{0})$, $\tau(\tt{1})$, and $\tau(\tt{2})$ have common suffix $\tt{01}$.  If $u=\tt{001}$, then $w=\tau(\tt{0}y\tt{0})\tt{00}$, and the unique palindromic extension of $w$ in $\Fact(\mathbf{z})$ is $\tt{2}w\tt{2}$.
\end{proof}

We now have the means to prove \Cref{Prop:Rich}.

\begin{proof}[Proof of \Cref{Prop:Rich}]
    First of all, when $n \leq 3$, it is straightforward to verify that the equation \eqref{Eq:RichCharacterization} holds. Let $n \geq 4$. Since each right-special factor of $\mathbf{z}$ of length $n$ has exactly two right extensions by \Cref{Prop:zFactorComplexity}, the quantity $C_\mathbf{z}(n+1) - C_\mathbf{z}(n)$ equals the number of right-special factors of $\mathbf{z}$ of length $n$. Hence the right side of \eqref{Eq:RichCharacterization} equals $6$. \Cref{Prop:zPalindromicComplexity} implies that the left side of the equation also equals $6$. Therefore $\mathbf{z}$ is rich by \Cref{Prop:RichCharacterization}.
\end{proof}

\section{The Critical Exponent}
\label{Section:CriticalExponent}

Throughout this section, let $\mathbf{x}=f^\omega(\tt{0})$, $\mathbf{y}=g(\mathbf{x})$, and $\mathbf{z}=\tau(\mathbf{y})$.  We will write $\mathbf{x}=x_0x_1x_2\cdots$, $\mathbf{y}=y_0y_1y_2\cdots$, and $\mathbf{z}=z_0z_1z_2\cdots$, where the $x_i$'s, $y_i$'s, and $z_i$'s are letters.

\begin{theorem}\label{Theorem:CriticalExponent}
    The critical exponent of $\mathbf{z}$ is 
    \begin{equation*}
    1+\frac{1}{3-\mu_1}\approx 2.25876324,
    \end{equation*}
    where $\mu_1\approx 2.20557$ is the unique real root of the polynomial $x^3-2x^2-1$.
\end{theorem}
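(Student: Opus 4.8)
The plan is to prove that $\alpha := 1 + 1/(3-\mu_1)$ is the critical exponent of $\mathbf{z}$ by establishing matching lower and upper bounds, with the growth rate of $f$ as the driving quantity. I would first record the observation that the incidence matrix of $f$, namely $\left(\begin{smallmatrix}1&1&1\\1&0&0\\0&2&1\end{smallmatrix}\right)$ (rows indexed by letter counts, columns by images), has characteristic polynomial $x^3-2x^2-1$, so that its dominant eigenvalue is precisely $\mu_1$. Writing $a_n=|f^n(\tt{0})|$, $b_n=|f^n(\tt{1})|$, $c_n=|f^n(\tt{2})|$, these sequences satisfy the linear recurrence with characteristic polynomial $x^3-2x^2-1$, and by \Cref{Lemma:LengthComp} the transduced lengths $|\tau(g(f^n(a)))|$ are comparable to them; all grow like $\mu_1^n$. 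I would then compute the exact leading constants (equivalently, closed forms in the three roots $\mu_1,\mu_2,\mu_3$), since the limiting exponent will be a ratio of such leading constants.

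For the lower bound $\ce(\mathbf{z})\ge\alpha$, I would exhibit an explicit family of factors of $\mathbf{z}$ whose exponents increase to $\alpha$. Concretely, I would locate a short seed factor $p^{e}$ of $\mathbf{x}$ in which $p$ carries an even number of $\tt{2}$'s, so that \Cref{Lemma:BadPeriods} guarantees that for every $n$ the word $\tau(g(f^n(p^e)))$ is a genuine repetition of period $P_n=|\tau(g(f^n(p)))|$. Its exponent has the form $1+E_n/P_n$, where the excess $E_n$ and the period $P_n$ are explicit integer combinations of the length sequences above, hence both grow like $\mu_1^n$. Taking the limit of $E_n/P_n$ using the leading constants from the first step should yield exactly $1/(3-\mu_1)$; here the defining relation $\mu_1^3=2\mu_1^2+1$ (equivalently $3-\mu_1=1-\mu_1^{-2}$) is what converts the eigen-ratio into the stated value.

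For the upper bound $\ce(\mathbf{z})\le\alpha$, I would argue by renormalization (desubstitution). Because the factorizations of $\mathbf{z}$ over $\tau$, of $\mathbf{y}$ over $g$, and of $\mathbf{x}$ over $f$ are synchronizing (as follows from the complete-return analyses of \Cref{Section:Structure}) and $\mathbf{x}=f(\mathbf{x})$, any factor $r$ of $\mathbf{z}$ of sufficiently large period is the image under $\tau\circ g\circ f$ of a strictly shorter factor $r'$ arising inside $\mathbf{x}$; I would show that this descent multiplies the period by a factor tending to $\mu_1$ while altering the exponent only in a controlled way. Iterating the descent, a factor of exponent exceeding $\alpha$ would force arbitrarily short factors of exponent exceeding $\alpha$, contradicting a finite base-case check. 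Equivalently, one sorts all maximal repetitions (runs) of period above a fixed threshold into finitely many $f$-generated families, computes the supremum of exponents in each family from its period/excess recurrences, and verifies that the overall supremum equals $\alpha$ and is attained only in the limit along the family produced for the lower bound.

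The step I expect to be the main obstacle is the desubstitution through the alternating transducer $\tau$ in the upper bound. Since $\tau$ applies $t_1$ and $t_2$ alternately, it does not preserve periods in general: a period of the preimage survives only when the relevant block has even length (equivalently, an even number of $\tt{2}$'s before $g$), which is exactly the parity bookkeeping isolated in \Cref{Lemma:BadPeriods} and \Cref{Lemma:LengthComp}. Carefully tracking this parity—so as to know precisely when a run in $\mathbf{z}$ descends to a run of the same relative exponent in $\mathbf{y}$ and then in $\mathbf{x}$, and when instead the effective period doubles—is the delicate heart of the argument, and it is where the bulk of the case analysis will reside.
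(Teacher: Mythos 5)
Your skeleton---renormalize repetitions through $f$, $g$, $\tau$, track the parity problems caused by the alternation of $\tau$, and extract the answer from the dominant eigenvalue of the incidence matrix $M_f$ (whose characteristic polynomial is indeed $x^3-2x^2-1$)---is the same as the paper's, which follows Krieger's method. But both halves of your argument omit the one mechanism that actually produces the value $\alpha=1+1/(3-\mu_1)$: after \emph{each} application of $f$, and after the final application of $\tau\circ g$, a repetition can be extended (``stretched'') by a bounded word on each side, and it is the accumulation of these per-level extensions, inflated by the remaining iterations of $f$, that creates $\alpha$. In the paper the maximal repetitions satisfy $w_n=\lambda_n f(w_{n-1})\rho_n$, and the resulting exponents are $R_{2n}=1+\bigl(\sum_{k=0}^{n}|\tau(g(f^{2k}(\tt{0})))|+3\bigr)/|\tau(g(f^{2n}(\tt{0})))|$ (\Cref{Lemma:OuterPowerSequenceExpression}); the sum over levels is what converges to the geometric series $\sum_{k\geq 0}\mu_1^{-2k}=\mu_1^2/(\mu_1^2-1)=1/(3-\mu_1)$. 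Your lower bound, by contrast, is static: you fix one seed $p^e=p^{\lfloor e\rfloor}q$ and take the exponents of the words $\tau(g(f^n(p^e)))$ themselves. These converge to $\lfloor e\rfloor+L(q)/L(p)$, where $L$ is the dominant left-eigenvector functional ($L(\tt{0})=\mu_1-1$, $L(\tt{1})=(\mu_1-1)^2$, $L(\tt{2})=1$), a fixed element of $\mathbb{Q}(\mu_1)$ determined by the seed. Equating this with $\alpha$ forces $(\mu_1^2-1)L(q)=L(p)$, hence, by the $\mathbb{Q}$-linear independence of $1,\mu_1,\mu_1^2$, the Parikh identity $P(p)=(M_f^2-I)P(q)$. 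Already for $q=\tt{0}$ this requires $P(p)=(1,1,2)^T$; the only length-$4$ factors of $\mathbf{x}$ with this Parikh vector are $\tt{1022}$ and $\tt{2201}$, and neither $\tt{10221022}$ nor $\tt{22012201}$ occurs in $\mathbf{x}$ (they contain $\tt{21}$, respectively $\tt{12}$, which are not factors of $\mathbf{x}$). So no such seed exists, and any family built your way has exponents whose limit falls strictly short of $\alpha$.

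The same omission breaks your upper bound in its first formulation. Desubstitution does not approximately preserve exponent over the whole descent: at each level the stripped stretches cost about $|\lambda_k\rho_k|/(\text{period})$ of exponent, and since the periods shrink geometrically these losses sum to a constant concentrated at the shortest levels. Indeed, the seed set $S$ of \Cref{Proposition:UnstretchableStructure} contains the single letters $\tt{0}$ and $\tt{1}$, of exponent $1$: the factors of $\mathbf{z}$ of exponent near $467/207$ descend all the way down to the letter $\tt{0}$ (\Cref{Example:StretchSequence}), not to short factors of exponent $>\alpha$. Hence ``a factor of exponent exceeding $\alpha$ forces arbitrarily short factors of exponent exceeding $\alpha$'' is false, and your finite base-case check would find nothing to contradict. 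Your second formulation (sort the maximal runs into finitely many $f$-generated families and compute each family's supremum of exponents) is the correct one---it is what the paper does, via \Cref{Lemma:UnstretchableInZ}, \Cref{Lemma:UnstretchableInY}, and \Cref{Lemma:UnstretchableInX} for the descent and \Cref{Lemma:OuterPowerSequenceExpression} for the re-ascent---but it is not ``equivalent'' to the descent-contradiction argument, and executing it requires precisely the per-level stretch bookkeeping that your proposal never sets up.
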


We prove \Cref{Theorem:CriticalExponent} by adapting the method of Krieger~\cite{Krieger2007} for finding the critical exponent of a fixed point of a non-erasing morphism.  The basic idea is that every factor of exponent greater than $9/4$ in $\mathbf{z}$ belongs to one of only finitely many sequences of ``unstretchable'' repetitions, which cannot be extended periodically (i.e., ``stretched'') in $\mathbf{z}$.
Each of these sequences is obtained as follows:
\begin{itemize}
    \item We start from a short unstretchable repetition $w_0$ in $\mathbf{x}$ and repeatedly apply $f$ and stretch to the left and right as far as possible.  This gives us a sequence $w_0,w_1,w_2,\ldots$ of unstretchable repetitions in $\mathbf{x}$, which we call an ``inner stretch sequence''.
    \item We apply $g$ and $\tau$ to each word in the sequence $w_0,w_1,w_2\ldots$, and stretch to the left and right as far as possible.  This gives us a sequence $W_0,W_1,W_2,\ldots$ of unstretchable repetitions in $\mathbf{z}$, which we call an ``outer stretch sequence''.
\end{itemize}
Once we identify the outer stretch sequences that contain repetitions of exponent greater than $9/4$ in $\mathbf{z}$, we analyze the exponents of the words in these outer stretch sequences, and show that their supremum is $1 + 1/(3 - \mu_1)$.

In \Cref{Subsection:StretchSequences}, we formally define unstretchable repetitions and stretch sequences.  In \Cref{Subsection:9/4}, we show that every unstretchable repetition of exponent greater than $9/4$ in $\mathbf{z}$ belongs to one of only finitely many outer stretch sequences.  Finally, in \Cref{Subsection:CriticalExponent}, we show that the supremum of the exponents of the words in these outer stretch sequences is $1 + 1/(3-\mu_1)$.

\subsection{Unstretchable Repetitions and Stretch Sequences}\label{Subsection:StretchSequences}

In order to define an unstretchable repetition, we need to consider a specific occurrence of the repetition.  An \emph{occurrence} of a factor $w$ in an infinite word $\mathbf{u}=u_0u_1u_2\cdots$ is a triple $(w,i,j)$ such that $w=u_i\cdots u_{j}$.  We usually omit the reference to the triple, and simply refer to an occurrence $(w,i,j)$ as $w=u_i\cdots u_j$.  The set of all occurrences of all factors of $\mathbf{u}$ is denoted by $\Occ(\mathbf{u})$.  Note that every occurrence of a word $w$ in $\mathbf{x}$ corresponds in a natural way to an occurrence of $f(w)$ in $\mathbf{x}$.  More precisely, if $w=x_i\cdots x_{j}$, then $f(w)=x_k\cdots x_\ell$, where $k=|f(x_0\cdots x_{i-1})|$ and $\ell=|f(x_0\cdots x_j)|-1$.  Similarly, every occurrence of a word $w$ in $\mathbf{x}$ corresponds to an occurrence of $g(w)$ in $\mathbf{y}$, and every occurrence of a word $w$ in $\mathbf{y}$ corresponds to an occurrence of either $\tau(w)$ or $\overline{\tau}(w)$ in $\mathbf{z}$.  If $w=y_i\cdots y_j\in\Occ(\mathbf{y})$, then we write 
\begin{align*}
    \tilde{\tau}(w)=z_k\cdots z_{\ell}, 
\end{align*}
where $k=|\tau(y_0\cdots y_{i-1})|$ and $\ell=|\tau(y_0\cdots y_j)|-1$, i.e.,
\begin{align*}
    \tilde{\tau}(w)=\begin{cases}
        \tau(w), &\text{ if $i$ is even;}\\
        \overline{\tau}(w), &\text{ if $i$ is odd.}
    \end{cases}
\end{align*}

Suppose that $w=u_i\cdots u_j\in\Occ(\mathbf{u})$ has period $q$.  We say that the pair $(w,q)$ is \emph{left-stretchable} if $u_{i-1}\cdots u_j$ also has period $q$, i.e., if $u_{i-1}=u_{i+q-1}$.  Similarly, we say that $(w,q)$ is \emph{right-stretchable} if $u_i\cdots u_{j+1}$ has period $q$.  The \emph{left stretch} of $(w,q)$ is the longest word $\lambda$ such that $\lambda w$ has period $q$ and $\lambda w=u_{i-|\lambda|}\cdots u_{j}$.  The \emph{right stretch} of $(w,q)$ is the longest word $\rho$ such that $w\rho$ has period $q$ and $w\rho=u_i\cdots u_{j+|\rho|}$.  (Such a word $\rho$ exists as long as $\mathbf{u}$ has finite critical exponent.)   We say that $(w,q)$ is \emph{unstretchable} if it is neither left-stretchable nor right-stretchable, i.e., if $\lambda=\rho=\varepsilon$.  Sometimes we say that ``$w$ is unstretchable with respect to $q$'' instead of ``$(w,q)$ is unstretchable''.

Suppose that $w=u^r=x_i\cdots x_j\in\Occ(\mathbf{x})$ is unstretchable with respect to $q=|u|$.  Note that the word $f(w)$ has period $|f(u)|$, the word $f^2(w)$ has period $|f^2(u)|$, and so on.  Further, each time we apply $f$, we can stretch the resulting repetition as far as possible to the left and the right in $\mathbf{x}$, to obtain a sequence of unstretchable repetitions $w_0,w_1,w_2,\ldots$ with periods $|u|,|f(u)|,|f^2(u)|,\ldots,$ respectively.  Formally, the \emph{inner stretch sequence} of $(w,q)$ is the sequence 
\begin{equation*}
(w_0,q_0),(w_1,q_1),(w_2,q_2),\ldots
\end{equation*}
where $q_n=|f^n(u)|$ for all $n\geq 0$, and $w_n$ is defined recursively by $(w_0,q_0)=(w,q)$ and 
\begin{equation*}
w_{n}=\lambda_n f(w_{n-1})\rho_n
\end{equation*}
for all $n\geq 1$, where $\lambda_n$ (resp.\ $\rho_n$) is the left (resp.\ right) stretch of $(f(w_{n-1}),q_{n})$ in $\mathbf{x}$.  Thus we have
\begin{align*}
    w_0&=w,\\
    w_1&=\lambda_1f(w)\rho_1,\\
    w_2&=\lambda_2f(\lambda_1f(w)\rho_1)\rho_2,\\
    w_3&=\lambda_3f(\lambda_2f(\lambda_1f(w)\rho_1)\rho_2)\rho_3,
\end{align*}
and so on.  Since $w_n$ has period $q_n$, we can write $w_n=v_n^{r_n}$, where $v_n$ is the prefix of $w_n$ of length $q_n$ and
\begin{equation*}
r_n=\frac{|w_n|}{q_n}=\frac{|f^n(w)|+\sum_{k=1}^n|f^{n-k}(\lambda_k\rho_k)|}{|f^n(u)|}
\end{equation*}
is an exponent of $w_n$.  We call $r_0,r_1,r_2,\ldots$ the \emph{inner power sequence} of $(w,q)$.

Now suppose that $u$ has an even number of $\tt{2}$'s.  Then it is not hard to show that for all $n\geq 0$, the word $v_n$ has an even number of $\tt{2}$'s.  It follows that $g(w_n)$ has even period $|g(v_n)|=|g(f^n(u))|$ and in turn that $\tau(g(w_n))$ has period $|\tau(g(v_n))|=|\tau(g(f^n(u)))|$.  For all $n\geq 0$, define $Q_n=|\tau(g(f^n(u)))|$ and 
\begin{equation*}
    W_n=\lambda_n^* \tilde{\tau}(g(w_n))\rho_n^*
\end{equation*}
where $\lambda_n^*$ (resp.\ $\rho_n^*$) is the left (resp.\ right) stretch of $(\tilde{\tau}(g(w_n)),Q_n)$ in $\mathbf{z}$.  The sequence
\begin{equation*}
(W_0,Q_0), (W_1,Q_1), (W_2,Q_2),\ldots
\end{equation*}
is called the \emph{outer stretch sequence} of $(w,q)$.  
Since $W_n$ has period $Q_n$, we can write $W_n=V_n^{R_n}$, where $V_n$ is the prefix of $W_n$ of length $Q_n$ and 
\begin{equation*}
R_n=\frac{|W_n|}{Q_n}=\frac{|\tau(g(f^n(w)))|+\sum_{k=1}^n|\tau(g(f^{n-k}(\lambda_k\rho_k)))|+|\lambda_n^*\rho_n^*|}{|\tau(g(f^n(u)))|}
\end{equation*}
is an exponent of $W_n$.  We call $R_0,R_1,R_2,\ldots$ the \emph{outer power sequence} of $(w,q)$.

% In principle, the stretch sequence (and corresponding power sequence) of $(w,q)$ may depend on the position $i$ of the occurrence of $w$ that we are considering.  But for the stretch sequences that we are interested in, the position $i$ will not really be relevant, at least as long as $i\geq 1$, i.e., as long as we are considering an internal occurrence of $w$ in $\mathbf{x}$ (so that we can stretch to the left).  

\begin{example}\label{Example:StretchSequence}
Consider the occurrence $w=x_2=\tt{0}$ in $\mathbf{x}$.  Note that $(w,1)$ is unstretchable, since $x_1,x_3\neq \tt{0}$.  Let us find the first few terms of the inner stretch sequence of $(w,1)$.  By definition, we have $w_0=w$ and $q_0=1$.  Next, we have $q_1=|f(\tt{0})|=2$ and $f(w_0)=x_5x_6=\tt{01}$.  Since $x_4=\tt{2}$ and $x_7x_8=\tt{02}$, we find $\lambda_1=\varepsilon$ and $\rho_1=x_7=\tt{0}$, so that 
\begin{equation*}
w_1=\lambda_1\cdot f(w_0)\cdot \rho_1=x_5x_6x_7=\tt{010}.
\end{equation*}
Continuing in this manner, we find $q_2=|f^2(\tt{0})|=5$ and  
\begin{align*}
    % w_0&=\tt{0},\\
    % w_1&=\lambda_1\cdot f(w_0)\cdot \rho_1=\varepsilon \cdot \tt{01}\cdot \tt{0}=\tt{010}=(\tt{01})^{3/2},\\
    w_2&=\lambda_2 \cdot f(w_1)\cdot \rho_2=\tt{2}\cdot \tt{0102201}\cdot \tt{02}=(\tt{20102})^2,
\end{align*}
and $q_3=|f^3(\tt{0})|=11$ and
\begin{align*}
    w_3&=\lambda_3 \cdot f(w_2)\cdot \rho_3=\varepsilon \cdot (\tt{02010220102})^2\cdot \tt{0}=(\tt{02010220102})^{23/11}.
\end{align*}
Thus, the inner stretch sequence of $(w,1)$ begins with
\begin{align*}
(w_0,q_0)&=(\tt{0},1),\\
(w_1,q_1)&=\left((\tt{01})^{3/2},2\right),\\
(w_2,q_2)&=\left((\tt{20102})^2,5\right), \text{ and}\\
(w_3,q_3)&=\left((\tt{02010220102})^{23/11},11\right),
\end{align*}
and the inner power sequence of $(w,1)$ begins with 
\begin{equation*}
1,\tfrac{3}{2},2,\tfrac{23}{11}.
\end{equation*}
Now let us find the initial term of the outer stretch sequence of $(w,1)$.  We have $Q_0=|\tau(g(\tt{0}))|=19$ and 
\begin{align*}
W_0&=\lambda_0^*\cdot \tau(g(\tt{0}))\cdot \rho_0^*\\
&=\tt{02}\cdot \tau(\tt{20})\cdot \tau(\tt{2})\tt{0010}\\
&=(\tt{0200101101001011010})^{41/19}.
\end{align*}
% and
% \begin{align*}
% W_1&=\alpha_1\cdot \tilde{\tau}(g(w_1))\cdot \beta_1\\
% &=\alpha_1\cdot \tilde{\tau}(\tt{202120})\cdot \beta_1\\
% &=\tt{00202202}\cdot \tau(202120)\cdot \tau(\tt{2})\tt{0020220200}\\
% &=(\tt{0020220200101101001011010020010110100101101})^{94/43}
% \end{align*}
Thus, the initial term in the outer stretch sequence of $(w,1)$ is
\begin{align*}
    (W_0,Q_0)&=\left((\tt{0200101101001011010})^{41/19},19\right).
    % (W_1,Q_1)&=\left((\tt{0020220200101101001011010020010110100101101})^{94/43},43\right).
\end{align*}
Continuing in this manner, one can show that the outer power sequence of $(w,1)$ begins with
\begin{equation*}
\tfrac{41}{19},\tfrac{94}{43}, \tfrac{210}{94}, \tfrac{467}{207},\ldots
\end{equation*}
In fact, we will see that this is the sequence that is responsible for the critical exponent of $\mathbf{z}$.  Note that $467/207 \approx 2.25604$ is just barely larger than $9/4$.
\end{example}

\subsection{Repetitions of Exponent Greater than 9/4 in \texorpdfstring{$\mathbf{z}$}{z}}\label{Subsection:9/4}

In this subsection, we show that every unstretchable repetition of exponent greater than $9/4$ in $\mathbf{z}$ belongs to one of only a narrow family of outer stretch sequences.  More precisely, we prove the following proposition.

\begin{proposition}\label{Proposition:UnstretchableStructure}
    Suppose that $W=U^R=z_k\cdots z_\ell\in\Occ(\mathbf{z})$ is unstretchable with respect to $Q=|U|$, where $R > 9/4$. Then $(W,Q)$ is in the outer stretch sequence of $(w,|w|)$ for some occurrence $w=x_i\cdots x_j\in\Occ(\mathbf{x})$, where the word $w$ belongs to the set
    \begin{align*}
        S&=\{\tt{0},\tt{1},\tt{22},\tt{202},\tt{1022},\tt{0220},\tt{2201},\tt{10202},\tt{02020},\tt{20201}\}.
    \end{align*}
\end{proposition}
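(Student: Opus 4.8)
The plan is to run the stretch-sequence construction of \Cref{Subsection:StretchSequences} in reverse: starting from the unstretchable occurrence $(W,Q)$, I would \emph{desubstitute} through $\tau$, then through $g$, and then repeatedly through $f$, at each stage recovering a strictly shorter unstretchable repetition, until arriving at a base repetition of bounded period that can be identified by hand. The target is an unstretchable occurrence $w=x_i\cdots x_j\in\Occ(\mathbf{x})$ of small period whose inner stretch sequence $(w_n,q_n)$ contains the pullback $w'$ of $(W,Q)$, say $w'=w_n$; then by the very definition of the outer stretch sequence $W_n=\lambda_n^*\,\tilde\tau(g(w_n))\,\rho_n^*$, the occurrence $(W,Q)$ is exactly the $n$-th term of the outer stretch sequence of $(w,|w|)$. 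It will then remain only to verify that every such seed $w$ lies in $S$.

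The first and most delicate stage is the desubstitution through $\tau$. Since every image $t_1(a),t_2(a)$ begins with $\tt{00}$ and $\tt{000}\notin\Fact(\mathbf{z})$, the occurrences of $\tt{00}$ in $\mathbf{z}$ serve as synchronizing markers that locate the $\tau$-block boundaries inside $W$; because $R>9/4>2$, the period $Q$ is long enough that these markers align across consecutive periods, so that $W$ reads off as $\tilde\tau$ of a factor of $\mathbf{y}$. The key obstacle is the alternation of $\tau$: a period of $W$ descends to a period of the underlying $\mathbf{y}$-factor only when that $\mathbf{y}$-period has \emph{even} length, since otherwise two consecutive $\tau$-blocks of one $\mathbf{z}$-period are sisters rather than equal words. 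I would show that $R>9/4$ forces the $\mathbf{y}$-period to be even: any odd-period candidate of large exponent would equate a block with its sister, which is impossible once the block contains both $\tt{1}$ and $\tt{2}$, and the remaining odd cases have exponent far below $9/4$ (checked by a finite backtracking search in the paper's style). Hence $(W,Q)$ genuinely descends to an unstretchable repetition $(\overline y,\overline q)$ in $\mathbf{y}$ with $\overline q$ even.

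Next I would desubstitute through $g$. As $g(\tt{0})=\tt{20}$, $g(\tt{1})=\tt{21}$, and $g(\tt{2})=\tt{2}$, the letter $\tt{2}$ synchronizes the $g$-blocks, and, exactly as for $\tau$, a repetition in $\mathbf{y}$ of sufficiently large exponent pulls back to an unstretchable repetition $(w',q')$ in $\mathbf{x}$. Tracking $\tt{2}$-parities through $g$ and the even-length condition through $\tau$ shows that the period word $u'$ of $w'$ must contain an even number of $\tt{2}$'s, which is precisely the hypothesis under which \Cref{Lemma:BadPeriods} guarantees that $\tau(g(f^\bullet(u')))$ really is a period of the image; this is what certifies that $W$ is recovered from $\tilde\tau(g(w'))$ by the left and right stretches $\lambda^*,\rho^*$, i.e.\ that $W$ is the outer-stretch term attached to $w'$.

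It remains to reduce $(w',q')$ inside $\mathbf{x}$. Since $\mathbf{x}=f(\mathbf{x})$ and $\tt{00},\tt{11},\tt{12},\tt{21}\notin\Fact(\mathbf{x})$, the morphism $f$ is synchronizing on $\mathbf{x}$, so Krieger's desubstitution applies: once $q'$ exceeds a fixed bound, $(w',q')$ is the $f$-image-and-stretch of a strictly shorter unstretchable repetition, placing it at a later index in the inner stretch sequence of some $(w,|w|)$ with $|w|$ below that bound. Iterating, the period strictly decreases until we reach the bounded base case, and a finite backtracking search—over all short unstretchable repetitions of $\mathbf{x}$ whose period word has an even number of $\tt{2}$'s and whose outer power sequence ever exceeds $9/4$—confirms that the only seeds are the ten words of $S$. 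I expect the parity bookkeeping forced by the alternation of $\tau$ (the even-length and even-$\tt{2}$-count conditions, together with ruling out the odd-period cases via $R>9/4$) to be the main obstacle; the $f$-stage is the standard Krieger argument and the base case is a routine, if lengthy, computation.
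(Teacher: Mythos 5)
Your overall strategy coincides with the paper's: desubstitute through $\tau$ (using $\tt{00}$ as a synchronizing marker, with the alternation argument forcing an even period in $\mathbf{y}$), then through $g$ (forcing an even number of $\tt{2}$'s in the $\mathbf{x}$-period), and finally repeatedly through $f$, ending with a finite inspection that yields $S$; the first two stages match \Cref{Lemma:UnstretchableInZ} and \Cref{Lemma:UnstretchableInY} closely. The gap is in the third stage. You assert that ``once $q'$ exceeds a fixed bound, $(w',q')$ is the $f$-image-and-stretch of a strictly shorter unstretchable repetition,'' so that the period strictly decreases until a bounded base case is reached. But the desubstitution step (the paper's \Cref{Lemma:UnstretchablePreimage}) needs more than a long period: writing the repetition as $UV$ with $|U|$ the period, it requires \emph{both} $|U|\geq 3$ and $|V|\geq 3$, since the prefix of $V$ of length $3$ is what synchronizes the second period with the first. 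Moreover, the exponent is not preserved under desubstitution: each step can shave a bounded amount off the excess $|V|$ while images under $f$ of different letters have different lengths, so after several pullbacks the excess can fall below $3$ while the period is still arbitrarily large (exponent barely above $1$). Such occurrences are exactly the shape of potential seeds of inner stretch sequences (note the true seeds in $S$ have excess $0$, i.e.\ $w=u$), and there are infinitely many candidates of this form, so they cannot be disposed of by a finite backtracking search over ``short'' repetitions.

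The paper closes precisely this hole with a quantitative argument in place of your iteration: it takes a \emph{minimal} seed $(w,q)$, $w=uv$, whose inner stretch sequence contains the pulled-back repetition, uses \Cref{Lemma:UnstretchablePreimage} to get the dichotomy $|u|<3$ or $|v|<3$, and then, in the problematic case $|u|>5$ and $|v|\leq 2$, proves via the estimates (\ref{Eq:rn})--(\ref{Ineq:fnlambdarho}) (built on \Cref{Lemma:LengthComp} and the bounds $\lambda_k\in\{\varepsilon,\tt{2}\}$, $\rho_k\in\{\tt{0},\tt{02}\}$) that \emph{every} term of the inner power sequence stays strictly below $2$. This contradicts the fact that the sequence contains a repetition of exponent at least $2$, bounds the seed's period by $5$, and makes the concluding inspection genuinely finite. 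Your proposal needs this growth estimate (or an equivalent argument excluding seeds with large period and small excess); with it added, the remainder of your plan goes through and is essentially the paper's proof.
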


First, we show that unstretchable repetitions of exponent greater than $9/4$ in $\mathbf{z}$ come from unstretchable repetitions of exponent greater than $2$ in $\mathbf{y}$ with periods of even length by applying $\tau$ (or $\overline{\tau}$) and stretching to the left and right.

\begin{lemma}\label{Lemma:UnstretchableInZ}
    Suppose that $W=U^R=z_k\cdots z_\ell\in \Occ(\mathbf{z})$ is unstretchable with respect to $Q=|U|$, where $R > 9/4$.  Then we have 
    \begin{equation*}
    W=\lambda\tau(w)\rho \ \ \ \text{ or }\ \ \  W=\lambda\overline{\tau}(w)\rho
    \end{equation*}
    for some words $\lambda,\rho\in\Occ(\mathbf{z})$ and a word $w=y_i\cdots y_j\in \Occ(\mathbf{y})$ such that
    \begin{itemize}
        % \item $|\lambda|\leq 16$ and $|\rho|\leq 18$.
        \item $w=u^r$ has period $q=|u|$, where $u$ is a prefix of $w$ satisfying $|\tau(u)|=|U|$;
        \item $(w,q)$ is unstretchable;
        \item $u$ has even length; and
        \item $r>2$.
    \end{itemize}
\end{lemma}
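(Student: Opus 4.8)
The plan is to prove this by \emph{desubstituting} the repetition $W$ through the transducer $\tau$. First I would record the structural features of $\tau$ that drive everything: for each letter $a$ the blocks $t_1(a)$ and $t_2(a)$ have equal length, so $|\tilde{\tau}(v)|$ depends only on the letters of $v$ and not on the parity of its starting position; $t_1(a)$ uses only $\tt{0}$ and $\tt{1}$ while $t_2(a)$ uses only $\tt{0}$ and $\tt{2}$; each block has length at most $16$; and $\tt{000}\notin\Fact(\mathbf{z})$. Using these, I would detect the block boundaries of the decomposition $\mathbf{z}=\tau(\mathbf{y})$ directly from $\mathbf{z}$: a genuine boundary occurs exactly where the nearest non-$\tt{0}$ letters on the two sides of an occurrence of $\tt{00}$ differ (a $\tt{1}\tt{00}\tt{2}$ or $\tt{2}\tt{00}\tt{1}$ junction), whereas the interior occurrence of $\tt{00}$ inside a self-similar block $t_i(\tt{2})=t_i(\tt{1})t_i(\tt{1})$ is flanked by equal non-$\tt{0}$ letters and is not a boundary. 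Thus boundaries, and their two types, are locally detectable.

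Next I would use $R>9/4>2$ to synchronize the period $Q$ with the block structure. Since $|W|>2Q$ and blocks alternate type, $W$ contains a junction of some fixed type, say $\tt{1}\tt{00}\tt{2}$ (the finitely many cases of small $Q$ being dealt with by direct computation). Because $W$ has period $Q$ and spans more than two periods, the local pattern at this boundary recurs at distance $Q$, producing a boundary of the \emph{same} type $Q$ positions away; iterating, the entire block decomposition of $W$ is invariant under the shift by $Q$. Hence $Q$ equals the total length of $q$ consecutive blocks for some $q$, that is, $Q=|\tau(u)|$ for a length-$q$ factor $u$ of $\mathbf{y}$. Cutting $W$ at its first and last interior boundaries, I can write $W=\lambda\,\tilde{\tau}(w)\,\rho$, where $w=y_i\cdots y_j\in\Occ(\mathbf{y})$ is a whole number of blocks, $\lambda$ is a proper suffix of the preceding block, $\rho$ a proper prefix of the following block, and $w=u^r$ has period $q=|u|$ with $|\tau(u)|=|U|$.

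The even length of $u$ and the unstretchability of $(w,q)$ follow quickly. The shift by $Q$ carries each $t_1$-block onto a $t_1$-block (same boundary type), so it shifts the block index by an even amount; an odd shift is impossible because it would place a $t_1$-block at a $t_2$-position, and $t_1(a)\neq t_2(b)$ always (their non-$\tt{0}$ letters are disjoint). Hence $q=|u|$ is even. For unstretchability, recall that $\tilde{\tau}(w)$ was chosen as the maximal block-aligned factor of the maximal repetition $W$; if $aw$ had period $q$ in $\mathbf{y}$, then, using that $|u|$ is even (so that $\tau$ carries period $q$ to period $Q$ by \Cref{Lemma:BadPeriods}), $\tilde{\tau}(aw)=\tilde{\tau}(a)\tilde{\tau}(w)$ would have period $Q$ and extend $\tilde{\tau}(w)$ leftward by the complete block $\tilde{\tau}(a)$, contradicting the maximality of the left stretch of $W$; the right case is symmetric.

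Finally I would bound the exponent. Writing $w=u^{\lfloor r\rfloor}p$ with $p$ a proper prefix of $u$ and using $\tau(u^k)=\tau(u)^k$ for even-length $u$, the core has exponent $|\tilde{\tau}(w)|/Q=\lfloor r\rfloor+|\tau(p)|/Q$. A short case analysis on $\lfloor r\rfloor$, together with $|\tau(p)|<|\tau(u)|=Q$, shows that this core exponent exceeding $2$ already forces $r>2$. To secure a core exponent above $2$, I would use $|\lambda|,|\rho|<16$, so that $|\tilde{\tau}(w)|=|W|-|\lambda|-|\rho|>(9/4)Q-32$ and hence the core exponent exceeds $9/4-32/Q>2$ once $Q>128$; the remaining finitely many repetitions with $Q\le 128$ are checked by computation. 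The step I expect to be the main obstacle is the synchronization: making rigorous that the interior $\tt{00}$ of the self-similar block $t_i(\tt{2})=t_i(\tt{1})t_i(\tt{1})$ can never masquerade as a true boundary, which is precisely where the disjoint-alphabet property of $t_1$ and $t_2$ must be invoked with care.
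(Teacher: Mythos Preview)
Your proposal is correct and follows essentially the same route as the paper's proof. Both arguments dispose of small periods by direct computation, then for large $Q$ locate the block boundaries of $\tau$ inside $W$, strip off short $\lambda,\rho$ at the ends, and use periodicity of $W$ to show the inner word $w\in\Occ(\mathbf{y})$ inherits a period $q$ with $|\tau(u)|=Q$, that $q$ is even because the shift by $Q$ preserves boundary type, and that $(w,q)$ is unstretchable by pushing a hypothetical stretch of $w$ back through $\tau$.

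The only differences are cosmetic. The paper cuts at the first $\tt{1}$ and the last alphabet-switch (giving $|\lambda|\le 16$, $|\rho|\le 18$, hence threshold $|U|>136$), whereas you cut at the first and last block boundaries (giving $|\lambda|,|\rho|\le 15$, threshold $Q>128$); both choices work. Your synchronization step is phrased as ``boundaries are locally detectable and hence shift-invariant under the period'', while the paper argues more directly that the length-$Q$ prefix $U'$ of $\tau(w)$ must begin $\tt{001}$ and end $\tt{2}$, forcing $U'=\tau(u)$ on the nose; these are two ways of saying the same thing. One small nit: your appeal to \Cref{Lemma:BadPeriods} in the unstretchability step is not quite the right reference, since that lemma concerns $\tau\circ g\circ f^n$; what you actually need (and correctly use) is just the elementary fact that $\tau$ carries a $q$-periodic word with even $q$ to a $Q$-periodic word.
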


\begin{proof}
    First of all, we check that $\mathbf{z}$ has no factor of the form $U^R$ where $R > 9/4$ and $|U|\leq 136$.  So we may assume that $|U|>136$.  Then $U$ contains both $\tt{1}$ and $\tt{2}$.  By \Cref{Lemma:SisterOccurs}, we may assume that $\tt{2}$ appears before $\tt{1}$ in $U$.  Let $\lambda\tt{00}$ be the longest prefix of $W$ that does not contain the letter $\tt{1}$, and let $\rho$ be the longest suffix of $W$ that does not contain both $\tt{1}$ and $\tt{2}$.  Notice that our assumptions imply that $\lambda$ exists and ends with $\tt{2}$.  Then $\lambda$ must be a suffix of $\overline{\tau}(a)$ for some $a\in\{\tt{0,1,2}\}$, and $\rho$ must be a prefix of $\tau(b)\tt{00}$ (or its sister) for some $b\in\{\tt{0,1,2}\}$, and we can write 
    \begin{equation*}
        W=\lambda \tau(w)\rho
    \end{equation*}  
    for some word $w=y_i\cdots y_j\in\Occ(\mathbf{y})$.  Since $W=U^R$ where $R>9/4$ and $|U|\geq 136$, we have 
    \begin{equation*}
    |W|>\tfrac{9}{4}|U|=2|U|+\tfrac{1}{4}|U|\geq 2|U|+34.
    \end{equation*}
    Further, since $W=\lambda\tau(w)\rho$ and $|\lambda|\leq 16$ and $|\rho|\leq 18$, we have
    \begin{equation*}
    |\tau(w)|=|W|-|\lambda|-|\rho|>2|U|.
    \end{equation*}
    Write $\tau(w)=U'V'$, where $U'$ is the prefix of $\tau(w)$ of length $|U|$.  Since $|\tau(w)|>2|U|$, we have $|V'|>|U'|$.
    Since $W$ has period $|U|$ and $\lambda$ ends in $\tt{2}$, we see that $U'$ ends in $\tt{2}$.  Similarly, since $U'$ starts in $\tt{001}$, we see that $V'$ starts in $\tt{001}$.  It follows that we can write $U'=\tau(u)$ and $V'=\tau(v)$ for some words $u,v\in\Fact(\mathbf{y})$.  Since $\tau$ is injective, we must have $w=uv$, and we see that $w$ has period $q=|u|$. Since $|V'|>|U'|$, we must have $|v|>|u|$, which means that $w=u^r$ for some rational number $r>2$. Since $U'=\tau(u)$ starts in $\tt{001}$ and ends in $\tt{2}$, we see that $u$ must have even length.

    It remains to show that $(w,q)$ is unstretchable.  Suppose that $(w,q)$ is left-stretchable.  (The proof is similar if $(w,q)$ is right-stretchable.)  Then $y_{i-1}\cdots y_j$ has period $q$, and it follows that $\tt{1}\overline{\tau}(y_{i-1}\cdots y_j)\rho$ has period $Q$ and properly contains $W=z_k\cdots z_\ell$, which contradicts the assumption that $(W,Q)$ is unstretchable.
\end{proof}

Next, we show that unstretchable repetitions in $\mathbf{y}$ with exponent greater than $2$ and period of even length come from unstretchable repetitions in $\mathbf{x}$ with exponent at least $2$ with an even number of $\tt{2}$'s in the repeated prefix.

\begin{lemma}\label{Lemma:UnstretchableInY}
    Suppose that $W=U^R=y_k\cdots y_\ell\in \Occ(\mathbf{y})$ is unstretchable with respect to $Q=|U|$, where $R>2$ and $U$ has even length.  Then we have 
    \begin{equation*}
    W=g(w)\tt{2}
    \end{equation*}
    where $w=x_i\cdots x_j\in\Occ(\mathbf{x})$ satisfies the following:
    \begin{itemize}
        \item $w=u^r$ has period $q=|u|$, where $u$ is a prefix of $w$ satisfying $|g(u)|=|U|$;
        \item $(w,q)$ is unstretchable;
        \item $u$ has an even number of $\tt{2}$'s; and
        \item $r\geq 2$.
    \end{itemize}
\end{lemma}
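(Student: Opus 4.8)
The plan is to follow the template of the proof of \Cref{Lemma:UnstretchableInZ}, but the argument is much cleaner because the images $g(\tt{0})=\tt{20}$, $g(\tt{1})=\tt{21}$, $g(\tt{2})=\tt{2}$ all begin with $\tt{2}$ and have length at most $2$. The key structural fact is that, since $\mathbf{y}=g(\mathbf{x})$, every occurrence of $\tt{0}$ or $\tt{1}$ in $\mathbf{y}$ is both preceded and followed by $\tt{2}$ (as noted in the proof of \Cref{Prop:yPalindromicComplexity}). Hence $\mathbf{y}$ factors uniquely into the blocks $\tt{20}$, $\tt{21}$, $\tt{2}$, and a position of $\mathbf{y}$ is a block boundary exactly when it carries the letter $\tt{2}$. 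I would use this block structure throughout.

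First I would show that $W=y_k\cdots y_\ell$ starts and ends at block boundaries, i.e.\ $y_k=y_\ell=\tt{2}$. For the left end: if $k=0$ this is immediate since $\mathbf{y}$ begins with $g(\tt{0})=\tt{20}$; otherwise, if $y_k\in\{\tt{0},\tt{1}\}$ then $y_{k-1}=\tt{2}$, while periodicity gives $y_{k+Q}=y_k\in\{\tt{0},\tt{1}\}$ and hence $y_{k+Q-1}=\tt{2}$. As $y_{k+Q-1}$ is the letter of $W$ at offset $Q-1$, this yields $y_{k-1}=y_{k+Q-1}$, so $(W,Q)$ is left-stretchable, a contradiction. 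Symmetrically, if $y_\ell\in\{\tt{0},\tt{1}\}$ then $y_{\ell+1}=\tt{2}$, while periodicity gives $y_{\ell-Q}=y_\ell\in\{\tt{0},\tt{1}\}$ and so $y_{\ell+1-Q}=\tt{2}=y_{\ell+1}$, making $(W,Q)$ right-stretchable. Both arguments only require $R>2$ to place the relevant indices inside $W$ (e.g.\ $k+Q\le\ell$ and $\ell-Q\ge k$).

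With $y_k=y_\ell=\tt{2}$, the word $y_k\cdots y_{\ell-1}$ runs from one block boundary to the next and is therefore a concatenation of full blocks; letting $w=x_i\cdots x_j$ be its $g$-preimage gives $W=g(w)\tt{2}$. Applying the same reasoning to a single period (using $y_{k+Q}=\tt{2}$) shows $U=g(u)$ for the factor $u$ that is the preimage of the first $|u|$ blocks of $W$, so $|g(u)|=|U|=Q$. Since $W$ has period $Q$, its block boundaries are periodic with period $Q$, so the block sequence of $w$ has block-period $|u|$; pulling this back to $\mathbf{x}$ shows $w$ has period $q=|u|$ with period block $u$, i.e.\ $w=u^r$. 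Because $|g(w)|=|W|-1=RQ-1\ge 2Q$ (as $RQ$ is an integer exceeding $2Q$), the prefix $g(u)^2=g(u^2)$ of $W$ lies within $g(w)$, so $u^2$ is a prefix of $w$ and $r\ge 2$. The even-length hypothesis enters only here: from $|U|=2|u|-|u|_\tt{2}$ being even, $u$ has an even number of $\tt{2}$'s.

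Finally I would check that $(w,q)$ is unstretchable by transporting a stretch of $w$ back up to $\mathbf{y}$. If $(w,q)$ were left-stretchable, then $x_{i-1}w$ has period $q$, and since the block $g(x_{i-1})$ immediately precedes $g(x_i)$ in $\mathbf{y}$, the word $g(x_{i-1}w)\tt{2}$ has period $Q$ and properly extends $W$ to the left, contradicting the unstretchability of $(W,Q)$; the right-stretchable case is analogous, using that the trailing $\tt{2}$ of $W$ is precisely the first letter of the block $g(x_{j+1})$. The main obstacle I anticipate is not any single deep step but the careful bookkeeping: verifying that $g$ sends a word of period $q$ to a word of period $Q$ (which holds because the $g$-image length $2\cdot(\text{length})-(\#\,\tt{2}\text{'s})$ is invariant under cyclic rotation of the period block), and confirming that all index computations remain valid under $R>2$. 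Unlike the $\mathbf{z}$ case, the uniformity of the block argument means no separate computer verification of short periods is required.
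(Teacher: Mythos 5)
Your proof is correct and takes essentially the same approach as the paper's: both arguments show that unstretchability forces $W$ to begin and end in $\tt{2}$ (via the fact that every $\tt{0}$ or $\tt{1}$ in $\mathbf{y}$ is preceded and followed by $\tt{2}$), decode $W$ through the block structure of $g$ to obtain $W=g(w)\tt{2}$ with $U=g(u)$, read off the parity condition from $|U|=2|u|-|u|_{\tt{2}}$, and transport a hypothetical stretch of $(w,q)$ back up through $g$ to contradict the unstretchability of $(W,Q)$. The only cosmetic difference is how the period of $w$ is extracted: the paper writes $W=UV\tt{2}$ with $V$ a prefix of $W$, so that $w=uv$ with $v$ a prefix of $w$, while you argue via the $Q$-periodicity of block boundaries together with the prefix $g(u^2)$; the content is the same.
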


\begin{proof}
    First note that since $(W,Q)$ is unstretchable, it must begin and end in $\tt{2}$.  For if $y_k\in\{\tt{0,1}\}$, then $y_k=y_{k+Q}$, and since every occurrence of $\tt{0}$ or $\tt{1}$ in $\mathbf{y}$ is preceded by $\tt{2}$, we have $y_{k-1}=y_{k+Q-1}$, which contradicts the assumption that $W$ is unstretchable.  Similarly $y_\ell = \tt{2}$.  Since $R>2$, we may write $W=UV\tt{2}$, where $V$ is a nonempty prefix of $W$ satisfying $|V|\geq |U|$.  Since $W$ begins in $\tt{2}$, we see that $V$ begins in $\tt{2}$.  So we can write $U=g(u)$ and $V=g(v)$ for some words $u,v\in\Occ(\mathbf{x})$.  Since $U$ has even length, we see that $u$ must have an even number of $\tt{2}$'s.  Let $w=uv$, so that $W=UV\tt{2}=g(u)g(v)\tt{2}=g(w)\tt{2}$.  Now since $g(v)\tt{2}$ is a prefix of $g(w)$, we see that $v$ is a prefix of $w$.  Further, since $|V|\geq |U|$, we must have $|v|\geq |u|$.  Thus, we have $w=u^r$ for some rational number $r\geq 2$.

    It remains to show that $(w,q)$ is unstretchable, where $q=|u|$.  Write $w=x_i\cdots x_j$, and suppose towards a contradiction that $(w,q)$ is left-stretchable.  (The proof is similar if $(w,q)$ is right-stretchable.)  Then $x_{i-1}\cdots x_j$ has period $q$, and it follows that $g(x_{i-1}\cdots x_j)\tt{2}=g(x_{i-1})W$ has period $Q$, which contradicts the assumption that $(W,Q)$ is unstretchable.
\end{proof}

In order to prove \Cref{Proposition:UnstretchableStructure}, it remains to show that every unstretchable repetition in $\mathbf{x}$ with exponent at least $2$ and an even number of $\tt{2}$'s in the repeated prefix belongs to the inner stretch sequence of $(w,|w|)$ for some word $w\in S$.  We start by showing that every sufficiently long unstretchable repetition in $\mathbf{x}$ comes from a shorter unstretchable repetition in $\mathbf{x}$.

\begin{lemma}\label{Lemma:UnstretchablePreimage}
    Suppose that $W=x_k\cdots x_\ell\in\Occ(\mathbf{x})$ has period $Q$ and that $(W,Q)$ is unstretchable.  Write $W=UV$,  where $|U|=Q$, and suppose that $|U|\geq 3$ and $|V|\geq 3$.  Then $W=\lambda f(w)\rho$ for some words $\lambda,w,\rho\in \Occ(\mathbf{x})$ such that
    \begin{itemize}
        \item $\lambda\in\{\varepsilon,\tt{2}\}$ and $\rho\in\{\tt{0},\tt{02}\}$;
        \item $w=x_i\cdots x_j$ has period $q=|u|$, where $u$ is a prefix of $w$ satisfying $|f(u)|=|U|$; and
        \item $(w,q)$ is unstretchable. 
    \end{itemize}
\end{lemma}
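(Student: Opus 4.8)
The plan is to exploit the \emph{synchronization} of $f$: since $f(\tt{0})=\tt{01}$, $f(\tt{1})=\tt{022}$, and $f(\tt{2})=\tt{02}$, each $f$-image contains exactly one $\tt{0}$, occurring at its first position. Because $\mathbf{x}=f(\mathbf{x})=f(x_0)f(x_1)f(x_2)\cdots$, the occurrences of $\tt{0}$ in $\mathbf{x}$ are therefore in bijection with the block boundaries of this factorization, and every block boundary is a $\tt{0}$. Two further elementary facts, both immediate from the shapes of the three images together with the fact that every block starts with $\tt{0}$, will be used repeatedly: every $\tt{1}$ in $\mathbf{x}$ is immediately preceded by $\tt{0}$, and every occurrence of $\tt{22}$ lies inside a single block $f(\tt{1})=\tt{022}$ (so it too is immediately preceded by $\tt{0}$).

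With this in hand, I would first cut $W=x_k\cdots x_\ell$ along block boundaries. Let $k'$ be the least index $\geq k$ with $x_{k'}=\tt{0}$, let $m+1$ be the greatest block boundary that is at most $\ell+1$, and set $\lambda=x_k\cdots x_{k'-1}$, $f(w)=x_{k'}\cdots x_m$, and $\rho=x_{m+1}\cdots x_\ell$. Then $\lambda$ is a (possibly empty) proper suffix of a block and $\rho$ is a (possibly empty) proper prefix of a block, so a priori $\lambda\in\{\varepsilon,\tt{1},\tt{2},\tt{22}\}$ and $\rho\in\{\varepsilon,\tt{0},\tt{02}\}$. The length bound $|f(w)|=|W|-|\lambda|-|\rho|\geq(|U|+|V|)-3\geq |U|=Q$ guarantees the cut is nontrivial. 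The crucial structural observation is that, since $W$ has period $Q$ and every $\tt{0}$ is a block boundary, the block boundaries inside $W$ recur with period $Q$: for $k\leq i\leq\ell-Q$, we have $x_i=\tt{0}$ exactly when $x_{i+Q}=\tt{0}$.

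Next I would rule out the forbidden values of $\lambda$ and $\rho$ using unstretchability. If $\lambda=\tt{1}$, then $x_k=\tt{1}$ and $x_{k-1}=\tt{0}$; periodicity gives $x_{k+Q}=x_k=\tt{1}$ (with $k+Q\leq\ell$ as $|V|\geq3$), and since every $\tt{1}$ is preceded by $\tt{0}$ we get $x_{k+Q-1}=\tt{0}=x_{k-1}$, so $(W,Q)$ is left-stretchable, a contradiction. The case $\lambda=\tt{22}$ is analogous: $x_{k}x_{k+1}=\tt{22}$ forces $x_{k+Q}x_{k+Q+1}=\tt{22}$, which lies inside some $f(\tt{1})$ and hence is preceded by $\tt{0}$, giving $x_{k+Q-1}=\tt{0}=x_{k-1}$ and left-stretchability again. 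Symmetrically, if $\rho=\varepsilon$ then $x_\ell$ ends a block and $x_{\ell+1}=\tt{0}$; using the $Q$-periodicity of block boundaries one shows $\ell+1-Q$ is again a block boundary (noting $|V|\geq3$ keeps it in range), so $x_{\ell+1-Q}=\tt{0}=x_{\ell+1}$, whence $(W,Q)$ is right-stretchable, a contradiction. This leaves exactly $\lambda\in\{\varepsilon,\tt{2}\}$ and $\rho\in\{\tt{0},\tt{02}\}$.

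Finally, the properties of $w$ follow from the same periodicity. Since block boundaries inside $W$ recur with period $Q$, the boundary at $k'$ forces a boundary at $k'+Q$, so the length-$Q$ prefix of $f(w)$ is itself a block image $f(u)$ with $u$ a prefix of $w$ and $|f(u)|=Q=|U|$; injectivity of $f$ on letters then transfers the period $Q$ of $f(w)$ down to period $q=|u|$ of $w$. To see $(w,q)$ is unstretchable, suppose for instance it were left-stretchable; then $f$ applied to the extended occurrence gives a factor of period $Q$, and since $\lambda$ is only a proper suffix of the block $f(x_{i-1})$ immediately preceding $f(w)$, this period-$Q$ extension reaches at least one letter to the left of position $k$, contradicting the unstretchability of $(W,Q)$; the right-stretchable case is symmetric, using that $\rho$ is a proper prefix of the block following $f(w)$. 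The main obstacle is the index bookkeeping in the periodicity arguments---particularly relating the out-of-range position $x_{\ell+1}$ to an in-range boundary to dispatch $\rho=\varepsilon$---but each step reduces to the single synchronization fact that the $\tt{0}$'s mark the blocks.
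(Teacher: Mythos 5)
Your synchronization framework and your exclusions of $\lambda=\tt{1}$ and $\lambda=\tt{22}$ are fine, but the step where you dispatch $\rho=\varepsilon$ contains a genuine gap: it is simply not true that an unstretchable $(W,Q)$ cannot end exactly at a block boundary. Your argument needs $\ell+1-Q$ to be a block boundary whenever $\ell+1$ is, but your periodicity-of-boundaries fact only applies to positions $i$ with $k\leq i\leq \ell-Q$, and $\ell+1$ lies outside this range---that is exactly where unstretchability is allowed to break the pattern. Concretely, take $W=x_{10}\cdots x_{19}=(\tt{20102})^2$ in $\mathbf{x}$ (this is the word $w_2$ of \Cref{Example:StretchSequence}, with $Q=5$ and $|U|=|V|=5$). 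Its last two letters $x_{18}x_{19}=\tt{02}$ form the \emph{complete} block $f(x_8)=f(\tt{2})$, so your maximal cut yields $\rho=\varepsilon$; yet $x_{\ell+1}=x_{20}=\tt{0}$ while $x_{\ell+1-Q}=x_{15}=\tt{2}$ (the last letter of the block $f(\tt{1})=\tt{022}$ occupying positions $13$--$15$), so $(W,Q)$ is not right-stretchable and no contradiction arises. The failure mode is that the block ending at $\ell$ is $f(\tt{2})=\tt{02}$ while the block covering position $\ell-Q$ is $f(\tt{1})=\tt{022}$: periodicity of $W$ forces the \emph{letters} to repeat inside $W$, but it does not force the block types to match at the right edge.

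The repair is not to rule out $\rho=\varepsilon$ but to re-cut in that case. One can show, in your own style, that if $W$ ends at a block boundary then the final block must be $f(\tt{2})=\tt{02}$ (if it were $\tt{01}$ or $\tt{022}$, the letters $Q$ positions earlier would force a block of the same type there, making $\ell+1-Q$ a boundary and $W$ right-stretchable); one then moves this complete block into $\rho$, getting $\rho=\tt{02}$ and an $f(w)$ one block shorter. But note that your unstretchability argument for $(w,q)$ must also be adapted: it relies on $\rho$ being a \emph{proper} prefix of the block following $f(w)$, so that stretching $w$ to the right pushes the period past position $\ell$; when $\rho$ is itself the complete block $f(x_{j+1})$, right-stretchability of $w$ only reproduces the period on $x_{k'}\cdots x_\ell$, which you already know, and an extra step is needed (namely, that $f(x_{j+1-q})=f(x_{j+1})=\tt{02}$ forces $x_{\ell+1-Q}=\tt{0}=x_{\ell+1}$) to contradict unstretchability of $(W,Q)$. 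The paper sidesteps this entire difficulty by first proving that $W$ must have prefix $\tt{0}$ or $\tt{201}$ and suffix $\tt{0}$ or $\tt{102}$, and then defining the decomposition from that suffix, so that the trailing $\tt{02}$ always goes into $\rho$ whether or not it happens to be a complete block. (A minor additional quibble: before your exclusions you only know $|\lambda|+|\rho|\leq 4$, not $\leq 3$, so your displayed length bound is premature, though harmless.)
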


\begin{proof}
    We begin by proving a simple claim.

    \begin{claim} 
    $W$ has prefix $\tt{0}$ or $\tt{201}$ and suffix $\tt{0}$ or $\tt{102}$.
    \end{claim}

    \begin{subproof} We prove only that $W$ has prefix $\tt{0}$ or $\tt{201}$.  The proof that $W$ has suffix $\tt{0}$ or $\tt{102}$ uses symmetric arguments.
    
    We begin by showing that $\tt{1}$ is not a prefix of $W$.  Suppose towards a contradiction that $W$ has prefix $\tt{1}$.  Then both $U$ and $V$ begin in $\tt{1}$.  But the unique left extension of $\tt{1}$ in $\Fact(\mathbf{x})$ is $\tt{01}$, which means that $x_{k-1}=x_{k+q-1}=\tt{0}$.  This contradicts the assumption that $W$ is unstretchable.  So $W$ has prefix $\tt{0}$ or $\tt{2}$.

    % Suppose that $W$ has prefix $\tt{0}$.  Then $W$ has prefix $\tt{010}$, $\tt{020}$, or $\tt{022}$, and $y$ has the same prefix of length $3$ as $W$, since $|y|\geq 3$.  But the unique left extension of $\tt{010}$ in $\Fact(\mathbf{x})$ is $\tt{2010}$, and the unique left extension of $\tt{022}$ in $\Fact(\mathbf{x})$ is $\tt{1022}$.  Since $W$ is unstretchable, it must have prefix $\tt{020}$.   
    
    If $W$ has prefix $\tt{0}$, then we are done, so suppose that $W$ has prefix $\tt{2}$.  Then $W$ has prefix $\tt{201}$, $\tt{202}$, or $\tt{220}$, and $V$ has the same prefix of length $3$ as $W$, since $|V|\geq 3$.  The unique left extension of $\tt{202}$ in $\Fact(\mathbf{x})$ is $\tt{0202}$, and the unique left extension of $\tt{220}$ in $\Fact(\mathbf{x})$ is $\tt{0220}$.  So if $W$ has prefix $\tt{202}$ or $\tt{220}$, then $x_{k-1}=x_{k+q-1}=\tt{0}$, which contradicts the assumption that $W$ is unstretchable.  Thus, we conclude that $W$ must have prefix $\tt{201}$.
    \end{subproof}

    We now consider two cases, based on the prefix of $W$.

    \smallskip
    
    \noindent 
    \textbf{Case 1:} Suppose that $W$ has prefix $\tt{0}$.  
    Then both $U$ and $V$ have prefix $\tt{0}$.  Further, since $W$ has suffix $\tt{0}$ or $\tt{102}$, we can write $V=V'\rho$ where $\rho\in\{\tt{0},\tt{02}\}$.  Let $w=x_i\cdots x_j$ be the unique preimage of $UV'$ in $\mathbf{x}$.  Write $w=uv$, where $f(u)=U$ and $f(v)=V'$.  Let $\lambda=\varepsilon$, so that $W=\lambda f(w)\rho$.  Since $f(w)\rho=f(u)f(v)\rho$ has period $|U|=|f(u)|$, we see that $w=uv$ must have period $|u|=q$.
    
    It remains to show that $(w,q)$ is unstretchable.  Suppose first that $(w,q)$ is right stretchable.  But then $x_i\cdots x_{j+1}$ has period $q$, and it follows that $f(x_i\cdots x_{j+1})\tt{0}=x_k\cdots x_{\ell'}$ has period $Q$ and contains the occurrence $W=x_k\cdots x_\ell$ as a proper prefix.  But this contradicts the assumption that $(W,Q)$ is unstretchable.  Suppose now that $w$ is left stretchable, i.e., that $x_{i-1}\cdots x_{j}$ has period $q$.  But then $f(x_{i-1}\cdots x_j)\rho=f(x_{i-1})W=x_{k'}\cdots x_\ell$ has period $Q$, and contains the occurrence $W=x_k\cdots x_\ell$ as a proper suffix.  But this contradicts the assumption that $(W,Q)$ is unstretchable.

    \smallskip

    \noindent
    \textbf{Case 2:} Suppose that $W$ has prefix $\tt{201}$.  Then both $U$ and $V$ have prefix $\tt{201}$, and since $W$ has suffix $\tt{0}$ or $\tt{102}$, we can write $U=\tt{2}U'$ and $V=\tt{2}V'\rho$, where $\rho\in\{\tt{0},\tt{02}\}$.  So $W=\tt{2}U'\tt{2}V'\rho$, and both $U'$ and $V'$ have prefix $\tt{01}$.  Let $w=x_i\cdots x_{j}$ be the preimage of $U'\tt{2}V'$ in $f$.  Write $w=uv$, where $f(u)=U'\tt{2}$ and $f(v)=V'$.  Let $\lambda=\tt{2}$ so that we have $W=\lambda f(w)\rho$.  Since $f(w)\rho=f(u)f(v)\rho$ has period $|U|=|U'\tt{2}|=|f(u)|$, we see that $w=uv$ must have period $|u|=q$.

    It remains to show that $(w,q)$ is unstretchable.  Suppose first that $(w,q)$ is right stretchable, i.e., that $x_i\cdots x_{j+1}$ has period $q$.  Then $\tt{2}f(x_i\cdots x_{j+1})\tt{0}=x_k\cdots x_{\ell'}$ has period $Q$ and contains the occurrence $W=x_k\cdots x_\ell$ as a proper prefix.  But this contradicts the assumption that $(W,Q)$ is unstretchable.  Suppose now that $w$ is left stretchable, i.e., that $x_{i-1}\cdots x_{j}$ has period $q$.  Then $f(x_{i-1}\cdots x_{j})\rho=x_{k'}\cdots x_\ell$ has period $Q$ and contains the occurrence $W=x_k\cdots x_\ell$ as a proper suffix.  But this contradicts the assumption that $(W,Q)$ is unstretchable.
\end{proof}

The next lemma describes the left and right stretch of an occurrence $w=x_i\cdots x_j\in\Occ(\mathbf{x})$ based on the context in which it appears, i.e., based on $x_{i-1}\cdots x_{j+1}$.

\begin{lemma}\label{Lemma:UnstretchablePrefixAndSuffix}
    Suppose that $w=x_i\cdots x_j\in\Occ(\mathbf{x})$ has period $q$ and that $(w,q)$ is unstretchable, where $i\geq 1$.  Let $Q=|f(x_i\cdots x_{i+q-1})|$ be the corresponding period of $f(w)=x_k\cdots x_{\ell}$, and let $\lambda$ (resp.~$\rho$) be the left (resp.~right) stretch of $(f(w),Q)$.  Then 
    \begin{equation*}
        \lambda=\begin{cases}
            \tt{2}, &\text{ if $x_{i-1},x_{i+q-1}\in\{\tt{1,2}\}$;}\\
            \tt{\varepsilon}, &\text{ otherwise;}
        \end{cases}
    \end{equation*}
    and 
    \begin{equation*}
        \rho=\begin{cases}
            \tt{02}, &\text{ if $x_{j+1},x_{j-q+1}\in\{\tt{1,2}\}$;}\\
            \tt{0}, &\text{ otherwise.}
        \end{cases}
    \end{equation*}
\end{lemma}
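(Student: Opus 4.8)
The plan is to compute $\lambda$ and $\rho$ directly, position by position, by translating each ``does the periodicity extend one more letter'' question into a comparison between letters of $f$-images. I would record the following elementary facts about $f$, all checked by inspection: every image $f(a)$ begins with $\tt{0}$ and has length at least $2$; the last letter of $f(a)$ is $\tt{1}$ if $a=\tt{0}$ and $\tt{2}$ if $a\in\{\tt{1},\tt{2}\}$; the second letter of $f(a)$ is $\tt{1}$ if $a=\tt{0}$ and $\tt{2}$ if $a\in\{\tt{1},\tt{2}\}$; and the blocks $f(\tt{1})=\tt{022}$ and $f(\tt{2})=\tt{02}$ are distinguished both by their second-to-last letter ($\tt{2}$ versus $\tt{0}$) and by what sits two positions past their initial $\tt{0}$ (the letter $\tt{2}$ inside $f(\tt{1})$, versus the $\tt{0}$ beginning the next block after the length-$2$ block $f(\tt{2})$). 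Throughout I will use the index bookkeeping from the definition of $f$ on occurrences: writing $f(w)=x_k\cdots x_\ell$, the position $k-1$ ends the block $f(x_{i-1})$, the position $k+Q-1$ ends the block $f(x_{i+q-1})$ (since the period $Q=|f(u)|$ with $u=x_i\cdots x_{i+q-1}$), the position $\ell+1$ begins $f(x_{j+1})$, and the position $\ell+1-Q$ begins $f(x_{j-q+1})$ (using that the last period of $w$ is again $u$, so its image has length $Q$). The hypothesis $i\geq 1$ guarantees $k\geq 2$, so every position I touch to the left of $f(w)$ exists.

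For the left stretch, the first step of extension succeeds iff $x_{k-1}=x_{k+Q-1}$, i.e.\ iff the last letters of $f(x_{i-1})$ and $f(x_{i+q-1})$ agree. By the facts above, these last letters agree exactly when $x_{i-1}$ and $x_{i+q-1}$ lie in the same class among $\{\tt{0}\}$ and $\{\tt{1},\tt{2}\}$. Because $(w,q)$ is unstretchable we have $x_{i-1}\neq x_{i+q-1}$, which kills the case $x_{i-1}=x_{i+q-1}=\tt{0}$; hence step one succeeds iff $x_{i-1},x_{i+q-1}\in\{\tt{1},\tt{2}\}$, which is precisely the stated condition, and in that case the matched letter is $x_{k-1}=\tt{2}$. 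To finish I check that the second step fails: when step one succeeds, one of $x_{i-1},x_{i+q-1}$ equals $\tt{1}$ and the other equals $\tt{2}$, so the second-to-last letters of $f(x_{i-1})$ and $f(x_{i+q-1})$ are $\tt{2}$ and $\tt{0}$ in some order, whence $x_{k-2}\neq x_{k+Q-2}$. Thus $\lambda=\tt{2}$ when $x_{i-1},x_{i+q-1}\in\{\tt{1},\tt{2}\}$ and $\lambda=\varepsilon$ otherwise.

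The right stretch is handled the same way, shifted by one because images begin (rather than end) with the distinguishing $\tt{0}$. Since $f(x_{j+1})$ and $f(x_{j-q+1})$ both begin with $\tt{0}$, we always have $x_{\ell+1}=x_{\ell+1-Q}=\tt{0}$, so $\rho$ always begins with $\tt{0}$. The second step succeeds iff the second letters of $f(x_{j+1})$ and $f(x_{j-q+1})$ agree, which as above happens iff $x_{j+1},x_{j-q+1}$ lie in the same class; unstretchability rules out $x_{j+1}=x_{j-q+1}=\tt{0}$, so step two succeeds iff $x_{j+1},x_{j-q+1}\in\{\tt{1},\tt{2}\}$, and then the matched letter is $\tt{2}$. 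In that case one of $x_{j+1},x_{j-q+1}$ is $\tt{1}$ and the other is $\tt{2}$; comparing the third position of the corresponding images---inside $f(\tt{1})=\tt{022}$ this is $\tt{2}$, whereas past the length-$2$ block $f(\tt{2})=\tt{02}$ it is the $\tt{0}$ opening the next image---shows $x_{\ell+3}\neq x_{\ell+3-Q}$, so the stretch halts. Hence $\rho=\tt{02}$ when $x_{j+1},x_{j-q+1}\in\{\tt{1},\tt{2}\}$ and $\rho=\tt{0}$ otherwise. I expect the only real care to be the index bookkeeping---verifying that each comparison stays inside a single $f$-block (guaranteed by every image having length at least $2$) and that the claimed positions correspond to the claimed letters---rather than any genuine mathematical difficulty.
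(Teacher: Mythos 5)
Your proposal is correct and follows essentially the same route as the paper's proof: both reduce the computation of $\lambda$ and $\rho$ to comparing the $f$-images of the (necessarily distinct, by unstretchability) letters $x_{i-1},x_{i+q-1}$ and $x_{j+1},x_{j-q+1}$, with the same case split on whether $\tt{0}$ is involved. The only difference is presentational: the paper phrases the conclusion as a longest common suffix/prefix of the blocks $f(\tt{0})$, $f(\tt{1})$, $f(\tt{2})$, while you carry out the same comparison letter by letter with explicit index bookkeeping (including the overflow of $\rho$ past the block $f(\tt{2})$ into the following block, which the paper leaves implicit).
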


\begin{figure}
    \centering
    \begin{tikzpicture}[scale=0.8]
        \draw (-1.5,0) rectangle node{$x_{i-1}$} (0,1);
        \draw (0,0) rectangle node{$w$} (13.5,1);
        \draw (13.5,0) rectangle node{$x_{j+1}$} (15,1);
        \draw (-1.5,-1) rectangle node{$x_{i-1}$} (0,0);
        \draw (0,-1) rectangle node{\footnotesize $x_i$} (1.5,0);
        \draw (1.5,-1) rectangle node{$\cdots$} (4.5,0);
        \draw (4.5,-1) rectangle node{\footnotesize $x_{i+q-1}$} (6,0);
        \draw (6,-1) rectangle node{$\cdots$} (7.5,0);
        \draw (7.5,-1) rectangle node{\footnotesize $x_{j-q+1}$} (9,0);
        \draw (9,-1) rectangle node{$\cdots$} (12,0);
        \draw (12,-1) rectangle node{\footnotesize $x_j$} (13.5,0);
        \draw (13.5,-1) rectangle node{$x_{j+1}$} (15,0);
        \draw [decorate,decoration={brace,amplitude=5pt,mirror,raise=5pt}]
  (0,-1) -- (6,-1) node[midway,below,yshift=-10pt]{length $q$};
        \draw [decorate,decoration={brace,amplitude=5pt,mirror,raise=5pt}]
  (7.5,-1) -- (13.5,-1) node[midway,below,yshift=-10pt]{length $q$};
    \end{tikzpicture}
    \caption{An occurrence $w=x_i\cdots x_j$ such that $(w,q)$ is unstretchable.  (We have illustrated the case that $|w|>2q$.)}
    \label{Figure:UnstretchableInContext}
\end{figure}

\begin{proof}
    First note that since $(w,q)$ is unstretchable, we must have $x_{i-1}\neq x_{i+q-1}$ and $x_{j+1}\neq x_{j-q+1}$ (see \Cref{Figure:UnstretchableInContext}).  If $x_{i-1},x_{i+q-1}\in\{\tt{1,2}\}$, then $\{x_{i-1},x_{i+q-1}\}=\{\tt{1,2}\}$, and it follows that $\lambda$ is the longest common suffix of $f(\tt{1})$ and $f(\tt{2})$, which is $\tt{2}$.  Otherwise, we have $x_{i-1}=\tt{0}$ (and $x_{i+q-1}\in\{\tt{1,2}\}$) or $x_{i+q-1}=\tt{0}$ (and $x_{i-1}\in\{\tt{1},\tt{2}\}$).  In either case, the longest common suffix of $f(x_{i-1})$ and $f(x_{i+q-1})$ is $\varepsilon$, hence $\lambda=\varepsilon$.

    The argument for $\rho$ is similar.  If $x_{j+1},x_{j-q+1}\in\{\tt{1,2}\}$, then $\{x_{j+1},x_{j-q+1}\}=\{\tt{1,2}\}$, and it follows that $\rho=\tt{02}$.  Otherwise, we have $x_{j+1}=\tt{0}$ (and $x_{j-q+1}\neq \tt{0}$) or $x_{j-q+1}=\tt{0}$ (and $x_{j+1}\neq \tt{0}$), and it follows that $\rho=\tt{0}$.
\end{proof}

We are finally ready to prove that every unstretchable repetition in $\mathbf{x}$ with exponent at least $2$ and an even number of $\tt{2}$'s in the repeated prefix belongs to the inner stretch sequence of $(w,|w|)$ for some word $w\in S$.

\begin{lemma}\label{Lemma:UnstretchableInX}
    Suppose that $W=U^R=x_k\cdots x_\ell\in\Occ(\mathbf{x})$ is unstretchable with respect to $Q=|U|$, where $R\geq 2$ and $U$ contains an even number of $\tt{2}$'s.  Then $(W,Q)$ is in the inner stretch sequence of $(w,|w|)$ for some word $w=x_i\cdots x_j\in\Occ(\mathbf{x})$, where $w$ belongs to the set
    \begin{align*}
        S&=\{\tt{0},\tt{1},\tt{22},\tt{202},\tt{1022},\tt{0220},\tt{2201},\tt{10202},\tt{02020},\tt{20201}\}.
    \end{align*}
\end{lemma}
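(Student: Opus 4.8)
The plan is to prove the statement by strong induction on the period $Q=|U|$, using \Cref{Lemma:UnstretchablePreimage} as the descent engine and \Cref{Lemma:UnstretchablePrefixAndSuffix} to glue the descent back together into a single inner stretch sequence. First I would note that the hypotheses $R\geq 2$ and ``$U$ has an even number of $\tt{2}$'s'' rule out the smallest periods: since $\tt{00},\tt{11},\tt{222}\notin\Fact(\mathbf{x})$ and $\tt{0101},\tt{1010}\notin\Fact(\mathbf{x})$, there is no unstretchable even-$\tt{2}$-count repetition of exponent at least $2$ and period at most $2$. Hence at the top of the induction we always have $Q\geq 3$, so $W=UV$ with $|U|=Q\geq 3$ and $|V|=(R-1)Q\geq 3$, and \Cref{Lemma:UnstretchablePreimage} applies, giving $W=\lambda f(w)\rho$ with $\lambda\in\{\varepsilon,\tt{2}\}$, $\rho\in\{\tt{0},\tt{02}\}$ and an unstretchable $(w,q)$ whose period satisfies $q=|u|\leq|f(u)|/2=Q/2<Q$.

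The key structural point is that this descent is exactly the inverse of one step of the inner-stretch-sequence construction. Indeed, the pair $(\lambda,\rho)$ recovered by \Cref{Lemma:UnstretchablePreimage} coincides with the left and right stretches of $(f(w),Q)$ computed by \Cref{Lemma:UnstretchablePrefixAndSuffix}, both being governed by whether the relevant neighbouring letters lie in $\{\tt{1},\tt{2}\}$. Since the stretch-sequence construction is deterministic --- from an unstretchable $(w_{n-1},q_{n-1})$ the next term $w_n=\lambda_n f(w_{n-1})\rho_n$ is uniquely determined --- it follows that if $(w,q)$ is the term $(w_m,q_m)$ of the inner stretch sequence of some seed $(w^*,|w^*|)$, then $(W,Q)$ is precisely the next term $(w_{m+1},q_{m+1})$ of that same sequence. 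I also need that $f$ preserves the parity of the number of $\tt{2}$'s, since $|f(a)|_{\tt{2}}\equiv|a|_{\tt{2}}\pmod{2}$ for each letter $a$ (from $f(\tt{0})=\tt{01}$, $f(\tt{1})=\tt{022}$, $f(\tt{2})=\tt{02}$), together with the fact that the $\lambda=\tt{2}$ shift in Case~2 of \Cref{Lemma:UnstretchablePreimage} leaves the $\tt{2}$-count of the period block unchanged; hence the preimage $u$ again has an even number of $\tt{2}$'s and the induction stays inside the right class.

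With the gluing in hand it suffices to identify the configurations at which the descent terminates. Iterating \Cref{Lemma:UnstretchablePreimage} strictly decreases the period at every step, so the descent halts; the terminal pairs are those for which the preimage lemma no longer applies, namely those of period at most $2$ or of too-small overhang $|V|$. I would then show by a finite backtracking search (of the kind used throughout the paper) that the unstretchable even-$\tt{2}$-count repetitions arising as such terminal pairs are exactly the ten words of $S$ --- realised as the $1$-powers $(w^*,|w^*|)$ at the bottom of the descent --- together with their first few stretched images. Climbing back up through the deterministic gluing of the previous paragraph then places the original $(W,Q)$ in the inner stretch sequence of the corresponding seed $w^*\in S$, completing the induction.

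The main obstacle is that the exponent is \emph{not} preserved under descent: a repetition of exponent $R\geq 2$ can have an $f$-preimage of exponent strictly less than $2$ (for example, descending the period-$5$, exponent-$2$ word $(\tt{20102})^2$ of \Cref{Example:StretchSequence} yields $(\tt{01})^{3/2}$, and the seed $\tt{1022}$ already has a stretched image of period $9$ and exponent at most $4/3$). Consequently the inductive hypothesis cannot be stated merely for exponents at least $2$: I would instead run the induction over \emph{all} unstretchable even-$\tt{2}$-count repetitions, continuing the descent through these low-exponent, small-overhang pairs --- which requires treating the cases $|V|\in\{1,2\}$ not covered verbatim by \Cref{Lemma:UnstretchablePreimage} --- and it is precisely the finite backtracking search that guarantees the resulting family of terminal configurations, and hence the seed set $S$, is finite.
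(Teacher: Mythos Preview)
Your descent-and-glue framework is correct as far as it goes: the pair $(\lambda,\rho)$ extracted by \Cref{Lemma:UnstretchablePreimage} does coincide with the left/right stretches of $(f(w),Q)$ from \Cref{Lemma:UnstretchablePrefixAndSuffix}, so one descent step is exactly the inverse of one inner-stretch step, and the parity of $|u|_{\tt{2}}$ is indeed preserved. The paper's argument is organised the same way (minimise over seeds rather than induct on $Q$, but this is cosmetic).

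The genuine gap is your finiteness claim for the terminal configurations. The descent halts at a pair $(w,q)$ with $w=uv$ where either $|u|\leq 2$ or $|v|\leq 2$; you then assert that ``a finite backtracking search'' pins these down. But the halting condition $|v|\leq 2$ places no bound on $|u|$, so a priori there are infinitely many terminal pairs, and a backtracking search cannot certify that only ten of them seed an exponent-$\geq 2$ repetition upstream. Concretely: why can't a seed with $|u|=100$, $|v|=1$ have some $r_n\geq 2$ far out in its inner power sequence?

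What you are missing is the quantitative estimate that rules this out. The paper shows that if $|u|\geq 6$ (and $|v|\leq 2$), then for every $n$
\[
r_n \;=\; 1+\frac{|f^n(v)|}{|f^n(u)|}+\frac{\sum_{k=1}^n|f^{n-k}(\lambda_k\rho_k)|}{|f^n(u)|}\;<\;2,
\]
using \Cref{Lemma:LengthComp} to get $|f^n(v)|\leq|f^n(\tt{01})|$ and $|f^n(u)|\geq 2|f^n(\tt{01})|$, together with $|\lambda_k\rho_k|\leq|\tt{202}|=|f(\tt{1})|$ and the telescoping bound $\sum_{k=1}^n|f^k(\tt{1})|<|f^n(\tt{01})|$. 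This forces $|u|\leq 5$ for any seed that can ever reach exponent $\geq 2$, and only then does the problem become a finite check. Without this growth estimate your proof sketch does not close.
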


\begin{proof}
    Let $w=u^r=uv=x_i\cdots x_j\in\Occ(\mathbf{x})$ be a shortest word such that $w$ is unstretchable with respect to $q=|u|$ and $(W,Q)$ is in the inner stretch sequence of $(w,q)$.  Note that $(W,Q)$ is in the inner stretch sequence of itself, so there is such a pair $(w,q)$.  We claim that $|u|<3$ or $|v|<3$. 
    Suppose otherwise that $|u|\geq 3$ and $|v|\geq 3$.  Then by \Cref{Lemma:UnstretchablePreimage}, we can write $w=\lambda f(w')\rho$ for some words $\lambda,w',\rho\in\Occ(\mathbf{x})$ such that $\lambda\in\{\varepsilon,\tt{2}\}$, $\rho\in\{\tt{0},\tt{02}\}$, $w'=(u')^{r'}$ where $u'$ is the prefix of $w'$ satisfying $|f(u')|=|u|$ and $w'$ is unstretchable with respect to $q'=|u'|$.  But as $w=\lambda f(w')\rho$ has period $q$, we see that $\lambda$ is the left stretch of $(f(w'),q)$, and $\rho$ is the right stretch of $(f(w'),q)$, so that $(w,q)$, and $(W,Q)$ in turn, are in the inner stretch sequence of $(w',q')$.  Since $|w'|<|w|$, this contradicts the minimality of $w$.

    So we have $|u|<3$ or $|v|<3$.  We now claim that $|u|\leq 5$.  Suppose otherwise that $|u|>5$, whence $|v|\leq 2$.  Then the $n$th term of the inner power sequence of $(w,q)$ is given by
    \begin{align}
        r_n&=\frac{|f^n(w)|+\sum_{k=1}^n|f^{n-k}(\lambda_k\rho_k)|}{|f^n(u)|}\nonumber\\
        &=\frac{|f^n(u)|+|f^n(v)|+\sum_{k=1}^n|f^{n-k}(\lambda_k\rho_k)|}{|f^n(u)|}\nonumber\\
        &=1+\frac{|f^n(v)|}{|f^n(u)|}+\frac{\sum_{k=1}^n|f^{n-k}(\lambda_k\rho_k)|}{|f^n(u)|} \label{Eq:rn}
    \end{align}
    Since $|v|\leq 2$ and $\tt{11}\not\in\Fact(\mathbf{x})$, it follows from \Cref{Lemma:LengthComp} that 
    \begin{align}\label{Ineq:fnv}
        |f^n(v)|\leq |f^n(\tt{01})|.
    \end{align}  
    By inspecting all factors of $\mathbf{x}$ of length $6$ and using \Cref{Lemma:LengthComp}, we find that 
    \begin{align}\label{Ineq:fnu}
        |f^n(u)|\geq 2|f^n(\tt{01})|.
    \end{align}
    Since $\lambda_k\in\{\varepsilon,\tt{2}\}$ and $\rho_k\in\{\tt{0},\tt{02}\}$, we have 
    \begin{align*}
        |f^{n-k}(\lambda_k\rho_k)|\leq |f^{n-k}(\tt{202})|=|f^{n-k+1}(\tt{1})|
    \end{align*} 
    for all $k\in\{1,2,\ldots n\}$.  Hence we have
    \begin{align}\label{Ineq:fnlambdarho}
        \sum_{k=1}^n|f^{n-k}(\lambda_k\rho_k)|\leq \sum_{k=1}^n|f^{n-k+1}(\tt{1})|=\sum_{\ell=1}^n|f^\ell(\tt{1})|< |f^n(\tt{01})|,
    \end{align}
    where the last inequality can be proven by a straightforward induction.
    Substituting inequalities (\ref{Ineq:fnv}), (\ref{Ineq:fnu}), and (\ref{Ineq:fnlambdarho}) into (\ref{Eq:rn}), we find
    \begin{align*}
        r_n< 1+\frac{|f^n(\tt{01})|}{2|f^n(\tt{01})|}+\frac{|f^n(\tt{01})|}{2|f^n(\tt{01})|}=2.
    \end{align*}
    But $W=U^R$ is in the inner stretch sequence of $(w,q)$, and we assumed that $R\geq 2$, so this is a contradiction.

    Therefore, we have $|u|\leq 5$, which means that there are only finitely many possibilities for $u$.  Since $U$ has an even number of $\tt{2}$'s and $U$ is a conjugate of $f^n(u)$ for some $n\geq 0$, we see that $u$ must also have an even number of $\tt{2}$'s.  For each factor $u$ of $\mathbf{x}$ of length at most $5$ with an even number of $\tt{2}$'s, we find the longest possible periodic extension $w=\lambda u\rho$ of $u$ in $\Fact(\mathbf{x})$, so that $\lambda$ is the left stretch of $(u,|u|)$ and $\rho$ is the right stretch of $(u,|u|)$ for some particular occurrence of $u$ in $\mathbf{x}$, and $(w,q)$ is unstretchable.  By inspection, the only such pairs $(w,q)$ that are not contained in the inner stretch sequence of $(w',q')$ for some word $w'$ with $|w'|<|w|$ belong to $S$ and satisfy $w=u$ (i.e., $v=\varepsilon$).
\end{proof}

\Cref{Proposition:UnstretchableStructure} now follows easily from \Cref{Lemma:UnstretchableInZ}, \Cref{Lemma:UnstretchableInY}, and \Cref{Lemma:UnstretchableInX}.

\subsection{The Critical Exponent}\label{Subsection:CriticalExponent}

In this subsection, we complete the proof of \Cref{Theorem:CriticalExponent}.  Essentially all that remains is to analyze the narrow family of outer power sequences described in \Cref{Proposition:UnstretchableStructure}.

For $w\in S$, we let $R_0(w),R_1(w),R_2(w),\ldots$ be the outer power sequence of $(w,|w|)$ for some occurrence $w=x_i\cdots x_j$ with $i\geq 1$.  We will see shortly that $R_n(w)$ is independent of $i$.  Note that we ignore the case $i=0$, where all of the left stretches involved in the stretch sequence of $w$ will be empty, but taking $i\geq 1$ gives us ``enough space'' to stretch as far as we need to on the left.  Since $\mathbf{x}$ is recurrent, we see that each term in the outer power sequence of an occurrence of $w$ starting at $i=0$ will be no greater than the corresponding term in the outer power sequence of some later occurrence of $w$.  Thus, since we are interested only in the largest exponents in $\mathbf{z}$, it is safe to ignore the case $i=0$.

In the next lemma, we derive an expression for $R_n(w)$ in terms of the lengths of the words $\tau(g(f^n(w)))$ and $\tau(g(f^k(\tt{0})))$ for $k\leq n$.  An important consequence is that $R_n(w)\leq R_n(\tt{0})$ for all $w\in S$ and $n\geq 0$, which means that the sequence $(R_n(\tt{0}))_n$ is responsible for the critical exponent of $\mathbf{z}$.

\begin{lemma}\label{Lemma:OuterPowerSequenceExpression}
    For all $w\in S$ and $n\geq 0$, we have
    \begin{align}\label{Equation:R2n}
        R_{2n}(w)=1+\frac{\sum_{k=0}^n|\tau(g(f^{2k}(\tt{0})))|+3}{|\tau(g(f^{2n}(w)))|}
        % \leq 1+\frac{\sum_{k=0}^n|\tau(g(f^{2k}(\tt{0})))|+3}{|\tau(g(f^{2n}(\tt{0})))|}
    \end{align}
    and
    \begin{align}\label{Equation:R2n+1}
        R_{2n+1}(w)=1+\frac{\sum_{k=0}^n |\tau(g(f^{2k+1}(\tt{0})))|+10}{|\tau(g(f^{2n+1}(w)))|}.
        % \leq 1+\frac{\sum_{k=0}^n |\tau(g(f^{2k+1}(\tt{0})))|+10}{|\tau(g(f^{2n+1}(\tt{0})))|}
    \end{align}
    It follows that $R_n(w)\leq R_n(\tt{0})$ for all $w\in S$ and $n\geq 0$.
\end{lemma}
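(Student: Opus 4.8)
The plan is to compute $R_n(w)$ directly from the definition of the outer power sequence and then simplify. Since $t_1$ and $t_2$ agree in length on each letter, the length $|\tau(u)|$ depends only on the letter counts of $u$; combined with the fact that $f$ and $g$ are morphisms, the recursion $w_n=\lambda_n f(w_{n-1})\rho_n$ unwinds to
\[
|\tau(g(w_n))| = |\tau(g(f^n(w)))| + \sum_{k=1}^n |\tau(g(f^{n-k}(\lambda_k\rho_k)))|.
\]
Because each $w\in S$ satisfies $w=u$ (so $Q_n=|\tau(g(f^n(w)))|$), this gives
\[
R_n(w) = 1 + \frac{\sum_{k=1}^n |\tau(g(f^{n-k}(\lambda_k\rho_k)))| + |\lambda_n^*\rho_n^*|}{|\tau(g(f^n(w)))|}.
\]
Thus the whole problem reduces to identifying the inner stretches $\lambda_k,\rho_k$ and the outer stretches $\lambda_n^*,\rho_n^*$ (working with a generic occurrence, $i\geq 1$ large enough that every left stretch attains its full length), and then collapsing the resulting sum.

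For the inner stretches I would apply \Cref{Lemma:UnstretchablePrefixAndSuffix} repeatedly: each $\lambda_k$ is $\tt2$ or $\varepsilon$ and each $\rho_k$ is $\tt{02}$ or $\tt0$, according to the first and last letters of $w_{k-1}$ and the letters one period inward. Tracking these contexts by induction on $k$ shows that for every $w\in S$ the sequence $(\lambda_k\rho_k)_{k\geq 1}$ is, after a bounded $w$-dependent transient, periodic of period $2$, alternating between $\tt0$ and $\tt{202}$; for $w=\tt0$ this alternation is already visible from the start in \Cref{Example:StretchSequence}. Splitting $\sum_{k=1}^n|\tau(g(f^{n-k}(\lambda_k\rho_k)))|$ by the parity of $k$ then expresses the inner contribution through the length sequences $a_m=|\tau(g(f^m(\tt0)))|$, $b_m=|\tau(g(f^m(\tt1)))|$, $c_m=|\tau(g(f^m(\tt2)))|$.

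Reading off $f(\tt0)=\tt{01}$, $f(\tt1)=\tt{022}$, $f(\tt2)=\tt{02}$ gives the recursions $a_m=a_{m-1}+b_{m-1}$, $b_m=a_{m-1}+2c_{m-1}$, $c_m=a_{m-1}+c_{m-1}$, and hence the key identity $a_{m+2}=a_{m+1}+a_m+2c_m$ (with its consequence $a_{2i+1}+2c_{2i+1}=a_{2i+3}-a_{2i+2}$). Next I would determine the outer stretches: since $\tau$ alternates $t_1/t_2$ by parity, the distance one can stretch $\tilde{\tau}(g(w_n))$ within $\mathbf{z}$ is governed by the parity of $n$. Substituting the parity-split inner sum together with the outer-stretch lengths and telescoping via $a_{2i+2}=a_{2i+1}+a_{2i}+2c_{2i}$ makes nearly everything cancel, leaving exactly the standalone constants $3$ (even case) and $10$ (odd case); this produces \eqref{Equation:R2n} and \eqref{Equation:R2n+1}. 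I expect this to be the main obstacle: one must verify, for each of the ten words of $S$ separately, that the $w$-dependent transient inner stretches and the outer stretches combine into the \emph{same} $w$-independent numerator. The eventual alternating pattern is the easy part; the finitely many early terms require careful word-by-word bookkeeping, most safely confirmed with the computational checks already used elsewhere in the paper.

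Finally, the inequality $R_n(w)\leq R_n(\tt0)$ is immediate once the two formulas are in hand, because their numerators are independent of $w$ while the denominator $|\tau(g(f^n(w)))|$ is minimized at $w=\tt0$. Indeed $|\tau(g(f^n(\tt0)))|=a_n\leq b_n=|\tau(g(f^n(\tt1)))|$ by \Cref{Lemma:LengthComp}, and every remaining $w\in S$ has length at least $2$, so $|\tau(g(f^n(w)))|\geq 2c_n\geq a_n$, again by \Cref{Lemma:LengthComp}. Hence $R_n(w)\leq R_n(\tt0)$ for all $w\in S$ and all $n\geq 0$.
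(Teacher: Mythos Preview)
Your approach is essentially the paper's: unwind $R_n(w)$ into the displayed formula, identify the inner stretches $\lambda_k,\rho_k$ via \Cref{Lemma:UnstretchablePrefixAndSuffix}, identify the outer stretches $\lambda_n^*,\rho_n^*$, and telescope. Your telescoping via $a_{m+2}=a_{m+1}+a_m+2c_m$ is equivalent to the paper's manipulation using $f(\tt{1})=\tt{022}$ and $f(\tt{0})=\tt{01}$, and your argument for $R_n(w)\leq R_n(\tt{0})$ is a slightly more explicit version of theirs.

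The one point where you are less sharp than the paper is what you flag as the ``main obstacle'': you anticipate a $w$-dependent transient in $(\lambda_k\rho_k)_{k\geq 1}$ and worry about reconciling ten separate computations into a common numerator. In fact there is no transient at all. The paper checks (by inspecting the context $x_{i-1}\cdots x_{j+1}$ of each $w\in S$) that already $\lambda_1=\varepsilon$, $\rho_1=\tt{0}$ for every $w\in S$; the induction then gives $\lambda_k\rho_k=\tt{0}$ for $k$ odd and $\lambda_k\rho_k=\tt{202}$ for $k$ even, uniformly in $w$. Likewise $|\lambda_k^*\rho_k^*|$ is shown to equal $22$ for $k$ even (including $k=0$, again by a direct context check for each $w\in S$) and $34$ for $k$ odd, uniformly in $w$. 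Once you know this, the $w$-independence of the numerators is automatic and the constants $3=22-|\tau(g(\tt{0}))|$ and $10$ drop out immediately; no word-by-word reconciliation is needed.
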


\begin{proof}
    Let $w=x_i\cdots x_j$ be an occurrence of $w$ in $\mathbf{x}$ with $i\geq 1$.  Throughout this proof, we use the notation of \Cref{Subsection:StretchSequences} to describe the inner and outer stretch sequences of $(w,|w|)$.  In particular, we let $(w_0,q_0),(w_1,q_1),\ldots$ and $(W_0,Q_0),(W_1,Q_1),\ldots$ denote the inner and outer stretch sequences of $(w,|w|)$, and we let $\lambda_k$, $\lambda_k^*$, $\rho_k$, and $\rho_k^*$ denote the left and right stretches applied along the way.  For all $k\geq 0$, we write 
    \begin{equation*}
        w_k=x_{i_k}\cdots x_{j_k}.
    \end{equation*}
    We start by proving the following claim.
    \begin{claim}\label{Claim:LeftAndRightStretches}
    \begin{enumerate}[label=\normalfont(\alph*)]
        \item For all $k\geq 1$, we have $\lambda_k=\begin{cases}
            \varepsilon, & \text{ if $k$ is odd;}\\
            \tt{2}, & \text{ if $k$ is even.}
        \end{cases}$
        \item For all $k\geq 1$, we have $\rho_k=\begin{cases}
            \tt{0}, & \text{ if $k$ is odd;}\\
            \tt{02}, & \text{ if $k$ is even.}
        \end{cases}$
        \item For all $k\geq 0$, we have $|\lambda_k^*\rho_k^*|=\begin{cases}
            34, & \text{ if $k$ is odd;}\\ 
            22, & \text{ if $k$ is even.} 
        \end{cases}$
        % \item $|\alpha_k|=\begin{cases}
        %     8, & \text{ if $k$ is odd;}\\ %\overline{\tau}(\tt{1})
        %     2, & \text{ if $k$ is even;} %\tt{02}
        % \end{cases}$
        % \item $|\beta_k|=\begin{cases}
        %     26, & \text{ if $k$ is odd;}\\ %\tt{\tau(21)\tt{00}}
        %     20, & \text{ if $k$ is even.} %\tt{\tau(20)\tt{0}}
        % \end{cases}$        
    \end{enumerate}
    \end{claim}

\begin{subproof}
    For parts (a) and (b), we proceed by induction on $k$.  For the base case, we inspect each word $w$ in $S$ individually and examine all possible contexts $x_{i-1}\cdots x_{j+1}$ in which the occurrence $w=x_i\cdots x_j$ might appear.  In every case, we see by  \Cref{Lemma:UnstretchablePrefixAndSuffix} that $\lambda_1=\varepsilon$ and $\rho_1=\tt{0}$.
    
    Now suppose that (a) and (b) hold for some integer $k\geq 1$.  If $k$ is odd, then we have $\lambda_k=\varepsilon$ and $\rho_k=\tt{0}$.  Thus we can write
    \begin{equation*}
        w_k=x_{i_k}\cdots x_{j_k}=f(w_{k-1})\tt{0},
    \end{equation*}
    So we see that $x_{i_k}=x_{i_k+q_k}=\tt{0}$.  Since $\tt{00}\not\in\Fact(\mathbf{x})$ and $(w_k,q_k)$ is unstretchable, we must have $\{x_{i_k-1},x_{i_k+q_k-1}\}=\{\tt{1},\tt{2}\}$.  Therefore, by \Cref{Lemma:UnstretchablePrefixAndSuffix}, we have $\lambda_{k+1}=\tt{2}$.  Similarly, we have $x_{j_k}=x_{j_k-q_k}=\tt{0}$, hence $\{x_{j_k+1},x_{j_k-q_k+1}\}=\{\tt{1,2}\}$ and $\rho_{k+1}=\tt{02}$.  On the other hand, if $k$ is even, then we have $\lambda_k=\tt{2}$ and $\rho_k=\tt{02}$.  Thus we can write
    \begin{equation*}
        w_k=x_{i_k}\cdots x_{j_k}=\tt{2}f(w_{k-1})\tt{02}.
    \end{equation*}
    So we see that $x_{i_k}=x_{i_k+q_k}=\tt{2}$.  Since $\tt{12}\not\in\Fact(\mathbf{x})$ and $(w_k,q_k)$ is unstretchable, we must have $\{x_{i_k-1},x_{i_k+q_k-1}\}=\{\tt{0},\tt{2}\}$.  Therefore, by \Cref{Lemma:UnstretchablePrefixAndSuffix}, we have $\lambda_{k+1}=\varepsilon$. 
    Similarly, we have $x_{j_k}=x_{j_k-q_k}=\tt{2}$, hence $\{x_{j_k+1},x_{j_k-q_k+1}\}=\{\tt{0},\tt{2}\}$ and $\rho_{k+1}=\tt{0}$.

    Finally, we prove (c).  First suppose that $k$ is odd.  Using (a) and (b), we see, as in the second paragraph of the proof, that $x_{i_k}=x_{i_k+q_k}=\tt{0}$ and $\{x_{i_k-1},x_{i_k+q_k-1}\}=\{\tt{1},\tt{2}\}$. Since $w \in S$, we see that $|g(w_k)|$ is even. Therefore $\lambda_k^*$ is the longest common suffix of $\tau(g(\tt{1}))$ and $\overline{\tau}(g(\tt{2}))$ or their sisters (observe that $|g(\tt{1})|$ is even and $|g(\tt{2})|$ is odd). This suffix is $\tt{00202202}$ or its sister, i.e., $|\lambda_k^*| = 8$. Similarly, from (a) and (b), we see that $x_{j_k}=x_{j_k-q_k}=\tt{0}$, so that $\{x_{j_k+1},x_{j_k-q_k+1}\}=\{\tt{1},\tt{2}\}$, and it follows that $\rho_k^*$ is the longest common prefix of $\tau(g(\tt{1})\tt{2})$ and $\tau(g(\tt{2})\tt{2})$ (or their sisters), which has length $26$.  So we conclude that $|\lambda_k^*\rho_k^*|=8+26=34$.

    Now suppose that $k$ is even.  First suppose that $k\geq 2$.  From (a) and (b), we see that $x_{i_k}=x_{i_k+q_k}=\tt{2}$ and $x_{j_k}=x_{j_k-q_k}=\tt{2}$.  So $\{x_{i_k-1},x_{i_k+q_k-1}\}=\{\tt{0},\tt{2}\}$, and we see that $\lambda_k^*$ is the longest common suffix of $\tau(g(\tt{0}))$ and $\overline{\tau}(g(\tt{2}))$ (or their sisters), which has length $2$.  Similarly, we have $\{x_{j_k+1},x_{j_k-q_k+1}\}=\{\tt{0},\tt{2}\}$, and it follows that $\rho_k^*$ is the longest common prefix of $\tau(g(\tt{0})\tt{2})$ and $\tau(g(\tt{2})\tt{2})$ (or their sisters), which has length $20$.  Finally, in the case that $k=0$, we examine the context $x_{i-1}\cdots x_{j+1}$ in which $w$ appears, and confirm by inspection that $\lambda_0^*$ and $\rho_0^*$ are the same as $\lambda_k^*$ and $\rho_k^*$ for $k\geq 2$.  Thus, we conclude that $|\lambda_k^*\rho_k^*|=2+20=22$.  
    % For example, if $w=x_i\cdots x_j=\tt{202}$, then $x_{i-1}\cdots x_{j+1}=\tt{02020}$.  So in this case, we see that $\lambda_0^*$ is the longest common suffix of $\tau(g(\tt{0}))$ and $\tau(g(\tt{2}))$ (or their sisters), which has length $2$, and $\rho_0^*$ is the longest common prefix of $\tau(g(\tt{0})\tt{2})$ and $\tau(g(\tt{2})\tt{2})$ (or their sisters), which has length $20$.
\end{subproof}

    We now proceed with the proof of (\ref{Equation:R2n}).  Let $n\geq 0$. By definition, we have 
    \begin{align}\label{Equation:R2ndef}
        R_{2n}(w)&=\frac{|\tau(g(f^{2n}(w)))|+\sum_{k=1}^{2n}|\tau(g(f^{2n-k}(\lambda_k\rho_k)))|+|\lambda_{2n}^*\rho_{2n}^*|}{|\tau(g(f^{2n}(w)))|}
        % &=1+\frac{\sum_{k=1}^{2n}|\tau(g(f^{2n-k}(\lambda_k\rho_k)))|+22}{|\tau(g(f^{2n}(w)))|},
    \end{align}
    We first show that 
    \begin{align}\label{Equation:SumOfStretches}
        \sum_{k=1}^{2n}|\tau(g(f^{2n-k}(\lambda_k\rho_k)))|=\sum_{k=1}^n|\tau(g(f^{2k}(\tt{0})))|.
    \end{align}
    Starting from the left side, we group the terms in pairs, and then use \Cref{Claim:LeftAndRightStretches}(a) and (b), and the facts that $f(\tt{0})=\tt{01}$ and $f(\tt{1})=\tt{022}$, as follows.
    \begin{align*}
        \sum_{k=1}^{2n}|\tau(g(f^{2n-k}(\lambda_k\rho_k)))|&=\sum_{\ell=1}^n\left(|\tau(g(f^{2n-2\ell}(\lambda_{2\ell}\rho_{2\ell})))|+|\tau(g(f^{2n-2\ell+1}(\lambda_{2\ell-1}\rho_{2\ell-1})))| \right)\\
        &=\sum_{\ell=1}^n\left(|\tau(g(f^{2n-2\ell}(\tt{202})))|+|\tau(g(f^{2n-2\ell+1}(\tt{0})))| \right)\\
        &=\sum_{\ell=1}^n\left(|\tau(g(f^{2n-2\ell+1}(\tt{1})))|+|\tau(g(f^{2n-2\ell+1}(\tt{0})))| \right)\\
        % &=\sum_{\ell=1}^n|\tau(g(f^{2n-2\ell+1}(\tt{01})))|\\
        &=\sum_{\ell=1}^n|\tau(g(f^{2n-2\ell+2}(\tt{0})))|\\
        &=\sum_{k=1}^n|\tau(g(f^{2k}(\tt{0})))|
    \end{align*}
    % the sum $\sum_{k=1}^{2n}|\tau(g(f^{2n-k}(\lambda_k\rho_k)))|$ is given by
    % \begin{align*}
    %     &|\tau(g(\lambda_{2n}\rho_{2n}))|+|\tau(g(f(\lambda_{2n-1}\rho_{2n-1})))|+\cdots +|\tau(g(f^{2n-2}(\lambda_2\rho_2)))|+|\tau(g(f^{2n-1}(\lambda_1\rho_1)))|\\
    %     &=|\tau(g(f(\tt{1})))|+|\tau(g(f(\tt{0})))|+\cdots +|\tau(g(f^{2n-1}(\tt{1})))|+|\tau(g(f^{2n-1}(\tt{0})))|\\
    %     &=|\tau(g(f(\tt{01})))|+\cdots +|\tau(g(f^{2n-1}(\tt{01})))|\\
    %     &=|\tau(g(f^2(\tt{0})))|+\cdots +|\tau(g(f^{2n}(\tt{0})))|\\
    %     &=\sum_{k=1}^n|\tau(g(f^{2k}(\tt{0})))|.
    % \end{align*}
    Substituting (\ref{Equation:SumOfStretches}) into (\ref{Equation:R2ndef}) and using \Cref{Claim:LeftAndRightStretches}(c) and the fact that $|\tau(g(\tt{0}))|=19$, we obtain
    \begin{align*}
        R_{2n}(w)&=1+\frac{\sum_{k=1}^n|\tau(g(f^{2k}(\tt{0})))|+22}{|\tau(g(f^{2n}(w)))|}=1+\frac{\sum_{k=0}^n|\tau(g(f^{2k}(\tt{0})))|+3}{|\tau(g(f^{2n}(w)))|}.
    \end{align*}
    % Finally, since $|\tau(g(f^{2n}(w)))|\geq |\tau(g(f^{2n}(\tt{0})))|$ for all $w\in S$ by Lemma~\ref{Lemma:LengthComp}, we have
    % \begin{align*}
    %      R_{2n}&=1+\frac{\sum_{k=0}^n|\tau(g(f^{2k}(\tt{0})))|+3}{|\tau(g(f^{2n}(w)))|}\leq 1+\frac{\sum_{k=0}^n|\tau(g(f^{2k}(\tt{0})))|+3}{|\tau(g(f^{2n}(\tt{0})))|},
    % \end{align*}
    % as desired.  
    The analogous expression (\ref{Equation:R2n+1}) for $R_{2n+1}$ can be established in a similar manner; we omit the details.

    Finally, by \Cref{Lemma:LengthComp}, we have $|\tau(g(f^n(w)))|\geq |\tau(g(f^n(\tt{0})))|$ for all $w\in S$.  Thus we see that $R_{2n}(w)\leq R_{2n}(\tt{0})$ for all $n\geq 0$. 
\end{proof}

With \Cref{Lemma:OuterPowerSequenceExpression} in hand, we now wish to find a simple expression for $|\tau(g(f^n(\tt{0})))|$.  We do so by finding a linear recurrence for $|\tau(g(f^n(\tt{0})))|$.

\begin{lemma}\label{Lemma:Recurrence}
    Write $a_n = |\tau(g(f^n(\tt{0})))|$ for $n \geq 0$. The sequence $(a_n)$ satisfies the recurrence
    \begin{equation}\label{Eq:Recurrence}
        a_n = 2a_{n-1} + a_{n-3}
    \end{equation}
    with initial values $19$, $43$, $94$.
\end{lemma}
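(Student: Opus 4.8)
The plan is to reduce the computation of $a_n = |\tau(g(f^n(\tt{0})))|$ to a purely length-theoretic calculation governed by the incidence matrix of $f$. The crucial preliminary observation is that \emph{the length of $\tau(w)$ depends only on the number of occurrences of each letter in $w$, and not at all on the alternation performed by $\tau$}. Indeed, from the definitions of $t_1$ and $t_2$ we have $|t_1(\tt{0})| = |t_2(\tt{0})| = 3$, $|t_1(\tt{1})| = |t_2(\tt{1})| = 8$, and $|t_1(\tt{2})| = |t_2(\tt{2})| = 16$, so that regardless of the parities of the positions,
\begin{equation*}
    |\tau(w)| = 3|w|_{\tt{0}} + 8|w|_{\tt{1}} + 16|w|_{\tt{2}}.
\end{equation*}
In particular, the map $w \mapsto |\tau(w)|$ is additive over concatenation, i.e., $|\tau(uv)| = |\tau(u)| + |\tau(v)|$, and the same is then true of $w\mapsto|\tau(g(w))|$.

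Next I would introduce the two companion sequences $b_n = |\tau(g(f^n(\tt{1})))|$ and $c_n = |\tau(g(f^n(\tt{2})))|$. Using $f^n = f^{n-1}\circ f$ together with $f(\tt{0}) = \tt{01}$, $f(\tt{1}) = \tt{022}$, and $f(\tt{2}) = \tt{02}$, the additivity just established yields the coupled system
\begin{align*}
    a_n &= a_{n-1} + b_{n-1},\\
    b_n &= a_{n-1} + 2c_{n-1},\\
    c_n &= a_{n-1} + c_{n-1},
\end{align*}
valid for all $n\geq 1$. (Equivalently, one may phrase this step in terms of the incidence matrix $M$ of $f$ composed with that of $g$ and the weight vector $(3,8,16)$; a direct computation shows that the characteristic polynomial of $M$ is $x^3 - 2x^2 - 1$, which is exactly why this polynomial, and hence $\mu_1$, governs the growth.)

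The remaining step is elimination. From the first relation, $b_{n-1} = a_n - a_{n-1}$, hence $b_n = a_{n+1} - a_n$; substituting into the second relation expresses $c_{n-1}$ in terms of the $a$-sequence, and feeding the result into the third relation ($c_n - c_{n-1} = a_{n-1}$) collapses everything into the single recurrence $a_n = 2a_{n-1} + a_{n-3}$ for all $n \geq 3$ (equivalently, Cayley--Hamilton gives $M^3 = 2M^2 + I$, whence $a_n = 2a_{n-1}+a_{n-3}$ immediately). I would then verify the initial values directly: $a_0 = |\tau(g(\tt{0}))| = 19$, and two short computations of $|\tau(g(f(\tt{0})))|$ and $|\tau(g(f^2(\tt{0})))|$ give $43$ and $94$. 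The only genuinely substantive point is the length-independence observation of the first paragraph, which sidesteps the fact that the alternation of $\tau$ depends on the absolute positions of letters; once that is in hand, every subsequent step is a routine linear manipulation, so I expect no serious obstacle.
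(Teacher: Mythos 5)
Your proof is correct and takes essentially the same route as the paper: the paper likewise reduces $a_n$ to the weighted Parikh vector $(3,8,16)\,M_g M_f^n\,(1,0,0)^T$ and obtains the recurrence from the Cayley--Hamilton theorem applied to $M_f$, whose characteristic polynomial is $x^3-2x^2-1$. Your elimination of the coupled system for $(a_n,b_n,c_n)$ is just the componentwise version of that matrix argument (as your own parenthetical notes, though there the relevant matrix is $M_f$ alone, not the product with $M_g$), and your key preliminary observation---that $|\tau(w)| = 3|w|_{\tt{0}}+8|w|_{\tt{1}}+16|w|_{\tt{2}}$ independently of the alternation---is exactly what the paper invokes when it writes $a_n=(3,8,16)\,P(g(f^n(\tt{0})))$.
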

\begin{proof}
    Recall that the Parikh vector $P(w)$ of a ternary word $w \in \{\tt{0}, \tt{1}, \tt{2}\}^*$ is the vector $(|w|_{\tt{0}}, |w|_{\tt{1}}, |w|_{\tt{2}})^T$. Let
    \begin{equation*}
        M_f =
        \begin{pmatrix}
            1 & 1 & 1 \\
            1 & 0 & 0 \\
            0 & 2 & 1
        \end{pmatrix}
        \quad \text{and} \quad
        M_g =
        \begin{pmatrix}
            1 & 0 & 0 \\
            0 & 1 & 0 \\
            1 & 1 & 1
        \end{pmatrix}
    \end{equation*}
    be the incidence matrices of the morphisms $f$ and $g$, so that $P(f(w)) = M_f P(w)$ and $P(g(w)) = M_g P(w)$ for all $w \in \{\tt{0}, \tt{1}, \tt{2}\}^*$. From the form of the transducer $\tau$, it is straightforward to see that $a_n = (3, 8, 16) P(g(f^n(0))) = (3, 8, 16) M_g M_f^n (1, 0, 0)^T$.
    
    The characteristic polynomial of the matrix $M_f$ equals $x^3 - 2x - 1$ so, by the Cayley-Hamilton Theorem, we have $M_f^3 - 2M_f^2 - I = 0$. Thus $0 = (3, 8, 16)M_g (M_f^3 - 2M_f^2 - I) M_f^n (1, 0, 0)^T = a_{n+3} - 2a_{n+2} - a_n$ for all $n \geq 0$. This proves the claim (after checking that the initial values are $19$, $43$, $94$).
\end{proof}

\Cref{Lemma:Recurrence} gives us a linear recurrence for $a_n=|\tau(g(f^n(\tt{0})))|$.  Using a computer algebra system to solve the recurrence, we obtain the following.

\begin{corollary}\label{Corollary:RecurrenceSolve}
    For all $n\geq 0$, we have
    \begin{equation*}
        |\tau(g(f^n(\tt{0})))|=\kappa_1\mu_1^n+\kappa_2\mu_2^n+\kappa_3\mu_3^n=\kappa_1\mu_1^n+2\Real\left(\kappa_2\mu_2^n\right),
    \end{equation*}
    where 
    \begin{align*}
        \mu_1&\approx 2.20557,\\
        \mu_2&\approx -0.10278+0.66546i, \text{ and}\\
        \mu_3&=\overline{\mu}_2
    \end{align*}
    are the roots of the polynomial $x^3-2x^2-1$, and
    \begin{align*}
        \kappa_1&\approx 19.31167,\\
        \kappa_2&\approx -0.15583-0.28157i, \text{ and}\\
        \kappa_3&=\overline{\kappa}_2.
    \end{align*}
\end{corollary}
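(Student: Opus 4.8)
The plan is to treat Corollary~\ref{Corollary:RecurrenceSolve} as a routine exercise in solving the constant-coefficient linear recurrence supplied by \Cref{Lemma:Recurrence}. Writing $a_n = |\tau(g(f^n(\tt{0})))|$, that lemma gives $a_n = 2a_{n-1} + a_{n-3}$ with $a_0 = 19$, $a_1 = 43$, $a_2 = 94$. The associated characteristic polynomial is obtained by substituting the ansatz $a_n = x^n$ into the recurrence: this yields $x^3 = 2x^2 + 1$, i.e.\ $x^3 - 2x^2 - 1 = 0$, which is exactly the polynomial named in the statement. So the first step is simply to record that the characteristic polynomial of the recurrence coincides with $x^3 - 2x^2 - 1$, whose roots I would name $\mu_1, \mu_2, \mu_3$.

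Next I would confirm that these three roots are \emph{distinct}, since distinctness is what guarantees the clean exponential form (with no polynomial-in-$n$ factors) and the invertibility of the interpolation system below. One checks that $x^3 - 2x^2 - 1$ shares no root with its derivative $3x^2 - 4x$ (whose roots are $0$ and $4/3$, neither of which is a root of the cubic), so the polynomial is separable and the roots are distinct. A sign analysis on the reals shows there is exactly one real root $\mu_1 \approx 2.20557$, and the remaining two roots therefore form a complex-conjugate pair $\mu_2, \mu_3 = \overline{\mu}_2$. By the standard theory of linear recurrences with constant coefficients, every solution then has the form
\begin{equation*}
a_n = \kappa_1 \mu_1^n + \kappa_2 \mu_2^n + \kappa_3 \mu_3^n
\end{equation*}
for suitable constants $\kappa_1, \kappa_2, \kappa_3$.

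The constants are pinned down by the three initial values. Imposing $a_0 = 19$, $a_1 = 43$, $a_2 = 94$ produces the Vandermonde system
\begin{align*}
\kappa_1 + \kappa_2 + \kappa_3 &= 19,\\
\kappa_1 \mu_1 + \kappa_2 \mu_2 + \kappa_3 \mu_3 &= 43,\\
\kappa_1 \mu_1^2 + \kappa_2 \mu_2^2 + \kappa_3 \mu_3^2 &= 94,
\end{align*}
whose coefficient matrix is a Vandermonde matrix on the distinct nodes $\mu_1, \mu_2, \mu_3$ and hence invertible. Solving it (by a computer algebra system, as the statement indicates) yields the numerical values $\kappa_1 \approx 19.31167$ and $\kappa_2 \approx -0.15583 - 0.28157i$. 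Because the recurrence has real coefficients and the initial data are real, the sequence $(a_n)$ is real for every $n$; this forces the complex contributions to be conjugate, so $\kappa_3 = \overline{\kappa}_2$, and consequently $\kappa_2 \mu_2^n + \kappa_3 \mu_3^n = 2\,\Real(\kappa_2 \mu_2^n)$. This establishes the second displayed form.

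There is no genuine conceptual obstacle here: the argument is entirely mechanical once \Cref{Lemma:Recurrence} is in hand. The only points that warrant care are verifying that the roots are distinct (so that the simple exponential form applies and the Vandermonde system is nonsingular) and observing that the conjugate structure $\kappa_3 = \overline{\kappa}_2$ is not an assumption but a consequence of the realness of the sequence; the numerical values themselves are best left to a symbolic computation.
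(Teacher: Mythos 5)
Your proposal is correct and follows essentially the same route as the paper, which simply notes that \Cref{Lemma:Recurrence} supplies the recurrence $a_n = 2a_{n-1} + a_{n-3}$ with initial values $19, 43, 94$ and solves it with a computer algebra system. Your additional care about the distinctness of the roots of $x^3 - 2x^2 - 1$, the invertibility of the Vandermonde system, and the fact that $\kappa_3 = \overline{\kappa}_2$ follows from the realness of the sequence makes explicit the standard facts the paper leaves implicit.
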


% \begin{proof}
%     Solving the linear recurrences of Lemma~\ref{Lemma:Recurrences} using a computer algebra system, we find
%     \begin{align*}
%         a_n&=\kappa_{a,1}\mu_1^n+\kappa_{a,2}\mu_2^n+\kappa_{a,3}\mu_3^n,\\
%         b_n&=\kappa_{b,1}\mu_1^n+\kappa_{b,2}\mu_2^n+\kappa_{b,3}\mu_3^n, \text{ and}\\
%         c_n&=\kappa_{c,1}\mu_1^n+\kappa_{c,2}\mu_2^n+\kappa_{c,3}\mu_3^n,
%     \end{align*}
%     where $\kappa_{a,i}$, $\kappa_{b,i}$, and $\kappa_{c,i}$ are constants for all $i\in\{1,2,3\}$.  Thus we have 
%     \begin{align*}
%         |\tau(g(f^n(\tt{0})))|&=|\tau(g(\tt{0}))|\cdot|f^n(\tt{0})|_\tt{0}+|\tau(g(\tt{1}))|\cdot |f^n(\tt{0})|_\tt{1}+|\tau(g(\tt{2}))|\cdot |f^n(\tt{0})|_{\tt{2}}\\
%         &=19a_n+24b_n+16c_n\\
%         &=\kappa_1\mu_1^n+\kappa_2\mu_2^n+\kappa_3\mu_3^n,
%     \end{align*}
%     where $\kappa_i=19\kappa_{a,i}+24\kappa_{b,i}+16\kappa_{c,i}$ for all $i\in\{1,2,3\}$ are the constants written in the corollary statement.
% \end{proof}

We are now ready to prove that $\ce(\mathbf{z})=1+1 / (3-\mu_1)$.

\begin{proof}[Proof of \Cref{Theorem:CriticalExponent}]
    The critical exponent of $\mathbf{z}$ is the supremum of the exponents of all unstretchable repetitions in $\mathbf{z}$.  We have already seen in \Cref{Example:StretchSequence} that $\mathbf{z}$ has factors of exponent greater than $\tfrac{9}{4}$, and by \Cref{Proposition:UnstretchableStructure}, every unstretchable repetition with exponent greater than $\tfrac{9}{4}$ in $\mathbf{z}$ belongs to the outer stretch sequence of $(w,|w|)$ for some $w=x_i\cdots x_j\in\Occ(\mathbf{x})$ with $w\in S$.  It follows that
    \begin{equation*}
        \ce(\mathbf{z})=\sup\{R_n(w)\colon w\in S, n\geq 0\}.
    \end{equation*}
    By \Cref{Lemma:OuterPowerSequenceExpression}, we have $R_n(w)\leq R_n(\tt{0})$ for all $w\in S$ and $n\geq 0$, hence
    \begin{equation*}
        \ce(\mathbf{z})=\sup\{R_n(\tt{0})\colon n\geq 0\}.
    \end{equation*}
    So we need to show that $\sup\{R_n(\tt{0})\colon n\geq 0\}=1+1/(3-\mu_1)$.
    It suffices to show that 
    \begin{align}
    \lim_{n\rightarrow \infty}R_{2n}=1+\frac{1}{3-\mu_1}\label{Equation:R2nLimit}
    \end{align}
    and that
    \begin{align}
    R_n\leq 1+\frac{1}{3-\mu_1} \text{ for all $n\geq 0$.}\label{Equation:RnUpperBound}
    \end{align}
    
    We first establish (\ref{Equation:R2nLimit}).  By \Cref{Lemma:OuterPowerSequenceExpression}, we have 
    \begin{equation*}
        R_{2n}=1+\frac{\sum_{k=0}^n|\tau(g(f^{2k}(\tt{0})))|+3}{|\tau(g(f^{2n}(\tt{0})))|}.
    \end{equation*}
    Evidently, we have
    \begin{equation*}
        \lim_{n\rightarrow \infty}R_{2n}=1+\lim_{n\rightarrow \infty}S_{2n},
    \end{equation*}
    where
    \begin{equation*}
        S_{2n}=\frac{\sum_{k=0}^n|\tau(g(f^{2k}(\tt{0})))|}{|\tau(g(f^{2n}(\tt{0})))|},
    \end{equation*}
    as long as $\displaystyle\lim_{n\rightarrow \infty} S_{2n}$ exists.
    By \Cref{Corollary:RecurrenceSolve}, we have
    \begin{align*}
        \lim_{n\rightarrow \infty}S_{2n}&=
        \lim_{n\rightarrow \infty}\frac{\sum_{k=0}^n(\kappa_1\mu_1^{2k}+\kappa_2\mu_2^{2k}+\kappa_3\mu_3^{2k})}{\kappa_1\mu_1^{2n}+\kappa_2\mu_2^{2n}+\kappa_3\mu_3^{2n}}
    \end{align*}
    Breaking the sum in the numerator into three geometric sums, we obtain
    \begin{align*}
        \lim_{n\rightarrow \infty}S_{2n}&=\lim_{n\rightarrow \infty}\frac{\kappa_1\cdot \frac{\mu_1^{2n+2}-1}{\mu_1^2-1}+\kappa_2\cdot \frac{\mu_2^{2n+2}-1}{\mu_2^2-1}+\kappa_3\cdot \frac{\mu_3^{2n+2}-1}{\mu_3^2-1}}{\kappa_1\mu_1^{2n}+\kappa_2\mu_2^{2n}+\kappa_3\mu_3^{2n}}
    \end{align*}
    Finally, dividing through by $\kappa_1\mu_1^{2n}$ in the numerator and denominator, and using the fact that $|\mu_1|>1$ and $|\mu_2|,|\mu_3|<1$, we find
    \begin{align*}
        \lim_{n\rightarrow\infty}S_{2n}&=\lim_{n\rightarrow \infty}\frac{\frac{1}{\mu_1^2-1}\cdot \frac{\mu_1^{2n+2}-1}{\mu_1^{2n}}+\frac{\kappa_2}{\kappa_1}\cdot \frac{1}{\mu_2^2-1}\cdot \frac{\mu_2^{2n+2}-1}{\mu_1^{2n}}+\frac{\kappa_3}{\kappa_1}\cdot \frac{1}{\mu_3^2-1}\cdot \frac{\mu_3^{2n+2}-1}{\mu_1^{2n}}}{1+\frac{\kappa_2}{\kappa_1}\cdot\frac{\mu_2^{2n}}{\mu_1^{2n}}+\frac{\kappa_3}{\kappa_1}\cdot\frac{\mu_3^{2n}}{\mu_1^{2n}}}\\
        &=\frac{\mu_1^2}{\mu_1^2-1} = \frac{1}{3 - \mu_1}.
    \end{align*}

    Now we establish (\ref{Equation:RnUpperBound}).  Let $n\geq 0$.  We show only that 
    \begin{align}
        R_{2n}\leq 1+\frac{\mu_1^2}{\mu_1^2-1}.  \label{Equation:R2nA}
    \end{align}
    The proof for $R_{2n+1}$ is similar.  By \Cref{Lemma:OuterPowerSequenceExpression}, we see that (\ref{Equation:R2nA}) is equivalent to
    \begin{align}
        \frac{3+\sum_{k=0}^n|\tau(g(f^{2k}(\tt{0})))|}{|\tau(g(f^{2n}(\tt{0})))|}\leq \frac{\mu_1^2}{\mu_1^2-1},\label{Equation:R2nB}
    \end{align}
    and by \Cref{Corollary:RecurrenceSolve}, this is equivalent to
    \begin{align}\label{Equation:R2nC}
    \frac{3+\kappa_1\sum_{k=0}^n\mu_1^{2k}+2\Real\left(\kappa_2\sum_{k=0}^n\mu_2^{2k}\right)}{\kappa_1\mu_1^{2n}+2\Real\left(\kappa_2\mu_2^{2n}\right)}\leq \frac{\mu_1^2}{\mu_1^2-1}.
    \end{align}
    By evaluating the geometric sums and doing some basic algebra, we find that (\ref{Equation:R2nC}) is equivalent to 
    \begin{align}\label{Equation:R2nD}
        3+2\Real\left(\kappa_2\cdot\frac{\mu_2^{2n+2}-1}{\mu_2^2-1}\right)\leq \frac{1}{\mu_1^2-1}\left(\kappa_1+2\mu_1^2\Real\left(\kappa_2\mu_2^{2n}\right)\right).
    \end{align}
    So it suffices to prove (\ref{Equation:R2nD}).  Starting from the left side and using the fact that $|\mu_2|<1$ and $n\geq 0$, we have
    \begin{align*}
        3+2\Real\left(\kappa_2\cdot\frac{\mu_2^{2n+2}-1}{\mu_2^2-1}\right)&\leq 3+2\cdot |\kappa_2|\cdot \frac{|\mu_2|^{2n+2}+1}{|\mu_2^2-1|}\\
        &\leq 3+2\cdot|\kappa_2|\cdot \frac{|\mu_2|^2+1}{|\mu_2^2-1|}. 
    \end{align*}
    Starting from the right side, we have
    \begin{align*}
        \frac{1}{\mu_1^2-1}\left(\kappa_1+2\mu_1^2\Real\left(\kappa_2\mu_2^{2n}\right)\right)
        &\geq \frac{1}{|\mu_1^2-1|}\left(|\kappa_1|-2\cdot|\mu_1|^2\cdot |\kappa_2|\cdot |\mu_2|^{2n}\right)\\
        &\geq \frac{1}{|\mu_1^2-1|}\left(|\kappa_1|-2\cdot|\mu_1|^2\cdot |\kappa_2|\right).
    \end{align*}
    Finally, we use the values of $\kappa_1$, $\kappa_2$, $\mu_1$, and $\mu_2$ obtained in proving \Cref{Corollary:RecurrenceSolve} to verify computationally that
    \begin{align*}
        3+2\cdot|\kappa_2|\cdot \frac{|\mu_2|^2+1}{|\mu_2^2-1|}\leq \frac{1}{|\mu_1^2-1|}\left(|\kappa_1|-2\cdot|\mu_1|^2\cdot |\kappa_2|\right).
    \end{align*}
    So we conclude that (\ref{Equation:R2nD}) holds, as desired.
\end{proof}

\section{Alphabets of Size 4 and More}\label{Section:Conclusion}
In this section, we briefly discuss the repetition threshold for rich words over alphabets of size $4$ and more. By \cite[Thm.~2]{CurrieMolRampersad2020} and \Cref{Theorem:CriticalExponent}, we have $\RT(R_2) = 2 + \sqrt{2}/2 \approx 2.7071$ and $\RT(R_3) = 1 + 1 / (3 - \mu_1) \approx 2.2588$. Moreover, $\RT^*(R_k) = 2$ for all $k \geq 2$ \cite{DvorakovaKloudaPelantova2024}. It is natural to ask the following question.

\begin{question}
    What is the number $\RT(R_4)$?
\end{question}

By performing an exhaustive computational search, we have shown that $\RT(R_4) > 2.117$ (a longest $2.117$-power-free rich word over $4$ letters has length $46628$). Since we have found a $2.12$-power-free rich word of length one million over $4$ letters, we believe this to be close to the true value of $\RT(R_4)$, but we have not attempted to work on this.\footnote{The code used to verify these claims, and a text file containing a $2.12$-power-free rich word of length one million over $4$ letters, can be found at \url{https://github.com/japeltom/rich-repetition-threshold}.}

Based on the known results and the computational evidence we have gathered, we propose the following conjecture.

\begin{conjecture}
    $\displaystyle\lim_{k\to\infty} \RT(R_k) = 2$.
\end{conjecture}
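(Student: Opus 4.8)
The plan is to prove the conjecture by showing $\liminf_{k\to\infty}\RT(R_k)\geq 2$ and $\limsup_{k\to\infty}\RT(R_k)\leq 2$ separately, which together force the limit to exist and equal $2$. The lower bound is immediate and carries no difficulty: by the result of Pelantov\'a and Starosta~\cite{PelantovaStarosta2013}, every infinite rich word over any finite alphabet contains infinitely many factors of exponent $2$, so $\ce(\mathbf{w})\geq 2$ for every infinite rich word $\mathbf{w}$, whence $\RT(R_k)\geq 2$ for all $k\geq 2$. The entire content of the conjecture therefore lies in the matching upper bound: given $\varepsilon>0$, one must exhibit, for all sufficiently large $k$, an infinite rich word over $k$ letters whose critical exponent is less than $2+\varepsilon$.

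The natural starting point for the upper bound is the theorem of Dvo\v{r}\'akov\'a, Klouda, and Pelantov\'a~\cite{DvorakovaKloudaPelantova2024} that $\RT^*(R_k)=2$ for every $k\geq 2$. For each $k$ this supplies rich words over $k$ letters whose asymptotic critical exponent $\ce^*$ is arbitrarily close to $2$; equivalently, for every $\delta>0$ there are only finitely many periods $p$ for which such a word contains a factor of period $p$ and exponent exceeding $2+\delta$. Since $\RT(R_k)\geq 2$ always, the bound $\RT(R_k)\leq 2+\varepsilon$ is equivalent to saying that, for a suitably chosen rich word, no factor of any (necessarily bounded) period has exponent exceeding $2+\varepsilon$. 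I would therefore aim to produce a family of rich words over $k$ letters in which the \emph{short} repetitions --- those of bounded period, which are the only possible source of the gap between $\ce^*$ and $\ce$ --- have exponents that decrease to $2$ as $k$ grows. The heuristic is that over a large alphabet one has many ``fresh'' letters available to separate the forced squares, and the task is to convert this freedom into a decay of the short-factor exponents without destroying richness.

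Two concrete routes seem promising. First, I would analyse directly the ordinary critical exponent of the specific words produced by the construction in~\cite{DvorakovaKloudaPelantova2024}: although that exponent exceeds $2$ for each fixed $k$, one hopes to show it tends to $2$ as $k\to\infty$, which reduces the problem to a finite, period-by-period accounting of how the worst short repetition in that family scales with $k$. Second, I would seek an explicit $k$-ary family generalising the ternary extremal word $\tau(g(f^\omega(\tt{0})))$ of this paper --- for instance a word of the form $T_k(h_k^\omega(\tt{0}))$ for a morphism $h_k$ and a transducer $T_k$ whose incidence data yield, via the recurrence and limiting argument of \Cref{Section:CriticalExponent}, a critical exponent of the form $1+1/(c_k-\mu_k)$ with $c_k-\mu_k\to 1$, so that the exponent tends to $2$.

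In either route, the main obstacle --- and the reason this remains a conjecture --- is the simultaneous control of richness and of \emph{short} repetitions. Richness is a rigid global constraint: by \Cref{Lemma:UnioccurrentSuffix}, every prefix must acquire a new unioccurrent palindromic suffix, so one cannot freely insert separating letters to break a short square without risking either a non-rich prefix or a newly created short repetition elsewhere. Thus the delicate step is to make the passage from the asymptotic bound $\ce^*=2$ to the ordinary bound $\ce\leq 2+\varepsilon$ \emph{uniform in $k$}, and it is precisely this uniform control of bounded-period repetitions within a rich family for which we presently have only the computational evidence reported above.
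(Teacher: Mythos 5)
This statement is a \emph{conjecture} in the paper---the authors offer no proof, only computational evidence (a lower bound $\RT(R_4)>2.117$ and a long $2.12$-power-free rich word over $4$ letters)---so there is no paper proof to compare against, and your proposal does not close the gap either. What you actually prove is only the half that was already known: the lower bound $\RT(R_k)\geq 2$, which follows from the result of Pelantov\'a and Starosta~\cite{PelantovaStarosta2013} and is stated as such in \Cref{Section:Background}. The entire content of the conjecture is the upper bound $\limsup_{k\to\infty}\RT(R_k)\leq 2$, and for this you give no argument, only two candidate strategies together with an accurate description of why each is hard. In particular, the step you would need---passing from $\RT^*(R_k)=2$ (the result of~\cite{DvorakovaKloudaPelantova2024}) to $\RT(R_k)\leq 2+\varepsilon$ for large $k$---is not a reduction at all as written: the asymptotic repetition threshold ignores exactly the bounded-period factors that determine the difference between $\ce^*$ and $\ce$, and nothing in the known constructions bounds those short repetitions uniformly in $k$ while preserving richness. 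Indeed, the known exact values move in the wrong direction slowly ($\RT(R_2)\approx 2.707$, $\RT(R_3)\approx 2.259$, and the paper's evidence suggests $\RT(R_4)$ is only slightly above $2.117$), so any proof must produce a genuinely new family of constructions or a new uniform argument; asserting that the worst short repetition ``should'' decay with $k$ is a heuristic, not a proof.

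To be clear about what would be needed: either route you sketch requires, for every $\varepsilon>0$ and all large $k$, an explicit infinite rich word over $k$ letters that is $(2+\varepsilon)$-power-free, together with a verification of richness (e.g.\ via \Cref{Prop:RichCharacterization} or the unioccurrent-palindromic-suffix criterion of \Cref{Lemma:UnioccurrentSuffix}) and a complete analysis of \emph{all} repetitions, short and long---the analogue of what \Cref{Section:Richness} and \Cref{Section:CriticalExponent} do for the single ternary word $\tau(g(f^\omega(\tt{0})))$. Your proposal correctly identifies this as the missing ingredient, but identifying the obstacle is not the same as overcoming it; as it stands, the statement remains a conjecture, and your write-up should be presented as a research plan rather than a proof.
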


\subsection*{Acknowledgements}

We thank the anonymous referees for suggestions which helped to improve the paper.  Research of James D.~Currie is supported by NSERC grant DDG-2024-00005.  Research of Lucas Mol is supported by NSERC grant RGPIN-2021-04084.

%BIBLIOGRAPHY
% You do not have to use the same format for your references, but 
%    include everything in this file.
% If you use BibTeX to create a bibliography, copy the .bbl file into here.
% We recommend you use \doi{...} and \arxiv{...} like the examples below,
% as they give a short display form with an active link to the full url.

\end{document}